
\documentclass[11pt,notitlepage,reqno]{amsart}

\usepackage{listings}
\usepackage{graphicx}
\usepackage{amsmath, amsfonts, amssymb, amsthm}
\usepackage[myheadings]{fullpage}
\usepackage[utf8]{inputenc}
\usepackage[margin=1in]{geometry}
\usepackage{enumitem}
\usepackage{xcolor}
\usepackage{caption}
\usepackage{tabularx}
\usepackage{fancyhdr}
\usepackage{hyperref}
\usepackage{tikz}
\usetikzlibrary{angles,quotes}
\usepackage{mdframed}
\usepackage{lipsum}
\usepackage{chngcntr}
\usepackage{imakeidx}
\usepackage{tkz-euclide}
\usepackage{blindtext}
\usepackage[inline]{asymptote}
\usepackage{mathdots}
\usepackage{tikz-cd,mathtools}
\usepackage{ mathrsfs }
\usepackage{float}
\restylefloat{table}

\usepackage{multicol}
\usepackage{endnotes}
\usepackage{idxlayout}
\usepackage{xcolor, forest}
\usepackage{tocvsec2}
\usepackage[toc]{glossaries}
\usepackage{venndiagram}
\usepackage[colorinlistoftodos]{todonotes} 
\usepackage[normalem]{ulem}
\usetikzlibrary{shapes.geometric}
\usepackage{mathtools}

\usepackage{bbm}
\usepackage{mathtools}

\newcommand{\sop}{s.o.p.}
\newcommand{\no}{\noindent}


\newcommand{\abs}[1]{{\left| #1\right|}}
\renewcommand{\exp}[1]{{\textnormal{exp} {\left( #1 \right)}}}

\newcommand{\C}{\mathbb{C}}
\newcommand{\N}{\mathbb{N}}

\newcommand{\E}{\mathbb{E}}

\newcommand{\<}{\langle}
\renewcommand{\>}{\rangle}
\newcommand{\R}{\mathbb{R}}

\newcommand{\Z}{\mathbb{Z}}

\newcommand{\la}{\lambda}

\newcommand{\suppgen}{\text{supp}(\hphi) \subseteq \left[ -\frac{1}{n-a},\frac{1}{n-a}\right]}

\renewcommand{\l}{\ell}
\renewcommand{\t}{{a-1}}

\usepackage{mathtools}
\newcommand{\defeq}{\coloneqq}

\renewcommand{\i}{{\mathrm{i}}} 

\newtheorem{thm}{Theorem}[section]

\newtheorem{ex}[thm]{Example}
\newtheorem{lem}[thm]{Lemma}

\newtheorem{prop}[thm]{Proposition}

\newtheorem{propt}[thm]{Property}

\newtheorem{defn}[thm]{Definition}

\theoremstyle{definition}
\newtheorem{rem}[thm]{Remark}

\setcounter{page}{1}



\newcommand{\xxi}[1]{\xi_{#1}(\phi)}
\newcommand{\bxi}[1]{\overline{\xi_{#1}}(\phi)}


\newcommand{\be}{\begin{equation}}
\newcommand{\ee}{\end{equation}}
\newcommand{\bea}{\begin{eqnarray}}
\newcommand{\eea}{\end{eqnarray}}



\newcommand{\psumn}[1]{\sum_{p_1 \nmid N,\ldots,p_{#1} \nmid N}}
\newcommand{\qsumn}[1]{\sum_{q_1 \nmid N,\ldots,q_{#1} \nmid N}}

\newcommand{\qsum}[1]{\sum_{\substack{q_1 \nmid N,\ldots,q_{#1} \nmid N\\ q_j \text{ distinct}}}}

\newcommand{\psumextra}[1]{\sum_{\substack{p_1 \nmid N,\ldots,p_{#1} \nmid N\\ p_i \text{ distinct} \\ p_i \ne q_j}}}

\newcommand{\psumsimp}[1]{\sum_{p_1 ,\ldots,p_{#1}}}
\newcommand{\qsumsimp}[1]{\sum_{q_1 ,\ldots,q_{#1}}}

\newcommand{\qsumdist}[1]{\sum_{\substack{q_1 ,\ldots,q_{#1} \\ q_j \text{ distinct}}}}
\newcommand{\psumdist}[1]{\sum_{\substack{p_1 ,\ldots,p_{#1} \\ p_i \text{ distinct}}}}

\newcommand{\rootn}{N^{-1/2}}

\newcommand{\indicator}{\chi}

\newcommand{\testfn}[1]{\hat{\phi}\left(\frac{\log #1}{\log R}\right)}
\newcommand{\testcomp}[1]{\left(\frac{2\log #1}{\sqrt{#1} \log R}\right)}
\newcommand{\testcompone}[1]{\frac{\log #1}{\sqrt{#1} \log R}}
\newcommand{\testcompvm}[1]{\left( \frac{\chi_0(#1) \Lambda(#1)}{\sqrt{#1}\log R}\right)}

\newcommand{\converrorterm}{O \left( \frac{(\log \log N)^2}{(\log N)^{1/2}} \right)}

\newcommand{\Enm}{E(\vec{n}, \vec{m})}
\newcommand{\Fabcde}{F(\vec{a}, \vec{b}, \vec{c}, \vec{d}, \vec{e})}

\newcommand{\msum}{\sum_{m \le N^\epsilon}}
\newcommand{\Jk}{J_{k-1}}
\newcommand{\bsum}[1]{\sum_{\substack{(b, #1) = 1 \\ b < N^{2022}}}}
\newcommand{\bsumdiv}[2]{\sum_{\substack{(b, #1) = 1 \\ b < N^{2022} \\ #2 \mid b}}}

\newcommand{\bsumfull}[2]{\sum_{\substack{(b, #1) = 1 \\ #2 \mid b}}}

\newcommand{\hs}[1]{\hspace{#1 cm}}




\newcommand{\inv}{^{-1}}

\newcommand{\nth}{n^{\rm th}}

\newcommand\nbea{\begin{eqnarray*}}
\newcommand\neea{\end{eqnarray*}}
\newcommand\bi{\begin{itemize}}
\newcommand\ei{\end{itemize}}
\newcommand\ben{\begin{enumerate}}
\newcommand\een{\end{enumerate}}





\newcommand{\ONe}{O\left(N^{-\epsilon}\right)}
\newcommand{\ONhalfe}{O\left(N^{1/2-\epsilon}\right)}

\newcommand{\soe}{{\rm SO(even)}}
\newcommand{\soo}{{\rm SO(odd)}}

\newcommand{\SO}{{\rm SO}}


\newcommand{\twocase}[5]{#1 \begin{cases} #2 & \text{{\rm #3}}\\ #4
&\text{{\rm #5}} \end{cases}   }





\newcommand{\foh}{\frac{1}{2}}




\newcommand{\gep}{\epsilon}  
\newcommand{\g}{\gamma}      
\newcommand{\gl}{\lambda}    

\newcommand{\notdiv}{\nmid}


\numberwithin{equation}{section}

\DeclareMathOperator{\supp}{supp}

\newcommand{\bal}{\begin{align}}
\newcommand{\eal}{\end{align}}

\newcommand{\nn}{\nonumber\\}


\newcommand{\hkpmn}{H_k^\pm(N)}
\newcommand{\hkn}{H_k^\ast(N)}
\newcommand{\hksn}{H_k^\sigma(N)}

\newcommand{\jk}[2]{J_{k-1}\left( 4\pi \frac{ \sqrt{ #1 } }{ #2 }\right) }
\newcommand{\phir}[1]{\widehat{\phi}\left( \frac{ \log p_{#1} }{\log R}\right) }

\newcommand{\pfrac}[1]{\frac{2\log p_{#1}}{\sqrt{p_{#1}} \log R}}

\newcommand{\chib}[1]{\overline{\chi}(#1)}

\newcommand{\twoovern}{\left(-\frac{2}{n}, \frac{2}{n} \right)}
\newcommand{\oneoverna}{\left(-\frac{1}{n-a}, \frac{1}{n-a} \right)}

\newcommand{\twonsquare}{\left[ -\frac{2}{n}, \frac{2}{n} \right]}
\newcommand{\nasquare}{\left[-\frac{1}{n-a}, \frac{1}{n-a} \right]}

\newcommand{\zerona}{\left[0, \frac{1}{n-a} \right]}

\newcommand{\hphi}{\widehat{\phi}}  

\newcommand{\intinf}{\int_{-\infty}^\infty}

\newcommand{\intii}{\intinf}
\newcommand{\I}{\mathbbm{1}} 

\renewcommand{\d}{{\mathrm{d}}} 

\renewcommand{\Re}{{\mathfrak{Re}}}



\usepackage{listings}
\usepackage{color}

\definecolor{dkgreen}{rgb}{0,0.6,0}
\definecolor{gray}{rgb}{0.5,0.5,0.5}
\definecolor{mauve}{rgb}{0.58,0,0.82}

\lstset{frame=tb,
  language=Java,
  aboveskip=3mm,
  belowskip=3mm,
  showstringspaces=false,
  columns=flexible,
  basicstyle={\small\ttfamily},
  numbers=none,
  numberstyle=\tiny\color{gray},
  keywordstyle=\color{blue},
  commentstyle=\color{dkgreen},
  stringstyle=\color{mauve},
  breaklines=true,
  breakatwhitespace=true,
  tabsize=3
}

\usepackage{subfiles}
\usepackage{color}

\usepackage{soul}

\DeclareMathOperator{\re}{Re}

\begin{document}

\title{On the moments of one-level densities in families of holomorphic cusp forms in the level aspect}

\author{Peter Cohen}
\address{Massachusetts Institute of Technology}
\email{plcohen@mit.edu}

\author{Justine Dell}
\address{University of California, San Diego}
\email{jsdell@ucsd.edu}

\author{Oscar E. Gonz\'alez}
\address{Department of Mathematics, University of Illinois at Urbana-Champaign, Urbana, IL 61801}
\email{oscareg2@illinois.edu}

\author{Simran Khunger}
\address{University of Michigan, Ann Arbor}
\email{skhunger@umich.edu}

\author{Chung-Hang Kwan}
\address{University College London}
\email{ucahckw@ucl.ac.uk}

\author{Steven J. Miller}
\address{Williams College}
\email{sjm1@williams.edu}

\author{Alexander Shashkov}
\address{University of California, Berkeley}
\email{ashashkov@berkeley.edu}

\author{Alicia Smith Reina}
\address{Williams College}
\email{ags6@williams.edu}

\author{Carsten Sprunger}
\address{University of Michigan}
\email{csprun@umich.edu}

\author{Nicholas Triantafillou}
\address{Department of Mathematics\\
University of Georgia\\
Athens GA 30602}
\email{nicholas.triantafillou@uga.edu}

\author{Nhi Truong}
\address{Stanford University}
\email{ntruongv@stanford.edu}

\author{Roger Van Peski}
\address{Massachusetts Institute of Technology}
\email{rvp@mit.edu}

\author{Stephen Willis}
\address{Williams College}
\email{sdw2@williams.edu}

\thanks{This work was done during the 2011, 2012, 2015, 2017, and 2021 SMALL REU program in Number Theory and Probability at Williams College supported by NSF grants DMS1561945 and DMS1659037. We thank Roger Heath-Brown for his assistance on exponential sums. We also thank Geoffrey Iyer and Yingzi Yang for significant work on parts of earlier drafts of this work.}
\date{\today}

\begin{abstract}

  We study the $\nth$ centered moments of the $1$-level density for the low-lying zeros of  $L$-functions attached to holomorphic cuspidal newforms of large prime level and fixed weight. Assuming the Generalized Riemann Hypotheses, we compute this statistic for any $n\ge 1$ and for all test functions whose Fourier transforms are supported in $\left(-2/n, \, 2/n\right)$. This is believed to be the natural limit of the current technology. Our work significantly extends beyond the trivial range $(-1/n, \, 1/n)$ and  surpasses the previous record of  $(-1/(n-1),\,  1/(n-1))$ whenever $n>2$. The Katz-Sarnak philosophy predicts that the aforementioned statistic can be modeled by the corresponding statistic for the eigenvalues of random orthogonal matrices. We prove that this is the case for test functions with  Fourier support contained in $(-2/n,\, 2/n)$. The main technical innovation is a tractable vantage to evaluate the combinatorial zoo of terms, similar to the work of Conrey-Snaith \cite{CS14} and Mason-Snaith \cite{MS18}. As an application, our work provides better 
  bounds on the order of vanishing at the central point for the $L$-functions in our family.

\end{abstract}

\subjclass[2010]{11M26 (primary), 11M41, 15A52 (secondary)}

\keywords{Low-lying zeros, optimal test functions, $n$-level densities, Katz-Sarnak Conjectures}

\maketitle

\setcounter{tocdepth}{1}
\tableofcontents

\markboth{\scalebox{0.9}{Cohen, Dell, Gonz\'alez, Iyer, Khunger, Kwan, Miller, Shashkov, Smith Reina, Sprunger, Triantafillou, Truong, Van Peski, Willis, Yang}}{On the moments of one-level densities in families of holomorphic cusp forms in the level aspect}


\section{Introduction} \label{sec:introduction section}

\subsection{Historical Perspectives}

Since Montgomery and Dyson's discovery that the two-point correlation of the zeros of the Riemann zeta function agrees with the pair correlation function for eigenvalues of the Gaussian Unitary Ensemble (see \cite{Mon}), the connection between the zeros of $L$-functions and the zeros of random matrices has been a major area of study. It is now widely believed that the statistical behavior of families of $L$-functions can be modeled by ensembles of random matrices. Based on the observation that the spacing statistics of high zeros associated with cuspidal $L$-functions agree with the corresponding statistics for eigenvalues of random unitary matrices under Haar measure (see \cite{RS}, for example), it was originally believed that only the unitary ensemble was important to number theory. However, Katz-Sarnak \cite{KS1, KS2} showed that these statistics are the same for all classical compact groups.
These statistics, the $n$-level correlations, are unaffected by finite numbers of zeros. In particular, they failed to identify differences in behavior near the central point $s = 1/2$. 

The $n$-level density statistic was introduced to distinguish the behavior of families of $L$-functions close to $s=1/2$. Based partially on an analogy with the function field setting, Katz-Sarnak conjectured that the \emph{low-lying zeros} (i.e.,  zeros near $s=1/2$) of families of $L$-functions behave like the eigenvalues near $1$ of classical compact groups (unitary, symplectic, and orthogonal). The behavior of the eigenvalues near 1 is different for each matrix group. A growing body of evidence has shown that this conjecture holds for test functions with suitably restricted support for a wide range of families of $L$-functions. For a non-exhaustive list, see \cite{Alpoge2013, Alpoge2015, barrett2017one, devin2022low, DPR23, DM1, DM2, Entin2012, FiMi, Gao, Guloglu2005, HM, ILS, knightly2019weighted, Mil2, MP, OS1, OS2, Ricotta2007, Ro, Ru, Shin2012, waxman2021lower, yang2009, Yo}. Much of the previous work is focused on the $n=1$ case. We study the \emph{$\nth$ centered moment of the 1-level density} for any $n\ge 1$, a higher-order statistic first introduced by Hughes-Rudnick \cite{HR} which is combinatorially simpler than the $n$-level density.

In this article, we consider the family of $L$-functions associated with holomorphic cusp forms. We prove that the Katz-Sarnak conjecture holds for the $n^\text{th}$ centered moment of the 1-level density for this family and for all test functions $\phi$ with Fourier transform $\hphi$ supported in $\twoovern$, which represents the natural limit of the current analytic machinery based on the prior works of \cite{HR} on the unitary family of primitive Dirichlet characters, as well as \cite{Gao}, \cite{Entin2012} and \cite{LM13} on the $n$-level density for the symplectic family of quadratic Dirichlet $L$-functions. Proving the conjecture past this support likely requires new ideas or stronger hypotheses such as the \emph{`Hypothesis S'} discussed in  \cite[Sect. 10]{ILS} (or more likely its generalizations, see \cite{MMM24}). 

Previously, Iwaniec-Luo-Sarnak \cite{ILS} and  Hughes-Miller \cite{HM}   examined the same family and statistic addressed in this article (with the former limited to the case of $n=1$). Under the Generalized Rieman Hypothesis (GRH), \cite{ILS} prove the conjecture when $n=1$ for $\hphi$ supported in 
$(-2,2)$ when $n=1$, and \cite{HM} prove the conjecture in the restricted range $\left(-\frac{1}{n-1}, \frac{1}{n-1}\right)$ when $n\ge 2$. In \cite{HM}, the authors were unable to handle the new terms which emerge at larger supports on both the number theory and random matrix theory side (nonetheless, the work of \cite{HM} addressed some rather challenging combinatorics). Our work develops an approach to handle all the combinatorial terms that arise when the Fourier support of the test function extends up to $2/n$, thus filling in a non-trivial gap in our current knowledge of higher-order statistics for the zeros of $L$-functions in the orthogonal families.

To the best of the authors' knowledge, apart from this work, the success of a  \emph{purely combinatorial venture} (i.e., without passing to function field settings in the large $q$-limit and using the equidistribution theorems \`{a} la Deligne and Katz) in verifying Katz-Sarnak's philosophy through higher-order statistics is somewhat limited. \cite{CL20} calculate the $n$-level density for a unitary family of Dirichlet $L$-functions. For the symplectic family, Levinson-Miller \cite{LM13} calculated the $n$-level density of a family of Dirichlet $L$-functions for $n\le 7$, extending the work of Gao \cite{Gao}. See also the contributions of \cite{CS14} and \cite{MS18} in the realm of random matrix theory. The pioneering work of \cite{Entin2012} and \cite{En14} calculated the $n$-level density for the symplectic family studied in \cite{LM13} and \cite{Gao} and showed it agreed with random matrix theory by passing to the function field setting. However, their method has not yet been successfully applied to any of the orthogonal families. Furthermore, the success of this article seems to suggest that the $\nth$ centered moment statistic might offer a viable means to overcome the combinatorial barriers encountered in \cite{Gao} and \cite{LM13} when verifying the higher-order Katz-Sarnak comparison for the symplectic family.

\subsection{Number theory setup}
We now describe the main objects of study. Let $H^*_k(N)$ be the set of holomorphic cusp forms of
weight $k$ and level $N$ (with trivial nebentypus) which are \emph{newforms}. For $f\in H^*_k(N)$, denote by $L(s,f)$ the $L$-function attached to $f$. The completed $L$-function is given by 
\begin{equation}
    \Lambda(s,f) \ \defeq\ \left(\frac{\sqrt{N}}{2\pi}\right)^s \Gamma\left(s+\frac{k-1}{2}\right) L(s,f).
\end{equation}
It admits an entire continuation and satisfies the functional equation
\begin{equation}
        \Lambda(s,f) \ = \ \epsilon_f \, \Lambda(1-s,f),
\end{equation}
where $\epsilon_f = \pm 1$ is the root number. The family $H^*_k(N)$ splits naturally into two disjoint sub-families:
\begin{align}\label{subfam}
      H^+_k(N) \ &\defeq \  \{f\in H^*_k(N): \epsilon_f = +1\}, \hspace{20pt}
      H^-_k(N) \ \defeq \  \{ f\in H^*_k(N): \epsilon_f = -1\}. 
\end{align}
For each $f\in H_{k}^{*}(N)$, we denote the non-trivial zeros of $L(s, f)$ by $\rho_f = \tfrac{1}{2} + i \g_f$. The Generalized Riemann Hypothesis (GRH) for $L(s,f)$ asserts that $\g_f \in \R$. 

As in \cite{ILS}, we take 
\begin{align}
    R  \ = \   k^2N
\end{align}
as the working definition of the \emph{analytic conductor} for the families $H_{k}^{\bullet}(N)$ ($\bullet \in \{+, -, *\}$).  Our analysis is greatly simplified by all forms in the family having the same analytic conductor. Varying conductors are easily handled in 1-level calculations, but cause technical difficulties through cross terms once $n \ge 2$, see \cite{Mil2}.  

The \emph{one-level density} of $f \in H_{k}^{\star}(N)$ is a weighted sum over $\gamma_f$ given by
\begin{equation}\label{eq:dfphidef}
    D(f;\phi)\ \defeq\ \sum_{\g_f} \, \phi\left(\frac{\log R}{2\pi}\g_f\right),
\end{equation}
where the test function $\phi: \R \rightarrow \C$ is an even Schwartz function whose Fourier transform $\widehat{\phi}$\, has compact support. We denote this class of test functions by  $\mathcal{S}_{ec}(\R)$ and $\phi\in \mathcal{S}_{ec}(\R) $. Because of the rapid decay of $\phi$, the \emph{low-lying zeros} of $L(s,f)$  contribute the most to the one-level density \eqref{eq:dfphidef}. 

As in \cite{HM}, we use the following shorthand for taking averages over $H_{k}^{\bullet}(N)$:
\begin{equation}\label{eq:unifaverdef}
    \< Q(\,\cdot\, ) \>_\bullet \ = \  \< Q(\,\cdot\, ) \>_{H_{k}^{\bullet}(N)}\  \defeq\ \frac{1}{|H_k^{\bullet}(N)|} \, \sum_{f\in H_k^\bullet (N)} Q(f),
\end{equation}
where $\bullet \in \{+, -, *\}$  and $Q$ is any complex-valued function defined on $H_{k}^{\bullet}(N)$, e.g.,  $f\mapsto D(f;\phi)$ defined in \eqref{eq:dfphidef}.  

In this article, we study the $\nth$-centered moment of the one-level density, defined by
\begin{align}\label{defcentmo}
  \mathcal{D}_{n}^{\bullet}(N; \phi) \ := \  \left\< \ \left(\, D(\ \cdot \ ;\, \phi\, ) \, - \,  \< \, D(\ \cdot \ ;\, \phi\, ) \, \>_{H_{k}^{\bullet}(N)} \, \right)^{n} \ \right\>_{H_{k}^{\bullet}(N)}
\end{align}
where $n \ge 2$ is an integer, $N$ is prime, $k \ge 2$ is even, and $\phi\in \mathcal{S}_{ec}(\R)$ is a test function.  Readers should keep in mind that the weight $k$ is kept fixed and the level $N$ goes to $\infty$ through primes.  

\subsection{Random matrix theory setup}\label{sec: rmt setup}
Let  $\phi \in \mathcal{S}_{ec}(\R)$. The random matrix theory counterpart of the one-level density $D(f;\phi)$ is given by 
 \begin{align}\label{RMTdenfunc}
      Z(U; \phi)\ \defeq\ \sum_{n=1}^M \, \sum_{j=-\infty}^\infty \, \phi\left(\frac M{2\pi}(\theta_{n}+2\pi j)\right),
 \end{align}
 where $U$ is an $M$-by-$M$ special orthogonal matrix with eigenvalues $\{e^{i\theta_n}: n=1, \ldots, M\}$. In fact, the eigenvalues of $U$ always occur in complex-conjugate pairs. We have the following correspondence when comparing (\ref{RMTdenfunc}) to \eqref{eq:dfphidef}: 
 \begin{align*}
     M \ \ &\longleftrightarrow \ \ \log\,  (k^2N) \\
     U \, \in \, SO(M) \ \ &\longleftrightarrow \ \ f \, \in \,  H_{k}^{*}(N) \\
      \left\{ \theta_{n}+ 2\pi j\right\}_{\substack{  j  \in  \Z \\ 1 \le  n  \le M}} \ \ &\longleftrightarrow \ \  \{ \gamma_{f}\}. 
 \end{align*}
Here, it is more standard to use the notation
\begin{align}
    \E_{\SO(M)}
        \left[ Z(\ \cdot\ ; \, \phi\, ) \right] \ := \ \int_{\SO(M)} \,  Z(U; \phi) \, dU
\end{align}
instead of the bracket of  \eqref{eq:unifaverdef}. The Haar measure $dU$ on the compact Lie group $SO(M)$ is normalized to have total measure $1$.  When $\supp(\hphi) \subseteq [-1, 1]$, it is well-known that
    \begin{align}
        \mu_\pm &\ \defeq \ \sideset{}{^\pm}{\lim
}_{M\to\infty} \ \E_{\SO(M)}
        \left[ Z( \, \cdot \, ; \phi) \right]\ =\
        \hphi(0)\ + \ \frac12\int_{-1}^1 \hphi(y)\;\d y, 
    \end{align}
where \, 
$\sideset{}{^+}{\lim}\limits_{M\to \infty}:= \lim\limits_{\substack{M\to \infty \\ M \equiv 0 \, (\bmod\, 2)}}$ \ and \ \  $\sideset{}{^-}{\lim}\limits_{M\to \infty}:= \lim\limits_{\substack{M\to \infty \\ M \equiv 1 \, (\bmod\, 2)}}$. Similar to (\ref{defcentmo}),  the \emph{ $\nth$-centered moment} of $Z(\ \cdot\ ; \phi)$ is defined as
 \begin{align}\label{RMT1levden}
      \mathcal{Z}_{n}(M; \phi) \ :=  \  \E_{\SO(M)} \left[ \ \left(\, Z(\ \cdot \ ; \,\phi\, ) \ - \ \E_{\SO(M)}
        \left[ Z(\ \cdot \ ;\, \phi\, ) \right] \, \right)^n \ \right] \hspace{20pt} (M \, \in \, \mathbb{N}).
 \end{align}

\subsection{Main Results}

We are now ready to present our main result, which extends and generalizes \cite[Theorem 1.1]{ILS} and \cite[Theorem 1.6-1.7]{HM}. 

\begin{thm}\label{thm: KS density main}
Let $k, n\ge 2$ be positive integers with $k$  even. Assume GRH for $L(s, f)$ for all $f \in H_k^*(N)$, where $N$ is any prime or $N = 1$. Assume also RH for $\zeta(s)$ and GRH for all primitive Dirichlet $L$-functions.  Then for $\phi \in \mathcal{S}_{ec}(\R)$ with $\supp (\hphi) \subset \twoovern$, we have
\begin{align}\label{eq: density main}
    \lim_{\substack{N\to\infty \\ N\, \text{\rm  prime}}}  \ \mathcal{D}^{\pm}_{n}(N; \phi) \ \  = \ \  \sideset{}{^\pm}{\lim}_{M\to \infty} \  \mathcal{Z}_{n}(M; \phi).
\end{align}
The moments $\mathcal{D}^{\pm}_{n}(N; \phi)$ and $\mathcal{Z}_{n}(M; \phi)$ are defined in (\ref{defcentmo}) and (\ref{RMT1levden}). 
\end{thm}

Theorem \ref{thm: KS density main} follows immediately from Theorems \ref{thm:lfnsn-2} and \ref{thm:fullRMT} below. In fact, we prove precise formulae for each of the limits in (\ref{eq: density main}) whenever $\supp (\hphi) \subset \oneoverna $, where $a$ is an integer with  $0\le a\le n/2$. These formulae might be of independent interest. To state these results, we need to introduce the following quantities:  
\begin{align} \label{eq:fullNT_var}
        \sigma_\phi^2 \ \coloneqq \ 2\intii |y| \hphi(y)^2\;\d y,
\end{align}
and
\begin{equation}\label{eq: R def}
\begin{split}
    \mathcal{R}(&m, i; \phi) \ \defeq\ 2^{m-1} (-1)^{m+1} \sum_{\ell = 0}^{i-1} (-1)^{ \ell }\binom{m}{\ell} \left( - \frac{1}{2}\, \phi(0)^m \right. \\
    &+\left. \intinf \cdots \intinf \hphi(x_2) \cdots \hphi(x_{\ell + 1}) \intinf \phi(x_1)^{m -\ell} \, \frac{\sin( 2 \pi x_1 (1 + |x_2| + \cdots + |x_{\ell+1}|))}{2\pi x_1}\,  dx_1 \cdots dx_{\ell + 1} \right),
\end{split}
\end{equation}
and
\begin{equation}\label{eq: Snaphi def}
    S(n, a; \phi) \ \defeq  \ \sum_{\ell = 0}^{\lfloor \frac{a-1}{2} \rfloor} \frac{n!}{(n-2\ell)! \ell!} \, \mathcal{R}(n-2\ell, a -2\ell; \phi) \left(\frac{\sigma_{\phi}^2}{2} \right)^{\ell},
\end{equation}
where $\phi\in\mathcal{S}_{ec}(\R)$, $1 \le i \le m$, and $0 \le a \le n/2$.

We first state the number theory result.
\begin{thm}\label{thm:lfnsn-2}
Let $n, a$ be integers with $n \ge 2$, $0 \le a \le n/2$ and let $\phi \in \mathcal{S}_{ec}(\R)$. Under the same assumptions as Theorem \ref{thm: KS density main}, if $\supp(\hphi) \subset \oneoverna $, then
\begin{equation}\label{eq:thmextmomcompact}
\lim_{\substack{N\to\infty \\ N\, \text{\rm  prime}}}  \ \mathcal{D}_{n}^{\pm}(N; \phi)   \ = \ \mathbbm{1}_{\{\text{\rm even}\}}(n) \cdot (n-1)!!\ (\sigma_{\phi}^{2})^{n/2} \ \pm \  S(n, a; \phi)
\end{equation}
where $\mathbbm{1}_{\{\text{\rm even}\}}(n)$ is equal to  $1$ if $n$ is even and is $0$ if $n$ is odd.  
    
\end{thm}

Next we state the random matrix theory result.
\begin{thm}\label{thm:fullRMT}
Let $n, a$ be integers with $n \ge 2$, $0 \le a \le n/2$ and let $\phi \in \mathcal{S}_{ec}(\R)$. If $\supp(\hphi) \subset \oneoverna $, then
    \be
        \sideset{}{^\pm}{\lim}_{\substack{M\to\infty}} \,  \mathcal{Z}_{n}(M; \phi) \ = \ \mathbbm{1}_{\{n\,\text{\rm even}\}}\cdot (n-1)!!\ (\sigma^2_\phi)^{n/2} \ \pm \  S(n, a; \phi) 
    \ee
    where $\mathbbm{1}_{\{\text{\rm even}\}}(n)$ is equal to  $1$ if $n$ is even and is $0$ if $n$ is odd.  
\end{thm}

Hughes-Miller \cite[Theorem E.1]{HM} prove an analogue of Theorem \ref{thm: KS density main} for the \emph{full} family $H_k^*(N)$ under the restriction $n \le 2k$. We remove this restriction in the following theorem, whose proof is given in Appendix \ref{sec: completefamily}. 

\begin{thm}\label{thm:nosplitmoments}\label{thm:mock-Gaussian for D}
Let $k, n\ge 2$ be positive integers with $k$  even and let $\phi \in \mathcal{S}_{ec}(\R)$. Under the same assumptions as Theorem \ref{thm: KS density main}, if $\supp(\hphi) \subset \oneoverna $, then
\begin{align}\label{cleanlimires}
    \lim_{\substack{N\to\infty \\ N\, \text{\rm  prime}}}  \ \mathcal{D}_{n}^{*}(N; \phi) \ \  = \ \   \frac{1}{2}  \left[ \   \sideset{}{^+}{\lim}_{M\to \infty} \  \mathcal{Z}_{n}(M; \phi)\  +\  \sideset{}{^-}{\lim}_{M\to \infty} \  \mathcal{Z}_{n}(M; \phi) \right].
\end{align}
The moments $\mathcal{D}^{\pm}_{n}(N; \phi)$ and $\mathcal{Z}_{n}(M; \phi)$ are defined in (\ref{defcentmo}) and (\ref{RMT1levden}). 
    
\end{thm}

\subsection{Applications}

As noted in \cite{HM} and \cite{Mil}, another application of centered moments is in bounding the order of vanishing of $L$-functions at the central point. In Appendix \ref{sec: order of vanishing}, we show how to use Theorem \ref{thm:lfnsn-2} to bound the probability that a newform with negative sign will have order of vanishing exceeding some $r$ at the central point. Similar calculations may be done for the positive sign family. Our results provide the best known bounds (conditional on GRH) for order of vanishing at the central point when $r \ge 5$, surpassing \cite{ILS, HM, Boldy,LiMiller}. 
\subsection{Proof sketch and structure of the paper}

We evaluate the limit \eqref{eq:thmextmomcompact} using the \emph{explicit formula} and the \emph{Petersson trace formula}. We use the explicit formula to transform the sum over zeros \eqref{eq:dfphidef} into a weighted average of products of Hecke eigenvalues over primes in Lemma \ref{pluexpHeckpr}. After removing many lower order subterms with Lemmas \ref{lem:coeffs are 0} and \ref{lem: n' big}, we apply the Petersson trace formula and study the resulting sums of Kloosterman sums over primes; see \eqref{eq:E Petersson}. We assume GRH for $L(s, f)$ when applying the Petersson trace formula; see Remark \ref{rem: complementary sum}.

Using Lemma \ref{lem:genkloos2}, we convert the Kloosterman sums into sums over Gauss sums. Assuming GRH for Dirichlet $L$-functions, we show in Lemma \ref{lem:char} that the terms involving Gauss sums with non-principal characters contribute negligibly in the limit when $\supp (\hphi) \subset \twoovern$. This requires strong bounds for various character sums over primes, hence the need to assume GRH for Dirichlet $L$-functions. Hughes-Miller \cite[Thm. 1.1, Thm. 1.3]{HM} prove results without GRH for Dirichlet $L$-functions for $\supp(\widehat{\phi}) \subset (-\frac{1}{n}, \frac{1}{n})$ (i.e., the case where $a=0$ in Theorem \ref{thm:lfnsn-2}-\ref{thm:fullRMT}); we use GRH for Dirichlet $L$-functions to extend the support to $\supp(\widehat{\phi}) \subset (-\frac{2}{n}, \frac{2}{n})$. 

We are left to handle certain smooth sums over primes (Proposition \ref{lem:sums to integrals}) and a convolution sum of Ramanujan sums (Proposition \ref{lem:ils7}). We arrive at Theorem \ref{thm:lfnsn-2} upon very careful bookkeeping of the resulting combinatorics (see Sections \ref{sec:combining} and \ref{sec:main term}).

Beyond the regime $\supp(\widehat{\phi}) \subset (-\frac{1}{n-1}, \frac{1}{n-1})$ proven by \cite{HM},  more complicated terms emerge as the size of  $\supp(\hphi)$ increases. This serves as the primary obstacle to generalizing the work of  \cite{HM}. The main insight of our extension lies in the observation that many of these terms actually \emph{vanish in the limit} (see Lemma \ref{lem:coeffs are 0} and Proposition \ref{lem:nonmaintermcalc}). This enables us to ignore the very intricate combinatorics behind these terms. The remaining terms contribute to the limit and exhibit nicer symmetries. We are able to obtain an integral representation for the these terms in Proposition \ref{lem:maintermcalc} upon making our way through the combinatorial jungle.  Our work features many elaborate combinatorial simplifications that result in exact matching with the calculations from the random matrix theory side. As with number theory, the key result that allows us to obtain greater support in random matrix theory is the vanishing of many of the complicated terms which emerge at larger supports (see Lemma \ref{cor:classes_cancel}).

The structure of this paper is as follows. In Section \ref{sec:preliminaries section}, we review the notations and conventions, and state some needed estimates. In Section \ref{sec:number theory section}, we work with the geometric side of the Petersson formula with the focus of locating the main contributions in the expansion.  We also prove Theorem \ref{thm:lfnsn-2} assuming two key propositions (Prop. \ref{lem:nonmaintermcalc}-\ref{lem:maintermcalc}). In Section \ref{sec:main term}, we evaluate the main contributions as well as complete the proofs of Propositions \ref{lem:nonmaintermcalc}-\ref{lem:maintermcalc}. In Section \ref{sec:rmt calc} we work on the random matrix theory side and prove Theorem \ref{thm:fullRMT}.  

In Appendix \ref{sec:pfs nt lemmas}, we prove Lemma \ref{lem:coeffs are 0} regarding the combinatorial expansion for $\mathcal{D}_{n}^{*}(N; \phi)$ which serves as the starting point of the number-theoretic calculations. In Appendix \ref{sec: completefamily}, we give a sketch of proof for Theorem \ref{thm:mock-Gaussian for D} regarding the non-split family. In Appendix \ref{sec:RMT Lemmas}, we include more details for the random matrix theory calculations. In Appendix \ref{sec: order of vanishing} we use Theorem \ref{thm:lfnsn-2} to bound the proportion of cuspidal newforms vanishing to a certain order at the central point.


\section{Preliminaries}\label{sec:preliminaries section}

\subsection{Notations and Conventions}

In this article,  $e(x) := e^{2\pi \i x}$, the Fourier transform and its inverse transform are given by 
    \begin{equation}\label{fouT}
        \hphi(y)\ \coloneqq \ \intinf \phi(x) e(-xy) \;\d x, \ \ \ \ \ \phi(x)\ \coloneqq \ \intinf \hphi(y) e(xy) \;\d y
    \end{equation}
    for $x, y \in \R$. The Mellin transform and its inverse transform are
   \begin{align}\label{melT}
     \hspace{-7pt}  \widetilde{\psi}(s) \ :=\   \int_{0}^{\infty} \, \psi(x) x^{s-1} \, \d x, \hspace{33pt} \psi(x) \ :=\ \int_{(\sigma)} \, \widetilde{\psi}(s) x^{-s} \, \frac{ds}{2\pi i}
   \end{align}
   for $x>0$ and $s\in \C$ in a vertical strip, provided that the integrals of (\ref{fouT}) and (\ref{melT}) converge absolutely.  


For $A \subset \R$, let  
    \begin{equation} \label{eq: indicator}
        \twocase{\I_{\{x \in A\}} \ \coloneqq \ }{1}{if $x\in A$}{0}{otherwise.}
    \end{equation}
We will suppress the argument of a characteristic function when it is clear from context. For $x,y \in \R$, let  
    \begin{equation} 
        \twocase{\delta(x,y) \ \coloneqq \ }{1}{if $x = y$}{0}{otherwise.}
    \end{equation}

For $x,y \in \Z$, let $(x, y)$ denote the greatest common divisor of $x$ and $y$. Set $(x, y^{\infty}) = \max_{n \in \N} (x, y^{n})$ and $(x^{\infty}, y) = \max_{n \in \N}(x^{n}, y)$.

To avoid potential confusion, we adopt the following set of conventions throughout this work. 
\begin{enumerate}
    \item We will use `$\i$' to denote the imaginary unit (i.e., $\i^2=-1$) and `$i$' for the indices of summations. 

    \item We will use $p$, $p_{i}$'s, $q_{j}$'s to denote prime numbers.

    \item The test function $\phi: \R \rightarrow \C$ is an even Schwartz function with its Fourier transform $\widehat{\phi}$\,  having compact support.

    \item The weight $k$ is kept fixed and the level $N$ goes to $\infty$ through primes

    \item A quantity is considered to be \emph{negligibly small} if it is $o(1)$ as $N \to \infty$.

   \item The implicit constants for $O$, $\ll$, $\gg$, $\asymp$, etc. may depend on $k$, $\epsilon$, $n$, $a$, and of course the test function $\phi$ (cf. Theorem \ref{thm:lfnsn-2}).  We will omit such dependencies to simplify notations. 

    \item  We shall frequently adopt the \emph{`$\epsilon$-convention'}, i.e.,  $\epsilon> 0$ is an arbitrary small quantity and $O(N^{-\epsilon})\cdot O(N^{- C(k, \epsilon, n, a)\cdot \epsilon})=O(N^{-\epsilon})$ (say).

    \item We always assume GRH for $L(s,f)$ for any $f\in H_{k}^{*}(N)$ with $N=1$ and primes $N$.

\end{enumerate}

\subsection{Analytic Preliminaries}

We will frequently encounter the following exponential/character sums while proving our main theorems.

\begin{defn}
Let $m,n,q \in \Z$ with $q\ge 1$ and $\chi \, (\bmod\, q)$ be  a Dirichlet character. We define
    \begin{align}
        G_\chi(n)\ &\coloneqq \ \sum_{a \bmod q} \, \chi(a) e(an/q), \\
        R(n,q) \ &\coloneqq \ \sideset{}{^\ast}\sum_{a \bmod q} \,  e(an/q) ,\\
        S(m,n;q)\ &\coloneqq \ \sideset{}{^\ast}\sum_{a \bmod q} \, e\left(\frac{ma}{q} + \frac{n\bar{a}}{q}\right),
    \end{align}
where  $\ast$ restricts the summation to the reduced residue classes $a \, (\bmod\, q)$, and  $a \bar{a} \equiv 1 \, (\bmod\, q)$.
    
\end{defn}

The sums $G_\chi(n)$, $ R(n,q)$, and $ S(m,n;q)$ are commonly known as the Gauss sum, the Ramanujan sum, and the Kloosterman sum respectively. When $\chi = \chi_0$ (i.e., the principal character $(\bmod\, q)$), we have $G_{\chi_0}(n)= R(n,q)$.  Also, $R(n,q)=S(0,n; q)$. The  Ramanujan sum admits the following explicit evaluation:
\begin{equation}\label{eq:vonsterneck}
    R(n, q) \ = \ \sum_{d|(n,q)} \mu(q/d) d \ = \  \mu \left(\frac{q}{(q, n)} \right) \frac{\varphi(q)}{\varphi\left(\frac{q}{(q, n)} \right)},
\end{equation}
where $\mu(\cdot)$ and $\phi(\cdot)$ are the M\"obius $\mu$-function and the Euler totient-function respectively. We will need the multiplicativity of the Ramanujan sum as well:
\begin{align}\label{eqn Ramult}
    R(n,q_{1}q_{2}) \ =  \ R(n, q_{1}) R(n, q_{2})
\end{align}
for $(q_{1},q_{2})=1$ and $n\in \Z$.

The following bounds are particularly handy in showing various sums and integrals to be negligibly small.  We have
\begin{align}
    |G_\chi(n)|  \ &\le \ \sqrt{q},
\end{align}
and 
\begin{equation}\label{eq:estimate Kloosterman}
        |S(m,n;q)| \ \leq \ (m,n,q)\ \sqrt{\min\left\{ \frac{q}{(m,q)} , \frac{q}{(n,q)} \right\}}\ \ \tau(q),
    \end{equation}
 where $\tau(\cdot)$ is the divisor function. The bound  \eqref{eq:estimate Kloosterman} is a convenient reformulation of a well-known result of  A. Weil, see  \cite[eq. 2.13]{ILS}.

We often need to handle the Kloosterman sums in more refined ways than merely applying \eqref{eq:estimate Kloosterman}. In the context of low-lying zeros (see \cite{ILS}, \cite{HM}), it is advantageous to expand the Kloosterman sums in terms of  Dirichlet characters and Gauss sums. The following lemma is a generalization of \cite[Lemma C.1]{HM}.  

\begin{lem}\label{lem:genkloos2}
    Let $N$ be a prime not dividing $bQm$. Then
    \begin{equation}\label{eqn KlGaexp}
        S(m^2, NQ; Nb) \ =\  -\frac{1}{\varphi(b)}\sum_{\chi (b)} G_\chi (m^2) G_\chi((Q, b^\infty)) \overline{\chi} \left(\frac{Q}{(Q, b^\infty)}\right) \chi(N).
    \end{equation}
\end{lem}

\begin{proof}
    Set $r = (Q, b^\infty)$ and $Q' = Q/r$. Then $(Q', b) = 1$. Using the orthogonality relation and opening up the Kloosterman sum by its definition, observe that
    \begin{align}
        S(m^2, NQ; Nb) \ &= \ \frac{1}{\varphi(b)} \, \sum_{\chi  (b)} \sideset{}{^*}\sum_{a \, (b)} \, \chi(a) \chib{Q'} S(m^2, Nra; Nb)\nonumber \\
        &= \ \frac{1}{\varphi(b)} \, \sum_{\chi (b)} \  \chib{Q'} \sideset{}{^*}\sum_{d \, (Nb)} e \left(\frac{m^2d}{Nb} \right) \sideset{}{^*}\sum_{a \, (b)} \, \chi(a) \left(\frac{ra\overline{d}}{b} \right).
        \end{align}
 Making  a change of variables $a\to ad$ in the $a$-sum and breaking up the $d$-sum by  $d = u_1N +u_2b$ with $(u_1, b) = 1$ and $(u_2, N) = 1$, it follows that
\begin{align}
    S(m^2, NQ; Nb) \ &= \ \frac{1}{\varphi(b)} \sum_{\chi \, (b)} \, \chib{Q'} G_\chi(r) \, \sideset{}{^*}\sum_{d \, (Nb)} \, \chi(d) e \left(\frac{m^2d}{Nb} \right) \nonumber\\
    \ &= \ \frac{1}{\varphi(b)} \sum_{\chi \, (b)} \, \chib{Q'} G_\chi(r) \chi(N)  \sideset{}{^*}\sum_{u_1 (b)}  \chi(u_1)  e \left(\frac{m^2u_1}{b} \right) \sideset{}{^*}\sum_{u_2 \, (N)} e \left(\frac{m^2u_2}{N} \right). \label{expGauKl}
\end{align}
The $u_{2}$-sum and $u_{1}$-sum of (\ref{expGauKl}) evaluate to  $-1$ and $G_{\chi} (m^2)$ respectively as  $(m^2, N) = 1$. The result follows. 
\end{proof}

\begin{lem}\label{lem:expsum} 
    Let $\chi \, (\bmod\, b)$ be a primitive Dirichlet character.  Under GRH for $L(s, \chi)$, we have 
    \be
        \sum_{n\le x} \Lambda(n) \chi(n) n^{-it} \ = \ O_{\epsilon}\left(x^{1/2} (bxt)^\epsilon\right)
    \ee
    for any $x\ge 2$ and $t\in\R$. 
\end{lem}

\begin{proof}
    This follows from a standard prime-number-theorem type argument, see \cite[Chapter 5]{iwaniec_kowalski_2004}, \cite{Da}, or \cite[Chapter 13]{MV07}. As a quick sketch, we have
    \begin{equation*}
        \sum_{n\le x} \Lambda(n) \chi(n) n^{-it} \ = \ \int_{\frac{3}{2}-ix}^{\frac{3}{2}+ix}\frac{L'}{L} (s+it, \chi) \frac{x^s}{s}ds \, + \, O\left(x^{1/2}\right)  \ = \ \sum_{|\gamma - t| \le x} \frac{x^{\rho - it}}{\rho - it} \, + \,  O_{\epsilon}\left(x^{1/2} (bxt)^\epsilon \right) 
    \end{equation*}
using GRH and the estimate
\begin{equation}
    \frac{L'}{L}(s,\chi) \ll (\log b|s|)^2
\end{equation}
 for $-1 \le \Re(s) \le 2$.  The desired result then follows from the fact that the  number of zeros satisfying $u \le \gamma - t \le u+1$ is $\ll \log \, (b(|u|+|t|)$. 
\end{proof}

The Bessel functions of the first kind occur in the Petersson formula (see Lemma \ref{lemcomplsum}) and hence frequently in this paper.  We collect some results for them. 

\begin{lem}\label{lem:Bessel} 
    Let $k\geq 2$ be an integer. Then the following bounds are satisfied for $x>0$:
    \begin{enumerate}
        \item\label{lb:1} $J_{k-1}(x) \ll_{k} 1$,
        \item\label{lb:2} $J_{k-1}(x) \ll_{k} x$,
        \item\label{lb:3} $J_{k-1}(x) \ll_{k} x^{k-1}$,
        \item\label{lb:4} $J_{k-1}(x) \ll_{k} x^{-\foh}$.
    \end{enumerate}
\end{lem}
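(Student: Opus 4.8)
\textbf{Proof plan for Lemma \ref{lem:Bessel}.}

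These four bounds are entirely standard consequences of the classical series and integral representations of $J_{k-1}$, so the plan is simply to organize them in the right order and cite the right facts. The main ``obstacle'' here is not difficulty but bookkeeping: bounds \eqref{lb:1} and \eqref{lb:4} are the large-$x$ behaviour, while \eqref{lb:2} and \eqref{lb:3} are the small-$x$ behaviour, and one must be slightly careful that each bound is claimed (and used) only on the range where it is the useful one. I would state at the outset that all implied constants depend only on the fixed integer $k\geq 2$.

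First I would recall the power series
\[
    J_{k-1}(x)\ =\ \sum_{m=0}^{\infty} \frac{(-1)^m}{m!\,\Gamma(m+k)}\left(\frac{x}{2}\right)^{2m+k-1},
\]
valid for all real $x$ (see \cite{GR,Wat}). For $|x|\leq 1$, the leading term is $\frac{(x/2)^{k-1}}{\Gamma(k)}$ and the series is dominated by a convergent geometric-type series, so $J_{k-1}(x)\ll |x|^{k-1}$; this is \eqref{lb:3}. Since $k\geq 2$ we have $|x|^{k-1}\leq |x|$ for $|x|\leq 1$, giving \eqref{lb:2} on that range, and $|x|^{k-1}\leq 1$, giving \eqref{lb:1} there. (For the genuinely small-$x$ estimates \eqref{lb:2} and \eqref{lb:3}, only the regime $|x|\lesssim 1$ matters, as these are the cases where they improve on \eqref{lb:1}; for $|x|\gtrsim 1$ they follow trivially from \eqref{lb:1}.)

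For $|x|\geq 1$ I would invoke the classical asymptotic / uniform bound
\[
    J_{k-1}(x)\ =\ \sqrt{\frac{2}{\pi x}}\cos\!\left(x-\frac{(k-1)\pi}{2}-\frac{\pi}{4}\right)+O_k\!\left(x^{-3/2}\right),
\]
equivalently the uniform estimate $J_{k-1}(x)\ll_k x^{-1/2}$ for $x\geq 1$ (again \cite{GR,Wat}; this can also be derived from the integral representation $J_{k-1}(x)=\frac{1}{\pi}\int_0^\pi \cos((k-1)\theta-x\sin\theta)\,d\theta$ by stationary phase). This gives \eqref{lb:4} for $x\geq 1$, and since $x^{-1/2}\leq 1$ there, it also gives \eqref{lb:1} for $x\geq 1$. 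Combining the two ranges yields \eqref{lb:1} for all $x$, completing the proof. The only point requiring any care is that \eqref{lb:4} is a large-$x$ statement (it is false near $x=0$, where $J_{k-1}$ vanishes rather than blowing up, but is harmless since the bound is only applied for $x$ bounded away from $0$), so I would remark that in all later applications \eqref{lb:4} is used together with one of \eqref{lb:2}, \eqref{lb:3} to cover small $x$.
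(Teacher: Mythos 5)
Your proof is correct and follows the standard route; the paper itself offers no proof of this lemma at all, simply citing \cite{GR,Wat} as these are classical facts, so your derivation from the power series (for small $x$) and the large-$x$ asymptotic supplies exactly what the citation points to. One small correction to your final parenthetical: the bound $J_{k-1}(x)\ll x^{-1/2}$ is \emph{not} false near $x=0$ — there $J_{k-1}(x)$ tends to $0$ while $x^{-1/2}$ blows up, so the inequality holds trivially; consequently \eqref{lb:4}, like the other three bounds, is valid uniformly for all $x>0$ and no caveat about restricting its range of application is needed.
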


\begin{proof}
   See  \cite{GR,Wat}.
\end{proof}

We will also utilize the Mellin integral representation for the Bessel function. 

\begin{lem}
We have
\begin{equation}\label{eq: Jk inverse Mellin}
    \Jk(x)\ = \ \frac{1}{2\pi i } \int_{\Re(s) = c} G_{k-1}(s) x^{-s} \, \d s
\end{equation}
for $x>0$ and  $1 - k < c < \frac{3}{2}$, where 
    \begin{align}
G_{k-1}(s)  \ := \ 2^{s-1} \, \Gamma\left( \frac{k-1 + s}{2}  \right)\left/ \Gamma\left( \frac{k+ 1 - s}{2} \right)\right 
 . \label{eq: Jk Mellin}
\end{align}

\end{lem}

\begin{proof}
    See \cite[(6.561.14)]{GR}. 
\end{proof}

\subsection{Automorphic Preliminaries}
We collect the essential results from the standard references from \cite[Chp. 14]{iwaniec_kowalski_2004}, \cite[Chp. 6-7]{Iw97},  \cite[Sect. 2-3]{ILS}.

Let $k$ and $N$ be positive integers with $k$ even and $N$ prime. Recall that $H_{k}^{\bullet}(N)$ ($\bullet \in \{+, -, *\}$) denotes the set of holomorphic cuspidal newforms of
weight $k$ and level $N$,  depending on the sign of the functional equation.  From \cite[eq. (2.73)]{ILS}, we have the following dimension formulae: 
\begin{equation}\label{eq:number of terms in hkpm}
|\hkpmn| \ = \ \frac{k-1}{24}N  \ + \  O\left( (kN)^{5/6}
\right),
\end{equation}
and
 \begin{equation} \label{eq:number of terms in hkn}
    |\hkn| \ = \ \frac{k-1}{12}N  \ + \  O\left( (kN)^{5/6} \right).
\end{equation}

Every $f\in H^*_k(N)$ has a Fourier expansion of the form
\begin{equation}
    f(z)\ =\ \sum_{n=1}^\infty \lambda_f(n) n^{\frac{k-1}{2}} e(nz)
\end{equation}
for $z\in \mathbb{H}:= \{ x+iy: x\in \R,\,  y > 0\}$, where $\lambda_{f}(1)=1$. 
The $L$-function associated with $f$ is defined by the Dirichlet series
    \begin{equation}
        L(s,f)\ =\ \sum_{n=1}^\infty \, \lambda_f(n) n^{-s} 
    \end{equation}
 which converges absolutely on $\re s >1$. The completed $L$-function is given by 
\begin{equation}\label{eq:completed_L_func}
    \Lambda(s,f) \ \defeq\ \left(\frac{\sqrt{N}}{2\pi}\right)^s \Gamma\left(s+\frac{k-1}{2}\right) L(s,f).
\end{equation}
It admits an entire continuation and satisfies the functional equation
\begin{equation}
        \Lambda(s,f) \ = \ \epsilon_f \, \Lambda(1-s,f).
\end{equation}
The root number $\gep_f$ admits a nice formula in our case. 

\begin{lem}\label{lem:sign in terms of lambdaN}
    If $f\in\hkn$ and $N$ is prime, then
    \begin{equation}\label{eq:signfneqexpils}
        \gep_f \ = \ -\i^k \gl_f(N) \sqrt{N}.
    \end{equation}
    In particular, we have $|\gl_f(N)| = 1/\sqrt{N}$. 
\end{lem}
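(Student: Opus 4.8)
The plan is to extract the relation between $\epsilon_f$ and $\lambda_f(N)$ directly from the functional equation of the completed $L$-function, using the fact that for a prime level $N$ the Atkin--Lehner involution $W_N$ acts on a newform $f\in\hkn$ as multiplication by a scalar $\eta_f=\pm1$, and that this scalar is precisely $-N^{(k-1)/2}\lambda_f(N)=-a_f(N)$ (the standard Hecke relation at the prime dividing the level). First I would recall that the completed $L$-function $\Lambda(s,f)=(\sqrt N/2\pi)^s\Gamma(s+\tfrac{k-1}2)L(s,f)$ is, up to elementary factors, the Mellin transform of $f$ restricted to the imaginary axis: $\Lambda(s,f)=(\text{const})\int_0^\infty f(iy/\sqrt N)\,y^{s+(k-1)/2}\,dy/y$, so that the transformation behaviour of $f$ under $z\mapsto -1/(Nz)$ translates into the functional equation $s\mapsto 1-s$.

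Next I would invoke the Atkin--Lehner theory: since $N$ is prime, the Fricke involution $f\mapsto f|_k W_N$, where $(f|_k W_N)(z)=N^{-k/2}z^{-k}f(-1/(Nz))$, sends the one-dimensional space of oldforms-free newforms to itself, so $f|_k W_N=\eta_f f$ with $\eta_f^2=1$. Feeding this into the integral representation yields $\Lambda(s,f)=\eta_f\,i^k\,\Lambda(1-s,f)$ (the $i^k=i^{-k}$ factor coming from the $z^{-k}$ in the slash action evaluated along the imaginary axis, i.e. from $\Gamma_0(N)\backslash\mathbb H$ conventions as in \cite{ILS}), whence comparing with $\Lambda(s,f)=\epsilon_f\Lambda(1-s,f)$ gives $\epsilon_f=i^k\eta_f$. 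Finally I would use the relation between the $N$-th Hecke eigenvalue and the Atkin--Lehner sign for a newform of prime level: $a_f(N)=-\eta_f N^{(k-1)/2-1/2}\cdot N^{1/2}$, more precisely the standard identity $\lambda_f(N)=-\eta_f/\sqrt N$, equivalently $\eta_f=-\sqrt N\,\lambda_f(N)$. Substituting gives $\epsilon_f=i^k\eta_f=-i^k\lambda_f(N)\sqrt N$, which is \eqref{eq:signfneqexpils}.

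The main obstacle — and the only genuinely delicate point — is pinning down the exact constant $-i^k$ (as opposed to $i^{-k}$, or a sign) in the passage from the slash action to the functional equation, and correctly normalizing the Atkin--Lehner eigenvalue against the Hecke eigenvalue $\lambda_f(N)$ with the analytic normalization $\lambda_f(n)=a_f(n)n^{-(k-1)/2}$ in force. Rather than re-derive these conventions from scratch, the cleanest route is to cite Equation (3.5) and the surrounding discussion of \cite{ILS} directly, where exactly this family and exactly these normalizations are used; the content of the lemma is then just the specialization of their formula to prime $N$. I would also note for completeness that $k$ is even in this setting, so $i^k=\pm1$ and $\epsilon_f$ is manifestly real, consistent with $\epsilon_f=\pm1$; the statement is nonetheless written with $i^k$ to keep the formula in the form it will be applied later.
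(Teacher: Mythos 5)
Your proposal is correct and matches the paper, which gives no independent proof of this lemma and simply cites Equation (3.5) of \cite{ILS} (where $\epsilon_f = i^k\mu(N)\lambda_f(N)\sqrt{N}$ specializes to the stated formula since $\mu(N)=-1$ for $N$ prime); your Atkin--Lehner sketch is the standard derivation behind that citation. The only blemish is the intermediate expression $a_f(N)=-\eta_f N^{(k-1)/2-1/2}\cdot N^{1/2}$, which simplifies to $-\eta_f N^{(k-1)/2}$ rather than the correct $-\eta_f N^{k/2-1}$, but the identity $\lambda_f(N)=-\eta_f/\sqrt{N}$ that you actually use is right.
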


\begin{proof}
    See \cite[eq. 3.5]{ILS}. 
\end{proof}

The following Hecke relations will be crucial in demonstrating that the number theoretic combinatorics align with those of random matrix theory.  

\begin{lem}\label{lem:multiplicativity of fourier coeffs}
    Let $f \in \hkn$. 
\begin{enumerate}
    \item For any any $m,n\ge 1$,   \begin{equation}
        \gl_f(m) \gl_f(n) \ = \ \sum_{\substack{d|(m,n) \\ (d,N) = 1}} \gl_f\left( \frac{mn}{d^2} \right).
    \end{equation}
In particular, if $(m,n) = 1$ then
\begin{equation}\label{eqn multHec}
    \gl_f(m) \gl_f(n)  \ = \  \gl_f(mn).
\end{equation}

\item For a prime $p\notdiv N$, we have
\begin{equation}\label{eq:lambda(p) squared}
    \gl_f(p)^2 \ = \ \gl_f(p^2) \, +\,  1.
\end{equation} 

\item For $n\ge 1$ and a prime $p\nmid N$, we have
\begin{equation}\label{eq:GuyLambdaExpansion}
    \lambda_f(p)^n\ = \ \sum_{\alpha = 0}^{[n/2]} \left[ \binom{n}{\alpha} - \binom{n}{\alpha - 1} \right]\lambda_f(p^{n- 2\alpha}).
\end{equation}
\end{enumerate}
\end{lem}

\begin{proof}
    Only the last property is less well-known, see \cite{Guy} for its proof. 
\end{proof}

Now, consider
\begin{equation}\label{eq:deltakndefeq}
    \Delta_{k,N}^\bullet(n) \ \coloneqq \ \sum_{f \in \hksn} \gl_f(n), \quad \bullet \in \{+,-,\ast\}.
\end{equation}
Splitting by sign with Lemma~\ref{lem:sign in terms of lambdaN},  we have 
\begin{align}
    \Delta_{k,N}^\pm(n)  \ = \ \sum_{f\in\hkn} \frac12(1\pm\epsilon_f) \gl_f(n) 
     \ = \ \frac12 \, \Delta_{k,N}^\ast(n)\ \mp\ \frac{\i^k \sqrt{N}}{2} \Delta_{k,N}^\ast(nN) \label{eqdeltapm}
\end{align}
whenever $N$ is a prime and $(n,N)=1$. We have the following useful form of the Petersson formula.
\begin{lem}\label{lemcomplsum}
    If $N$ is prime and $(n,N^2)\mid N$ then
    \begin{equation}\label{eqdeltastarprimeinf}
        \Delta_{k,N}^\ast(n)  \ = \  \Delta_{k,N}'(n) \ + \ \Delta_{k,N}^\infty(n) ,
    \end{equation}
    where
    \begin{multline}\label{proptof}
        \Delta_{k,N}'(n)\ =\ \frac{(k-1)N}{12\sqrt{n}}\ \delta_{n,\Box_Y} \\ 
        + \frac{(k-1)N}{12} \sum_{\substack{(m,N) = 1 \\ m \le Y}} \frac{2\pi \i^k}{m} \sum_{\substack{ c\equiv 0 \bmod N \\ c\geq N}} \frac{ S(m^2,n;c)}{c} \, \jk{m^2n}{c}
    \end{multline}
    with $\delta_{n,\Box_Y} = 1$ only if $n=m^2$ with  $m\leq Y$ and $0$ otherwise, and $\Delta_{k,N}^\infty(n)$ defined in \cite[Lem. 2.12]{ILS}. 
\end{lem}

\begin{proof}
   See \cite[Prop. 2.1, 2.11 and 2.15]{ILS}. 
\end{proof}

\begin{rem}\label{rem: complementary sum}
    The piece $\Delta_{k,N}^\infty(n)$ is called the \emph{complementary sum}. By \cite[Lemma A.1]{HM}, assuming GRH for all $L(s,f)$ with $f \in H_k^*(1) \cup H_k^*(N)$, the complementary sum does not contribute in all cases appearing in this paper.
\end{rem}

 We have the following
lemma.
\begin{lem} \label{lem:trivial bound}
    Assume $(n,N)=1$. Then
    \begin{equation}\label{eq:219}
        \frac{1}{|\hkn|}\ \Delta_{k,N}'(Nn) \ \ll \ \sqrt{n} N^{-\frac{3}{2}+\gep}.
    \end{equation}
\end{lem}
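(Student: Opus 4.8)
The plan is to bound $\Delta'_{k,N}(Nn)$ directly from the explicit formula \eqref{proptof} and then divide by $|\hkn|$, which is $\gg_k N$ by \eqref{eq:number of terms in hkn}. The first step is to check that the diagonal term drops out: $\delta_{Nn,\Box_Y}$ is nonzero only if $Nn$ is a perfect square $b^2$ with $b \le Y$, but since $N$ is prime, $N \mid b^2$ forces $N \mid b$, hence $N^2 \mid Nn$ and $N \mid n$, contradicting $(n,N)=1$. Thus only the Kloosterman--Bessel sum survives.

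Next I would write $c = bN$ with $b \ge 1$, so that
\[
\Delta'_{k,N}(Nn) \ = \ \frac{(k-1)N}{12}\sum_{\substack{(a,N)=1 \\ a \le Y}}\frac{2\pi i^k}{a}\sum_{b \ge 1}\frac{S(a^2,Nn;bN)}{bN}\,J_{k-1}\!\left(\frac{4\pi a\sqrt{Nn}}{bN}\right),
\]
and then apply the two stated input bounds term by term. For the Kloosterman sum, \eqref{eq:estimate Kloosterman} together with $(a,N)=1$ gives $(a^2,Nn,bN) \le a^2$ and $\min\{bN/(a^2,bN),\,bN/(Nn,bN)\} \le bN/(N(n,b)) \le b$, so $|S(a^2,Nn;bN)| \ll a^2 b^{1/2}\tau(bN) \ll_\epsilon a^2 b^{1/2}(bN)^{\epsilon}$. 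For the Bessel factor, the bound $J_{k-1}(x) \ll x$ from Lemma~\ref{lem:Bessel} gives $J_{k-1}(4\pi a\sqrt{Nn}/(bN)) \ll a\sqrt{n}/(b\sqrt{N})$, valid for all $x\ge 0$ since $k \ge 2$.

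Substituting both estimates, the $b$-sum reduces to $\sum_{b\ge 1} b^{-3/2+\epsilon}$, which converges for small $\epsilon$, and the $a$-sum runs over $a \le Y = N^{\epsilon}$ with summand $\ll a^2$, contributing $\ll Y^3 = N^{3\epsilon}$. Collecting the powers of $N$ gives $|\Delta'_{k,N}(Nn)| \ll_k \sqrt{n}\,N^{-1/2+\epsilon}$ (after renaming $\epsilon$), and dividing by $|\hkn| \gg_k N$ yields $\sqrt{n}\,N^{-3/2+\epsilon}$, as claimed. This is a routine ``trivial'' estimate, so there is no serious obstacle; the only points needing care are verifying that the diagonal term really vanishes under the hypothesis $(n,N)=1$, and choosing the $Nn$-side (rather than the $a^2$-side) of the minimum in the Kloosterman bound so that the resulting sum over $b$ converges.
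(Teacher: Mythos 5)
Your proof is correct and follows exactly the route the paper indicates: the paper derives this lemma by the "trivial" estimate combining the Kloosterman bound \eqref{eq:estimate Kloosterman}, the bound $J_{k-1}(x)\ll x$, and the size of $|\hkn|$, with $Y=N^{\epsilon}$ and $c=bN$. Your additional observations — that the diagonal term vanishes since $(n,N)=1$ forces $N\mid n$ if $Nn$ were a square, and that one must take the $Nn$-side of the minimum so that $(Nn,bN)=N(n,b)$ makes the $b$-sum converge — are exactly the details the paper leaves implicit.
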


\begin{proof}
     We take $Y = N^{\epsilon}$ and write $c =
bN$ for $c \equiv 0 \bmod N$. Using \eqref{eq:number of terms in hkpm}, the Weil bound \eqref{eq:estimate
Kloosterman} and  the bound $J_{k-1}(x) \ll x$ from Lemma~\ref{lem:Bessel}, the result follows immediately.
\end{proof}

\subsection{Density and moment sums}\label{sect:intro_mmt sums}

Let $f\in \hkn$ and $\phi \in \mathcal{S}_{ec}(\R)$. Substituting the explicit formula for $L(s,f)$ (see \cite[Sect. 4]{ILS}) into the one-level density function
\begin{eqnarray}\label{eq:expformexpsumzerosdfphi}
    D(f;\phi) & \ := \ & \sum_{\gamma_f} \, \phi\left( \frac{\log R}{2\pi} \gamma_f \right),
\end{eqnarray}
we have
\begin{equation}\label{eqdfphiexpansion}
    D(f;\phi)\ =\ \widehat{\phi}(0) \ + \  \foh \phi(0)  \ - \  P(f;\phi) \ + \  O\left( \frac{\log \log R}{\log R} \right),
\end{equation}
where $R = k^2 N$ and 
\begin{equation}\label{eq:P in terms of gl}
    P(f;\phi)\ :=\ \sum_{p \notdiv N} \, \gl_f(p) \phir{} \pfrac{}.
\end{equation}
See \cite[eq. (4.25)]{ILS} and the relevant remarks of \cite[pp. 88]{ILS} and \cite[pp. 129]{HM}.  Using \eqref{eqdfphiexpansion}, \cite{HM} expresses the $\nth$-centered moments in terms of sums over primes (see \cite[Sect. 2.3]{HM}).

\begin{lem}\label{pluexpHeckpr}
If $\supp(\hphi) \subset (-1,1)$, we have
\begin{equation}\label{eq:nth_mmt_in_terms_of_S}
   \lim\limits_{\substack{N\to\infty \\ N\, \text{\rm  prime}}}  \,  \mathcal{D}_{n}^{\pm}(N; \phi)  \ =\ (-1)^n  \lim\limits_{\substack{N\to\infty \\ N\, \text{\rm  prime}}} \,  S_1^{(n)}(N)\ \pm\ (-1)^{n+1}  \lim\limits_{\substack{N\to\infty \\ N\, \text{\rm  prime}}} \,  S_2^{(n)}(N)
\end{equation}
provided the limits on the right side exist, where
\begin{equation}\label{eq:S_1}
    S_1^{(n)}(N)\ \defeq\ \sum_{p_1 \nmid N , \,\ldots,\,  p_n \nmid N} \, \prod_{j=1}^n \left( \testfn{p_j} \testcomp{p_j} \right) \left\< \prod_{j=1}^n \lambda_f(p_i) \right\>_\ast
\end{equation}

\no and

\begin{equation}\label{eq:S2n}
    S_2^{(n)}(N)\ \defeq\ \ \i^k \sqrt{N} \sum_{p_1 \nmid N , \ldots, \, p_n \nmid N} \, \prod_{j=1}^n \left( \testfn{p_j} \testcomp{p_j} \right) \left\< \lambda_f(N) \prod_{j=1}^n \lambda_f(p_i) \right\>_\ast.
\end{equation}
\end{lem}

\begin{prop}
Under the same assumptions of Theorem \ref{thm: KS density main}, we have
    \be\label{eq:S1nresult}
    \lim_{\substack{N\to\infty\\N \text{ prime}}} S_1^{(n)}(N)\ = \ \begin{cases}
        (n-1)!!(\sigma_{\phi}^{2})^{n/2} & \text{if $n$ is even}\\
        0 & \text{if $n$ is odd}
    \end{cases},
\ee
for $\phi\in\mathcal{S}_{ec}(\R)$ with  $\supp(\hat{\phi}) \subset \twoovern$,  
where $\sigma_\phi^2$ is defined in \eqref{eq:fullNT_var}.
\end{prop}

\begin{proof}
    See Appendix~\ref{sec: completefamily}. 
\end{proof}

\begin{rem}
 \cite[Thm.  E.1]{HM} prove an analogous result but with an extra restriction $n\le 2k$. 
\end{rem}

In Sections \ref{sec:number theory section} and \ref{sec:main term}, we evaluate $S_{2}^{(n)}(N)$. The main result is Proposition \ref{lem: S2n eval}, in which we express $S_{2}^{(n)}(N)$ in terms of $S(n,a; \phi)$ (see \eqref{eq:thmextmomcompact})   .


\section{Proof of Theorem \ref{thm:lfnsn-2} assuming Propositions \ref{lem:nonmaintermcalc} and \ref{lem:maintermcalc}}\label{sec:number theory section}

In this section, we prove Theorem \ref{thm:lfnsn-2} assuming the key Propositions \ref{lem:nonmaintermcalc} and \ref{lem:maintermcalc}. We prove these propositions in Section \ref{sec:main term}. 
First we decompose $S_2^{(n)}(N)$ (see \eqref{eq:S2n}) into subterms and show that many of these subterms vanish as $N\to \infty$ through primes. Then in Section \ref{sec:combining}, we apply Propositions \ref{lem:nonmaintermcalc} and \ref{lem:maintermcalc} to complete the proof of Theorem \ref{thm:lfnsn-2}.


\subsection{Combinatorial expansion and cleaning}\label{combiexp}

We rewrite the sums over primes in \eqref{eq:S2n} as sums over powers of \emph{distinct} primes. This facilitates the applications of the Hecke relations (Lemma \ref{lem:multiplicativity of fourier coeffs}) and the Petersson formula (Lemma \ref{lemcomplsum}). More precisely,   suppose $p_1\cdots p_n = q_1^{n_1}\cdots q_\ell^{n_\ell}$ in \eqref{eq:S2n}, where   $q_1,\ldots,q_{\ell}$ are \emph{distinct} primes, $n\ge 2$ and $\ell\ge 1$.   We have $\langle \lambda_f(N) \prod_{j=1}^n \lambda_f(p_i) \rangle_\ast = \langle\lambda_{f}(N) \prod_{j=1}^{\ell} \lambda_{f}(q_{j})^{n_{j}}\rangle_\ast$. We then apply the Hecke relations \eqref{eq:GuyLambdaExpansion} to each $\lambda_{f}(q_{j})^{n_{j}}$ and then use \eqref{eqn multHec}. Then we remove the distinctness condition from the sum over primes using a delicate inclusion-exclusion process. We conclude this process with the following lemma.
\begin{lem}\label{lem:coeffs are 0}
We have
    \begin{align}\label{eq: S2n E}
     S_2^{(n)}(N) = \sum_{0 \le \omega \le n} \sum_{0 \le n' \le n} \sum_{\substack{\vec{n} :=(n_1,\dots,n_{\omega}) \\ n_j > 1 \\ n_{1}+\cdots +n_{\omega}=n'}} \sum_{\substack{\vec{m} :=(m_1,\dots,m_{\omega}) \\ m_j \equiv n_j \pmod{2} \\ 0\le  m_j < n_j}} C_{\vec n, \vec m} E(\vec n, \vec m)
\end{align}
for some explicit constants $C_{\vec{n}, \vec{m}}$, where\footnote{We have omitted the dependence on $\omega$ and $n'$ in the notation $E(\vec{n}, \vec{m})$ as it is implicitly contained in $\vec{n}, \vec{m}$. We have also suppressed the dependence on $N$ for ease of notation.}
\begin{align}
E(\vec n, \vec m) \defeq \i^k \sqrt{N} &\qsumn{\omega} \prod_{j=1}^{\omega} \left( \testfn{q_j}^{n_j}  \testcomp{q_j}^{n_j} \right) \nonumber\\
&\hs{1} \times \psumn{n-n'} \prod_{i=1}^{n-n'} \testfn{p_i} \testcomp{p_i}\left< \lambda_f(NQ)\right>_\ast \label{eq:E with omega}
\end{align}
and
\begin{align}\label{Qdef}
    Q \ :=\   \prod_{j=1}^{\omega} \, q_j^{m_j} \, \cdot\,  \prod_{i=1}^{n-n'} \, p_{i}.
\end{align}
\end{lem}
Lemma \ref{lem:coeffs are 0} is quite involved combinatorially, and we prove it in Appendix \ref{sec:pfs nt lemmas}. In particular, we carefully decompose $S_2^{(n)}(N)$ into sums over distinct primes in order to establish the condition $m_j < n_j$ in \eqref{eq: S2n E}.

Lemma \ref{lem:coeffs are 0} allows us to apply the Petersson trace formula in the following section, as we have expressed $S_2^{(n)}(N)$ in terms of the average of a single Fourier coefficient $\lambda_f(NQ)$, as opposed to the product of Fourier coefficients in the definition \eqref{eq:S2n}. The coefficients $ C_{\vec{n}, \vec{m}}$ are difficult to calculate in general, but as a consequence of Proposition \ref{lem:nonmaintermcalc} we only need to determine them in specific cases (see \eqref{eq: S2n El}).

\subsection{Cleaning with Weil's bound and Prime Number Theorem (PNT)}

The following result states that $E(\vec{n}, \vec{m})$ contributes to $\lim_{\substack{N\to\infty \\ N\, \text{\rm  prime}}}  \ \mathcal{D}^{\pm}_{n}(N; \phi)$ only if ``most'' of the indices satisfy $n_j = m_j$. 

\begin{lem}\label{lem: n' big}
    Suppose $\supp (\hphi) \subset \oneoverna$ for some $a \le n/2$. If $n' \ge a$, then $\Enm = \ONe$.
\end{lem}

\begin{proof}
    Let $\supp (\hphi) \subset (-\sigma, \sigma)$ with $\sigma < 1/(n-a)$. Using Lemma \ref{lem:trivial bound} and the PNT (with partial summation), the sum over $p_1, \ldots, p_{n-n'}$ in \eqref{eq:E with omega} is $\ll  N^{-3/2 + \epsilon} \left( q_1^{m_1} \cdots q_\omega^{ m_\omega} \right)^{1/2} R^{\sigma(n-n')}$. 
    For $1 \le j \le \omega$, we have $n_j - m_j \ge 2$ as $m_j < n_j$ and $m_j \equiv n_j \pmod{2}$.   Using the PNT again for the sums over $q_1, \ldots, q_\omega$ in \eqref{eq:E with omega}, we have $ E(\vec n, \vec m) \ll N^{-1 + \epsilon} R^{\sigma (n-n')} $. This is $\ONe$ if $n' \ge a$.
\end{proof}



We are now in a position to apply the Petersson formula (Lemma \ref{lemcomplsum}) to \eqref{eq:E with omega}. We assume GRH for $L(s,f)$ so that the complementary sum $\Delta_{k,N}^\infty$ does not contribute (see Remark \ref{rem: complementary sum}). Since $|H^{*}_k(N)| \sim N(k-1)/12$ from \eqref{eq:number of terms in hkn}, it follows that 
\begin{align}\label{eq:E Petersson}
E(\vec{n}, \vec{m}) &\ = \ \frac{2^{n+1} \pi}{\sqrt{N}} \qsumn{\omega} \prod_{j=1}^{\omega} \left( \testfn{q_j}  \testcompone{q_j} \right)^{n_j}\psumn{n-n'} \prod_{i=1}^{n-n'} \testfn{p_i} \testcompone{p_i}\nonumber\\
&\hspace{1cm}\times  \msum \, \frac{1}{m} \, \sum_{b=1}^\infty \, \frac{S(m^2, NQ; Nb)}{b}\Jk \left( \frac{4 \pi m \sqrt{Q}}{b \sqrt{N}} \right) \, + \, \ONe,
\end{align}
where $Q$ was defined in (\ref{Qdef}) and we set $Y = N^\epsilon$. Also, recall that $n' = n_1 + \cdots + n_\omega$.

We impose restrictions to the $b$-sum of \eqref{eq:E Petersson} using the following two lemmas. We will also make use of the following bounds from Section \ref{sec:preliminaries section}: 
\begin{align}\label{coarsebdd}
    S(m^2, NQ; Nb) \, \ll_{\epsilon} \, m^2 b^{1/2} (bN)^\epsilon, \hspace{10pt} \Jk(x) \, \ll_{k} \,  x.
\end{align}
\begin{lem} \label{lem: b,N coprime}
    If $\supp(\hat{\phi}) \subseteq \left( -\frac{5}{2(n-n')}, \frac{5}{2(n-n')}\right)$, then the contribution from the terms in \eqref{eq:E Petersson} with $(b,N)>1$  is $\ONe$.
\end{lem}
\begin{proof}
This is a refinement of \cite[Lem. 4.4]{HM}.     Let $\supp (\hphi) \subset (-\sigma, \sigma)$. If the $b$-sum of \eqref{eq:E Petersson} is restricted to $(b, N)>1$, i.e.,  $b = cN$ for some $ c \ge 1$, then such a sum is $\ll N^{-2 + \epsilon} \sqrt{Q}$ using (\ref{coarsebdd}). Combining  this with the PNT and the fact that $n_j - m_j \ge 2$,  we find that the contribution to \eqref{eq:E Petersson} from terms with $(b,N)>1$ is  $\ll N^{-5/2 + \epsilon}R^{(n-n')\sigma}$, which is negligible when $\sigma < \frac{5}{2(n-n')}$ .
\end{proof}

\begin{lem} \label{lem: b > N2006}
    If $\supp(\hat{\phi}) \subset \left( -\frac{1000}{n-n'}, \frac{1000}{n-n'}\right)$, then the contribution  from the terms in \eqref{eq:E Petersson}  with $b \geq N^{2022}$ is $O(N^{-12})$.
\end{lem}

\begin{proof}
This is a refinement of \cite[Lem. 4.5]{HM}. Let $\supp (\hphi) \subset (-\sigma, \sigma)$.  If the $b$-sum of \eqref{eq:E Petersson} is restricted to  $b \ge N^{2022}$, then (\ref{coarsebdd}) implies such a sum is 
    \begin{equation}\label{eq: b > N2006 bsum bound}
    \sum_{b \ge N^{2022}} \frac{S(m^2, NQ; Nb)}{b} \Jk \left( \frac{4\pi m}{b \sqrt{N}} \right)  \ \ll \  N^{-1/2 + \epsilon} \sum_{b \ge N^{2022}} b^{-3/2 + \epsilon} \ \ll \ b^{-1011 -1/2 + \epsilon}. 
    \end{equation}
When $\sigma < \frac{1000}{n-n'}$,  the contribution of \eqref{eq:E Petersson} with $b\ge N^{2022}$ is  $\ll N^{-1012 + \epsilon}R^{(n-n')\sigma} \ll N^{-12}$ using 
 \eqref{eq: b > N2006 bsum bound}, the PNT, and the fact that $n_j - m_j \ge 2$.  This completes the proof. 

\end{proof}

\subsection{Expanding the Kloosterman sums}

Suppose $\supp(\hphi) \subset \twoovern$ for $n\ge 2$. In particular, the assumptions of Lemma \ref{lem: b,N coprime} and \ref{lem: b > N2006} are satisfied and we may impose the relevant restrictions on the $b$-sum of \eqref{eq:E Petersson}. Also, because $R = k^2N$ and $\supp(\hphi)  \subset (-1, 1)$, the conditions $q_j \nmid N$, $p_i \nmid N$ in \eqref{eq:E Petersson} are automatically satisfied provided the primes $N$  are \emph{sufficiently large}, and thus they will be dropped subsequently. Now,  Lemma \ref{lem:genkloos2} allows us to convert the Kloosterman sums in \eqref{eq:E Petersson}  into Gauss sums. We thus obtain
\begin{align}
E&(\vec{n}, \vec{m}) \ =\  -\frac{2^{n+1} \pi}{\sqrt{N}} \qsumsimp{\omega} \prod_{j=1}^{\omega} \left( \testfn{q_j}  \testcompone{q_j} \right)^{n_j} \psumsimp{n-n'} \prod_{i=1}^{n-n'} \testfn{p_i} \testcompone{p_i} \nonumber \\
&\times  \msum \frac{1}{m} \bsum{N} \frac{1}{b\varphi(b)} \sum_{\chi (b)} G_\chi (m^2) G_\chi((Q, b^\infty)) \overline{\chi} \left(\frac{Q}{(Q, b^\infty)}\right) \chi(N) \Jk \left( \frac{4 \pi m \sqrt{Q}}{b \sqrt{N}} \right) + \ONe. \label{eq:E Gauss}
\end{align}
Denote by $E(\vec{n}, \vec{m})|_{\chi\neq \chi_{0}}$ the expression \eqref{eq:E Gauss} but with an extra restriction $\chi \neq \chi_{0}$ in the sum over $\chi (b)$.  The following lemma shows that $E(\vec{n}, \vec{m})|_{\chi\neq \chi_{0}}$ contributes negligibly as $N\to \infty$.
The shape of the expansion \eqref{eqn KlGaexp} allows us to capture cancellations in two different ways: first, from the sum over $\chi (b)$ via 
\begin{equation}\label{eqn Gaussorth}
    \frac{1}{\varphi(b)}\,  \sum_{\chi (b)} \ \left|G_\chi(x) G_\chi(y) \right| \ \ll \ b,
\end{equation}
and second, the character sums over primes (see Lemma \ref{lem:expsum}). The former follows simply from Cauchy-Schwarz's inequality and the orthogonality of characters, and the uniformity of \eqref{eqn Gaussorth} in $x,y$ is important.  For the latter,   we crucially make use of GRH for the Dirichlet $L$-functions,  and of course the restriction $\supp(\hphi) \subset \twoovern$.  Additionally, the following proposition corrects an error made in  \cite[Lem. 4.7]{HM}. 

\begin{lem}\label{lem:char}
    Assume GRH for the Dirichlet $L$-functions. If $\supp(\hphi) \subset \twoovern$, then
    \begin{equation}
        E(\vec{n}, \vec{m})|_{\chi\neq \chi_{0}} = \ONe.
    \end{equation}
\end{lem}
\begin{proof}

Upon rearranging the sums and taking absolute values, we find $E(\vec{n}, \vec{m})|_{\chi\neq \chi_{0}}$ is bounded by
\begin{align}
\max_{0\le \alpha\le n-n'} \,   \frac{1}{\sqrt{N}} \sum_{\substack{b<N^{2022}\\ (b,N)=1}}  & \, \frac{1}{b} \, \qsumsimp{\omega} \prod_{j=1}^{\omega} \left( \left|\testfn{q_j}  \right|\testcompone{q_j} \right)^{n_j} \sum_{\substack{ p_{1}\mid b, \, \ldots, \, p_{\alpha} \mid b }} \, \prod_{i=1}^{\alpha} \,  \left|\testfn{p_i} \right|\testcompone{p_i}\nonumber \\
&\times  \msum \, \frac{1}{m}\, \cdot \,  \frac{1}{\varphi(b)} \,    \sum_{\substack{\chi (b) \\ \chi \ne \chi_0}}\,   \left|G_\chi (m^2) G_\chi\left( (Q, b^\infty)\right)\right| \nonumber \\
& \times  \left| \, \sum_{\substack{ p_{\alpha+1}\nmid b,\,  \ldots, \, p_{n-n'}\nmid b }}  \, \prod_{\ell =\alpha+1}^{n-n'} \,  \testfn{p_\ell} \frac{\overline{\chi}(p_\ell)\log p_\ell }{\sqrt{p_\ell} \log R} \Jk \left( \frac{4 \pi m \sqrt{Q}}{b \sqrt{N}} \right)\right|.  \label{eq:E Gauss char subterm}
\end{align}
We estimate the sum over $p_{\alpha+1}, \ldots, p_{n-n'}$ in \eqref{eq:E Gauss char subterm} with Lemma \ref{lem:expsum}.  To separate variables, we plug in \eqref{eq: Jk inverse Mellin} with $s = -1 + \epsilon+it$. Interchanging the order of sums and integrals and taking absolute values, we have that  
\begin{align}\label{eq: T factored}
    &\sum_{\substack{ p_{\alpha+1}\nmid b,\,  \ldots, \, p_{n-n'}\nmid b }}  \, \prod_{\ell =\alpha+1}^{n-n'} \,  \testfn{p_\ell} \frac{\overline{\chi}(p_\ell)\log p_\ell }{\sqrt{p_\ell} \log R} \Jk \left( \frac{4 \pi m \sqrt{Q}}{b \sqrt{N}} \right) \nn
    &\hs{1} \ll
     \intii \left( \frac{4 \pi m \sqrt{Q'}}{b \sqrt{N}} \right)^{1 - \epsilon}  \left|G_{k-1}(1 - \epsilon + it)\right|  \left|\sum_{p\,\nmid\, b} \,  \testfn{p} \frac{ \overline{\chi}(p)\log p}{p^{(\epsilon + it)/2} \log R} \right|^{n-n' - \alpha} \d t,
\end{align}
where 
\begin{align}\label{eqn Q'expre}
    Q' \ := \   \prod_{j=1}^{\omega} \, q_j^{m_j} \,\cdot \, \prod_{i=1}^{\alpha} \, p_{i}
\end{align}
and $G_{k-1}(\,\cdot\, )$ was given by \eqref{eq: Jk Mellin}.
 Suppose $\supp(\hphi) \subset (-\sigma, \sigma)$.    By  Lemma \ref{lem:expsum} (with partial summation) and the fact that
 \begin{align}\label{eqn simpdivbd}
     \sum_{p\, \mid\, b} \, 
     \frac{ \log p}{ \log R} \  \le \ \frac{\log b}{\log R},
 \end{align}
 we find 
\begin{equation}\label{eqn strongbdd}
    \sum_{p\, \nmid\, b} \testfn{p} \frac{ \overline{\chi}(p)\log p}{p^{(\epsilon + it)/2} \log R} \ \ll \ R^{\sigma/2} (Rbt)^{\epsilon}.
\end{equation}
By the Stirling  formula $ |\Gamma(x+iy)|  \asymp_{x}  (1+|y|)^{x-1/2} e^{-\frac{\pi}{2}|y|}$,
which holds for any $x\in \R- \{0,-1,-2,\ldots\}$ and $y\in \R$ (see \cite[eq. 5.113]{iwaniec_kowalski_2004}), we have 
\begin{align}\label{eqn stircorr}
    |G_{k-1}(1- \epsilon + it)| \ \ll_{k,\epsilon} \  (1 + |t|)^{-2 +\epsilon}
\end{align}
for any $t\in\R$.  Applying \eqref{eqn strongbdd} and \eqref{eqn stircorr} to  \eqref{eq: T factored}, we have
\begin{equation}\label{eq: T bound}
    \sum_{\substack{ p_{\alpha+1}\nmid b,\,  \ldots, \, p_{n-n'}\nmid b }}  \, \prod_{\ell =\alpha+1}^{n-n'} \,  \testfn{p_\ell} \frac{\overline{\chi}(p_\ell)\log p_\ell }{\sqrt{p_\ell} \log R} \Jk \left( \frac{4 \pi m \sqrt{Q}}{b \sqrt{N}} \right) \ll \left(\frac{m \sqrt{Q'}}{b \sqrt{N}}\right)^{1- \epsilon} R^{\sigma (n-n') /2} (Rb)^{\epsilon}.
\end{equation}
From \eqref{eq:E Gauss char subterm}, \eqref{eqn Q'expre}, and \eqref{eq: T bound}, we have that $ E(\vec{n}, \vec{m})|_{\chi\neq \chi_{0}}  $ is bounded by
\begin{align}
\max_{0\le \alpha\le n-n'} \   \frac{R^{\epsilon + \sigma (n-n'-\alpha) /2} }{N} \sum_{\substack{b<N^{2022}}}  & \, \frac{1}{b^2} \, \sum_{\substack{q_{1}, \ldots, q_{\omega} < R^{\sigma}}} \prod_{j=1}^{\omega} \left( \testcompone{q_j} \right)^{n_j} q_{j}^{m_{j}/2} \, \sum_{\substack{ p_{1}\mid b, \, \ldots, \, p_{\alpha} \mid b }} \, \prod_{i=1}^{\alpha} \,  \,  \frac{\log p_{i}}{\log R} \nonumber \\
&\times  \msum \, m^{-\epsilon}\, \cdot \,  \frac{1}{\varphi(b)} \,    \sum_{\substack{\chi (b) \\ \chi \ne \chi_0}}\,   \left|G_\chi (m^2) G_\chi\left( (Q, b^{\infty})\right)\right|  
\end{align}
Apply  \eqref{eqn Gaussorth}, $\sum\limits_{m\le N^{\epsilon}}m ^{-\epsilon} \ll R^{\epsilon}$, and \eqref{eqn simpdivbd} in sequence, the expression above is further bounded by
\begin{align}
  \max_{0\le \alpha\le n-n'} \   \frac{R^{\epsilon + \sigma (n-n'-\alpha) /2} }{N}  &  \,   \sum_{\substack{q_{1}, \ldots, q_{\omega} < R^{\sigma}}} \prod_{j=1}^{\omega} \left( \testcompone{q_j} \right)^{n_j} q_{j}^{m_{j}/2} \, \sum_{\substack{b<N^{2022}}}  \, \frac{1}{b} \left(\frac{\log b}{\log R}\right)^{n-n'}.
\end{align}
The contribution of the $b$-sum is $O(R^{\epsilon})$. Since $n_{j}-m_{j}\ge 2 $, it follows from the PNT that
\begin{align}
  E(\vec{n}, \vec{m})|_{\chi\neq \chi_{0}}  \ \ll \ N^{\epsilon} N^{\sigma(n-n')/2 -1},
\end{align}
which is negligible if $\sigma < 2/(n-n')$.  The result follows.
\end{proof}

We apply Lemma~\ref{lem:char} to \eqref{eq:E Gauss}. This leaves only the contribution from $\chi_0 \, (\bmod\, b)$  for each $b< N^{2022}$ and $(b,N)=1$. Note that $G_{\chi_0}(x) = R(x, b)$ is the Ramanujan sum and $\chi_0(N) = 1$. Hence, we have under GRH that
\begin{align}\label{eq:E Ram}
E(\vec{n}, \vec{m}) \ &\defeq\ -\frac{2^{n+1} \pi}{\sqrt{N}} \qsumsimp{\omega} \prod_{j=1}^{\omega} \left( \testfn{q_j}  \testcompone{q_j} \right)^{n_j} \psumsimp{n-n'} \prod_{i=1}^{n-n'} \testfn{p_i} \testcompone{p_i}\nn 
& \hs{1} \times  \msum \frac{1}{m} \bsum{N} \frac{R(m^2, b) R((Q, b^\infty), b) }{b\varphi(b)} \chi_0 \left(\frac{Q}{(Q, b^\infty)}\right) \Jk \left( \frac{4 \pi m \sqrt{Q}}{b \sqrt{N}} \right), \nn
& \hs{1} + \ONe,
\end{align}
provided $\supp(\hphi) \subset \twoovern$.

We complete the calculation of $E(\vec{n}, \vec{m})$ with the following two propositions, which we prove in Section \ref{sec:main term}.

\begin{prop}\label{lem:nonmaintermcalc}
Let $E(\vec n, \vec m)$ be as in \eqref{eq:E with omega} and suppose that there exists some $1 \le j \le \omega$ for which $n_j + m_j > 2$. Under the same assumptions of Theorem \ref{thm: KS density main}, if $\supp (\hphi) \subset \twoovern$ with $n\ge 2$, then $\Enm = O(1/\log N)$. 
\end{prop}
Proposition \ref{lem:nonmaintermcalc} shows that many of the terms in the expansion \eqref{eq: S2n E} do not contribute in the limit. We complete the proof of Proposition \ref{lem:nonmaintermcalc} in Section~\ref{sec: vanishing}. 

\begin{prop}\label{lem:maintermcalc}
Let $E(\vec n, \vec m)$ be as in \eqref{eq:E with omega} and suppose $\omega = n' = 0$. Under the same assumptions of Theorem \ref{thm: KS density main}, if $\supp (\hphi) \subset \oneoverna $ with $1\le a \le n/2$, then
\begin{align}
    \Enm 
    \ &=\  (-1)^{n+1} \mathcal{R}(n, a; \phi) + \converrorterm , \label{eq:maintermcalc}
\end{align}
where $R(n, a; \phi)$ was defined  in \eqref{eq: R def}.

\end{prop}
We complete the proof of Proposition \ref{lem:maintermcalc} in Section \ref{sec: main term combo}.
\subsection{Concluding the proof of Theorem \ref{thm:lfnsn-2}}\label{sec:combining}

We will now demonstrate how to use Propositions \ref{lem:nonmaintermcalc} and \ref{lem:maintermcalc} to evaluate $S_2^{(n)}(N)$.

\begin{prop}\label{lem: S2n eval}
    Let $S(n, a; \phi)$ be defined in \eqref{eq: Snaphi def}.  Under the same assumptions of Theorem \ref{thm: KS density main}, if $\supp (\hphi) \subset \oneoverna $ with $1\le a\le n/2$, then
    \begin{equation}\label{eq: S2n final}
     S_2^{(n)}(N) \ =\ (-1)^{n+1} S(n, a; \phi) + \converrorterm.
 \end{equation}
\end{prop}
\begin{proof}
 By Lemma \ref{lem:coeffs are 0} and Proposition~\ref{lem:nonmaintermcalc}, $S_2^{(n)}(N)$ is a sum of terms of the form $E(\vec n, \vec m)$ with $n_j + m_j \le 2$ for each $j$  up to  an error of $O(1/\log N)$. Since $n_j \equiv m_j \pmod 2$, then $n_j = m_j = 1$ or $n_j = 2$ and $m_j = 0$ for each $j$. Let $E_{\ell}(N)$ denote the term $\Enm$ in which $n_j = 2$ and $m_j = 0$ for exactly $\ell$ values of $j$. We have that
 \begin{align}
E_\ell(N) &\ =\  \qsumn{\ell} \prod_{j=1}^{\ell} \left( \testfn{q_j}^2  \testcomp{q_j}^2 \right) \nonumber \\
&\hspace{1cm}\times \i^k \sqrt{N} \psumn{n-2\ell} \prod_{i=1}^{n-2\ell} \testfn{p_i} \testcomp{p_i}\left< \lambda_f(Np_1\cdots p_{n-2\ell})\right>_\ast . \label{eq: El expanded} \end{align}
By Lemma~\ref{lem: n' big}, the contribution to $S_2^{(n)}(N)$ from $E_{\ell}(N)$ with $\ell \ge a/2$ is negligible. We thus have 
 \begin{equation}\label{eq: S2n El}
     S_2^{(n)}(N) \ =\  \sum_{\ell = 0}^{\lfloor \frac{a-1}{2} \rfloor} \frac{n!}{2^{\ell} (n - 2\ell)! \ell! } E_\ell(N) \ + \  O(1/\log N).
 \end{equation}
 The combinatorial factor $\frac{n!}{2^{\ell} (n - 2\ell)! \ell! }$ arises from choosing the indices of the primes for which $n_j = m_j = 1$, or $n_j = 2$ and $m_j = 0$. We choose the primes for which $n_j = m_j = 1$ in $\binom{n}{2\ell}$ ways, and put the remaining primes into pairs in $(2\ell-1)!! = (2\ell)!/(\ell! 2^{\ell})$ ways. Multiplying and simplifying gives the desired combinatorial coefficient. By Proposition \ref{lem:maintermcalc}, we have that
 \begin{align}\label{eq: maintermcalc applied}
     &\i^k \sqrt{N} \psumn{n-2\ell} \prod_{i=1}^{n-2\ell} \testfn{p_i} \testcomp{p_i}\left< \lambda_f(Np_1\cdots p_{n-2\ell})\right>_\ast\nn
     &\hs{1}= (-1)^{n+1} \mathcal{R}(n - 2\ell, a - 2\ell; \phi) + \converrorterm.
 \end{align}
Applying \eqref{eq: maintermcalc applied} to \eqref{eq: El expanded} and factoring the sums over $q_j$ gives
\begin{equation}\label{eq: El factored}
    E_\ell(N) \ =\  (-1)^{n+1} \left[\mathcal{R}(n - 2\ell, a- 2\ell; \phi) + \converrorterm \right] \left[ \sum_{q \notdiv N  }   \testfn{q}^2  \frac{4 \log^2 q}{q \log^2 R}  \right]^{\ell}.
\end{equation}
A standard partial summation argument gives
\begin{equation}\label{eq: primes to sigma}
    \sum_{q \notdiv N  } \testfn{q}^2  \frac{4 \log^2 q}{q \log^2 R} \ \sim \  2 \intii |y| \hphi(y)^2 dy, 
\end{equation}
which is precisely the quantity $\sigma^2_\phi$ given by \eqref{eq:fullNT_var}. Applying \eqref{eq: primes to sigma} to \eqref{eq: El factored},  we have that 
\begin{equation}
E_\ell(N) \  =\  (-1)^{n+1} \left( \sigma^2_\phi \right)^\ell \mathcal{R}(n - 2\ell, a- 2\ell; \phi) + \converrorterm .
\end{equation}
The proposition follows readily upon applying this to \eqref{eq: S2n El}
 and comparing with \eqref{eq: Snaphi def}.  
 \end{proof}

\begin{proof}[Proof of Theorem \ref{thm:lfnsn-2}]
The proof follows immediately from
Proposition \ref{lem: S2n eval},  \eqref{eq:nth_mmt_in_terms_of_S}, and \eqref{eq:S1nresult}. 
\end{proof}



\section{Proof of Propositions \ref{lem:nonmaintermcalc} \& \ref{lem:maintermcalc} }\label{sec:main term}  

Throughout Section \ref{sec:main term}, we assume $\supp(\hphi) \subset \oneoverna$ with $1\le a\le n/2$ and we further analyze the expression \eqref{eq:E Ram} for $E(\vec{n}, \vec{m})$.

In Section~\ref{sec: sums to integrals},  we rewrite the sums over primes in \eqref{eq:E Ram} into a more analytically tractable expression. The key quantity to be considered is 
\be\label{eq:MTBalpha}
    B(\alpha) \ \defeq \ 
    \rootn \sum_{p_{\alpha+1}, \ldots, p_n} J_{k-1} \left(\frac{4\pi m \sqrt{c \prod\limits_{i=\alpha+1}^n \, p_{i}}}{b \sqrt{N }} \right)\, \prod_{i=\alpha+1}^n \hat{\phi}\left(\frac{\log p_i}{\log R} \right) \frac{\chi_0(p_i)\log p_i}{p_i^{1/2} \log R},
\ee
where the dependencies on $N, m, c,b$ are conveniently suppressed in the notation $B(\alpha) $.  We also set  $\Phi_{r}(x) :=\phi(x)^r$ for $r\ge 0$.  The main result is the following.
\begin{prop}\label{lem:sums to integrals}
 Suppose $\supp(\hphi) \subset \oneoverna$ with $1\le a \le n/2$. Under RH for $\zeta(s)$, we have
    \begin{align}
        B(\alpha)\ &= \ \frac{b}{2\pi m \sqrt{c}  }\sum_{\delta = 0}^{a - \alpha - 1} \binom{n - \alpha}{\delta} \sum_{i=0}^{a - \alpha - \delta - 1} (-1)^i \binom{n - \alpha - \delta}{i} \psumsimp{\delta} \prod_{j=1}^{\delta} \hat{\phi}\left(\frac{ \log p_j}{\log R} \right) \frac{\chi_0(p_j) \log p_j}{p_j \log R}  \nonumber \\
        &\hspace{1cm} \times\int_{x=0}^{\infty} J_{k-1}(x) \widehat{\Phi_{n-\alpha- \delta}} \left( \frac{2\log\left(bx\sqrt{N/(c\prod\limits_{j=1}^{\delta} \, p_{j})} /4\pi m\right)}{\log R} \right) \frac{dx}{\log R} \ + \ \ONe.  \label{eq:BtoFexpanded}
    \end{align}
    where the implicit constant does not depend on $N,m,c,b,\alpha$. 
\end{prop}

The evaluation of $B(\alpha)$ requires a delicate inclusion-exclusion argument to convert the sums over primes of \eqref{eq:MTBalpha} into  sums over integers which appears on the left side of \eqref{eq:sumtoint}. This is followed by Lemma \ref{lem:Triant3.6general} which expresses the sums over integers into integrals via a standard contour-shifting argument (assuming RH). Then the proof of Proposition \ref{lem:sums to integrals} is completed by further combinatorial simplifications. The final step is crucial for matching the calculations with those from the random matrix theory (see Section \ref{sec:rmt calc}).

One must also evaluate the convolution sum of \emph{Ramanujan sums} in \eqref{eq:E Ram}. This task will be carried out in Section \ref{Ramsumco} and the main result is stated as follows. 

\begin{prop}\label{lem:ils7}
    Let $\phi \in \mathcal{S}_{ec}(\R)$. Then as $N\to\infty$, we have 
    \begin{align}
        &\sum_{(b,M) = 1} \frac{R(1,b) R(m^2,b)}{ \varphi(b)} \int_{0}^{\infty} J_{k-1}(y) \hat{\phi} \left( \frac{2 \log(by \sqrt{Q}/4 \pi m)}{\log R} \right) \frac{\d y}{ \log R}\nonumber \\
        = \ & \delta \left(\frac{m}{(m,M^\infty)}, 1\right) \frac{\varphi(M)}{M}\left(-\frac{1}{2}\int_{-\infty}^{\infty}\phi(x)\sin\left(2\pi x\frac{\log(k^2Q/16\pi^2m^2)}{\log R}\right)\frac{\d x}{2\pi x}
        \, + \, \frac{1}{4}\phi(0)\right)\nonumber \\ &  +  O\left(m^{\epsilon} \frac{(\log \log M)^{2}}{(\log R)^{1/2}} \right) \label{eq:ils7}
    \end{align}
    uniformly for $m, M, Q\ge 1$. 
    
\end{prop}

The contents of Sections \ref{sec: sums to integrals} and \ref{Ramsumco} are independent of each other. The proofs of Propositions \ref{lem:nonmaintermcalc} and \ref{lem:maintermcalc} crucially rely on Propositions \ref{lem:sums to integrals} and \ref{lem:ils7}. They are the subjects of Section \ref{sec: vanishing} and Section \ref{sec:maintermlaststep}-\ref{sec: main term combo} respectively. It will be essential to break up the Ramanujan sums judiciously using the multiplicativity \eqref{eqn Ramult} and perform a prime-by-prime analysis with the exact evaluation \eqref{eq:vonsterneck}. In Section~\ref{sec:maintermlaststep}, we managed to find an integral representation for $\Enm$. The proof of Proposition~\ref{lem:maintermcalc} follows from the combinatorial simplification in Section~\ref{sec: main term combo}.

\subsection{Sums over primes: Proof of Proposition \ref{lem:sums to integrals}}\label{sec: sums to integrals}

This subsection is dedicated to proving Proposition \ref{lem:sums to integrals}. We begin by defining the following quantities:
\begin{align}
    C'(\alpha, \beta) &\ \defeq \ \rootn \sum_{p_{\alpha+1}, \ldots, p_n}\sum_{t_{\alpha+1}, \ldots, t_{\alpha+ \beta} = 2}^\infty  J_{k-1} \left(\frac{4\pi m \sqrt{c \prod\limits_{i=\alpha + \beta+1}^n p_{i}}}{b \sqrt{N/ \prod\limits_{j=\alpha+1}^{\alpha + \beta} \, p_{j}^{t_{j}}}} \right) \nonumber \\
    &\hspace{1cm}\times  \prod_{j=\alpha+1}^{\alpha + \beta} \hat{\phi}\left(\frac{t_j \log p_j}{\log R} \right) \frac{\chi_0(p_j) \log p_j}{p_j^{t_j/2} \log R}  \, \prod_{i=\alpha + \beta+1}^n \hat{\phi}\left(\frac{\log p_i}{\log R} \right) \frac{\chi_0(p_i)\log p_i}{p_i^{1/2} \log R},\label{eq:MTCprime}
\end{align}
and
\begin{align}
    C(\alpha, \beta) \ &\defeq \ \rootn \sum_{p_{\alpha+1}, \ldots, p_{\alpha+ \beta}} \, \sum_{t_{\alpha+1}, \ldots, t_{\alpha+ \beta} = 2}^\infty \, \sum_{v_{\alpha+\beta+1}, \ldots, v_n = 1}^\infty  J_{k-1} \left( \frac{4\pi m \sqrt{c \prod\limits_{i=\alpha + \beta+1}^n v_{i} }}{b \sqrt{N/  \prod\limits_{j=\alpha+1}^{\alpha + \beta} \, p_{j}^{t_j} }} \right) \nonumber\\
    &\hspace{1cm}\times \prod_{j=\alpha+1}^{\alpha + \beta} \hat{\phi}\left(\frac{t_j \log p_j}{\log R} \right) \frac{\chi_0(p_j) \log p_j}{p_j^{t_j/2} \log R}  \,  \prod_{i=\alpha + \beta+1}^n \hat{\phi}\left(\frac{\log v_i}{\log R} \right) \frac{\chi_0(v_i)\Lambda (v_i)}{v_i^{1/2} \log R}.\label{eq:MTC}
    \end{align}
    
Observe that $C'(\alpha, \beta)$ is a generalized version of $B(\alpha)$ (see \eqref{eq:MTBalpha}) as  $B(\alpha)= C'(\alpha, 0)$. Also, the expression $C(\alpha, \beta)$ is obtained from  $C'(\alpha,\beta)$ by replacing the sums over primes $p_{j}$'s, where $\alpha+\beta+1\le j \le n$,  by sums over positive integers $v_{j}$'s, as well as the weight $\log p_{j}$ is replaced by  von Mangoldt's function $\Lambda(v_{j})$. We have the following relation between $C'(\alpha, \beta)$ and $C(\alpha, \beta)$.

\begin{propt}\label{propt:CCprime}
We have that
    \be \label{eq:CCprime}
         C'(\alpha, \beta)\ = \ C(\alpha, \beta) - \sum_{i = 1}^{a- 1 -\alpha - \beta} \binom{n-\alpha-\beta}{i} C'(\alpha, \beta + i) \ + \  O(N^{-\epsilon}).
    \ee
\end{propt}

\begin{proof}
     The property follows directly from the definition of von Mangoldt's function and a partitioning argument. The sum over $i$ in \eqref{eq:CCprime} is restricted to  $i\le a - 1 -\alpha -\beta$ because 
     the contribution from  $i > a-1 - \alpha -\beta$  is $\ONe$, which follows readily from  $\Jk(x) \ll 1$. 
\end{proof}

 Property~\ref{propt:CCprime} can be applied repeatedly to obtain a relation between \eqref{eq:MTBalpha} and \eqref{eq:MTC}. 

\begin{propt}\label{propt:BtoC}

We have that
    \be\label{eq:BtoC}
        B(\alpha)\ = \ \sum_{i = 0}^{a-1-\alpha} \binom{n-\alpha}{i} C(\alpha, i) (-1)^i \ + \  O(N^{-\epsilon}).
    \ee
\end{propt}

\begin{proof}
    Define
    \begin{equation}
        B'(\alpha, \eta) \ \defeq \ \sum_{i = 0}^{\eta} \, (-1)^i\binom{n-\alpha}{i} C(\alpha, i)  - \sum_{i = \eta+1}^{a- 1 - \alpha} \binom{n-\alpha}{i} C'(\alpha, i)\sum_{j = 0}^{\eta} (-1)^j \binom{i}{j} + \ONe.
    \end{equation}
    We claim that $B'(\alpha, \eta) = B(\alpha)$ and we proceed by induction on $\eta$. The base case $\eta=0$ holds by Property~\ref{propt:CCprime} and the fact that $B(\alpha) = C'(\alpha, 0)$. For the inductive step,  assume that $B'(\alpha, k) = B(\alpha)$ for some integer $k\ge 0$. This implies 
    \be\label{eq:hypo}
        \begin{split}
            B(\alpha) 
            \ &= \ \sum_{i = 0}^{k} \binom{n-\alpha}{i} C(\alpha, i) (-1)^i - \sum_{i = k+1}^{a- 1 - \alpha} \binom{n-\alpha}{i} C'(\alpha, i)\sum_{j = 0}^{k} (-1)^j \binom{i}{j} + O(N^{-\epsilon}).
        \end{split}
    \ee
    
    We examine the term where $i = k+1$ and we have
    \be
        -\binom{n-\alpha}{k+1} C'(\alpha, k+1)\sum_{j = 0}^{k} (-1)^j \binom{k+1}{j}\ = \ \binom{n-\alpha}{k+1}(-1)^{k+1} C'(\alpha, k+1).
    \ee
    This follows from the identity  $\sum_{j = 0}^{k+1} (-1)^j \binom{k+1}{j} = 0$, which is an easy consequence of the Binomial Theorem.  It follows from  Property~\ref{propt:CCprime} and re-indexing the sum using the change of variables $\ell = k+1+ j$ that
    \begin{align}
            &\binom{n-\alpha}{k+1}(-1)^{k+1} C'(\alpha, k+1) \nonumber\\
            &= \ \binom{n-\alpha}{k+1} (-1)^{ k + 1} \left[C(\alpha, \eta + 1) -  \sum_{j = 1}^{a -\alpha - k} \binom{n- \alpha - k - 1}{j} C'(\alpha,  k +1 + j)\right] + \ONe \nonumber \\
            &= \ \binom{n - \alpha}{k+1}(-1)^{k+1} C(\alpha, k+1) -  \sum_{j = 1}^{a -\alpha - k}\binom{n-\alpha}{k + 1 + j} \binom{k + 1 + j}{k +1} (-1)^{k+1}  C'(\alpha, k+1 + j) + O(N^{-\epsilon}) \nonumber \\
            &= \ \binom{n-\alpha}{k+1}(-1)^{k+1} C(\alpha, k+1) -  \sum_{\ell = k+2}^{a -\alpha - 1}\binom{n-\alpha}{\ell} \binom{\ell}{k + 1} (-1)^{k+1}  C'(\alpha, \ell) + O(N^{-\epsilon}).
        \end{align}
    Substituting the last equality into \eqref{eq:hypo}, we have  $B(\alpha) = B'(\alpha, k+1)$ and this completes the induction. 
\end{proof}

The innermost sums of \eqref{eq:MTC} can be rewritten as follows. 

\begin{lem}\label{lem:Triant3.6general}
        Under RH for $\zeta(s),$ if $\supp(\hat{\phi}) \subset \left( -\frac{1}{n-a}, \frac{1}{n-a}\right)$, then 
    \begin{align} 
        & \sum_{v_1, \ldots, v_{n- \eta}} \left[ \prod_{i=1}^{n- \eta} \testfn{v_i} \testcompvm{v_i} \right] J_{k-1} \left( \frac{ 4\pi m \sqrt {c v_1 \cdots v_{n- \eta}}}{b \sqrt{N}} \right)\nonumber \\
        & = \ \frac{b \sqrt{N} }{2 \pi m \sqrt{c} } \sum_{{\gamma}=0}^{a - \eta - 1} \sum_{j={\gamma}}^{a - \eta - 1} (-1)^{j-{\gamma}} \binom{n-\eta}{j} \binom{j}{{\gamma}}   \sum_{v_1, \ldots, v_\gamma=1}^\infty \left[ \prod_{i=1}^{{\gamma}} \hat{\phi} \left( \frac{\log v_i}{\log  R}\right) \frac{\chi_0 (v_i) \Lambda(v_i)}{v_i \log R} \right] \nonumber \\
        &\hspace{1cm} \times \int_{x=0}^{\infty} J_{k-1} (x) \widehat{\Phi_{n- \eta -{\gamma}}} \left( \frac{2 \log(b x \sqrt{N / (c v_1\cdots v_\gamma )}/ 4 \pi m)}{\log R} \right) \frac{dx}{\log R} \, + \, \ONhalfe, \label{eq:sumtoint}
    \end{align}
    where the implicit constant does not depend on $N, m, c, b$. 
\end{lem}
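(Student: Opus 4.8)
The plan is to convert the $n-\eta$ sums over integers on the left-hand side of \eqref{eq:sumtoint} into integrals one variable at a time, in close analogy with the passage from $B(\alpha)$ to $C(\alpha,\beta)$ carried out in Properties~\ref{propt:CCprime} and \ref{propt:BtoC}. The analytic input is partial summation together with RH for $\zeta(s)$: setting $\psi_{\chi_0}(x) = \sum_{v\le x}\chi_0(v)\Lambda(v)$, deleting the finitely many Euler factors at primes dividing $b$ changes Chebyshev's function by only $O(\log b\,\log x)$, so RH gives $\psi_{\chi_0}(x) = x + O(x^{1/2}\log^2(2bx))$. Fixing all but one of the variables, say $v$, Abel summation turns $\sum_{v}\hphi\!\left(\frac{\log v}{\log R}\right)\frac{\chi_0(v)\Lambda(v)}{v^{1/2}\log R}\,g(v)$ into $\frac{1}{\log R}\int_1^\infty \hphi\!\left(\frac{\log x}{\log R}\right)\frac{g(x)}{x^{1/2}}\,dx$ plus a boundary term and an integral, against $\psi_{\chi_0}(x)-x$, of derivatives of the integrand. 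The support hypothesis $\supp(\hphi)\subset\left(-\frac{1}{n-a},\frac{1}{n-a}\right)$ restricts the relevant range to $x\ll R^{1/(n-a)}$; since here $g$ is itself a Bessel function of the product of the variables, the bounds on $J_{k-1}$ from Lemma~\ref{lem:Bessel} control $g$ and its derivative, and combined with the RH bound this makes the error $O(N^{-1/2+\epsilon})$ once summed over the outer variables $m$ and $b$ (the latter bounded by $N^{2022}$ by Lemma~\ref{lem: b > N2006}).

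Next I would iterate this conversion, exactly paralleling the induction in the proof of Property~\ref{propt:BtoC}. Each application trades one sum for the integral above, so after several steps one has a main term in which a prescribed subset of the variables has been integrated out, together with correction terms of the same shape that push the conversion further, plus negligible error; any term in which too many variables remain under summation is $O(N^{-1/2+\epsilon})$ by the trivial bound $J_{k-1}(x)\ll 1$ and the support restriction, which is why the inner sum over $j$ in \eqref{eq:sumtoint} is truncated at $a-\eta-1$. Collecting the surviving contributions and using $\binom{n-\eta}{j}\binom{j}{\gamma}=\binom{n-\eta}{\gamma}\binom{n-\eta-\gamma}{j-\gamma}$, the bookkeeping organizes into a choice of the $\gamma$ variables kept as sums (in $\binom{n-\eta}{\gamma}$ ways) together with the truncated alternating binomial $\sum_{i=0}^{a-\eta-1-\gamma}(-1)^i\binom{n-\eta-\gamma}{i}$ controlling the inclusion--exclusion among the remaining ones, which is precisely the double sum on the right of \eqref{eq:sumtoint}.

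Third, for each fixed $\gamma$ I would collapse the resulting $(n-\eta-\gamma)$-fold integral into the single integral $\int_0^\infty J_{k-1}(x)\,\widehat{\Phi_{n-\eta-\gamma}}\!\left(\frac{2\log(bx\sqrt{N/(cv_1\cdots v_\gamma)}/4\pi m)}{\log R}\right)\frac{dx}{\log R}$. Substituting $x_i=R^{u_i}$ for each converted variable sends $\frac{dx_i}{x_i^{1/2}\log R}$ to $R^{u_i/2}\,du_i$ and $\hphi\!\left(\frac{\log x_i}{\log R}\right)$ to $\hphi(u_i)$, while the Bessel argument $\frac{4\pi m\sqrt{c\,v_1\cdots v_\gamma\,x_1\cdots x_{n-\eta-\gamma}}}{b\sqrt N}$ depends on the $x_i$ only through $\sum_i u_i$; trading the variable $u_{n-\eta-\gamma}$ for this Bessel argument $x$ then extracts the prefactor $\frac{b\sqrt N}{2\pi m\sqrt c}$, with the leftover $(v_1\cdots v_\gamma)^{-1/2}$ absorbed into the surviving weights so that $v_i^{-1/2}$ becomes $v_i^{-1}$ as in \eqref{eq:sumtoint}, and leaves an iterated integral of $\hphi(u_1)\cdots\hphi(u_{n-\eta-\gamma-1})\,\hphi\!\left(\frac{2\log(bx\sqrt{N/(cv_1\cdots v_\gamma)}/4\pi m)}{\log R}-\sum_{i<n-\eta-\gamma}u_i\right)$ against $J_{k-1}(x)$. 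Recognizing the $u$-integral as the $(n-\eta-\gamma)$-fold convolution of $\hphi$ with itself, which equals $\widehat{\phi^{\,n-\eta-\gamma}}=\widehat{\Phi_{n-\eta-\gamma}}$, gives the claimed integrand, and the lower limit of the $x$-integral is extended down to $0$ at a cost of $O(N^{-1/2+\epsilon})$ using $J_{k-1}(x)\ll x^{k-1}$ near the origin (Lemma~\ref{lem:Bessel}).

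The main obstacle is keeping the error estimates uniform through the iterated conversions and synchronizing them with the combinatorics: at each stage the remainder $\psi_{\chi_0}(x)-x=O(x^{1/2}\log^2(2bx))$ is integrated against derivatives of products of $J_{k-1}$ and of $\hphi$-factors and then summed over $m$, over $b$, and over whichever variables are still present as sums, and this is only barely $O(N^{-1/2+\epsilon})$ precisely because $\supp(\hphi)$ is permitted to reach $\frac{1}{n-a}$. Relative to Lemma~4.9 of \cite{HM}, the new features are the presence of several variables, the extra parameter $\gamma$ allowing some sums to survive, and the larger admissible support; since the support is large enough that one cannot in general convert all of the sums simultaneously, carrying the partial conversions is exactly what forces the additional summation over $\gamma$, and checking that the surviving pieces assemble into the precise coefficients $(-1)^{j-\gamma}\binom{n-\eta}{j}\binom{j}{\gamma}$ is the delicate bookkeeping at the heart of the proof.
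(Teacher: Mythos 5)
Your route is genuinely different from the paper's. The paper never does partial summation variable by variable: it writes the Bessel function via its Mellin transform $G_{k-1}(s)$, so that the left-hand side becomes a single contour integral against the $(n-\eta)$-th \emph{power} of the one-variable Dirichlet sum $\sum_v \hphi(\log v/\log R)\chi_0(v)\Lambda(v)v^{-(1+s)/2}/\log R$. It then invokes the identity (4.34) of \cite{HM}, which expresses that single sum as $\phi\left(\tfrac{1-s}{4\pi i}\log R\right)+\mathcal E(s)$ with $\mathcal E(s)$ a contour integral of $\tfrac{L'}{L}(z,\chi_0)$; the binomial expansion of $(\phi+\mathcal E)^{n-\eta}$, followed by re-expanding each surviving $\mathcal E$ back into a sum, produces exactly the coefficients $(-1)^{j-\gamma}\binom{n-\eta}{j}\binom{j}{\gamma}$, and the bound (4.43) of \cite{HM} — which is where RH enters — kills every term carrying at least $a-\eta$ factors of $\mathcal E(s)$ because such a term is $\ll N^{(n-\eta-\gamma)\sigma/2+\epsilon}$ with $(n-\eta-\gamma)\sigma<1$. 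Your combinatorial endgame (the identity $\binom{n-\eta}{j}\binom{j}{\gamma}=\binom{n-\eta}{\gamma}\binom{n-\eta-\gamma}{j-\gamma}$, the convolution identification $\widehat{\Phi_{n-\eta-\gamma}}$, and the Jacobian accounting that turns the $v_i^{-1/2}$ weights into $v_i^{-1}$) all match the target formula correctly.

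The gap is in the analytic part, which you yourself flag as ``the main obstacle'' but do not resolve, and as literally described it would fail. Converting one sum by Abel summation against $\psi_{\chi_0}(x)-x=O(x^{1/2}\log^2(2bx))$ while bounding the remaining $n-\eta-1$ sums trivially (each of size up to $R^{\sigma/2}$ since they carry weight $v^{-1/2}$) produces an error of order $R^{(n-\eta-1)\sigma/2}$ times the square-root saving in one variable; with $\sigma$ close to $\tfrac{1}{n-a}$ and $\eta=0$ this exceeds the allowed threshold whenever $a\ge 2$, which is precisely the regime of interest. To rescue the scheme you must carry the Abel-summation remainders along as structured objects through all $n-\eta$ iterations rather than bounding them after one step — but the remainder is an integral of $\psi_{\chi_0}(x)-x$ against the $x$-derivative of $J_{k-1}$ of the \emph{product} of all the variables, so it is not ``of the same shape'' as the original sum, and no uniform estimate for these iterated remainders is established in your proposal. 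This is exactly the difficulty that the Mellin-side factorization is designed to avoid: there the $n-\eta$ variables never have to be decoupled, the error object $\mathcal E(s)$ is a clean one-variable quantity, and the single bound \eqref{HMeqn4.43conseq} controls all mixed products of main terms and error terms simultaneously. Without proving an analogue of that bound, your argument does not yet close.
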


\begin{proof}
This is a generalization of \cite[Lem. 4.9]{HM}.  Open up the $J$-Bessel function on the left side of \eqref{eq:sumtoint} with the Mellin inversion formula 
  \eqref{eq: Jk inverse Mellin}: 
    \begin{align}\label{eq: U def}
         \frac{1}{2\pi i}\int_{\Re(s) = 1}  \left[ \sum_{v = 1}^\infty \hat{\phi} \left( \frac{\log v}{ \log R} \right) \frac{\chi_0 (v) \Lambda(v) }{v^{(1+s)/2} \log R} \right]^{n-\eta} G_{k-1} (s) \left(\frac{4\pi m\sqrt{c }}{b \sqrt{N}} \right)^{-s}  \ \d s.
    \end{align}
    Under RH, a simple contour-shifting argument (or see \cite[eq. (4.34)]{HM}) gives 
    \begin{align}\label{eq: HM4.34}
\sum_{v=1}^\infty\hat{\phi}\left(\frac{\log v}{\log R}\right)\frac{\chi_0(v)\Lambda(v)}{v^{(1+s)/2}\log R} \ = \ \phi\left(\frac{1-s}{4\pi i}\log R\right) \, + \, \mathcal E(s),
    \end{align}
    where
    \begin{align}
        \mathcal E(s) \ \defeq \ -\frac{1}{2\pi i}\int_{\Re(z) = 3/4} \phi \left( \frac{(2z - 1 - s)\log R}{4\pi i}\right)\frac{L'}{L}(z,\chi_0) \ \d z.
    \end{align}
    As a consequence,  \eqref{eq: U def} becomes
    \begin{align}
  &\frac{1}{2\pi i}\int_{\Re(s) = 1}  \left[ \phi\left(\frac{1-s}{4\pi i}\log R\right) + \mathcal E(s) \right]^{n-\eta} G_{k-1} (s)\left(\frac{4\pi m\sqrt{c }}{b \sqrt{N}} \right)^{-s}  \ \d s\nonumber \\
    &= \ \sum_{\gamma =0}^{n-\eta} \, \binom{n- \eta}{\gamma} \frac{1}{2\pi i} \int_{\Re(s) = 1} \phi\left(\frac{1-s}{4\pi i}\log R\right)^{n -\eta -\gamma} \mathcal E(s)^\gamma G_{k-1} (s)\left(\frac{4\pi m \sqrt{c} }{b \sqrt{N}} \right)^{-s}  \ \d s,
    \end{align}
    where the last line follows from the Binomial Theorem. Now,  \cite[eq. (4.43)]{HM} gives the bound
    \begin{align}
        \frac{1}{2\pi i}&\int_{\Re(s) = 1} \phi \left( \frac{1-s}{4\pi i}\log R \right)^{n- \eta -\gamma} \mathcal E(s)^\gamma G_{k-1} (s)\left(\frac{4\pi m \sqrt{c}}{b \sqrt{N}} \right)^{-s}  \ \d s \  \ll \ N^{(n- \eta - \gamma)\sigma/2 + \epsilon},\label{HMeqn4.43conseq}
    \end{align}
which is in turn $\ONhalfe$ when $\gamma > a - \eta - 1$ and $\sigma < 1/(n-a)$. Hence, \eqref{eq: U def} is equal to
    \begin{equation}
    \begin{split}
    \sum_{\gamma=0}^{a - \eta - 1} \binom{n- \eta}{\gamma}  \int_{\Re(s) = 1} \phi\left(\frac{1-s}{4\pi i}\log R\right)^{n -\eta -\gamma} \mathcal E(s)^\gamma  G_{k-1}(s) \left(\frac{4\pi m \sqrt{c} }{b \sqrt{N}} \right)^{-s}  \ \frac{\d s}{2\pi i}  \, + \,  \ONhalfe.
    \end{split}
    \end{equation}
For integers $\gamma$, $n$ with $0\le \gamma < n$, we define
    \begin{equation}\label{eq: T def}
    T(\gamma, n) \defeq \int_{\Re(s) = 1}  \left[ \sum_{v=1}^\infty\hat{\phi}\left(\frac{\log v}{\log R}\right)\frac{\chi_0(v)\Lambda(v)}{v^{(1+s)/2}\log R}\right]^{\gamma} \phi\left(\frac{1-s}{4\pi i}\log R\right)^{n-\gamma} G_{k-1}(s) \left(\frac{4\pi m \sqrt{c}}{b\sqrt{N}}\right)^{-s}  \ \frac{\d s}{2\pi i}.
    \end{equation}
    It follows that \eqref{eq: U def} is given by
    \begin{align}\label{eq: U final}
      &\sum_{\gamma =0}^{a - \eta - 1}   \, \binom{n- \eta}{\gamma} \sum_{j=0}^{\gamma} \,  (-1)^{j - \gamma} \binom{\gamma}{j} T(j, n- \eta) \ +\  \ONhalfe \nonumber\\
     & \hs{1} = \   \sum_{\gamma =0}^{a - \eta - 1} \sum_{j= \gamma}^{a - \eta - 1} (-1)^{j- \gamma} \binom{n- \eta}{j} \binom{j}{\gamma}   T(\gamma, n- \eta) \ + \  \ONhalfe
    \end{align}
     using the formula for $\mathcal{E}(s)$ in \eqref{eq: HM4.34}.

In \eqref{eq: T def}, applying the formula $\int_{0}^{\infty}\, J_{k-1}(x) x^{s-1} \, dx = G_{k-1}(s)$ with the change of variable $s=1+it$, it follows that 
    \begin{align}
        T(\gamma, n) \ = \ \frac{b\sqrt{N}}{8\pi^2 m \sqrt{c}} \int_{t=-\infty}^{\infty}\, & \phi\left(\frac{-t\log R}{4\pi}\right)^{n-\gamma} \left[ \sum_{v_1, \ldots, v_\gamma = 1 }^\infty  \prod_{i=1}^{\gamma} \hat{\phi} \left(\frac{\log v_i}{\log R}\right)\frac{\chi_0(v_i)\Lambda(v_i)}{v_i^{ it/2 + 1}\log R}\right]\nonumber \\
        & \hspace{10pt} \times \left(\frac{4\pi m \sqrt{c}}{b\sqrt{N}}\right)^{-it} \int_{0}^{\infty} J_{k-1}(x) x^{it} \ \d x \ \d t .
    \end{align}
   Upon rearranging and a change of variables $u = -t \log R/(4\pi)$, we have
    \begin{align}
        T(\gamma, n) \ &= \ \frac{b\sqrt{N}}{2\pi m \sqrt{c}} \sum_{v_1,\ldots,v_\gamma=1}^\infty \left[ \prod_{i=1}^{\gamma}\hat{\phi}\left(\frac{\log v_i}{\log R}\right)\frac{\chi_0(v_i)\Lambda(v_i)}{v_i\log R} \right]\nonumber\\
        & \hs{1} \times \int_{x=0}^{\infty} J_{k-1}(x) \widehat{\Phi_{n-\gamma}}\left(\frac{2\log(bx\sqrt{N/c v_1 \cdots v_\gamma}/4\pi m)}{\log R}\right) \frac{dx}{\log R}. \label{eq: T final}
    \end{align}
Now, \eqref{eq: T final} and \eqref{eq: U final} lead to the desired claim.
\end{proof}

Next we apply Lemma~\ref{lem:Triant3.6general} to $C(\alpha, \beta)$.
\begin{propt}\label{propt:CtoD} We have
    \begin{align}
        \binom{n-\alpha}{\beta}C(\alpha, \beta) \ &= \ \sum_{\gamma = 0}^{a- \alpha - \beta - 1} D(\alpha, \beta, \gamma) \left[\sum_{i = 0}^{a- \alpha -\beta - \gamma - 1} (-1)^{i} \binom{n-\alpha}{\gamma + \beta} \binom{n-\alpha - \beta - \gamma}{i}\binom{\gamma + \beta}{\gamma}  \right] \nonumber\\
        & \hspace{1cm}+ \ONe,
    \end{align}
    where $D(\alpha, \beta, \gamma)$ is defined by
    \begin{align}
     \frac{b}{2\pi m \sqrt{c} } \, \sum_{p_{\alpha + 1}, \ldots, p_{\alpha + \beta + \gamma}} & \, \sum_{t_{\alpha+1}, \ldots, t_{\alpha+\beta} = 2}^\infty \, \sum_{t_{\alpha+\beta + 1}, \ldots, t_{\alpha+\beta + \gamma} = 1}^\infty \, \prod_{j=\alpha+\beta +1}^{\alpha + \beta + \gamma} \hat{\phi} \left( \frac{t_j \log p_j}{\log R} \right) \frac{\chi_0(p_j) \log p_j}{p_j^{t_j} \log R}  \\
        & \times\int_{x=0}^{\infty} J_{k-1}(x) \widehat{\Phi_{n- \alpha- \beta- \gamma}} \left( \frac{2\log(bx\sqrt{N''} /4\pi m)}{\log R}\right) \frac{dx}{\log R}, \label{eq:Dabg} 
\end{align}
and  
\begin{align}
    N'' \ := \  N/(cp_{\alpha+1}^{t_{\alpha+1}} \cdots p_{\alpha + \beta + \gamma}^{t_{\alpha + \beta + \gamma}}). 
\end{align}
\end{propt}

\begin{proof}
    Applying Lemma~\ref{lem:Triant3.6general} to \eqref{eq:MTC}, we have
    \be
       C(\alpha, \beta)\ = \  \sum_{\gamma = 0}^{a- \alpha - \beta - 1} D(\alpha, \beta, \gamma) \left[\sum_{j = \gamma}^{a- \alpha -\beta - 1} (-1)^{j-\gamma} \binom{n-\alpha - \beta}{j}\binom{j}{\gamma}  \right] + \ONe.
    \ee
    The property follows from the re-indexing  $i = j - \gamma$ and simplifying.
\end{proof}

\begin{rem}
    The three multiple sums of \eqref{eq:Dabg} can be interpreted as follows: the first $\alpha$ primes are those that divide $b$; the next two sums involving $\beta$ powers of primes are those left over from converting to sums over integers; and the last sums over $\gamma$ integers are those leftover from applying Lemma~\ref{lem:Triant3.6general}. 
\end{rem}

We apply Property~\ref{propt:CtoD} to the formula for $B(\alpha)$ described in Property~\ref{propt:BtoC}.
\begin{propt}\label{propt:BtoD}
    \be\label{eq:BtoD}
        B(\alpha)\ = \ \sum_{\delta = 0}^{a - \alpha - 1} \binom{n - \alpha}{\delta} \sum_{i=0}^{a - \alpha - \delta - 1} (-1)^i \binom{n - \alpha - \delta}{i} \sum_{\gamma  = 0}^\delta (-1)^{\delta - \gamma} \binom{\delta}{\gamma} D(\alpha, \delta - \gamma, \gamma) + \ONe.   
    \ee
\end{propt}
\begin{proof}
    The property from applying Property~\ref{propt:CtoD} to \eqref{eq:BtoC} and collecting terms with $\delta = \beta + \gamma$.
\end{proof}

We now eliminate the sums over prime powers in \eqref{eq:Dabg}. Define 
\begin{align}\label{eq:Fad}
    G(\alpha, \delta) \ &\defeq \ \frac{b}{2\pi m \sqrt{c} }\sum_{p_{1}, \ldots, p_{ \delta}}   \int_{x=0}^{\infty} J_{k-1}(x) \widehat{\Phi_{n-\alpha- \delta}} \left( \frac{2\log(bx\sqrt{N/(cp_{1} \cdots p_{\delta})}  /4\pi m)}{\log R} \right) \frac{dx}{\log R}\nonumber \\
    &\times \prod_{j=\alpha+1}^{\alpha + \delta} \hat{\phi}\left(\frac{ \log p_j}{\log R} \right) \frac{\chi_0(p_j) \log p_j}{p_j \log R},
\end{align}  
which is obtained from \eqref{eq:Dabg} upon specializing all $t_{i}$'s to be $1$ and re-indexing.  The expressions $D(\alpha,\beta,\gamma)$ and $G(\alpha,\delta)$ (with $\delta = \beta + \gamma$ as above) satisfy the following recursion: 

\begin{propt}\label{propt:DtoF} We have
    \be\label{eq:DtoF}
        G(\alpha, \delta)\ = \ \sum_{\gamma  = 0}^\delta (-1)^{\delta - \gamma} \binom{\delta}{\gamma} D(\alpha, \delta - \gamma, \gamma).
    \ee
\end{propt}

This can be deduced from a more general result:

\begin{lem}\label{lem:symmetricfunction}
        Let $f(t_1, \ldots, t_n)$ be a symmetric function which takes as an input a finite sequence $t_1, \ldots, t_n$ of arbitrary length and define the following transform $\mathcal{T}(i, j)$ on $f$:
        \be\label{eq:transform}
            \mathcal{T}(i, j)(f) \ \defeq \ \sum_{t_1, \ldots, t_i = 2}^\infty \sum_{s_1, \ldots , s_j = 1}^\infty f(t_1, \ldots, t_i, s_1, \ldots,  s_j).
        \ee
        Then
        \begin{equation}\label{eq:symmetricfunction}
            \sum_{i=0}^n (-1)^i \binom{n}{i} \mathcal{T}(i, n-i)(f)\ =\ f([1]^n)
        \end{equation}
        where $[1]^n$ denotes the sequence of 1's repeated $n$ times. 
\end{lem}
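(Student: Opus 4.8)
The plan is to prove the identity by induction on $n$, exploiting the symmetry of $f$ so that we only ever need to track how many arguments equal $1$ versus how many are $\ge 2$. First I would set up notation: for a symmetric function $f$, write $f(\{2^{+}\}^i, \{1^{+}\}^j)$ loosely for the ``state'' where $i$ arguments range over $\{2,3,\dots\}$ and $j$ arguments range over $\{1,2,\dots\}$; then $\mathcal{T}(i,j)(f)$ is the full sum over all such arguments. The key observation is that a sum over an argument ranging in $\{1,2,\dots\}$ splits as the $=1$ contribution plus the sum over $\{2,3,\dots\}$; by symmetry of $f$ this gives a ``Pascal-type'' recursion relating $\mathcal{T}(i,j)$ to $\mathcal{T}(i+1,j-1)$ and to the operator that fixes one argument to $1$ and applies $\mathcal{T}(i,j-1)$ to the rest.

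Concretely, the base case $n=0$ is trivial since both sides equal $f()$ (the empty sequence), and $n=1$ reads $\mathcal{T}(0,1)(f) - \mathcal{T}(1,0)(f) = \sum_{s\ge 1} f(s) - \sum_{t \ge 2} f(t) = f(1)$, which is immediate. For the inductive step, in the sum $\sum_{i=0}^n (-1)^i \binom{n}{i}\mathcal{T}(i,n-i)(f)$ I would isolate, in each term with $i < n$ (so there is at least one ``$\{1^{+}\}$'' argument), the contribution where that last argument is set to $1$ versus where it is $\ge 2$. Writing $g$ for the $(n-1)$-ary symmetric function obtained from $f$ by fixing one argument to $1$, the ``$=1$'' parts collect via the identity $\binom{n}{i} = \binom{n-1}{i} + \binom{n-1}{i-1}$ (or directly) into $\sum_{i=0}^{n-1}(-1)^i\binom{n-1}{i}\mathcal{T}(i,n-1-i)(g)$, which by the inductive hypothesis equals $g([1]^{n-1}) = f([1]^n)$; meanwhile the ``$\ge 2$'' parts reorganize, after the shift $i \mapsto i-1$ in the appropriate terms, into a telescoping/binomial cancellation $\sum_{i}(-1)^i\left(\binom{n}{i} - \binom{n-1}{i-1}\right)\mathcal{T}(i,\dots)$ type expression that vanishes — the cleanest way to see the vanishing is to recognize it as $\sum_{i=0}^{n}(-1)^i\binom{n}{i}$ applied (with appropriate index bookkeeping) to a single fixed configuration of the $i$ arguments that are $\ge 2$, which is $0$ for $n \ge 1$ by the binomial theorem.

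The main obstacle I anticipate is purely bookkeeping: making the reindexing precise so that the ``$\ge 2$'' contributions genuinely cancel rather than merely appearing to. An alternative, perhaps cleaner, route that avoids induction is an inclusion–exclusion argument: interpret $\mathcal{T}(i,j)(f)$ as summing $f$ over all sequences whose first $i$ entries avoid the value $1$ and whose last $j$ entries are unconstrained, then note that $\sum_{i=0}^n (-1)^i\binom{n}{i}\mathcal{T}(i,n-i)(f) = \sum_{\text{sequences } (a_1,\dots,a_n)} f(a_1,\dots,a_n)\sum_{S}(-1)^{|S|}\prod_{k\in S}\I_{\{a_k \ge 2\}}$, where by symmetry we may sum over subsets $S$ of the coordinates; the inner sum is $\prod_{k=1}^n(1 - \I_{\{a_k\ge 2\}}) = \prod_{k=1}^n \I_{\{a_k = 1\}}$, which is nonzero only for the all-ones sequence and then equals $1$, giving exactly $f([1]^n)$. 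I would present the inclusion–exclusion version as the main argument since it is shortest and makes the role of symmetry transparent, and it sidesteps the reindexing headaches of the inductive proof. Either way, the convergence issues are non-existent here because $f$ is an arbitrary (formal) symmetric function and no analytic estimates are needed — in the application $f$ will carry enough decay to justify interchanging the finitely many sums.
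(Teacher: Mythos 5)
Your proposal is correct. Your fallback inductive argument is essentially the paper's proof: the paper sets $g(t_1,\dots,t_n)=f(t_1,\dots,t_n,1)$, applies the inductive hypothesis to $g$, splits the sum over the last argument as $\sum_{u\ge 1}-\sum_{u\ge 2}$ to get $\mathcal{T}(i,n-i)(g)=\mathcal{T}(i,n+1-i)(f)-\mathcal{T}(i+1,n-i)(f)$, and finishes with Pascal's rule $\binom{n}{i}+\binom{n}{i-1}=\binom{n+1}{i}$ — exactly the mechanism you sketch. However, the argument you propose to present as the main one, the direct inclusion–exclusion, is a genuinely different and cleaner route: by symmetry, $\binom{n}{i}\mathcal{T}(i,n-i)(f)=\sum_{|S|=i}\sum_{a_1,\dots,a_n\ge 1}f(a)\prod_{k\in S}\mathbbm{1}_{\{a_k\ge 2\}}$, so the alternating sum collapses to $\sum_{a}f(a)\prod_{k=1}^{n}\bigl(1-\mathbbm{1}_{\{a_k\ge 2\}}\bigr)$, which is supported only on the all-ones sequence. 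This avoids the induction and the reindexing entirely and makes the role of symmetry explicit; its only cost is that the interchange of summation must be justified (absolute convergence or a formal reading), which you correctly flag and which the paper's inductive proof also implicitly assumes when splitting the sums. Either version is a complete proof.
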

\begin{proof}
    We proceed by induction on $n$. The base case $n=1$ holds immediately.
    Assume the result holds up to $n$ and define a new function $g(t_1, \ldots, t_n) = f(t_1, \ldots, t_n, 1)$. Then
    \begin{align}
        f([1]^{n+1})\ &= \ g([1]^{n}) \nonumber \\
        &=\ \sum_{i=0}^n (-1)^i \binom{n}{i} \sum_{t_1, \ldots,  t_i=2}^\infty \sum_{s_1, \ldots, s_{n-i} = 1}^\infty f(t_1, \ldots, t_i, s_1, \ldots,  s_{n-i}, 1)\nonumber \\
        &=\ \sum_{i=0}^n  \binom{n}{i}\left[(-1)^i \mathcal{T}(i, n+1-i)(f) +(-1)^{i+1} \mathcal{T}(i+1, n+1-(i+1))(f) \right] \nonumber \\
        &=\ \sum_{i=0}^{n+1} (-1)^i \mathcal{T}(i, n+1-i)(f) \binom{n+1}{i}. 
        \end{align}
    This completes the induction. 
\end{proof}

\begin{proof}[Proof of Property \ref{propt:DtoF}]
    A special case of Lemma \ref{lem:symmetricfunction}: take $f([1]^\delta) = G(\alpha, \delta)$ and $\mathcal{T}(i, j) = D(\alpha, i, j)$.
\end{proof}

Applying Property~\ref{propt:DtoF} to \eqref{eq:BtoD} gives the following relation between $B(\alpha)$ and $G(\alpha,\delta)$.

\begin{propt}\label{propt:BtoF} We have
    \begin{equation}\label{eq:BtoF}
    B(\alpha)\ = \ \sum_{\delta = 0}^{a - \alpha - 1} \binom{n - \alpha}{\delta} \sum_{i=0}^{a - \alpha - \delta - 1} (-1)^i \binom{n - \alpha - \delta}{i} G(\alpha, \delta).   
    \end{equation}      
\end{propt}

\begin{proof}[Proof of Proposition~\ref{lem:sums to integrals}]
  Substitute  \eqref{eq:Fad} into  \eqref{eq:BtoF}, the result follows.  
  
\end{proof}


\subsection{A convolution sum of Ramanujan sums: Proof of Proposition \ref{lem:ils7}}\label{Ramsumco}


This is a generalization of   \cite[Sect. 7]{ILS} and \cite[Lem. 2.12]{HM}. We take this opportunity to correct typos and include more details that were omitted in previous works.

We first claim that the left side of \eqref{eq:ils7} is equal to
\begin{equation}\label{eq:A1}
       \lim_{\epsilon \to 0} \, \sum_{(b,M)=1} \frac{R(1,b)R(m^2,b)}{\varphi(b) b^{\epsilon}} \int_{0}^{\infty} J_{k-1}(y) \hat{\phi} \left( \frac{2\log(by\sqrt{Q}/4\pi m)}{\log R} \right) \frac{\d y}{\log R}. 
    \end{equation}
Since $\widehat{\phi}$ is compactly supported, the $b$-sum and the $y$-integral of \eqref{eq:A1} in total is bounded by 
\begin{align}
    \sum_{b=1}^{\infty} \, \frac{m^4}{\phi(b)} \int_{y\ll \frac{mR^{O(1)}}{b\sqrt{Q}}} \,  \frac{\d y}{\log R} \ \ll \ \frac{m^5 R^{O(1)}}{\sqrt{Q}\log R} \,  \sum_{b=1}^{\infty} \, \frac{1}{\varphi(b)b}, 
\end{align}
where the bounds $|R(1,b)| = 1$, $|R(m^2, b)| \le m^4$, $J_{k-1}(x) \ll 1$ (see (\ref{eq:vonsterneck}) and Lemma \ref{lem:Bessel}) are used above. The last $b$-sum converges because $\varphi(b) \gg b/\log \log b$ (see \cite[Thm. 13.14]{Ap76}). Our claim now readily follows from the Dominated Convergence Theorem.

Apply the Mellin inversion formulae (\ref{melT}) and   \eqref{eq: Jk Mellin} to the integral of \eqref{eq:A1}. We have
\be
   \hspace{10pt} \int_{0}^{\infty} J_{k-1}(y) \hat{\phi}\left( \frac{2\log(by\sqrt{Q}/4\pi m)}{\log R} \frac{dy}{\log R}\right) \ = \ \int_{-\infty}^{\infty} \phi(x \log R) \left( \frac{2\pi m}{b\sqrt{Q}}\right)^{4\pi i x} \frac{\Gamma\left( \frac{k}{2} - 2\pi i x\right)}{\Gamma\left( \frac{k}{2} + 2\pi i x\right)} \, \d x. \label{eqn Melplan}
\ee
Substituting \eqref{eqn Melplan} into \eqref{eq:A1} and interchanging the sum and integral, we have
\begin{equation}  
     \eqref{eq:A1} \ = \ \lim_{\epsilon \to 0} \int_{-\infty}^{\infty} \phi(x \log R) \left( \frac{2\pi m}{\sqrt{Q}}\right)^{4\pi i x} \frac{\Gamma\left( \frac{k}{2} - 2\pi i x\right)}{\Gamma\left( \frac{k}{2} + 2\pi i x\right)} \, \chi_{_M}(\epsilon + 4\pi i x; m) \ \d x\label{eq:A2},
\end{equation}
where the Dirichlet series
\be
    \chi_{_M}(s; m) \ \defeq \  \sum_{(b,M)=1} \frac{R(1,b)R(m^2,b)}{\varphi(b)b^{s}}\label{eq: RamDS}
\ee
converges absolutely on the half-plane 
$\Re \, s >0$.

We evaluate the integral of \eqref{eq:A2} asymptotically by breaking it into two pieces: one for $|x| \le  X$ and the other for $|x| > X$.  The first piece can be handled by Laurent expansions while the second piece contributes negligibly due to the rapid decay of $\phi$. We must carefully keep track of the dependence on $m$, $M$. 

By \eqref{eqn Ramult}, the Dirichlet series \eqref{eq: RamDS} can be expressed in terms of  an Euler product
    \begin{align}\label{convEPRam}
        \chi_{_M}(s; m) \  = \ \prod_{(p,M) = 1}\sum_{t= 0}^{\infty}\frac{R(1,p^t)R(m^2,p^t)}{\varphi(p^t)p^{ts}}
        \ = \ \prod_{(p,M) = 1}\begin{cases} \left(1 + \frac{1}{(p-1)p^s}\right), & \text{if } (p,m) = 1,\\
        \left(1 - \frac{1}{p^s}\right) & \text{if } (p,m) > 1 \end{cases}
    \end{align}
    on $\Re \, s> 0$, where the second equality follows from the facts that $R(1,b) = \mu(b)$, $R(m^2, 1) = 1$, $R(m^2,p) = -1$ if $(m,p) = 1$, and $R(m^2,p) = \varphi(p) = p-1$ if $(p,m) > 1$. Moreover, (\ref{convEPRam}) can be written as 
    \begin{align}\label{convEP}
        \chi_{_M}(s; m) \ & = \ \prod_{p}\left(1 + \frac{1}{(p-1)p^s}\right) \,  \cdot\, \prod_{p|M} \left(1 + \frac{1}{(p-1)p^s}\right)\inv  \, \cdot \, \prod_{p|\frac{m}{(m,M^\infty)}}\left(1 - \frac{1}{p^s}\right) \left(1 + \frac{1}{(p-1)p^s}\right)\inv \nonumber\\
        \ &=: \  \chi^{(1)}(s) \, \cdot\,  \chi_{_M}^{(2)}(s) \, \cdot \,  \chi_{_M}^{(3)}(s; m). 
    \end{align}
    This corrects a mistake made in \cite[pp. 99]{ILS} regarding the factorization of $\chi_{_M}(s; m)$. For this reason, the relevant  Laurent expansions must be re-computed and the correct results can be obtained as follows. 

    For $\Re \, s >0$, it is easy to verify that
    \begin{align}\label{EPcomp}
       \chi^{(1)}(s)
        \ &= \ \frac{\zeta(1+s)}{\zeta(2+2s)} \, \prod_p \, \left(1 + \frac{1}{(p-1)(p^{1+s} + 1)}\right). 
    \end{align}
For $s=O(1)$, we have
\begin{align}
     \chi^{(1)}(s) \ = \  \left(s^{-1}+O(1)\right) \left( \frac{1}{\zeta(2)} \prod_{p} \, \left(1+ \frac{1}{p^2-1}\right) \, + \, O(|s|)\right) \ = \  s^{-1} \, +\, O(1). 
\end{align}

Define $\zeta_{M}(s):= \prod\limits_{p\mid M} \, (1-p^{-s})^{-1}$. In particular, $\zeta_{M}(1)=M/\varphi(M)$. For any  $s= O(1/\log \log M)$, we have
\begin{align}
     \zeta_{M}(1+s) \cdot \frac{\varphi(M)}{M} \ &= \  \prod_{p\mid M} \, \frac{1-p^{-1}}{1-p^{-1}+ O\left(|s|\frac{\log p}{p}\right)} \ = \  \prod_{p\mid M} \, \left(1 \, + \, O\left(|s|\frac{\log p}{p}\right)\right)^{-1} \nonumber\\
     \ &=  \ 1 \, + \, O\left(|s| \sum_{p\mid M} \, \frac{\log p}{p}\right)  
    \ = \ 1 \, + \, O(|s|\log \log M).  
\end{align}
From this and (\ref{EPcomp}), it follows that
\begin{align}
   \chi_{_M}^{(2)}(s)  \ &= \ \frac{\varphi(M)}{M}  \left( 1 \, + \, O(|s|\log \log M)\right) \cdot \left\{\zeta_{M}(2) \prod_{p\mid M} \left( 1+ \frac{1}{p^2-1}\right)^{-1} \, + \, O(|s|)\right\} \nonumber\\
    \ &= \  \frac{\varphi(M)}{M}  \left( 1 \, + \, O(|s|\log \log M)\right). 
\end{align}

Suppose $m/(m, M^{\infty})>1$. For $s=O(1/\log m)$, we have
\begin{align}
    \prod_{p|\frac{m}{(m,M^\infty)}} \, (1-p^{-s}) \ \ll \   \prod_{p|\frac{m}{(m,M^\infty)}} \, |s|\log p \ \ll \ |s| \prod_{p|\frac{m}{(m,M^\infty)}} \, \log p. 
\end{align}
Observe that
\begin{align}
  \log\,  \prod_{p|\frac{m}{(m,M^\infty)}} \, \log p \ \ll \ \sum_{p=O(\log m)} \, \log \log p \ \ll \ \frac{(\log m) \log \log \log m }{\log \log m},
\end{align}
which implies 
\begin{align}
     \prod_{p|\frac{m}{(m,M^\infty)}} \, (1-p^{-s}) \ \ll \ |s| m^{\epsilon'}
\end{align}
for any $\epsilon'>0$. \footnote{ In this proof, we distinguish $\epsilon$ with $\epsilon'$ for clarity. } When $m/(m, M^{\infty})=1$, we have $ \prod_{p|\frac{m}{(m,M^\infty)}}  (1-p^{-s})=1$.   In other words, 
  \begin{align}
       \chi_{_M}^{(3)}(s; m) \ &= \ \delta\left(\frac{m}{(m,M^\infty)},1\right) \, + \, O(|s|m^{\epsilon'}).
    \end{align}

Altogether,  for $|s| \ll X\ll  1/ ((\log m) \log \log M)$, the following estimate holds:
    \begin{align}
        \chi_{_M}(s; m) \ = \ \frac{\varphi(M)}{sM} \delta\left(\frac{m}{(m,M^\infty)},1\right) \, +  \, O\left( m^{\epsilon'}  \right). 
    \end{align}
    
    From \cite[eq. (8.322)]{GR} we have
    \begin{align}
        \Gamma\left(\frac{k+s}{2}\right) \ = \ \Gamma\left(\frac{k-s}{2}\right)\left(\frac{k}{2}\right)^s\left[1 + O\left(\frac{|s|}{k}\right)\right].
    \end{align}
   For $s=\epsilon + 4\pi i x$ with $\epsilon, x\ll X$, we have
    \begin{align}\label{eq: chi Gamma bound}
         \chi_{_M}(s;m) \, \frac{\Gamma(\frac{k}{2} - 2\pi i x)}{\Gamma(\frac{k}{2} + 2\pi i x)} \ = \ \frac{\varphi(M)}{sM} \delta\left(\frac{m}{(m,M^\infty)},1\right)\left(\frac{k}{2}\right)^{-4\pi i x} \, + \,  O\left( m^{\epsilon'} \right).
    \end{align}
Make a change of variables $x \to -x$ and use the evenness of $\phi$, we have
\begin{align}\label{smallasymp}
    &\int_{-X}^{X} \phi(x \log R)  \left( \frac{2\pi m}{\sqrt{Q}}\right)^{4\pi i x} \frac{\Gamma\left( \frac{k}{2} - 2\pi i x\right)}{\Gamma\left( \frac{k}{2} + 2\pi i x\right)} \, \chi_{_M}(\epsilon + 4\pi i x; m) \ \d x \nonumber\\
    & \hs{1} =  \   \frac{\varphi(M)}{M} \delta\left(\frac{m}{(m,M^\infty)},1\right) \int_{-X}^{X} \phi(x\log R)\left(\frac{k\sqrt{Q}}{4\pi m}\right)^{4\pi i x} \frac{dx}{\epsilon - 4\pi i x} \, + \, O\left( m^{\epsilon'}X  \right).
    \end{align}

We move on to bounding each of the three products over primes in (\ref{convEP}) for  $s = \epsilon + 4\pi i x$ with $|x| > X$.  

The last infinite product and $\zeta(2+2s)^{-1}$ in (\ref{EPcomp}) are  clearly $O(1)$. Using the bound $\zeta(1+s) \ll \log |x|$ for $|x|\ge 3$ (see \cite[Thm. 13.4]{Ap76}) and $\zeta(1+s)= O(|s|^{-1})$ as $s\to 0$,  we have
\begin{align}
 | \chi^{(1)}(s)| \ \ll \  |\zeta(1 + s)| \ \ll \ X^{-1} \log \, (3+|x|). 
\end{align}   
Next, observe the inequality
\begin{align}\label{absboundEP}
   \hspace{25pt}  \log\,  \prod_{\substack{p|M \\ p>2}} \, \left|1 + \frac{1}{(p-1)p^s}\right|^{-1}  \ \le \  - \, \sum_{\substack{p|M \\ p>2}} \, \log  \, \left(1 - \frac{1}{p-1}\right) \ = \  \sum_{\substack{p|M}} \, \frac{1}{p-1} \ + \ O(1).
\end{align}
From the identity $\zeta_{M}(1)=M/\varphi(M)$ and Taylor's expansion, one easily obtains
\begin{align}
    \log \, \frac{M}{\varphi(M)} \ = \  \sum_{p\mid M} \, \frac{1}{p - 1} \ + \ O(1). 
\end{align}
 As a result, we have
\begin{align}
    (\ref{absboundEP}) \ \le \ \log \frac{M}{\varphi(M)} \ + \ O(1) \ \le \ \log \log \log M \ + \ O(1)
\end{align}
using the  bound $\varphi(M) \gg M/\log \log M$ (\cite[Thm. 13.14]{Ap76}). We may deduce that
\begin{align}
   |\chi_{_M}^{(2)}(s)|  \ \ll \ \log \log M,
\end{align}
where the implicit constant is absolute. The estimate for $ \chi_{_M}^{(3)}(s; m)$ follows from a similar argument. So,  we have that 
\begin{align}\label{SmallExponentm}
    | \chi_{_M}(s;m)| \ \ll \ X^{-1} m^{\epsilon'} (\log \log M) \log \, (3+|x|)
\end{align}
for any $\epsilon'>0$. 

Equation \eqref{SmallExponentm} and  the decay of $\phi$  imply 
\begin{align}\label{decayest1}
        &   \hspace{-80pt}\left|\int_{X}^{\infty}\phi(x\log R) \chi_{_M}(\epsilon + 4\pi i x; m) \left(\frac{2\pi m}{\sqrt{Q}}\right)^{4\pi i x}\frac{\Gamma(\frac{k}{2} - 2\pi i x)}{\Gamma(\frac{k}{2} + 2\pi i x)} \, \d x\right| \notag \\
        \ &\ll  \ X^{-1}\,  m^{\epsilon'} (\log \log M )\, \int_{X}^{\infty} (x\log R)^{-A} \log\, (3+|x|) \ \d x \notag \\
        \ &\ll \ m^{\epsilon'} (\log \log M ) (X \log R)^{-A},
    \end{align}
    as well as
\begin{align}\label{decayest2}
     \left|\int_{X}^{\infty} \phi(x\log R)\left(\frac{k\sqrt{Q}}{4\pi m}\right)^{4\pi i x} \frac{dx}{\epsilon - 4\pi i x}\right|
     \ &\ll \ \int_{X}^{\infty} \frac{1}{x} \, (x\log R)^{-A} \, \d x  \ \ll \ (X \log R)^{-A}
 \end{align}
 for any $A>0$.

Combining the estimates (\ref{decayest1}), (\ref{decayest2}) and (\ref{smallasymp}), it follows that
    \begin{align}
          (\ref{eq:A1}) \ = \ \delta\left(\frac{m}{(m,M^\infty)},1\right) \, & \frac{\varphi(M)}{M} \, \lim_{\epsilon \downarrow 0} \, \int_{-\infty}^{\infty}\phi(x\log R)  \left(\frac{k\sqrt{Q}}{4\pi m}\right)^{4\pi i x} \frac{dx}{\epsilon -4\pi ix} \nonumber\\
          \, &+\,  O\left( m^{\epsilon'} (\log \log M ) (X \log R)^{-A}\right) \ + \  O\left( m^{\epsilon'}X \right)
     \end{align}
for any $\epsilon'>0$, $A>0$, and $X \ll 1/((\log m) \log\log M)$.  

Taking $A = 1$ and $X^{-1}:= (\log m) (\log\log M)(\log R)^{1/2}$, and following the same argument of  \cite[pp. 100]{ILS}, we may now conclude 

 \begin{align}
      (\ref{eq:A1})  \ &= \ \delta\left(\frac{m}{(m,M^\infty)},1\right) \frac{\varphi(M)}{M} \left(-\frac{1}{2} \int_{-\infty}^{\infty} \phi(x) \sin \left( 2 \pi x\frac{\log \left(  k^2 Q/16\pi^2 m^2 \right) }{\log R} \right) \frac{dx}{2\pi x} + \frac{1}{4} \phi(0) \right) \nonumber \\
     &\hs{1} \, + \,  O\left(m^{\epsilon'} \, \frac{(\log \log M)^{2}}{(\log R)^{1/2}} \right). 
 \end{align}
The proof of Proposition \ref{lem:ils7} is complete.



\subsection{ Proof of Proposition~\ref{lem:nonmaintermcalc} }\label{sec: vanishing}
Suppose $n_j + m_j > 2$ for some $1 \le j \le \omega$ in \eqref{eq:E Ram}.  Firstly,  applying Proposition~\ref{lem:sums to integrals} to \eqref{eq:E Ram} with $c = p_1\cdots p_\alpha q_1^{m_1} \cdots q_\omega^{m_\omega}$, we obtain
\begin{equation}\label{eq:sums to integrals genterm simp}
    \begin{split}
      E(\vec{n}, \vec{m})  \ = \  & \qsumsimp{\omega} \prod_{j=1}^{\omega}  \testfn{q_j}^{n_j} \frac{\log^{n_j} q_j} {q_j^{(n_j + m_j)/2} \log^{n_j} R}  \psumsimp{\alpha + \delta} \prod_{i=1}^{\alpha + \delta} \testfn{p_i} \frac{\log p_j}{p_j \log R} \\
        &\hspace{1cm}\times  \msum \frac{1}{m^2} \bsumfull{ N p_{\alpha+1} \cdots p_{\alpha+ \delta} q_{\theta+1} \cdots q_\omega }{p_1, \ldots, p_{\alpha}, q_1, \ldots, q_\theta} \frac{R(m^2, b) R(p_1\cdots p_{\alpha}q_1^{m_1}\cdots q_{\theta}^{m_{\theta}}, b) }{\varphi(b)}  \\
        & \hspace{1cm}  \times\int_{0}^{\infty} J_{k-1}(x) \widehat{\Phi_{n-\alpha- \delta}} \left( \frac{2\log(bx\sqrt{N/(p_{1} \cdots p_{\alpha+ \delta} q_1^{m_1} \cdots q_{\omega}^{m_{\omega}})} /4\pi m)}{\log R} \right) \frac{dx}{\log R} 
    \end{split}
\end{equation}
up to an error  $\ONe$.  The condition $b < N^{2022}$ has been removed at a negligible cost. 

Secondly, we convert the sums over $q_j$ and $p_i$ in \eqref{eq:sums to integrals genterm simp} into sums over \emph{distinct} primes, requiring us to break up the sums depending on whether some of the primes in the sums are equal or not, as well as the order of the primes factors of $b$. It follows that $\Enm$ is  a sum of terms of the form
\begin{align}
\Fabcde \ & \defeq \ \qsumdist{\ell} \prod_{j=1}^{\ell}  \testfn{q_j}^{a_j} \frac{\log^{a_j} q_j}{q_j^{b_j} \log^{a_j} R}  \msum \frac{1}{m^2} \sum_{\substack{(b', Nq_{1}\ldots q_\ell) = 1 \\ b = b' q_1^{c_1} \cdots q_\kappa^{c_{\kappa}}}} \frac{R(m^2, b) R(q_1^{d_1} \cdots q_\kappa^{d_\kappa}, b) }{\varphi(b)} \nonumber \\
& \hspace{1cm}  \times\int_{x=0}^{\infty} J_{k-1}(x) \widehat{\Phi_{n-\nu}} \left( \frac{2\log(b'x\sqrt{N q_1^{e_1} \cdots  q_\ell^{e_\ell}} /4\pi m)}{\log R} \right) \frac{dx}{\log R} \label{eq: abcde}
\end{align}
up to an error term of size $\ONe$, where $a_i$, $b_i$, $c_i$, and $d_i$ are positive integers, and the $e_i$'s are integers. Additionally, we have that $\sum a_j = \nu$ and $b_i > 1$ for some $i$ (since $n_j + m_j > 2$ for some $j$), as well as $b_j \ge d_j$ for all $j$ (since $n_j \ge m_j$ in \eqref{eq:sums to integrals genterm simp}).

As a result, the proof of Proposition~\ref{lem:nonmaintermcalc} rests on the following lemma. In fact, this lemma will also be useful in Section~\ref{sec:maintermlaststep}.

\begin{lem}\label{lem:abcde}
    Let $\Fabcde$ be defined as in \eqref{eq: abcde} with $a_j, b_j, c_j, d_j$'s being positive integers, $e_j$'s being integers, $b_j \ge d_j$ for all $1 \le j \le \kappa$. If $b_i > 1$ or $d_i < c_i$ for some $i$, then $\Fabcde \ll 1/\log N$.
\end{lem}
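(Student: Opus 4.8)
The plan is to eliminate the sum over $b$ first, extract the arithmetic factors cleanly, and then bound everything using the decay of the Bessel integral together with the restricted support of $\hphi$. The key tool for the $b$-sum is a lemma of the type of Lemma~7.1 of \cite{ILS} (call it Lemma~\ref{lem:ils7}), which evaluates sums of the shape $\sum_b \frac{R(m^2,b)R(r,b)}{b\varphi(b)} g(b)$ against a smooth, rapidly decaying weight $g$; after splitting $b = b' q_1^{c_1}\cdots q_\kappa^{c_\kappa}$ as in \eqref{eq: abcde}, the Ramanujan sums $R(q_1^{d_1}\cdots q_\kappa^{d_\kappa}, b)$ are multiplicative in the prime-power pieces and can be computed explicitly via \eqref{eq:vonsterneck}, while $R(m^2,b)\le m^4$ (or more precisely $R(m^2,b) \le (m^2,b)$) controls the $m$-dependence. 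The upshot should be that the $b'$-sum contributes a bounded constant (up to $\log$ factors), the $m$-sum converges after inserting $J_{k-1}(x)\ll x^{-1/2}$ and $\widehat{\Phi}$'s rapid decay, and we are left with a sum over distinct primes $q_1,\dots,q_\ell$ of a product of the form $\prod_j \hphi\big(\tfrac{\log q_j}{\log R}\big)^{a_j}\tfrac{\log^{a_j} q_j}{q_j^{b_j}\log^{a_j} R}$ times contributions from the prime-power divisors of $b$.

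The main point is then a counting/size estimate. Because $\supp(\hphi)\subset\oneoverna$ forces $q_j \ll R^{1/(n-a)}$, and because the full product $q_1^{e_1}\cdots q_\ell^{e_\ell}$ appearing inside the argument of $\widehat{\Phi_{n-\nu}}$ together with $b'$ must stay bounded (else the integrand is negligible) of size at most a fixed power of $R$, the number of surviving tuples is controlled. When all $b_j = 1$ and all $d_j = c_j$ the $q_j$-sum over $1/q_j$ just produces a factor $\log\log R/\log R$ per prime after accounting for the $\hphi$ truncation --- this is exactly the diagonal case and does \emph{not} vanish, which is why the hypothesis excludes it. The hypothesis ``$b_i > 1$ or $d_i < c_i$ for some $i$'' buys an extra saving: if some $b_i > 1$ then $\sum_{q_i} \tfrac{\log^{a_i} q_i}{q_i^{b_i}\log^{a_i}R}$ converges absolutely and contributes only $O(1/\log^{a_i} R)$ rather than $O(\log\log R/\log R)$, gaining a power of $1/\log R$; if instead $d_i < c_i$ for some $i$ then the Ramanujan sum $R(q_i^{d_i},q_i^{c_i}\cdots) = 0$ unless... more carefully, $R(q_i^{d_i},b)/\varphi(q_i^{c_i})$ carries an extra factor of $q_i^{-1}$ relative to the $d_i = c_i$ case, again producing a convergent sum over $q_i$ and hence a power-of-$\log$ saving. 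Combining, each term of the form \eqref{eq: abcde} is $\ll 1/\log N$ relative to the diagonal, hence $\ll \OlogN$ as claimed.

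The hard part will be bookkeeping the interaction between the three families of exponents attached to the primes dividing $b$ --- the $c_j$ (multiplicity of $q_j$ in $b$), the $d_j$ (power of $q_j$ whose Ramanujan sum we take), and the $e_j$ (net power of $q_j$ inside the $\widehat{\Phi}$ argument after $m$-expansion), all constrained by $b_j \ge d_j$ --- and checking that, across every admissible combination, at least one prime sum genuinely converges or that the support restriction $\supp(\hphi)\subset\oneoverna$ is strong enough to bound the number of tuples. I would organize this by fixing the index $i$ with $b_i>1$ or $d_i < c_i$, isolating the $q_i$-sum (which I bound by $O(1/\log R)$ using one of the two mechanisms above), and bounding every other prime sum trivially by $O(\log\log R)$ or by absolute convergence; since there are only $n$ primes in total the product of these is $O((\log\log R)^{n}/\log R) = \OlogN$. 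A minor technical nuisance is justifying the extension of the $b$-sum to all $b$ (removing $b < N^{2022}$) and the interchange of the $b$-sum with the $x$-integral, but this follows from the rapid decay of $\widehat{\Phi_{n-\nu}}$ in its argument exactly as in the passage from \eqref{eq:sums to integrals genterm} to \eqref{eq:sums to integrals genterm simp}.
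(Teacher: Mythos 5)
Your proposal follows essentially the same route as the paper: apply Lemma~\ref{lem:ils7} to reduce the $b'$-sum to $O(m^{\epsilon})$, exploit multiplicativity of the Ramanujan sums to factor the $m$- and $q_j$-sums, and observe that either $b_i>1$ (convergent prime sum) or $d_i<c_i$ (an extra $q_i^{-1}$ from $R(q_i^{d_i},q_i^{c_i})/\varphi(q_i^{c_i})$) forces one factor to be $O(\log^{-a_i}R)$. The only refinement worth noting is that the remaining prime factors are in fact $O(1)$ rather than $O(\log\log R)$, which is what lets the paper land exactly on the stated $\OlogN$ bound.
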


\begin{proof}
Using the multiplicativity of the Ramanujan sums and the totient function $\varphi$, observe that
\begin{equation}\label{eq:Ram simp general}
    \frac{R(m^2, b) R(q_1^{d_1} \cdots q_\kappa^{d_\kappa}, b) }{\varphi(b)}\ =\ \frac{R(q_1^{d_1} \cdots q_\kappa^{d_\kappa}, q_1^{c_1} \cdots q_\kappa^{c_{\kappa}})}{ \varphi(q_1^{c_1} \cdots q_\kappa^{c_{\kappa}})} \cdot  \frac{R(m^2, b') R(m^2, q_1^{c_1} \cdots q_\kappa^{c_{\kappa}})  R(1, b')}{\varphi(b') },
\end{equation}
where  $b = b' q_1^{c_1} \cdots q_\kappa^{c_{\kappa}}$ and $(b', N q_1\cdots q_\ell ) = 1$.  Apply \eqref{eq:Ram simp general} to \eqref{eq: abcde}. This allows us to rewrite $\Fabcde$ (with $Q = N q_1^{e_1} \cdots  q_\ell^{e_\ell}$) as
\begin{align}\label{eq: abcde Ram simp}
         &\qsumdist{\ell} \,  \prod_{j=1}^{\ell}  \testfn{q_j}^{a_j} \frac{\log^{a_j} q_j}{q_j^{b_j} \log^{a_j} R}   \msum \, \frac{R(q_1^{d_1} \cdots q_\kappa^{d_\kappa}, q_1^{c_1} \cdots q_\kappa^{c_{\kappa}})}{ \varphi(q_1^{c_1} \cdots q_\kappa^{c_{\kappa}})}  \frac{R(m^2, q_1^{c_1} \cdots q_\kappa^{c_{\kappa}})}{m^2}  \nn 
        & \hs 1 \times \sum_{(b', Nq_{1}\ldots q_\ell) = 1 } \frac{R(m^2, b') R(1, b') }{\varphi(b')} \int_{x=0}^{\infty} J_{k-1}(x) \widehat{\Phi_{n-\nu}} \left( \frac{2\log(b'x\sqrt{Q} /4\pi m)}{\log R} \right) \frac{dx}{\log R}. 
    \end{align}
By Proposition~\ref{lem:ils7},  observes that the sum over $b'$ in \eqref{eq: abcde Ram simp} is $\ll m^{\epsilon}$, and hence,  \eqref{eq: abcde Ram simp} is 
\begin{equation}\label{eq: abcde Y bound}
     \ll \,    \qsumdist{\ell} \prod_{j=1}^{\ell} \,   \left|\testfn{q_j}^{a_j} \frac{\log^{a_j} q_j}{q_j^{b_j} \log^{a_j} R} \right| \msum \, \frac{|R(q_1^{d_1} \cdots q_\kappa^{d_\kappa}, q_1^{c_1} \cdots q_\kappa^{c_{\kappa}})|}{ \varphi(q_1^{c_1} \cdots q_\kappa^{c_{\kappa}})}  \frac{\left| R( m^2, q_1^{c_1} \cdots q_\kappa^{c_{\kappa}}) \right|}{m^{2 - \epsilon}} .
\end{equation}
The last $m$-sum is bounded by
\begin{equation}
    \left[ \sum_{(m', q_1 \cdots p_\kappa) = 1} \frac{1}{(m')^{2-\epsilon}} \right]  \left[ \prod_{i=1}^{\kappa} \sum_{t \ge 0} \frac{|R(q_i^{2t}, q_i^{c_i})||R(q_i^{d_i}, q_i^{c_i})|} {q_i^{(2- \epsilon) t} \varphi(q_i^{c_i})}\right] \ \ll \  \prod_{i=1}^{\kappa} \sum_{t \ge 0} \frac{|R(q_i^{2t}, q_i^{c_i})||R(q_i^{d_i}, q_i^{c_i})|} {q_i^{(2- \epsilon) t} \varphi(q_i^{c_i})}
\end{equation}
once again due to the multiplicativity of the Ramanujan sums.

 We now analyze the sum over $t$, primarily relying on \eqref{eq:vonsterneck} to bound the Ramanujan sums. When $2t < c_i - 1$, then $R(q_i^{2t}, q_i^{c_i}) = 0$. When $2t = c_i - 1$, then $R(q_i^{2t}, q_i^{c_i}) = q_i^{2t}$. When $2t \ge c_i$, we have that $R(q_i^{2t}, q_i^{c_i}) = \varphi(q_i^{c_i}) \le q_i^{c_i}$. 

 The sum over $t$ is $O(q_i^{\epsilon\lfloor c_i/2 \rfloor})$ when $d_i \ge c_i$, and is  $O(q_i^{-1 + \epsilon\lfloor c_i/2 \rfloor })$  when  $d_i < c_i$, where the bounds $|R(q_i^{d_i}, q_i^{c_i})| \le \varphi(q_i^{c_i})$ and $|R(q_i^{d_i}, q_i^{c_i})| \le q_i^{d_i}$ were applied respectively. Therefore,   
\begin{align}\label{eq: abcde factor}
        \Fabcde \ &\ll \ \qsumdist{\ell} \prod_{j=1}^{\ell} \, \left| \testfn{q_j}^{a_j} \right| \frac{\log^{a_j} q_j}{q_j^{b_j + \eta_j - \epsilon\lfloor c_j/2 \rfloor} \log^{a_j} R} \nonumber\\
        \ &\ll \ \prod_{j=1}^{\ell} \left[ \sum_{p} \, \left| \testfn{p}^{a_j} \right|\frac{\log^{a_j} p}{p^{b_j + \eta_j - \epsilon\lfloor c_j/2 \rfloor} \log^{a_j} R} \right],
\end{align}
where $\eta_j = \I_{d_j < c_j}$.  
Observe the following:
\begin{enumerate}
    \item Set $x_j := b_j + \eta_j - \epsilon\lfloor c_j/2 \rfloor$. The sum over $p$  in \eqref{eq: abcde factor} is $O(1/\log^{a_j} R)$ when $x_j > 1$,  and  is $O(1)$ when $x_j = 1$. 

    \item Suppose $d_j \ge c_j$. By assumption,  we have  $b_j \ge d_j$ and so $b_j \ge c_j$. If $b_j = 1$,  then $x_j = 1$,  and if $b_j > 1$,  then $x_j > 1$. Suppose $d_j < c_j$. Then  $x_j > 1$. 
\end{enumerate}
In particular,  $x_j \ge 1$ always holds and each of the $p$-sums in \eqref{eq: abcde factor} is  $O(1)$. 

By our assumption, there exists $i$ for which either $d_i < c_i$ or $b_j > 1$ hold. In either case, we have $x_i > 1$ and the $i$-th factor in \eqref{eq: abcde factor} is thus  $\ll 1/\log^{a_i} R \ll 1/\log N$ (since $a_i > 0$ by assumption). Taking the product over all $j$'s, we may now conclude that $\Fabcde \ll 1/\log N$. This completes the proof of the lemma. 
\end{proof}
\begin{proof}[Proof of Proposition~\ref{lem:nonmaintermcalc} ]
  By Lemma \ref{lem:abcde} and the arguments preceding \eqref{eq: abcde}, $\Enm$ can be written as the sum of finitely many terms of sizes $O(1/\log N)$ whenever $n_j + m_j > 2$ for some $j$. Note that the number of such terms is independent of $N$.   This completes the proof. 
\end{proof}

\subsection{Analytic simplification of the main contribution}\label{sec:maintermlaststep}

Recall the expression \eqref{eq:E Ram} for $E(\vec{n}, \vec{m})$ and the fact that  the main contribution comes from terms with $\omega = 0$  and  $n_j = m_j = 1$ for all $1 \leq j \leq n$, i.e., 
\begin{align}
        A \ &\defeq 
         \  -\sum_{\alpha = 0}^{a-1} \binom{n}{\alpha} 2^{n+1} \pi  \psumsimp{\alpha} \prod_{j=1}^\alpha \hat{\phi}\left(\frac{\log p_j}{\log R} \right) \frac{\log p_j}{\sqrt{p_j} \log R} \msum \frac{1}{m} \bsumdiv{N}{p_1,\ldots, p_\alpha}    \frac{1}{b\varphi(b)} R(m^2,b) R(p_1 \cdots p_\alpha,b) \nonumber   \\
        &\hspace{1cm} \times \rootn \sum_{p_{\alpha+1}, \ldots, p_n} \Jk \left(\frac{4\pi m \sqrt{p_{1}\cdots p_n}}{b\sqrt{N}} \right)  \prod_{j=\alpha + 1}^n \hat{\phi}\left(\frac{\log p_j}{\log R} \right) \frac{\chi_0 (p_j)\log p_j}{\sqrt{p_j} \log R} \, + \, \ONe. \label{eq:MTalphaCase}   
\end{align}

 \begin{rem}
 \ 
 \begin{enumerate}
     \item The coefficient  $\binom{n}{\alpha}$ comes from the choices of indices for the prime factors of $b$.

     \item We have truncated the $\alpha$-sum  in \eqref{eq:MTalphaCase} to $0\le \alpha\le a-1$ because the contribution from  $\alpha > a-1$ the term is $\ONe$, which follows from the bounds $\Jk(x) \ll x$, $R(m^2, b) \le m^4$ and $R(p_1\cdots p_\alpha, b) \le \varphi(b)$. 
 \end{enumerate}
      
 \end{rem}

Applying Proposition~\ref{lem:sums to integrals} to \eqref{eq:MTalphaCase}, we have, upon simplification,  
\begin{align}
     A\ &= \ -2^n\, \sum_{\alpha = 0}^{a-1} \binom{n}{\alpha} \sum_{\delta = 0}^{a - \alpha - 1} \binom{n - \alpha}{\delta} \sum_{i=0}^{a - \alpha - \delta - 1} (-1)^i \binom{n - \alpha - \delta}{i} H(\alpha, \delta) + \ONe, \label{eq:A to F}
\end{align}
\no where 
\begin{align}\label{eq:Gdef}
    H(\alpha, \delta) &\ \defeq \  \sum_{p_1, \ldots, p_{\alpha + \delta} } \prod_{j=1}^{\alpha + \delta} \hat{\phi}\left(\frac{\log p_j}{\log R} \right) \frac{\log p_j}{p_j \log R}  \msum \frac{1}{m^2}   \bsumfull{Np_{\alpha+1} \cdots p_{\alpha + \delta} }{p_1, \ldots, p_\alpha}  \frac{R(m^2, b) R(p_1 \cdots p_{\alpha}, b)} {\varphi(b)} \nonumber \\  
    & \times \int_{x=0}^{\infty} J_{k-1}(x) \widehat{\Phi_{n-\alpha- \delta}} \left( \frac{2\log(bx\sqrt{N} /(4\pi m\sqrt{p_1 \cdots p_{\alpha + \delta}}))}{\log R} \right) \frac{dx}{\log R }.
\end{align} 
The rest of this subsection is dedicated to proving the following lemma. 

\begin{lem}\label{lem:Gtosin}
We have that
    \begin{align}
        H(\alpha, \delta)\ &= \ -2^{-1-\alpha-\delta} (-1)^\alpha \intii \cdots \intii \hphi(x_2) \cdots \hphi(x_{\alpha+\delta+1})\nonumber \\
        &\hspace{1cm}\times \left[ \intii \phi^{n-\alpha-\delta}(x_1) \frac{\sin\left(2\pi x_1 (1 + |x_2| + \cdots + |x_{\alpha + 1}| - |x_{\alpha +2}| - \cdots - |x_{\alpha + \delta + 1}|)\right)}{2\pi x_1}dx_1\right.\nonumber \\
        &\hspace{1cm}- \left.\frac{1}{2}\phi^{n-\alpha-\delta}(0) \right]dx_2\cdots dx_{\alpha+\delta+1} + \converrorterm.
    \end{align}
\end{lem}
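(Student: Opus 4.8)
The plan is to start from the definition \eqref{eq:Gdef} of $H(\alpha, \delta)$ and peel off the sum over $b$ using Lemma~\ref{lem:ils7}, which is precisely tailored to evaluate expressions of the form $\sum_{(b,M)=1} \frac{R(1,b)R(m^2,b)}{\varphi(b)}\int_0^\infty J_{k-1}(y)\hphi(\cdots)\,\frac{dy}{\log R}$. Here the relevant modulus is $M = Np_{\alpha+1}\cdots p_{\alpha+\delta}$ and the ``$Q$'' appearing in Lemma~\ref{lem:ils7} is $Q = N/(p_1\cdots p_{\alpha+\delta})$ up to the constant $4\pi m$ bookkeeping; the test function in the integral is $\widehat{\Phi_{n-\alpha-\delta}}$ rather than a single $\hphi$, but Lemma~\ref{lem:ils7} is stated for a general even Schwartz $\phi$, so it applies with $\phi$ replaced by $\Phi_{n-\alpha-\delta}(x) = \phi(x)^{n-\alpha-\delta}$. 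The delta factor $\delta\!\left(\tfrac{m}{(m,M^\infty)},1\right)$ forces $m$ to be built only out of primes dividing $M$; since $M$'s prime factors among the $p_j$ are exactly $p_{\alpha+1},\dots,p_{\alpha+\delta}$ (and $N$, which is too large to matter given the support restriction), this will collapse the $m$-sum to $m=1$ after accounting for the error term $O_{\epsilon'}(m^{\epsilon'}\log\log M/\log R)$ — the $m$-sum of $m^{\epsilon'-2}$ converges, contributing only to the $\OllN$ error. What survives is the ``main'' piece of Lemma~\ref{lem:ils7}:
\[
\frac{\varphi(M)}{M}\left(-\frac12\intii \Phi_{n-\alpha-\delta}(x)\sin\!\left(2\pi x\frac{\log(k^2 Q/16\pi^2 m^2)}{\log R}\right)\frac{dx}{2\pi x} + \frac14 \Phi_{n-\alpha-\delta}(0)\right),
\]
evaluated at $m=1$, with $Q = N/(p_1\cdots p_{\alpha+\delta})$.

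Next I would handle the remaining sum over the primes $p_1,\dots,p_{\alpha+\delta}$. After substituting $Q$, the argument of the sine becomes $2\pi x_1$ times $\frac{\log(k^2 N) - \log(p_1\cdots p_{\alpha+\delta})}{\log R} = 1 - \sum_j \frac{\log p_j}{\log R} + o(1)$, using $R = k^2N$. Each prime sum carries a weight $\hphi\!\left(\frac{\log p_j}{\log R}\right)\frac{\log p_j}{p_j \log R}$, and also a factor coming from $\varphi(M)/M = \prod_{j=\alpha+1}^{\alpha+\delta}(1 - 1/p_j)$ for the ``new'' primes; the factor $(1-1/p_j)$ is $1 + O(1/p_j)$ and the correction is absorbed into the error since $\sum_p \frac{\log p}{p^2 \log R} = O(1/\log R)$. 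By Riemann–Stieltjes integration (as in \eqref{eq: primes to sigma}, or Lemma B.4 of \cite{Mil3}), each sum $\sum_p \hphi\!\left(\frac{\log p}{\log R}\right) g\!\left(\frac{\log p}{\log R}\right)\frac{\log p}{p\log R}$ converges to $\intii \hphi(u) g(u)\,du$ as $N\to\infty$ — but here $g$ itself depends on all the $p_j$ through the sine's argument, so I would instead introduce variables $x_{j+1} = \frac{\log p_j}{\log R}$ (taking signs into account: the first $\alpha$ primes $p_1,\dots,p_\alpha$ appear in $R(p_1\cdots p_\alpha, b)$ and contribute with one sign, the $\delta$ primes $p_{\alpha+1},\dots,p_{\alpha+\delta}$ appear in the modulus $M$ and flip the sign of their $\log p_j$ in $Q$), so the joint prime sum converges to $\intii\cdots\intii \hphi(x_2)\cdots\hphi(x_{\alpha+\delta+1})\,F(x_2,\dots,x_{\alpha+\delta+1})\,dx_2\cdots dx_{\alpha+\delta+1}$, where $F$ packages the bracket $\bigl[-\tfrac12\intii \Phi_{n-\alpha-\delta}(x_1)\sin(\cdots)\frac{dx_1}{2\pi x_1} + \tfrac14\Phi_{n-\alpha-\delta}(0)\bigr]$ with sine argument $2\pi x_1(1 + |x_2| + \cdots + |x_{\alpha+1}| - |x_{\alpha+2}| - \cdots - |x_{\alpha+\delta+1}|)$. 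The absolute values arise because $\hphi$ is even and the prime sum ranges over positive $\log p_j / \log R$ only, so reflecting to a symmetric integral over $\R$ replaces each $x_{j+1}$ by $|x_{j+1}|$ in the sign-sensitive argument. Finally I would rewrite $\Phi_{n-\alpha-\delta} = \phi^{n-\alpha-\delta}$, fold the $\tfrac14\Phi_{n-\alpha-\delta}(0)$ term and the $\tfrac{1}{2\pi x_1}$ normalization into the stated form, and collect the constant: the $-\tfrac12$ and $\tfrac14$ from Lemma~\ref{lem:ils7} together with the $2$-adic bookkeeping give the prefactor $-2^{-1-\alpha-\delta}(-1)^\alpha$, where the $(-1)^\alpha$ tracks that the $\alpha$ primes dividing $b$ enter the Ramanujan sum $R(p_1\cdots p_\alpha, b)$ and hence flip sign relative to the $\delta$ modulus-primes (consistent with the asymmetric $+\cdots-\cdots$ pattern inside the sine).

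The main obstacle I anticipate is the careful bookkeeping of signs and powers of $2$ — tracking exactly how the $\alpha$ primes (dividing $b$), the $\delta$ primes (in the modulus $M$), and the $n-\alpha-\delta$ ``free'' primes (inside $\Phi_{n-\alpha-\delta}$) each contribute to the argument of the sine and to the overall constant, and verifying that reflecting the one-sided prime sums into symmetric integrals over $\R$ produces exactly the absolute-value pattern $1 + |x_2| + \cdots + |x_{\alpha+1}| - |x_{\alpha+2}| - \cdots - |x_{\alpha+\delta+1}|$. A secondary technical point is justifying the interchange of the (convergent) $m$-sum with the limit and confirming that the cumulative error from the $O_{\epsilon'}(m^{\epsilon'}\log\log M/\log R)$ term in Lemma~\ref{lem:ils7}, summed against $\sum_p \frac{\log p}{p\log R}$ over the $\alpha+\delta$ prime variables, remains $\OllN$ — this uses that $\alpha + \delta \le a - 1$ and the support hypothesis $\supp(\hphi)\subset\oneoverna$ to keep all the prime sums and the resulting iterated integrals absolutely convergent with the right size.
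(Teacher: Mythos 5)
Your overall strategy — apply Lemma~\ref{lem:ils7} to the $b$-sum, let the delta factor collapse the $m$-sum to $m=1$, then convert the prime sums to integrals by Riemann--Stieltjes — is the same route the paper takes. But there is a genuine gap at the very first step: Lemma~\ref{lem:ils7} does \emph{not} apply directly to the $b$-sum in \eqref{eq:Gdef}. That sum carries the divisibility condition $p_1,\dots,p_\alpha \mid b$ and the factor $R(p_1\cdots p_\alpha, b)$ in place of the $R(1,b)$ that Lemma~\ref{lem:ils7} requires. The paper's proof spends most of its effort on exactly this reduction: first introduce distinctness into the prime sums (inclusion--exclusion plus Lemma~\ref{lem:abcde} to kill the coincidence terms), then write $b = b'p_1^{c_1}\cdots p_\alpha^{c_\alpha}$ and use Lemma~\ref{lem:abcde} again to show only $c_1=\cdots=c_\alpha=1$ survives, and only then use multiplicativity of $R$ and $\varphi$ to produce $R(1,b')R(m^2,b')/\varphi(b')$ times the crucial factor $R(m^2,p_1\cdots p_\alpha)$. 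Without distinctness of the $p_i$ none of these multiplicativity manipulations are legitimate, and without the factorization of $b$ you have nothing to feed into Lemma~\ref{lem:ils7}. Your plan treats all of this as ``bookkeeping,'' but it is the structural core of the argument.

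This omission also causes several concrete misidentifications in your outline. After the reduction, the coprimality modulus is $M = Np_1\cdots p_{\alpha+\delta}$ (all $\alpha+\delta$ primes), not $Np_{\alpha+1}\cdots p_{\alpha+\delta}$; consequently $m$ may be built from \emph{any} of the $p_j$, and ruling out $m>1$ is a separate main-term estimate (the $\sum_{t_i\ge 1}$ analysis giving $O(1/\log R)$), not something absorbed by the convergence of $\sum m^{\epsilon'-2}$ in the error term. Likewise $Q = Np_1\cdots p_\alpha/(p_{\alpha+1}\cdots p_{\alpha+\delta})$, not $N/(p_1\cdots p_{\alpha+\delta})$: the $\alpha$ primes land in the numerator because $b=b'p_1\cdots p_\alpha$, and this — not the Ramanujan sum — is what produces the $+|x_2|+\cdots+|x_{\alpha+1}|$ versus $-|x_{\alpha+2}|-\cdots$ asymmetry in the sine. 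The prefactor $(-1)^\alpha$ comes separately from $R(1,p_1\cdots p_\alpha)=\mu(p_1\cdots p_\alpha)$ once $m=1$. Your stated $Q$ would give all minus signs, so the sign pattern in your final formula is asserted rather than derived. To repair the proof you need the distinctness and $b$-factorization steps, with Lemma~\ref{lem:abcde} (or an equivalent bound) controlling the discarded terms.
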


First, we transform the sum over the primes $p_1, \ldots, p_{\alpha +\delta}$ in $H(\alpha, \delta)$ to a sum over distinct primes. 
\begin{propt}
    A distinctness condition can be added to \eqref{eq:Gdef} at the cost of an error of size  $O(1/\log N)$.
\end{propt}
\begin{proof}
The distinctness condition can be imposed to the sums over primes of \eqref{eq:Gdef} by inclusion-exclusion,  depending on which primes are equal. If $p_i = p_j$ for some $1 \le i \le \alpha$ and $\alpha + 1\le j \le \alpha + \delta$, then the corresponding term of $H(\alpha, \delta)$ is zero due to the condition on the $b$-sum.

Without loss of generality, we let $p_1\cdots p_\alpha = q_1^{u_1} \cdots q_{\alpha'}^{u_{\alpha'}}$ and $p_{\alpha + 1} \cdots p_{\alpha + \delta'} = q_{\alpha' + 1}^{u_{\alpha'+1}} \cdots q_{\alpha' + \delta'}^{u_{\alpha' + \delta'}}$, where the primes $q_i$ are distinct and at least one $u_i > 1$. To add in a distinctness condition to $H(\alpha, \delta)$, our inclusion-exclusion introduced   terms of the form
\begin{align}
     &\sum_{\substack{q_1, \ldots, q_{\alpha' + \delta'}\\ q_i \text{ distinct}} } \prod_{j=1}^{\alpha' + \delta'} \hat{\phi}\left(\frac{\log q_j}{\log R} \right)^{u_j} \frac{\log^{u_j} q_j}{q_j^{u_j} \log^{u_j} R} \,  \sum_{m \le N^\epsilon} \frac{1}{m^2}   \sum_{\substack{(b,Nq_{\alpha' +1} \cdots q_{\alpha' + \delta'}) = 1  \\ q_1^{u_1}, \ldots, q_{\alpha'}^{u_{\alpha'}} \mid b}}  \frac{R(m^2, b) R(q_1^{u_1} \cdots q_{\alpha'}^{u_{\alpha'}}, b)} {\varphi(b)} \nonumber \\  
    &\hspace{1cm} \times \int_{x=0}^{\infty} J_{k-1}(x) \widehat{\Phi_{n-\alpha- \delta}} \left( \frac{2\log(bx\sqrt{N} /(4\pi m\sqrt{q_1^{u_1} \cdots q_{\alpha' + \delta'}^{u_{\alpha' + \delta'}}}))}{\log R} \right) \frac{dx}{\log R }.\label{eq:Gremainder}
\end{align}

After breaking up \eqref{eq:Gremainder} based on the multiplicities of the prime factors of $b$, we appeal to Lemma~\ref{lem:abcde} and find \eqref{eq:Gremainder} is $O(1/\log N)$ since at least one $u_i > 1$. This completes the proof. 
\end{proof}

Upon inserting the distinctness condition to $H(\alpha, \delta)$, we break up the terms based on the multiplicities of the prime factors of  $b$. Consider the $b$-sum of \eqref{eq:Gdef} over $b$'s  of the form  $b'p_1^{c_1} \cdots p_{\alpha}^{c_{\alpha}}$, where $(b', p_1 \cdots p_\alpha) = 1$. Due to Lemma~\ref{lem:abcde},  if any of the $c_i$'s is $>1$, then the corresponding term is $O(1/\log N)$. Thus, it suffices to consider the case when each of the $c_i$'s is  $1$, i.e.,
\begin{equation}\label{eq:G to H}
    \begin{split}
    H(\alpha, \delta)\ &=  \ \psumdist{\alpha + \delta} \prod_{j=1}^{\alpha + \delta} \hat{\phi}\left(\frac{\log p_j}{\log R} \right) \frac{\log p_j}{p_j \log R}  \msum \frac{1}{m^2}  \sum_{\substack{b = b'  p_1 \cdots  p_{\alpha} \\ (b',Np_{1} \cdots p_{\alpha + \delta}) = 1}}  \frac{R(m^2, b) R(p_1 \cdots p_{\alpha}, b)} {\varphi(b)}\\  
    & \times \int_{x=0}^{\infty} J_{k-1}(x) \widehat{\Phi_{n-\alpha- \delta}} \left( \frac{2\log(bx\sqrt{N} /(4\pi m\sqrt{p_1 \cdots p_{\alpha + \delta}}))}{\log R} \right) \frac{dx}{\log R } \, + \, O(1/\log N)  .
    \end{split}
\end{equation}

The $b$-sum can be simplified with the multiplicativity of $\varphi$ and the Ramanujan sums, $R(q,q) = \varphi(q)$, and the conditions  $b = b'p_1\cdots p_\alpha$ with $(b', N p_1 \cdots p_{\alpha +\delta} ) = 1$. In fact, 
\begin{equation}\label{eq:Ram simp}
\begin{split}
    \frac{R(m^2, b) R(p_1\cdots p_{\alpha}, b)} {\varphi(b)}\ &= \ \frac{R(m^2, b')R(m^2, p_1\cdots p_\alpha)R(1, b')}{\varphi(b')}. 
\end{split}
\end{equation}
Applying this to \eqref{eq:G to H}, we have
\begin{align}
    H(\alpha, \delta)\ &= \ \psumdist{\alpha + \delta} \prod_{j=1}^{\alpha + \delta} \hat{\phi}\left(\frac{\log p_j}{\log R} \right) \frac{\log p_j}{p_j \log R}  \msum \frac{1}{m^2} R(m^2, p_1\cdots p_\alpha)\nonumber \\  
    & \times \sum_{ (b',Np_{1} \cdots p_{\alpha + \delta}) = 1}  \frac{R(m^2, b')R(1, b')}{\varphi(b')} \int_{x=0}^{\infty} J_{k-1}(x) \widehat{\Phi_{n-\alpha- \delta}} \left( \frac{2\log(b'x\sqrt{Q} /(4\pi m))}{\log R} \right) \frac{dx}{\log R }\nonumber \\
    &+O(1/\log N),\label{eq:G to H simp}
    \end{align}
    where $Q = Np_1\cdots p_\alpha /(p_{\alpha+1} \cdots p_{\alpha+\delta})$. 

We are ready to apply Proposition~\ref{lem:ils7} to the sum over $b'$ in \eqref{eq:G to H simp}. We first show that the contribution of the error term of \eqref{eq:ils7} to $H(\alpha, \delta)$ is  $O\left( \frac{(\log \log N)^2}{(\log N)^{1/2}}\right)$. Indeed, using the multiplicativity of the Ramanujan sums,  the $m$-sum  in this case is
\begin{align}
   \ \ll \  \msum \frac{R(m^2, p_1 \cdots p_\alpha) }{ m^2} \cdot m^\epsilon \frac{(\log \log N)^2}{(\log N)^{1/2}} \ &\ll \ \frac{(\log \log N)^2}{(\log N)^{1/2}} \sum_{(m', \, p_1 \cdots p_\alpha) = 1} \frac{1}{(m') ^{2 - \epsilon}} \, \prod_{i=1}^{\alpha} \left[\sum_{t \ge 0} \frac{R(p_i^{2t}, p_i) }{ p_i^{ (2- \epsilon)t}}\right] \nonumber\\
   \ &\ll \ \frac{(\log \log N)^2}{(\log N)^{1/2}} \prod_{i=1}^{\alpha} \left[\sum_{t \ge 0} \frac{R(p_i^{2t}, p_i) }{ p_i^{ (2- \epsilon)t}}\right]. 
\end{align}
When $t = 0$, $R(p_i^{2t}, p_i) = R(1, p_i) = 1$. When $t > 0$, $R(p_i^{2t}, p_i) = \varphi(p_i)<p_i$. From this, it is clear that the sum over $t$ is $O(1)$ (independent of $p_i$) and the sum over $m$ is $O\left( \frac{(\log \log N)^2}{(\log N)^{1/2}} \right)$. Our claim follows by also considering the sums over primes and thus, 
\begin{equation}\label{eq:G ils7}
\begin{split}
    H&(\alpha, \delta)\ = \ \psumdist{\alpha + \delta}  \prod_{j=1}^{\alpha + \delta} \hat{\phi}\left(\frac{\log p_j}{\log R} \right) \frac{\log p_j}{p_j \log R}  \msum \frac{1}{m^2} R(m^2, p_1\cdots p_\alpha) \delta\left(\frac{m}{(m,M^\infty)},1\right)\frac{\varphi(M)}{M} \\  
    & \times \left(-\frac{1}{2}\int_{-\infty}^{\infty}\phi(x)^{n - \alpha - \delta} \sin\left(2\pi x\frac{\log(k^2Q/16\pi^2m^2)}{\log R}\right)\frac{dx}{2\pi x}
        + \frac{1}{4}\phi(0)^{n - \alpha -\delta} \right) +\converrorterm,
    \end{split}
\end{equation}
where $M = Np_1\cdots p_{\alpha + \delta}$. 

The factor $\delta\left(\frac{m}{(m,M^\infty)},1\right)$ and  the fact that $N \nmid m$ force $m$ in \eqref{eq:G ils7} to take the shape $p_1^{t_1} \cdots p_{\alpha + \delta}^{t_{\alpha + \delta}}$.   Additionally, the observations $Q = N p_1 \cdots p_\alpha /(p_{\alpha+1} \cdots p_{\alpha+\delta})$ and 
\begin{align*}
    \frac{\varphi(M)}{M} \  = \ \left(1-\frac{1}{N}\right)\prod_{j=1}^{\alpha+\delta} \, \left(1-\frac{1}{p_{j}}\right)
\end{align*}
permit us to  simplify \eqref{eq:G ils7} as
\begin{align}
    &H(\alpha, \delta)\ = \  \psumdist{\alpha + \delta} \prod_{j=1}^{\alpha + \delta} \hat{\phi}\left(\frac{\log p_j}{\log R} \right) \frac{\log p_j}{p_j \log R}\left(1 - \frac{1}{p_j} \right)  \sum_{0 \le t_1, \ldots , t_{\alpha + \delta} \le \epsilon\log N} \frac{R(p_1^{2t_1} \cdots p_\alpha^{2t_{\alpha}}, p_1 \cdots p_\alpha)}{p_1^{2t_1} \cdots p_{\alpha + \delta}^{2t_{\alpha + \delta}}}\nonumber\\
    &\times\left(-\frac{1}{2}\left. \int_{-\infty}^{\infty}\phi(x)^{n-\alpha-\delta}\sin\left(2\pi x\left(1 + \frac{\log p_1}{\log R} + \cdots + \frac{\log p_\alpha}{\log R}-  \frac{\log p_{\alpha + 1}}{\log R} - \cdots-  \frac{\log p_{\alpha + \delta}}{\log R} \right) \right)\frac{dx}{2\pi x} \right. \right.\nonumber\\
    & \hs{1}\left. + \frac{1}{4}\phi(0)^{n-\alpha-\delta}\right) + \converrorterm. \label{eq: Had0 simp}
    \end{align}
Next, we show that the contribution over the complement of $t_1 = \cdots = t_{\alpha + \delta} = 0$ in \eqref{eq: Had0 simp} is negligibly small. Indeed, upon inserting an absolute value,  the quantity of interest is  bounded by
\begin{equation}\label{eq: tbound}
    \begin{split}
      &\max_{1\le i\le \alpha+\delta} \, \sum_{\substack{p_1, \ldots, p_{\alpha + \delta} \\ p_j < R} } \prod_{j=1}^{\alpha + \delta}  \frac{\log p_j}{p_j \log R} \sum_{\substack{0 \le t_1, \ldots , t_{\alpha + \delta} \le \epsilon\log N \\ t_i \ne 0}} \frac{|R(p_1^{2t_1}, p_1)|\cdots  |R(p_\alpha^{2t_{\alpha}}, p_{\alpha})|}{p_1^{2t_1} \cdots p_{\alpha + \delta}^{2t_{\alpha + \delta}}},
    \end{split}
\end{equation}
which is equal to 
\begin{equation}\label{eq: tbound factored}
    \begin{split}
       \max_{1\le i\le \alpha+\delta} \, 
\prod_{j=1}^{\alpha + \delta} \left[ \sum_{p_j < R} \frac{\log p_j}{p_j \log R} \sum_{\delta(i, j) \le t_j \le \epsilon \log N} \frac{|R(p_j^{2t_j}, p_j^{s_j}) |}{p_j^{2t_j} }\right],
    \end{split}
\end{equation}
where  $s_j = 1$ for $1 \le j \le \alpha$ and 0 for $\alpha +1 \le j \le \alpha + \delta$.

We have $|R(1, p^{s_j})| = 1$ when $t_j = 0$,  whereas the sum over $t_j \ge 1$ is $O(1/p_j)$.  When $j \ne i$,   the sums over $t_j$'s and $p_{j}$'s are both $O(1)$.  When $j=i$, the sum over $t_i$'s is $O(1/p_i)$ as the term with $t_i = 0$ is absent. As a result,  we have
\begin{equation}
    \sum_{p_i < R} \frac{\log p_i}{p_i \log R} \sum_{1 \le t_i \le \epsilon \log N} \frac{|R(p_i^{2t_i}, p_i^{s_i})| }{p_i^{2t_i} }  \ \ll \ \sum_{p_i < R} \frac{\log p_i}{p_i^2 \log R} \ \ll \ 1/\log N.
\end{equation}
Upon taking the product over $j$, we deduce that \eqref{eq: tbound factored} is $O(1/\log N)$. Our claim follows.

It remains to consider the contribution when $t_1 = \cdots = t_{\alpha + \delta} = 0$ (and so $m = 1$). In this case, we approximate $\prod_{j} \, (1 - 1/p_j)$ in \eqref{eq: Had0 simp} by $1$ as the contribution from the rest of the terms in the expansion of such a product is negligibly small. More precisely, we have 
\begin{align}
    &H(\alpha, \delta)\ = \  (-1)^\alpha \psumdist{\alpha + \delta} \prod_{j=1}^{\alpha + \delta} \hat{\phi}\left(\frac{\log p_j}{\log R} \right) \frac{\log p_j}{p_j \log R}\nonumber  \\
    &\times\left(-\frac{1}{2}\left. \int_{-\infty}^{\infty}\phi(x)^{n-\alpha-\delta}\sin\left(2\pi x\left(1 + \frac{\log p_1}{\log R} + \cdots + \frac{\log p_\alpha}{\log R}-  \frac{\log p_{\alpha + 1}}{\log R} - \cdots-  \frac{\log p_{\alpha + \delta}}{\log R} \right) \right)\frac{dx}{2\pi x} \right. \right.\nonumber\\
    & \hs{1}\left. + \frac{1}{4}\phi(0)^{n-\alpha-\delta}\right) \, + \,  \converrorterm,\label{eq: Gad m 1}
    \end{align}
where we use the identity $R(1, p_{1}\cdots p_{\alpha}) = \mu(p_1 \cdots p_\alpha) = (-1)^\alpha$. 
\begin{propt}
    Equation \eqref{eq: Gad m 1} holds with the distinctness condition in the prime sum removed.
\end{propt}
\begin{proof}
    In the process of removing the distinctness condition, we apply inclusion-exclusion and we are left to show that the terms with  $p_i = p_j$ for some $i\neq j$ can be eliminated as they contribute negligibly. Indeed, suppose the condition in the sum of \eqref{eq: Gad m 1} is replaced by  $p_1 \cdots p_{\alpha + \delta} = q_1^{a_1} \cdots q_\ell^{a_\ell}$, where $q_i \ne q_j$ when $i \ne j$ and where $a_j > 1$ for some $j$. Now,  such a contribution  is bounded by
    \begin{equation}\label{eq: nondistinct bound}
        \sum_{\substack{q_1, \ldots, q_\ell \\ q_i \text{ distinct}\\ q_i < R}} \prod_{j=1}^\ell \frac{\log^{a_j}q_j}{q_j^{a_j} \log^{a_j} R} \ \ll \ \sum_{\substack{q_1, \ldots, q_\ell \\ q_i < R}} \prod_{j=1}^\ell \frac{\log^{a_j}q_j}{q_j^{a_j} \log^{a_j} R} \ \ll \  \prod_{j=1}^\ell \left[ \sum_{q_j < R} \frac{\log^{a_j}q_j}{q_j^{a_j} \log^{a_j} R} \right].
    \end{equation}
The last sum is $O(1)$ and $O( 1/\log^{a_j}R)$
when $a_j = 1$ and $a_j > 1$ respectively. Since $a_j > 1$ for some $j$,  we find \eqref{eq: nondistinct bound} is $O(1/\log^2 R)$. This completes the proof.
\end{proof}

To complete the proof of Lemma~\ref{lem:Gtosin}, we apply the Prime Number Theorem with partial summation to each of the sums over primes in \eqref{eq: Gad m 1} without the distinctness condition. We find that
\begin{align}
    H(\alpha, \delta)\ &= \ -2^{-1-\alpha-\delta} (-1)^\alpha \intii \cdots \intii \hphi(x_2) \cdots \hphi(x_{\alpha+\delta+1})\nonumber \\
    &\hspace{1cm}\times \left[ \intii \phi^{n-\alpha-\delta}(x_1) \frac{\sin\left(2\pi x_1 (1 + |x_2| + \cdots + |x_{\alpha + 1}| - |x_{\alpha +2}| - \cdots - |x_{\alpha + \delta + 1}|)\right)}{2\pi x_1}dx_1\right.\nonumber \\
    &\hspace{1cm}- \left.\frac{1}{2}\phi^{n-\alpha-\delta}(0) \right]dx_2\cdots dx_{\alpha+\delta+1} + \converrorterm \label{eq:Gtosin}
\end{align}
as desired. \qed

\subsection{Proof of Proposition~\ref{lem:maintermcalc}: Combinatorial simplification of the main contribution}\label{sec: main term combo}
In this section we finish the proof of Proposition~\ref{lem:maintermcalc} by applying Lemma~\ref{lem:Gtosin} to \eqref{eq:A to F} and simplifying. This step is mostly combinatorial, although we need the following lemma.

\begin{lem}\label{lem:3.49gen}
We have
    \begin{equation}
        \intii \hphi(y)\left(\sin(z+ 2\pi x|y|) + \sin(z- 2\pi x|y|) \right) dy\ = \ 2\sin (z) \phi(x).
    \end{equation}
\end{lem}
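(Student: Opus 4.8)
The plan is to collapse the bracketed sum of sines with a product-to-sum identity and then recognize what remains as a Fourier cosine transform of $\hphi$. Concretely, applying $\sin(\alpha+\beta)+\sin(\alpha-\beta) = 2\sin\alpha\cos\beta$ with $\alpha = z$ and $\beta = 2\pi x|y|$ gives
\[
    \sin(z + 2\pi x|y|) + \sin(z - 2\pi x|y|) \ = \ 2\sin(z)\cos(2\pi x|y|),
\]
so the left-hand side of the lemma equals $2\sin(z)\int_{-\infty}^{\infty}\hphi(y)\cos(2\pi x|y|)\,dy$. The integral converges absolutely since $\phi$ is Schwartz, hence $\hphi$ is Schwartz and in particular integrable, so pulling the constant $2\sin(z)$ out is legitimate.

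Next I would remove the absolute value and pass to the exponential kernel. Because cosine is even, $\cos(2\pi x|y|) = \cos(2\pi xy)$ for every $y$, so the remaining integral is $\int_{-\infty}^{\infty}\hphi(y)\cos(2\pi xy)\,dy$. Writing $\cos(2\pi xy) = \tfrac12\bigl(e^{2\pi\i xy} + e^{-2\pi\i xy}\bigr)$ and making the substitution $y\mapsto -y$ in the term with $e^{-2\pi\i xy}$, the evenness of $\hphi$ (which holds since $\phi$ is even) identifies the two pieces, yielding $\int_{-\infty}^{\infty}\hphi(y)\cos(2\pi xy)\,dy = \int_{-\infty}^{\infty}\hphi(y)e^{2\pi\i xy}\,dy = \phi(x)$ by the Fourier inversion formula in the normalization fixed in the Preliminaries. (Equivalently, one notes $\int\hphi(y)\sin(2\pi xy)\,dy = 0$ by oddness of the integrand and reads off the cosine integral as the inversion integral itself.) Combining, the left-hand side equals $2\sin(z)\phi(x)$, which is the claimed identity.

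There is essentially no obstacle: the only two points meriting a word of care are the absolute convergence of the integral (immediate from Schwartz decay) and the evenness of $\hphi$, which is what allows the replacement of $\cos(2\pi x|y|)$ by the full exponential kernel and hence the appeal to Fourier inversion; the rest is a single trigonometric manipulation.
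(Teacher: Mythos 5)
Your proof is correct and follows essentially the same route as the paper's: the product-to-sum identity $\sin(z+\beta)+\sin(z-\beta)=2\sin(z)\cos(\beta)$ followed by recognizing $\int\hphi(y)\cos(2\pi xy)\,dy$ as the (real part of the) Fourier inversion integral, giving $\phi(x)$. Note that the paper's statement writes $\phi(xy)$ on the right-hand side, which is a typo for $\phi(x)$ (as your derivation and the lemma's subsequent use in establishing $I(\alpha,\delta)=2I(\alpha,\delta-1)-I(\alpha+1,\delta-1)$ both confirm), so your conclusion is the correct one.
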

\begin{proof}
    Using that $\sin(z+ 2\pi x|y|) + \sin(z- 2\pi x|y|) =  2\sin(z)\cos(2\pi xy)$ we have that
    \begin{align}
        \intii \hphi(y)\left(\sin(z+ 2\pi x|y|) + \sin(z- 2\pi x|y|) \right) dy\ &= \ 2\sin(z) \intii \hphi(y)  \cos(2\pi xy) dy\nonumber \\
        &=\ 2\sin(z) \intii \hphi(y) \Re( \exp{2\pi i xy})   dy\nonumber \\
        &=\ 2\sin(z) \phi(x)
    \end{align}
as desired.
\end{proof}

Applying Lemma~\ref{lem:Gtosin} to \eqref{eq:A to F} gives
\begin{align}
    A \ &= \ \sum_{\alpha = 0}^{a-1} \sum_{\delta = 0}^{a - \alpha - 1} \binom{n}{\alpha + \delta} \binom{\alpha + \delta}{\alpha} \sum_{i=0}^{a - \alpha - \delta - 1} (-1)^i \binom{n - \alpha - \delta}{i} 2^{n-1-\alpha-\delta} (-1)^\alpha\nonumber \\
    &\hspace{1cm}\times \intii \cdots \intii \hphi(x_2) \cdots \hphi(x_{\alpha+\delta+1})\nonumber \\
    &\hspace{1cm}\times \left[ \intii \phi^{n-\alpha-\delta}(x_1) \frac{\sin\left(2\pi x_1 (1 + |x_2| + \cdots + |x_{\alpha + 1}| - |x_{\alpha +2}| -\cdots  - |x_{\alpha + \delta + 1}|)\right)}{2\pi x_1}dx_1\right.\nonumber \\
    &\hspace{2cm}- \left.\frac{1}{2}\phi^{n-\alpha-\delta}(0) \right]dx_2\cdots dx_{\alpha+\delta+1} + \converrorterm . \label{eq:mainterm with sin}
    \end{align}
Our first step is to eliminate the integral over $\phi^{n - \alpha - \delta} (0)$ in \eqref{eq:mainterm with sin} when $\alpha + \delta > 0$. We fix some $\nu \leq a$ and collect the terms of \eqref{eq:mainterm with sin} for which $\alpha + \delta = \nu$:
\begin{align}
    &  \left[\sum_{\alpha = 0}^\nu \binom{\nu}{\alpha} (-1)^\alpha\right] \binom{n}{\nu}  \sum_{i=0}^{a - \nu -1} (-1)^i \binom{n - \nu}{i} 2^{n-1-\nu}\nonumber  \\
    & \hspace{1cm}\times\int_{-\infty}^\infty \cdots \intii \hphi(x_2)\cdots \hphi(x_{\nu+1}) \left[-\frac{1}{2}\phi^{n-\nu}(0)\right] dx_2\cdots dx_{\nu+1}. \label{eq: nu ints}
\end{align}
By the Binomial Theorem, we have $\sum_{\alpha = 0}^\nu \binom{\nu}{\alpha} (-1)^\alpha = (1-1)^{\nu} = 0$ for $\nu > 0$, so the sum over $\alpha$ in \eqref{eq: nu ints} is 0 unless $\nu = 0$. Thus, the terms where $\alpha + \delta = \nu$ cancel when $\nu > 0$. When $\nu = 0$, we pull out the $-\frac{1}{2} \phi^n(0)$ term and find that

\begin{align}
    A \ &= \ \sum_{\alpha = 0}^{a-1} \sum_{\delta = 0}^{a - \alpha - 1} \binom{n}{\alpha + \delta} \binom{\alpha + \delta}{\alpha} \sum_{i=0}^{a - \alpha - \delta - 1} (-1)^i \binom{n - \alpha - \delta}{i} 2^{n-1-\alpha-\delta} (-1)^\alpha I(\alpha, \delta)\nonumber \\
    &\hspace{1cm} -2^{n-2}\phi^n(0)\sum_{i=0}^{a - 1} (-1)^i \binom{n}{i}  \,  + \converrorterm  \label{eq:AtoI}
\end{align}

\no where

\begin{align}
    I(\alpha, \delta) \ &= \ \intii \cdots \intii \hphi(x_2) \cdots \hphi(x_{\alpha+\delta+1}) \intii \phi^{n-\alpha-\delta}(x_1) \nonumber\\
    &\times   \frac{\sin\left(2\pi x_1 (1 + |x_2| + \cdots + |x_{\alpha + 1}| - |x_{\alpha +2}| - \cdots - |x_{\alpha + \delta + 1}|)\right)}{2\pi x_1}dx_1\cdots dx_{\alpha+\delta+1}. \label{eq:I def}
    \end{align}
We simplify the first sum over $\alpha$ in \eqref{eq:AtoI}, which we denote by $A'$. By Lemma \ref{lem:3.49gen} we have $I(\alpha, \delta) = 2 I(\alpha, \delta - 1) - I(\alpha + 1, \delta - 1)$. We express $A'$ in terms of $I(\alpha, 0)$ via the following result:
\begin{lem}\label{lem: I to 0}
    Let $I(\alpha, \delta)$ be defined as above. Then 
    \begin{equation}
        I(\alpha, \delta) \ = \ \sum_{j=0}^\delta 2^{\delta - j}(-1)^j \binom{\delta}{j} I(\alpha + j, 0).
    \end{equation}
\end{lem}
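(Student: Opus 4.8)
The plan is to induct on $\delta$, using the recursion $I(\alpha, \delta) = 2 I(\alpha, \delta - 1) - I(\alpha + 1, \delta - 1)$, which has already been derived above from Lemma~\ref{lem:3.49gen}. The base case $\delta = 0$ is immediate: the right-hand side collapses to the single term $\binom{0}{0}I(\alpha,0) = I(\alpha,0)$.

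For the inductive step, I would assume the claimed formula holds with $\delta$ replaced by $\delta-1$, for every value of $\alpha$, and substitute the inductive hypothesis for $I(\alpha,\delta-1)$ and for $I(\alpha+1,\delta-1)$ into the recursion. This expresses $I(\alpha,\delta)$ as a linear combination of the values $I(\alpha+j,0)$. After reindexing the sum coming from the $-I(\alpha+1,\delta-1)$ term by $j \mapsto j-1$ and collecting, the coefficient of $I(\alpha+j,0)$ equals $2^{\delta-j}(-1)^{j}\bigl(\binom{\delta-1}{j}+\binom{\delta-1}{j-1}\bigr)$, which is $2^{\delta-j}(-1)^{j}\binom{\delta}{j}$ by Pascal's identity (using the convention that a binomial coefficient with an out-of-range lower index is $0$, which handles the endpoints $j=0$ and $j=\delta$ automatically). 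This is exactly the desired formula at level $\delta$, closing the induction.

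A cleaner way to package the same computation is to write the recursion as $I(\,\cdot\,,\delta) = (2-\mathcal{S})\,I(\,\cdot\,,\delta-1)$, where $\mathcal{S}$ is the shift operator sending a function $\alpha\mapsto g(\alpha)$ to the function $\alpha\mapsto g(\alpha+1)$; iterating $\delta$ times gives $I(\,\cdot\,,\delta)=(2-\mathcal{S})^{\delta}\,I(\,\cdot\,,0)$, and the binomial theorem applied to $(2-\mathcal{S})^{\delta}$ yields the stated sum. Either way the argument is entirely formal, so I do not expect a genuine obstacle here: all the analytic content sits in the recursion $I(\alpha,\delta)=2I(\alpha,\delta-1)-I(\alpha+1,\delta-1)$ supplied by Lemma~\ref{lem:3.49gen}, and what remains is routine bookkeeping with binomial coefficients.
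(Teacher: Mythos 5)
Your proposal is correct and follows essentially the same route as the paper: the paper also iterates the recursion $I(\alpha,\delta)=2I(\alpha,\delta-1)-I(\alpha+1,\delta-1)$ by induction (phrased as an auxiliary claim in a parameter $k$ with $k=\delta$ at the end, which is exactly your $(2-\mathcal{S})^{\delta}$ unrolling) and closes the step with the same reindex-and-Pascal computation. No gap.
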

\begin{proof}
    We prove the following claim holds by induction, after which setting $k = \delta$ completes the proof of the lemma:
    \begin{equation}
        I(\alpha, \delta) \ = \ \sum_{j=0}^k 2^{k - j}(-1)^j \binom{k}{j} I(\alpha + j, \delta - k).
    \end{equation}
    The base case $k = 0$ holds immediately. Suppose the result holds up to $k$. Then using that $I(\alpha, \delta) = 2 I(\alpha, \delta - 1) - I(\alpha + 1, \delta - 1)$ we have that
    \begin{align}
        I(\alpha, \delta) \ &= \  \sum_{j=0}^k 2^{k - j}(-1)^j \binom{k}{j} (2I(\alpha + j, \delta - k - 1) - I(\alpha + j + 1, \delta - k -1))\nonumber \\
        &= \ \sum_{j=0}^{k+1} 2^{k+1 - j}(-1)^j \left[\binom{k+1}{j+1} -\binom{k}{j}\right]I(\alpha + j, \delta - k -1)\nonumber \\
        &= \ \sum_{j=0}^{k+1} 2^{k+1 - j}(-1)^j \binom{k+1}{j}I(\alpha + j, \delta - k -1)
    \end{align}
    completing the inductive hypothesis and the proof of the lemma.
\end{proof}

\begin{proof}[Proof of Proposition~\ref{lem:maintermcalc}.]
    Apply Lemma \ref{lem: I to 0} to \eqref{eq:AtoI}, we have
\begin{equation}\label{eq:AtoI0}
    \begin{split}
    A' \ &= \ \sum_{\alpha = 0}^{a-1} \sum_{\delta = 0}^{a - \alpha - 1}\sum_{j=0}^\delta \binom{n}{\alpha + \delta} \binom{\alpha + \delta}{\alpha} \sum_{i=0}^{a - \alpha - \delta - 1} (-1)^{\alpha +j+i} \binom{n - \alpha - \delta}{i} 2^{n-1-\alpha-j}    \binom{\delta}{j} I(\alpha + j, 0).
    \end{split}
\end{equation}
The terms are collected according to the values of   $\omega := \alpha + j$. Upon simplification, it follows that
\begin{equation}\label{eq:toomega}
    \begin{split}
    A' \ &= \ 2^{n-1}\sum_{\omega = 0}^{a-1} 2^{-\omega} (-1)^{\omega}I(\omega, 0) \sum_{\alpha = 0}^\omega \sum_{\delta = \omega - \alpha}^{a - 1 - \alpha} \sum_{i= 0}^{a - \delta - \alpha - 1} (-1)^i \binom{n}{\delta +\alpha} \binom{\delta + \alpha}{\alpha} \binom{n - \delta - \alpha}{i} \binom{\delta}{\omega - \alpha}.
    \end{split}
\end{equation}
We then make a change of variables $\delta = \ell + \omega - \alpha$ and rewrite the sums above as 
\begin{align}
    A' \ &= \ 2^{n-1}\sum_{\omega = 0}^{a-1} 2^{-\omega} (-1)^{\omega}I(\omega, 0) \sum_{\alpha = 0}^\omega \sum_{\ell = 0}^{a - 1 - \omega} \sum_{i= 0}^{a - \ell - \omega - 1} (-1)^i \binom{n}{\ell + \omega} \binom{\ell + \omega}{\alpha} \binom{n - \ell - \omega}{i} \binom{\ell + \omega - \alpha}{\omega - \alpha} \nonumber\\
    \ &= \ 2^{n-1}\sum_{\omega = 0}^{a-1} 2^{-\omega} (-1)^{\omega}I(\omega, 0) \sum_{\alpha = 0}^\omega \sum_{\ell = 0}^{a - 1 - \omega} \sum_{i= 0}^{a - \ell - \omega - 1} (-1)^i \binom{n}{\ell + \omega +i} \binom{\ell + \omega +i}{\ell + i} \binom{\omega}{\alpha} \binom{\ell + i }{i}.
\end{align}
Grouping terms based on the values $m = \ell + i$ and rearranging, we have
\begin{equation}\label{eq:groupm}
    \begin{split}
    A' \ &= \ 2^{n-1}\sum_{\omega = 0}^{a-1} 2^{-\omega} (-1)^{\omega}I(\omega, 0) \sum_{\alpha = 0}^\omega \binom{\omega}{\alpha} \sum_{m = 0}^{a - 1 - \omega} \binom{n}{m +\omega} \binom{m+\omega}{m} \sum_{i=0}^m  (-1)^i   \binom{m}{i} .
    \end{split}
\end{equation}
As a consequence of the Binomial Theorem, the sum over $i$ is zero unless $m = 0$.  Thus, summing over $\alpha$ yields $\binom{n}{\omega}2^\omega$ and so, 
\begin{equation}
\begin{split}
    A' \ &= \  2^{n-1}\sum_{\omega=0}^{a-1} \binom{n}{\omega}(-1)^\omega I(\omega, 0).
\end{split}
\end{equation}

Applying this to \eqref{eq:AtoI}, we find that $A = (-1)^{n+1} \mathcal{R}(n, a; \phi) + \converrorterm $, where $\mathcal{R}(n, a; \phi)$ was defined in \eqref{eq: R def}. This completes the proof of Proposition~\ref{lem:maintermcalc}.

\end{proof}


\section{Extending support for random matrix theory: Proof of Theorem \ref{thm:fullRMT}}\label{sec:rmt calc}
In this section, we prove Theorem \ref{thm:fullRMT}. We focus on the case where $n \ge 3$ as \cite[Theorem 1.7]{HM} have proved the $n = 2$ case.

In Section \ref{subsec:RMTprelims}, we use results from \cite{HR} and \cite{HM} to reduce the proof of Theorem \ref{thm:fullRMT} to proving Proposition \ref{thm:ComputeQnPhi}, which gives a closed form expression for the quantity $Q_n(\phi)$ defined in \eqref{eqn:QnPhi}. 

The rest of the section is dedicated to evaluating $Q_n(\phi)$. In Section \ref{subsec: Qnphi prelims}, we define the notion of a \emph{system of parameters} and of a \emph{$t$-class}, and express $Q_n(\phi)$ as a sum over these two objects in Lemma \ref{thm:Qn_end}. Lemma~\ref{thm:Qn_end} splits $Q_n(\phi)$ into a combinatorial piece and an integral piece. We evaluate the combinatorial piece in Section \ref{subsec: combo part}. In particular, we calculate the contribution to $Q_n(\phi)$ from $t$-classes with $t = 1$ using Lemma \ref{cor:nonoverlapping final coef}, and show that the $t$-classes with $t \ge 2$ do not contribute in Lemma \ref{cor:classes_cancel}. Then, in Section \ref{subsec: final integral}, we evaluate the integral piece, completing the proof of Proposition \ref{thm:ComputeQnPhi}.

\subsection{Proof of Theorem \ref{thm:fullRMT} assuming Proposition \ref{thm:ComputeQnPhi}}\label{subsec:RMTprelims}

We calculate the $n$th-centered moments of $Z(U; \phi)$, denoted in Section \ref{sec: rmt setup} by $\mathcal{Z}_{n}(M; \phi)$, using the method of cumulants. Weyl's explicit representation of Haar measure would allow us to compute the higher moments directly. However, to facilitate the comparison with number theory, we use the cumulants as in \cite{HR} and \cite{HM}. The cumulants $C_{\ell}^{+}(\phi)$ and $C_{\ell}^{-}(\phi)$ are defined to satisfy the following equality of formal power series:
\begin{align}
    \sum_{\ell  = 1}^{\infty}C_{\ell}^{+}(\phi)\frac{\lambda^{\ell}}{\ell !} \ &= \ \lim_{\substack{M \text{ even}\\ M \to \infty}} \log \E_{\text{SO}(M)}[\exp{\lambda Z(U; \phi)}],\\
    \sum_{\ell = 1}^{\infty}C_{\ell}^{-}(\phi)\frac{\lambda^{\ell}}{\ell !} \ &= \ \lim_{\substack{M \text{ odd}\\ M \to \infty}} \log \E_{\text{SO}(M)}[\exp{\lambda Z(U; \phi)}].
\end{align}

Given the first $n$ cumulants, one can compute the first $n$ moments. In particular, for $n \ge 2$, we have that
\begin{align} \label{eqn:CumulantsToMoments}
    \sideset{}{^\pm}{\lim}_{M\to \infty} \  \mathcal{Z}_{n}(M; \phi) \ = \ \sum_{\substack{2k_2 + 3k_3 + \cdots + nk_n = n\\
    k_{j}\geq 0}} \left(\frac{C^\pm_{2} (\phi) }{2!}\right)^{k_{2}} \cdots \left(\frac{C_{n}^\pm (\phi)}{n!}\right)^{k_{n}}\frac{n!}{k_{2}! \cdots k_{n}!}\,.
\end{align}
Now, set $S(x) \coloneqq \frac{\sin(\pi x)}{\pi x}$ and define
\begin{align}
    Q_n(\phi)\ & \coloneqq \ 2^{n - 1} \sum_{m=1}^{n}  \sum_{\substack{\lambda_{1}+\cdots+\lambda_{m} = n\nonumber\\
    \lambda_{j}\geq1}}\frac{(-1)^{m+1}}{m}\frac{n!}{\lambda_{1}!\cdots\lambda_{m}!} \intii \cdots \intii \phi(x_1)^{\lambda_1} \cdots \phi(x_m)^{\lambda_m} \\
     & \hs{1} \times S(x_{1}-x_{2}) S(x_2 - x_3) \cdots S(x_{m-1} - x_m) S(x_m + x_1) dx_1 \cdots dx_m.\label{eqn:QnPhi}
 \end{align}
We have the following result due to \cite{HR}.
\begin{lem}[\cite{HR}, Section 2.1] \label{thm:nthCumulant}
     Let $\phi\in \mathcal{S}_{ec}(\R)$ with $\supp(\hphi) \subseteq \twonsquare$.
     For $n \geq 3$,
     \begin{align}
     C_{n}^{\soe}(\phi) &\ = \  Q_{n}(\phi) \nonumber \\
     C_{n}^{\soo}(\phi) &\ = \  -Q_{n}(\phi).
     \end{align}
     Moreover, for $n\geq 4$,
     \begin{align}\label{2ndCumulant}
     C_{2}^{\soe} \ = \  C_{2}^{\soo} \ = \ 2 \intii|y|\hphi(y)^{2} dy \ = \ \sigma^2_\phi
     \end{align}
     where $\sigma^2_\phi$ is defined as in \eqref{eq:fullNT_var}.
\end{lem}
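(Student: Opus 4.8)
The plan is to follow Hughes and Rudnick \cite{HR}: express the cumulants of $Z_\phi$ through the eigenvalue correlation kernels of $\soe$ and $\soo$, and then use the support hypothesis to collapse the resulting sum of integrals onto the single combinatorial shape defining $Q_n(\phi)$.

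First I would pass from the linear statistic $Z_\phi(U)=\sum_j F_M(\theta_j)$ of \eqref{eqn:defofg} to the cumulant--kernel formalism. The eigenangles of an orthogonal matrix occur in pairs $\pm\theta$, together with one angle forced at $0$ when $M$ is odd; that pinned angle contributes only the deterministic constant $F_M(0)$ to $Z_\phi$, hence affects only the first cumulant, so for $n\ge2$ it can be dropped. After the standard reduction the $\pm\theta$ process is determinantal, and its correlation functions converge, uniformly enough to pass to the limit in the cumulant expansion (this is the Katz--Sarnak input \cite{KS1,KS2}), to $\det\big(K^{\pm}(x_i,x_j)\big)$, where $K^{\pm}(x,y)=S(x-y)\pm S(x+y)$, $S(x)=\sin(\pi x)/\pi x$, with the plus sign for $\soe$ and the minus for $\soo$. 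From the Fredholm expansion of $\log\E[e^{\lambda Z_\phi}]$ one reads off, as the coefficient of $\lambda^{n}/n!$, the usual ``cyclic'' formula: $C_n$ is a signed sum, over compositions $\lambda_1+\cdots+\lambda_m=n$ (here $\lambda_i\ge1$), of integrals $\int\phi(x_1)^{\lambda_1}\cdots\phi(x_m)^{\lambda_m}$ against a cyclic product $K^{\pm}(x_1,x_2)K^{\pm}(x_2,x_3)\cdots K^{\pm}(x_m,x_1)$, weighted by $\tfrac{(-1)^{m+1}}{m}\cdot\tfrac{n!}{\lambda_1!\cdots\lambda_m!}$, with an overall factor $2^{n-1}$ that records the unfolding of the $\pm\theta$ pairing.

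Next I would expand every kernel $K^{\pm}(x_i,x_{i+1})=S(x_i-x_{i+1})\pm S(x_i+x_{i+1})$ and classify the $2^{m}$ resulting terms by how many factors are of ``$S(x+y)$'' type. This is where $\supp(\hphi)\subseteq\twonsquare$ enters: since $S$ is the Fourier transform of the indicator of $[-\tfrac12,\tfrac12]$ and $\widehat{\phi^{\lambda_i}}$ is supported in an interval of radius $2\lambda_i/n$ with $\sum_i\lambda_i=n$, a Plancherel/Fourier-support argument shows that any term with two or more ``$S(x+y)$''-type factors integrates over an empty region and vanishes, while the term with none of them is a pure sine-kernel (unitary-type) cyclic integral, which the support hypothesis forces to vanish for $n\ge3$ (this is the mock-Gaussian cancellation of \cite{HR}). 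The only survivors have exactly one ``$S(x+y)$''-type factor; by cyclic symmetry the $m$ choices of its position give equal integrals and cancel the $\tfrac1m$, and the integrand becomes $\phi(x_1)^{\lambda_1}\cdots\phi(x_m)^{\lambda_m}\,S(x_1-x_2)\cdots S(x_{m-1}-x_m)\,S(x_m+x_1)$ --- exactly the integrand of $Q_n(\phi)$ in \eqref{eqn:QnPhi}. Since each surviving term carries precisely one ``$S(x+y)$'' factor, it inherits the coefficient $+1$ from $K^{+}$ and $-1$ from $K^{-}$; assembling everything gives $C_n^{\soe}(\phi)=Q_n(\phi)$ and $C_n^{\soo}(\phi)=-Q_n(\phi)$. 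I expect the main obstacle to be exactly this step: verifying that the support hypothesis annihilates precisely the ``wrong'' sign patterns, and that the survivors reassemble into $Q_n$ with the correct multiplicities and the prefactor $2^{n-1}$ --- the analytic and limiting estimates being routine.

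Finally, the second-cumulant claim does not really depend on $n$: for $n\ge4$ the hypothesis puts $\supp(\hphi)$ comfortably inside $(-1,1)$, where the pair correlation is the full limiting one. Writing $C_2$ as a double sum over traces, $C_2=\lim_{M}\tfrac1{M^{2}}\sum_{k,\ell}\hphi(k/M)\hphi(\ell/M)\operatorname{Cov}\big(\operatorname{Tr}(U^{k}),\operatorname{Tr}(U^{\ell})\big)$, and invoking the standard covariance identity for traces of powers of orthogonal matrices ($\operatorname{Cov}(\operatorname{Tr}(U^{k}),\operatorname{Tr}(U^{\ell}))$ equals $|k|$ on the diagonal $|k|=|\ell|$ and is negligible off it, in this range), a Riemann-sum approximation gives $C_2=2\intii|y|\hphi(y)^{2}\,dy=\sigma^2_\phi$, matching \eqref{2ndCumulant} and \eqref{eq:fullNT_var}.
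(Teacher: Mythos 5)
You should know at the outset that the paper does not prove this lemma at all: it is imported from Hughes--Rudnick \cite{HR} (in the $\soe$/$\soo$ form written down in \cite{HM}), and the text around it says exactly that. So there is no in-paper proof to compare against; the question is whether your reconstruction of the HR argument is sound. The overall shape of your plan is recognizable and reasonable: the eigenangle process on the half-spectrum is determinantal with kernel $S(x-y)\pm S(x+y)$, the cumulants of a linear statistic of a determinantal process are given by the cyclic Fredholm/Soshnikov expansion, and the support hypothesis is what kills all but the sign patterns containing a single ``$S(x+y)$'' factor, with the all-difference pattern vanishing for $n\ge 3$ by the unitary mock-Gaussian result. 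Your treatment of $C_2$ via trace covariances is also fine. (One small point: no Katz--Sarnak input is needed here; for fixed $M$ the trace/kernel formulas are exact and one takes $M\to\infty$ at the end.)

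The genuine gap is in the step you yourself flag as the ``main obstacle'' and then treat as routine: the reassembly of the surviving terms into $Q_n(\phi)$ with the stated multiplicities. As written, your bookkeeping does not land on \eqref{eqn:QnPhi}. Two things go wrong simultaneously. First, the linear statistic restricted to the half-spectrum is $2\sum_{j\le M/2}F_M(\theta_j)$ (up to the pinned eigenvalue), so in the cyclic expansion each $\phi^{\lambda_i}$ carries a factor $2^{\lambda_i}$ and the total prefactor is $2^{n}$, not the $2^{n-1}$ of \eqref{eqn:QnPhi}. Second, your cyclic-symmetry argument — summing the $m$ possible positions of the single $S(x+y)$ factor and relabeling — cancels the $\frac1m$ in the Fredholm weight $\frac{(-1)^{m+1}}{m}\frac{n!}{\lambda_1!\cdots\lambda_m!}$, whereas \eqref{eqn:QnPhi} retains it. So your derivation produces $2^{n}\sum_m(-1)^{m+1}\sum_{\lambda}\frac{n!}{\prod\lambda_j!}\int\cdots$, which differs from the target term-by-term by a factor of $2m$; either the symmetrization must not be performed (in which case you must explain why only the $S(x_m+x_1)$ slot appears), or some compensating factor has been lost. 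Relatedly, the claim that every pattern with two or more $S(x+y)$ factors ``integrates over an empty region'' is not justified under $\supp(\hphi)\subseteq\twonsquare$ alone — this is precisely the delicate support analysis that the paper's own downstream machinery (e.g.\ Lemma \ref{lem:2_sets_support} and the indicator products in \eqref{eq:Ky}) is built to handle, and in \cite{HR} it is carried out on the Fourier side via moments of $\mathrm{Tr}(U^k)$ and the identity \eqref{eq: sosh id}, which is also where the surviving $\frac1m$ and the $2^{n-1}$ actually come from. Until that bookkeeping is done correctly, the sketch does not establish $C_n^{\soe}=Q_n(\phi)$ and $C_n^{\soo}=-Q_n(\phi)$ in the stated normalization.
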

\no Thus, in order to prove Theorem~\ref{thm:fullRMT}, it suffices to calculate $Q_n(\phi)$, which we do in the following proposition.

\begin{prop}\label{thm:ComputeQnPhi}
    Let $\phi\in \mathcal{S}_{ec}(\R)$ with $\supp (\hphi) \subseteq \nasquare $ for some integer $1\le a\le \lceil n/2 \rceil$.  Let $R(n, a; \phi)$ be as in \eqref{eq: R def}. Then
    \begin{align}
        Q_n(\phi) \ =\ \mathcal{R}(n, a; \phi).
    \end{align}
\end{prop}
We complete the proof of Proposition \ref{thm:ComputeQnPhi} in Section \ref{subsec: final integral}.
Assuming Proposition~\ref{thm:ComputeQnPhi}, we now prove Theorem~\ref{thm:fullRMT}.
\begin{proof}[Proof of Theorem \ref{thm:fullRMT}]
By \cite[Theorem~1.4]{HM}, since $\supp(\hphi) \subseteq \left[ -\frac{1}{n-a}, \frac{1}{n-a} \right]$, then $C_j^\pm(\phi) = 0$ for $3 \le j \le n-a$, as in this case the cumulants are the same as those of the Gaussian. Hence, restricting the sum in \eqref{eqn:CumulantsToMoments} to those terms with $k_3=\cdots=k_{n-a}=0$ does not change its value. Moreover, $a \le \lceil n/2\rceil$ and $\sum_{\ell=2}^{n}\ell k_{\ell} = n$ imply that $ k_{n}, k_{n-1}, \dots, k_{n-a+1} \in\{0,1\}$ and at most one of $k_{n}, k_{n-1}, \dots, k_{n-a+1}$ is equal to $1$. Thus, we can rewrite \eqref{eqn:CumulantsToMoments} as
\begin{equation} 
     \sideset{}{^\pm}{\lim}_{M\to \infty} \  \mathcal{Z}_{n}(M; \phi) \ =\ \mathbbm{1}_{\{n \text{ even}\}} \left(\frac{C^\pm_2(\phi)}{2}\right)^{n/2}\frac{n!}{(n/2)!} + 
     \sum_{\substack{k_2, \ell \\ 2k_2+(n-\ell)=n \\ 0 \leq \ell \leq a-1}} \left(\frac{C_{2}^\pm (\phi) }{2!}\right)^{k_{2}}\left(\frac{C_{n-\l}^\pm (\phi)}{(n-\l)!}\right)\frac{n!}{k_{2}!}
\end{equation}
where the first term is from when $k_2 = n/2$.
Observing that $2k_2 + (n-\ell) = n$ forces $\ell=2k_2$, we have
\begin{align}
    \sideset{}{^\pm}{\lim}_{M\to \infty} \  \mathcal{Z}_{n}(M; \phi) &\ = \
    \mathbbm{1}_{\{n \text{ even}\}} \left(C_{2}^{\pm}(\phi)\right)^{n/2}(2n-1)!! + \sum_{k_2=0}^{\lfloor\frac{a-1}{2}\rfloor} \frac{n! C_{n-2k_2}^{\pm}(\phi)}{k_2!(n-2k_2)!}\left( \frac{C_2^\pm(\phi)}{2}\right)^{k_2} .\label{eqn:nthMoment}
\end{align}
Now, applying Lemma~\ref{thm:nthCumulant} and Proposition~\ref{thm:ComputeQnPhi} to the right hand side of \eqref{eqn:nthMoment} and simplifying completes the proof of Theorem~\ref{thm:fullRMT} after comparing with \eqref{eq: Snaphi def}. 
\end{proof}


\subsection{Decomposition of \texorpdfstring{$Q_n(\phi)$}{Q}}\label{subsec: Qnphi prelims}

In this section, we work towards Proposition~\ref{thm:ComputeQnPhi} by evaluating $Q_n(\phi)$, as defined in \eqref{eqn:QnPhi}, when $\supp(\hphi)\subseteq \nasquare$ and $a \le \lceil n/2 \rceil$. The main result of this subsection is Lemma~\ref{thm:Qn_end}, which splits $Q_n(\phi)$ into a combinatorial term and an integral term which we will then evaluate separately. 

Equation (5.27) of \cite{HM} gives (independent of the choice of support) that
\begin{equation}\label{eq:Qn}
	Q_n(\phi)\ =\  2^{n-2} \int_0^\infty \cdots \int_0^\infty \hat{\phi}(y_1)\cdots\hat{\phi}(y_n)K(y_1,\ldots,y_n)dy_1\cdots dy_n,
\end{equation}
where
\begin{equation}\label{eq:Ky}
	K(y_{1},\ldots,y_{n})\ \coloneqq\ \sum_{m=1}^{n}\sum_{\substack{\lambda_{1}+\ldots+\lambda_{m}=n\\
			\lambda_{j}\geq1}
	}\frac{(-1)^{m+1}}{m}\frac{n!}{\lambda_{1}!\cdots\lambda_{m}!}\sum_{\epsilon_{1},\ldots,\epsilon_{n} \in \{\pm 1\}}\prod_{\ell=1}^{m}\indicator_{\left\{ \left|\sum_{j=1}^{n}\eta(\ell,j)\epsilon_{j}y_{j}\right|\leq1\right\} }
\end{equation}
and
\begin{equation}\label{eq:eta}
	\eta(\ell,j) \ \coloneqq\  \begin{cases} +1 &\text{if } j\leq\sum_{k=1}^{\ell}\lambda_{k} \\
	-1 &\text{if } j>\sum_{k=1}^{\ell}\lambda_{k}. \end{cases}
\end{equation}


\subsubsection{Simplifying $K(y_1,\dots,y_n)$}
To evaluate $Q_n(\phi)$, we first discuss how we will interpret the expression $K(y_1,\dots,y_n)$ for $y_1,\dots,y_n \in \zerona$. 

\begin{defn}

Throughout this section, if $I \subseteq \{1,\dots,n\}$, we write 
\begin{equation}\label{eq: chi I def}
	\indicator_{I} \ =\  \indicator_{\{y_1+\dots+y_n > 1 + 2\sum_{i\in I} y_i\}}.
\end{equation}

\end{defn}

\begin{defn}
	A \textbf{\emph{system of parameters}} (or \textbf{\emph{\sop{}}}) is an ordered tuple $(m,\la_1,\dots,\la_m,\epsilon_1,\dots,\epsilon_n)$ with $1\le m \le n$, $\la_1+\dots+\la_m = n$, $\la_i \ge 1$ for all $1\le i\le m$, and $\epsilon_j = \pm 1$ for each $1\le j \le n$. 
\end{defn}

Given a system of parameters $S$, we may use $\eta_S(\ell, j)$ to denote the function $\eta(\ell, j)$ where the $\la_k$ are taken from $S$. When it is clear from context that the $\la_k$ are taken from the \sop{} $S$, we simply denote this function $\eta(\ell, j)$. Fix $n \ge 2a$ and a \sop{} $S =(m,\la_1,\dots,\la_m,\epsilon_1,\dots,\epsilon_n)$. Consider the product
\begin{equation}\label{eq:theproduct}
	\prod_{\ell=1}^{m}\indicator_{\{|\sum_{j=1}^n \eta(\ell,j)\epsilon_jy_j| \le 1\}}
\end{equation}
from \eqref{eq:Ky}. Fix $1\le \ell_0 \le m$. In order to study \eqref{eq:theproduct}, we study the complement of the indicator functions in \eqref{eq:theproduct}, given by
\begin{equation}\label{eq:vanishing_chi}
	\indicator_{\{|\sum_{j=1}^n \eta(\ell_0,j)\epsilon_jy_j| > 1\}}.
\end{equation}

For $y_1, \ldots, y_n \in \zerona$, if $\sum_{j=1}^n \eta(\ell_0,j)\epsilon_jy_j > 1$ then we cannot find $y'_1, \ldots, y'_n \in \zerona$ such that $\sum_{j=1}^n \eta(\ell_0,j)\epsilon_jy_j < -1$ because $a \le \lceil n/2 \rceil$. Thus the indicator function \eqref{eq:vanishing_chi} is identical to \eqref{eq: chi I def} for a particular choice of $I$. Moreover, there exists $y_i \in \zerona$ such that \eqref{eq:vanishing_chi} is nonzero if and only if one of the following (mutually exclusive) conditions holds:
\begin{enumerate}
	\item[(i)] $|\{1\le j \le n : \eta (\ell_0, j) \epsilon_j = +1 \}| \le a-1$, or
	\item[(ii)] $|\{1\le j \le n : \eta (\ell_0, j) \epsilon_j = -1 \}| \le a-1$\,.
\end{enumerate}
If case (i) holds, we define
\begin{equation}
J_{\ell_0} \coloneqq \{1\le j \le n : \eta(\ell_0,j)\epsilon_j = +1\}\,
\end{equation}
and say that $J_{\ell_0}$ has sign $\zeta_{\ell_0} = +1$.\\
If case (ii) holds, we define
\begin{equation}
J_{\ell_0} \coloneqq \{1\le j \le n : \eta(\ell_0,j)\epsilon_j = -1\}\,
\end{equation}
and say that $J_{\ell_0}$ has sign $\zeta_{\ell_0} = -1$.\\
If neither case holds, then $J_{\ell_0}$ is undefined. 

\begin{lem}\label{lem:there_is_only_one}
	If $S =(m,\la_1,\dots,\la_m,\epsilon_1,\dots,\epsilon_n)$ is a system of parameters and $J \subseteq [1,n]$ is any subset,  then there is at most one $\ell_0 \in [1,m]$ and $\zeta \in \{\pm 1\}$ such that $\eta(\ell_0,i)\epsilon_i = \zeta$ for $i \in J$ and $\eta(\ell_0,j)\epsilon_j = -\zeta$ for $j\notin J$.
\end{lem}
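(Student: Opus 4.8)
The plan is to argue by contradiction: suppose there were two distinct indices $\ell_0, \ell_0' \in [1,m]$ (with associated signs $\zeta, \zeta'$) both realizing the same subset $J$, meaning $\eta(\ell_0,i)\epsilon_i = \zeta$ and $\eta(\ell_0',i)\epsilon_i = \zeta'$ for all $i \in J$, while $\eta(\ell_0,j)\epsilon_j = -\zeta$ and $\eta(\ell_0',j)\epsilon_j = -\zeta'$ for all $j \notin J$. First I would observe that since the $\epsilon_j$ are fixed and nonzero, the data of which $\ell_0$ realizes $J$ is equivalent to the data of the sign pattern $j \mapsto \eta(\ell_0, j)$, up to the global choice of $\zeta$. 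So the statement reduces to a purely combinatorial fact about the function $\eta$ defined in \eqref{eq:eta}.

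The key structural point is that for fixed $\la_1,\dots,\la_m$, the function $j \mapsto \eta(\ell,j)$ is completely determined by the single threshold $\Lambda_\ell := \sum_{k=1}^\ell \la_k$: it equals $+1$ for $j \le \Lambda_\ell$ and $-1$ for $j > \Lambda_\ell$. Thus the sign pattern of $\eta(\ell,\cdot)$ is always an "initial segment of $+1$'s followed by a tail of $-1$'s," and as $\ell$ increases from $1$ to $m$ the thresholds $\Lambda_1 < \Lambda_2 < \dots < \Lambda_m = n$ are strictly increasing (because each $\la_k \ge 1$). Consequently the sets $\{j : \eta(\ell,j)\epsilon_j = +1\}$ for different $\ell$ are genuinely different as sets (they correspond to different thresholds applied to the fixed sign vector $\epsilon$), and likewise for the $-1$ sets. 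Now I would case on $\zeta$ versus $\zeta'$: if $\zeta = \zeta'$, then $\ell_0$ and $\ell_0'$ give the same set $\{j : \eta(\ell,j)\epsilon_j = \zeta\} = J$, which forces $\Lambda_{\ell_0} = \Lambda_{\ell_0'}$ and hence $\ell_0 = \ell_0'$ by strict monotonicity — contradiction. If $\zeta = -\zeta'$, then $\{j : \eta(\ell_0,j)\epsilon_j = +1\}$ and $\{j : \eta(\ell_0',j)\epsilon_j = +1\}$ are $J$ (say) and its complement $J^c$; I would need to rule this out, which is where the constraint $a \le \lceil n/2\rceil$ (equivalently $n \ge 2a$, in force throughout this subsection) enters, since in the intended application $|J| \le a-1 < n/2$, so $J$ and $J^c$ cannot both have size $\le a-1$; more directly, $J$ and $J^c$ cannot both be of the form "$\epsilon$ restricted below a threshold," but the cleanest route is just to note that such a $J$ is simply not among the sets that arise.

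The main obstacle I anticipate is handling the $\zeta \neq \zeta'$ case cleanly — one must be careful that the lemma is stated for an \emph{arbitrary} subset $J$, not only those arising from some $\ell$, so the honest statement is "at most one pair $(\ell_0,\zeta)$," and the proof is really: the map $(\ell,\zeta) \mapsto \{j : \eta(\ell,j)\epsilon_j = \zeta\}$ is injective on $[1,m]\times\{\pm1\}$. Injectivity in the first coordinate is the monotonicity argument above; injectivity across the sign flip follows because flipping $\zeta$ complements the set, and a set equal to its own complement is impossible (and two sets that are complements of each other arising from the threshold structure would force a threshold and its "reverse" to coincide, which the strict inequalities $0 < \Lambda_1 < \dots < \Lambda_m = n$ forbid). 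I would write this up by first stating the injectivity claim as the real content, proving it via the threshold description of $\eta$, and then deducing the lemma as stated.
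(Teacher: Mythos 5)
Your proposal is correct and takes essentially the same route as the paper: the paper also splits into the cases $\zeta=\zeta'$ and $\zeta=-\zeta'$ and derives a contradiction from the forced pointwise identity $\eta(\ell_0,\cdot)=\pm\eta(\ell_1,\cdot)$, which your threshold description ($\eta(\ell,j)=+1$ iff $j\le\Lambda_\ell$, with $\Lambda_1<\cdots<\Lambda_m$ strictly increasing) simply makes explicit where the paper writes ``clearly impossible.'' One remark: the detour through $|J|\le a-1$ and $a\le\lceil n/2\rceil$ is unnecessary (and, as you yourself note, inapplicable since $J$ is arbitrary); the clean argument in your final paragraph --- $\eta(\ell_0,1)=\eta(\ell_1,1)=+1$ rules out $\eta(\ell_0,\cdot)=-\eta(\ell_1,\cdot)$, and strict monotonicity of the $\Lambda_\ell$ rules out $\eta(\ell_0,\cdot)=\eta(\ell_1,\cdot)$ for $\ell_0\neq\ell_1$ --- is all that is needed.
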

\begin{proof}
	Suppose $\ell_1 > \ell_0$ and that both $\ell_0$ and $\ell_1$ have this property for some $\zeta_0$ and $\zeta_1$. Without loss of generality, we assume that $J = \{i : \eta(\ell_0,i)\epsilon_i = -1\}$. It is clear that we cannot also have $I = \{i : \eta(\ell_1,i)\epsilon_i = -1\}$, so we may assume that $I = \{i : \eta(\ell_1,i)\epsilon_i = +1\}$, but then we must have $\eta(\ell_0,j) = -\eta(\ell_1,j)$ for all $j$, and this is clearly impossible. 
\end{proof}

In particular, if $J_{\ell_{0}}$ and $J_{\ell_{1}}$ are both defined, then $J_{\ell_0} \neq J_{\ell_1}$.

\begin{defn}
	For a \sop{} $S =(m,\la_1,\dots,\la_m,\epsilon_1,\dots,\epsilon_n)$, let  $\{\ell_1,\dots,\ell_t\}\subseteq \{1,\dots,m\}$ be the set of indices for which $I_{\ell_j}$ is defined. Define
	\begin{equation}
	    J(S) \ \coloneqq\  \{{J_{\ell_1}},\dots,{J_{\ell_t}}\}\,.
	\end{equation}
	Define 
	\begin{equation}
		I(S) \ \coloneqq\  \{{I_1},\dots,{I_r}\}
	\end{equation}
	to be the subset of elements of $J(S)$ which are minimal with respect to inclusion. That is, $I(S)$ consists of those elements of $J(S)$ which do not strictly contain any other elements of $J(S)$. By Lemma~\ref{lem:there_is_only_one}, for each $i \in [1,r]$ there is a unique $\ell_{i}$ such that $I_i = J_{\ell_i}$.
	Finally, define	the function
	\begin{equation}\label{eq: sigma def}
		\sigma^S(y_1,\dots,y_n)\  \coloneqq\  \sum_{i=1}^{r} \sum_{1\le j_1<\dots<j_i\le r} (-1)^i (\indicator_{I_{j_1}}\cdots \indicator_{I_{j_i}})(y_1,\dots,y_n),
	\end{equation}
	and the quantity
	\begin{equation}\label{eq: AS def}
		A(S) \ \coloneqq\  \frac{(-1)^{m+1}}{m}\frac{n!}{\lambda_{1}!\cdots\lambda_{m}!}.
	\end{equation}
\end{defn}

The next lemma for $\sigma^S$ resembles the   M\"{o}bius inversion formula from elementary number theory. 

\begin{lem}\label{lem:sigma}
	For any \sop{} $S$, we have 
	\begin{equation}
		\sigma^S(y_1,\dots,y_n) \ =\  \begin{cases}
			-1 &\mbox{\emph{ if }} \indicator_I(y_1,\dots,y_n) = 1 \mbox{\emph{ for some }} I\in I(S) \\
			0 &\mbox{\emph{ otherwise.}}
		\end{cases}
	\end{equation}
\end{lem}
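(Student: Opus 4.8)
The plan is to prove Lemma~\ref{lem:sigma} by a direct inclusion-exclusion argument, exploiting the key structural fact that the indicator functions $\indicator_I$ for $I \in I(S)$ are \emph{mutually exclusive} on inputs $y_1,\dots,y_n \in \zerona$. First I would establish this mutual exclusivity: if $\indicator_{I}(y_1,\dots,y_n) = 1$ and $\indicator_{I'}(y_1,\dots,y_n) = 1$ for distinct $I, I' \in I(S)$, then by \eqref{eq: chi I def} we would need $y_1 + \dots + y_n > 1 + 2\sum_{i \in I} y_i$ and $y_1 + \dots + y_n > 1 + 2\sum_{i \in I'} y_i$ simultaneously. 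Recalling how the $I_j = J_{\ell_j}$ arise from \eqref{eq:vanishing_chi}: $\indicator_{J_{\ell_0}}$ being nonzero at some point corresponds to $\sum_j \eta(\ell_0,j)\epsilon_j y_j > 1$, i.e.\ (after relabeling via the sign $\zeta_{\ell_0}$) to $\sum_{j \notin J_{\ell_0}} y_j - \sum_{j \in J_{\ell_0}} y_j > 1$, which is exactly \eqref{eq: chi I def}. Since $a \le \lceil n/2 \rceil$ and each $y_i \le 1/(n-a)$ (so $\sum_i y_i \le n/(n-a) < 2$ in the relevant range — more carefully, the constraint is that at most $a-1$ of the coordinates can appear with a given sign, as in conditions (i)/(ii) preceding Lemma~\ref{lem:there_is_only_one}), two such strict inequalities for genuinely different subsets $I \ne I'$ cannot hold at once. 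This is essentially the same argument used in the text to show that if $\sum_j \eta(\ell_0,j)\epsilon_j y_j > 1$ then we cannot have $\sum_j \eta(\ell_0,j)\epsilon_j y_j < -1$; I would adapt it to rule out two distinct "large" subset-sums.

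Granting mutual exclusivity, I would then evaluate the sum \eqref{eq: sigma def} directly. Fix $(y_1,\dots,y_n)$. If $\indicator_I(y_1,\dots,y_n) = 0$ for every $I \in I(S)$, then every summand in \eqref{eq: sigma def} contains a factor $\indicator_{I_{j_1}} = 0$, so $\sigma^S(y_1,\dots,y_n) = 0$. If instead $\indicator_{I^*}(y_1,\dots,y_n) = 1$ for some (by mutual exclusivity, unique) $I^* = I_{j^*} \in I(S)$, then a product $\indicator_{I_{j_1}} \cdots \indicator_{I_{j_i}}$ is nonzero (and equals $1$) precisely when $\{j_1,\dots,j_i\} = \{j^*\}$, i.e.\ only the single-index term with $i=1$, $j_1 = j^*$ survives. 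That term contributes $(-1)^1 = -1$. Hence $\sigma^S(y_1,\dots,y_n) = -1$, matching the claimed formula.

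The main obstacle is the mutual-exclusivity step: one must carefully track what it means for two \emph{minimal} elements $I, I'$ of $I(S)$ to both give "large" subset sums, using the bound $a \le \lceil n/2\rceil$ together with the support restriction $y_i \in \zerona$. The subtlety is that the sets $J_{\ell}$ in $J(S)$ can have various signs $\zeta_{\ell}$, and one needs that the corresponding inequalities $\sum_i y_i > 1 + 2\sum_{i \in J_\ell} y_i$ (after normalizing all signs to the form \eqref{eq: chi I def}) are incompatible for distinct sets. I expect this to follow because each such inequality forces $\sum_{i \notin J_\ell} y_i > 1$ with at most $a-1 \le \lceil n/2\rceil - 1$ indices outside $J_\ell$ carrying the "positive" contribution, which combined with $y_i \le 1/(n-a)$ pins down the complementary set tightly enough that two different ones cannot coexist; alternatively one can invoke Lemma~\ref{lem:there_is_only_one} to note the sets in $J(S)$ are already distinct and argue that their defining inequalities partition the parameter region. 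Once mutual exclusivity is in hand, the rest is the routine inclusion-exclusion collapse described above.
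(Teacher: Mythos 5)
Your proposed route has a genuine gap: the mutual-exclusivity claim is false, and the paper's proof does not use it. By Lemma~\ref{lem:2_sets_support}, the product $\indicator_I\cdot\indicator_{I'}$ is forced to vanish only when $|I\cup I'|\ge a$; when $|I\cup I'|\le a-1$ the two indicators can easily be simultaneously $1$. For a concrete failure, take $I=\{1\}$, $I'=\{2\}$ with $a\ge 3$, and set $y_1=y_2=0$, $y_j=1/(n-a)$ for $j\ge 3$: then $y_1+\dots+y_n=(n-2)/(n-a)>1$ exceeds both $1+2y_1$ and $1+2y_2$. Indeed, the entire machinery of Section~\ref{subsec: t2 class} exists precisely because valid $t$-classes with $t\ge 2$ do occur, i.e.\ distinct minimal sets in $I(S)$ whose indicators are jointly supported; those contributions are killed by the combinatorial cancellation of Lemma~\ref{cor:classes_cancel}, not by disjointness of supports. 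Your heuristic for exclusivity also inverts the roles of $I$ and its complement: the inequality $\sum_i y_i > 1+2\sum_{i\in I}y_i$ puts the \emph{positive} mass on the at least $n-a+1$ indices outside $I$, so the complement is large and not "pinned down"; and the one-form argument showing $\sum_j\eta(\ell_0,j)\epsilon_jy_j$ cannot be both $>1$ and $<-1$ concerns a single set versus its complement, not two distinct small sets.

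The repair is short and is exactly what the paper does: fix $(y_1,\dots,y_n)$ and let $k$ be the number of $I\in I(S)$ with $\indicator_I(y_1,\dots,y_n)=1$. If $k=0$ every summand of \eqref{eq: sigma def} vanishes. If $k\ge 1$, the nonzero terms in the $i$th inner sum are exactly the $\binom{k}{i}$ choices of $i$ supported sets, so
\begin{equation}
  \sigma^S(y_1,\dots,y_n)\ =\ \sum_{i=1}^{k}\binom{k}{i}(-1)^i\ =\ (1-1)^k-1\ =\ -1,
\end{equation}
with no need for $k\le 1$. The rest of your write-up (the $k=0$ case and the overall structure) is fine once this step is replaced.
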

\begin{proof}
	Fix $(y_1,\dots,y_n)$. Suppose there are $k$ elements in $I(S)$ whose support contains $(y_1, \dots, y_n)$. If $k= 0$ the result is immediate. Now, for $k \ge 1$ and $1\le i \le k$, there are $\binom{k}{i}$ terms in the $i$th summand of with coefficient $(-1)^i$ and all the other terms vanish. Thus we have
	\begin{align}
		\sigma^S(y_1,\dots,y_n) &\ =\  \sum_{i=1}^k \binom{k}{i} (-1)^i   = (1-1)^k-1 = -1. 
	\end{align}
\end{proof}

\no We now have the following reformulation of the quantity from \ref{eq:Ky} in terms of $A(S)$, defined in \ref{eq: AS def}.
\begin{lem}\label{prop:K_simplified}
	For $(y_1,\dots,y_n) \in \zerona^n$,
	\begin{equation}\label{eq:K_simplified}
		K(y_1,\dots,y_n)\  =\  \sum_{t=1}^{n} (-1)^t \sum_{\substack{(I_1,\dots,I_t) \\ \mbox{\emph{\tiny valid}}}} (\indicator_{I_1}\cdots \indicator_{I_t})(y_1,\dots,y_n) \sum_{\substack{\mbox{\emph{\tiny \sop{} }} S \mbox{\emph{ \tiny with}} \\ {I_1},\dots,{I_t} \in I(S)}} A(S).
	\end{equation}
\end{lem}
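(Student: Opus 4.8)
The plan is to start from the definition of $K(y_1,\dots,y_n)$ in \eqref{eq:Ky} and rewrite the inner sum over sign patterns $\epsilon_1,\dots,\epsilon_n$ using the complementary-indicator decomposition built up in the preceding discussion. First I would, for a fixed system of parameters $S$, expand the product $\prod_{\ell=1}^m \indicator_{\{|\sum_j \eta(\ell,j)\epsilon_j y_j|\le 1\}}$ by complementary counting: writing each factor as $1 - \indicator_{\{|\sum_j \eta(\ell,j)\epsilon_j y_j| > 1\}}$ and multiplying out. Using the analysis before Lemma~\ref{lem:there_is_only_one}, on the domain $y_i \in \zerona$ with $a\le\lceil n/2\rceil$, each complementary indicator $\indicator_{\{|\sum_j \eta(\ell_0,j)\epsilon_j y_j|>1\}}$ is either identically zero (if neither case (i) nor case (ii) holds) or equals $\indicator_{J_{\ell_0}}$ for the set $J_{\ell_0}$ with its sign $\zeta_{\ell_0}$, in the notation of \eqref{eq: chi I def}. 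Hence the product over $\ell$ collapses to $\prod_{\ell : J_\ell \text{ defined}} (1 - \indicator_{J_\ell}) = 1 + \sigma^S(y_1,\dots,y_n)$ once we observe that, by minimality, $\indicator_{J}\le \indicator_{I}$ whenever $I\subseteq J$, so a product of $\indicator_{J_\ell}$'s over $J(S)$ equals the same product restricted to the minimal elements $I(S)$ (any non-minimal $\indicator_{J}$ is absorbed); this is exactly the content needed to pass from the $J_\ell$'s to $I(S)$, and then $\sigma^S$ is precisely the expansion of $-1 + \prod_{I\in I(S)}(1-\indicator_I)$.

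Next I would substitute $\sum_{\epsilon} \prod_\ell \indicator_{\{\cdots\}} = \sum_\epsilon (1 + \sigma^S)$ back into \eqref{eq:Ky}. The ``$1$'' contributes $\sum_S A(S)\cdot 2^n$, which by Soshnikov's identity \eqref{eq: sosh id} (applied with the role of $z^n$) sums to zero for $n\ge 2$ — I would double-check the exact bookkeeping here, but this constant term drops out. The remaining contribution is $\sum_S A(S) \sum_\epsilon \sigma^S(y_1,\dots,y_n)$. Now expand $\sigma^S$ via its definition \eqref{eq: sigma def} as an alternating sum of products $(-1)^t \indicator_{I_{j_1}}\cdots\indicator_{I_{j_t}}$ over $t$-tuples of distinct elements of $I(S)$, and swap the order of summation: sum first over $t$, then over tuples $(I_1,\dots,I_t)$ of subsets of $[1,n]$, then over those systems of parameters $S$ for which $I_1,\dots,I_t$ all lie in $I(S)$. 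This reordering is the heart of the argument and yields exactly the right-hand side of \eqref{eq:K_simplified}, with the word ``valid'' meaning precisely that the tuple $(I_1,\dots,I_t)$ arises (as distinct minimal elements) from at least one $S$; for tuples not arising from any $S$ the innermost sum is empty. I note also that $\sigma^S$ is evaluated at the point $(y_1,\dots,y_n)$, so the indicator factors $\indicator_{I_1}\cdots\indicator_{I_t}$ pull out of the $\epsilon$-sum cleanly — indeed there is no residual $\epsilon$-dependence once we are inside $\sigma^S$, because $\sigma^S$ depends only on which sets $I$ are in $I(S)$, and $I(S)$ is determined by $S$ (including its $\epsilon$'s) but $\sigma^S(y)$ itself no longer references $\epsilon$ — so the sum over $\epsilon$ just re-tallies which $S$'s give each configuration, which is already encoded in the final innermost sum over $S$.

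The main obstacle I anticipate is the careful justification that the product $\prod_{\ell:\,J_\ell \text{ defined}}(1-\indicator_{J_\ell})$ genuinely equals $1+\sigma^S$ and not merely something congruent to it: one must verify that non-minimal elements of $J(S)$ contribute nothing new, i.e. that $\indicator_{J}(y)=1 \Rightarrow \indicator_{I}(y)=1$ for $I\subseteq J$, $I\in I(S)$, which follows since $\sum y_i > 1 + 2\sum_{i\in J} y_i$ and $y_i\ge 0$ force $\sum y_i > 1 + 2\sum_{i\in I} y_i$. A secondary subtlety is the vanishing of the constant term via \eqref{eq: sosh id}, and making sure the index $m=1,\dots,n$ and the constraint $\lambda_1+\dots+\lambda_m=n$ line up with Soshnikov's formula; this is routine but must be stated. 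Once these two points are nailed down, the remaining steps are purely formal rearrangements of finite sums, and I would present them compactly rather than expanding every case.
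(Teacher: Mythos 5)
Your proposal is correct and follows essentially the same route as the paper: establish the pointwise identity $\prod_{\ell}\indicator_{\{|\sum_j\eta(\ell,j)\epsilon_jy_j|\le 1\}} = 1+\sigma^S$, kill the constant term using Soshnikov's identity \eqref{eq: sosh id}, and rearrange the expansion of $\sigma^S$ into a sum over $t$-tuples and systems of parameters. The only cosmetic difference is that you derive the key identity by expanding $\prod_{J\in J(S)}(1-\indicator_J)$ and using the monotonicity $\indicator_J\le\indicator_I$ for $I\subseteq J$ to pass to $I(S)$, whereas the paper invokes Lemma~\ref{lem:sigma} together with the observation that the product vanishes exactly when some $\indicator_I$ with $I\in I(S)$ is supported -- the same inclusion--exclusion in different clothing.
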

\begin{proof}
    The product \eqref{eq:theproduct} vanishes at $(y_1,\dots,y_n)$ if and only if there is some $J \in J(S)$ such that $\chi_{J}$ is supported at $(y_1,\dots,y_n)$ if and only if there is some $I \in I(S)$ such that $\chi_{I}$ is supported at $(y_1,\dots,y_n)$. So, by Lemma~\ref{lem:sigma},
    \begin{equation}\label{eq:indicator-to-sigma}
        \prod_{\ell=1}^{m}\indicator_{\{|\sum_{j=1}^n \eta(\ell,j)\epsilon_jy_j| \le 1\}}(y_1,\dots,y_n) = 1 +  \sigma^S(y_1, \cdots, y_n)\,.
    \end{equation}
    Substituting \eqref{eq:indicator-to-sigma} into \eqref{eq:Ky}, 
    we have that
	\begin{align}\label{eq: Ky pre sosh}
		K(y_1,\dots,y_n) &\ =\  \sum_{m=1}^{n}\sum_{\substack{\lambda_{1}+\ldots+\lambda_{m}=n\\ \lambda_{j}\geq1}}\frac{(-1)^{m+1}}{m}\frac{n!}{\lambda_{1}!\cdots\lambda_{m}!} \cdot 2^n + \sum_{\mbox{\tiny \sop{}'s } S} A(S) \sigma^S(y_1,\dots,y_n).
	\end{align}
    Now, we apply the following identity given by Soshnikov \cite{So}:
\begin{equation}\label{eq: sosh id}
    z \ =\  \log(1 + (e^z-1))\  =\ \sum_{n=1}^\infty z^n \sum_{m=1}^n \sum_{\substack{\lambda_1 + \cdots + \lambda_m = n \\ \lambda_j \ge 1}} \frac{(-1)^{m + 1}}{m} \frac{1}{\lambda_1 ! \cdots \lambda_m !}
\end{equation}
which gives that the first sum in \eqref{eq: Ky pre sosh} is 0. Expanding the second sum using the definition of $\sigma^S$ from \eqref{eq: sigma def} and rearranging completes the proof.
\end{proof}


\subsubsection{Simplifying $Q_n(\phi)$}
In this section, we simplify $Q_n(\phi)$ by applying Lemma \ref{prop:K_simplified} to the quantity $Q_n(\phi)$ as in \eqref{eq:Qn}. First, we define further notation which allows us to express $Q_n(\phi)$ (through Lemma \ref{thm:Qn_end}) in terms of combinatorial quantities which we then compute in Section \ref{subsec: t1 class} and \ref{subsec: t2 class}.

The symmetric group $S_n$ acts naturally on sets of (unordered) $t$-tuples of subsets of $[1,n]$ by permuting the elements in each subset of each tuple. Take such a $t$-tuple $(I_1, \ldots, I_t)$ and some $I_j = \{i_1, \ldots, i_k\}$. Given some permutation $\tau \in S_n$, we have that $\tau(I_j) = \{\tau(i_1), \ldots, \tau(i_k) \}$.
Let $\indicator_{I_1}\cdots\indicator_{I_t}$ and $\indicator_{J_1}\cdots\indicator_{J_t}$ be elements of $\Omega$ such that there exists a permutation $\tau \in S_n$ so that for each $1\le \ell \le t$, $\tau(I_\ell) = J_\ell$. Then,
\begin{equation}\label{eq: C is well defined}
	\int_0^\infty \cdots \int_0^\infty \hat{\phi}(y_1)\cdots\hat{\phi}(y_n)(\indicator_{I_1}\cdots\indicator_{I_t} - \indicator_{J_1}\cdots\indicator_{J_t})(y_1, \dots, y_n)dy_1\cdots dy_n = 0.
\end{equation}

\no This motivates the following definition.
\begin{defn}
	The symmetric group $S_n$ acts naturally on sets of (unordered) $t$-tuples of subsets of $[1,n]$, as described above. An orbit of this action is called a \textbf{\emph{$t$-class}}.  
\end{defn}

For a $t$-class $C$, we define
\begin{equation}\label{eq: int C def}
	\int\,  C \, dy \ \coloneqq \ \int_0^\infty \cdots \int_0^\infty \hat{\phi}(y_1)\cdots\hat{\phi}(y_n)\indicator(y_1, \dots, y_n) \; dy_1\cdots dy_n,
\end{equation}
where $\indicator \coloneqq \indicator_{I_1}\cdots\indicator_{I_t}$ with $(I_1,\dots,I_t) \in C$. Equation \eqref{eq: C is well defined} shows that the integral $\int\,  C \, dy$ is well-defined. 

\begin{defn}
	We call an unordered tuple $(I_1,\dots,I_t)$ of subsets of $\{1,\dots,n\}$ \textbf{\emph{valid}} if $I_1,\dots,I_t \in I(S)$ for some \sop{} $S$ and $\indicator_{I_1}\cdots\indicator_{I_t}$ is supported at some point in $\zerona^n$.
\end{defn}

We can extend this definition to a $t$-class.

\begin{defn}
	We call a $t$-class \textbf{\emph{valid}} if it contains at least one valid tuple.
\end{defn}


\no We are now ready to prove the main result of the section.
\begin{lem}\label{thm:Qn_end} 
For a \sop{} $S$ and a $t$-class $C$, set
	\begin{equation}\label{eq: TSC def}
		T(S,C) \ \coloneqq \  \#\left\{(I_1,\dots,I_t) \in C : I_1,\dots,I_t \in I(S) \right\}.
	\end{equation}
We have
	\begin{equation}\label{eq:Qn_end}
		Q_n(\phi) \ =\ 2^{n-2} \sum_{t=1}^n (-1)^t \sum_{\substack{ \mbox{\emph{\tiny valid $t$-classes }} C} } \left( \sum_{\substack{ \mbox{\emph{\tiny \sop{}'s }} S}}  T(S,C) \, A(S)  \right) \int C dy.
	\end{equation}
\end{lem}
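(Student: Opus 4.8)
The plan is to start from the integral representation of $Q_n(\phi)$ in \eqref{eq:Qn}, substitute the simplified formula for $K(y_1,\dots,y_n)$ from Lemma~\ref{prop:K_simplified}, and then reorganize the resulting triple sum (over $t$, over valid tuples $(I_1,\dots,I_t)$, and over systems of parameters $S$ with $I_1,\dots,I_t \in I(S)$) by grouping tuples into $t$-classes. The key observation that makes this work is \eqref{eq: C is well defined}: the integral $\int_0^\infty\cdots\int_0^\infty \hphi(y_1)\cdots\hphi(y_n)(\indicator_{I_1}\cdots\indicator_{I_t})(y_1,\dots,y_n)\,dy_1\cdots dy_n$ depends only on the $t$-class of $(I_1,\dots,I_t)$, not on the tuple itself, since permuting the $y_i$ variables is a measure-preserving change of variables and $\hphi(y_1)\cdots\hphi(y_n)$ is symmetric.

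\textbf{Step 1.} Plug Lemma~\ref{prop:K_simplified} into \eqref{eq:Qn}. This gives
\begin{equation*}
Q_n(\phi) \ = \ 2^{n-2} \int_0^\infty \cdots \int_0^\infty \hphi(y_1)\cdots\hphi(y_n) \sum_{t=1}^n (-1)^t \sum_{\substack{(I_1,\dots,I_t)\\ \text{valid}}} (\indicator_{I_1}\cdots\indicator_{I_t})(y_1,\dots,y_n) \sum_{\substack{\text{\sop{} } S\\ I_1,\dots,I_t \in I(S)}} A(S) \, dy_1\cdots dy_n.
\end{equation*}
\textbf{Step 2.} Interchange the (finite) sums with the integral; since for fixed $t$ the tuple $(I_1,\dots,I_t)$ and the inner coefficient $\sum_{S} A(S)$ (where the sum is over all \sop{}'s $S$ with $I_1,\dots,I_t \in I(S)$) do not depend on the $y_i$, we can move $\int_0^\infty \cdots \int_0^\infty \hphi(y_1)\cdots\hphi(y_n)(\indicator_{I_1}\cdots\indicator_{I_t})(y_1,\dots,y_n)\,dy$ inside as the only $y$-dependent factor. \textbf{Step 3.} Partition the set of valid $t$-tuples into $t$-classes. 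For each valid $t$-class $C$, every tuple $(I_1,\dots,I_t) \in C$ contributes the same integral $\int C\,dy$ by \eqref{eq: C is well defined}, so we collect: the total contribution of $C$ is $(-1)^t \left(\sum_{(I_1,\dots,I_t)\in C}\ \sum_{\text{\sop{} }S:\, I_1,\dots,I_t\in I(S)} A(S)\right) \int C\,dy$. \textbf{Step 4.} Swap the order of the two inner sums: $\sum_{(I_1,\dots,I_t)\in C}\sum_{S:\, I_1,\dots,I_t\in I(S)} A(S) = \sum_{S}\left(\#\{(I_1,\dots,I_t)\in C : I_1,\dots,I_t\in I(S)\}\right) A(S) = \sum_S T(S,C)\,A(S)$, which is exactly the definition \eqref{eq: TSC def}. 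Assembling Steps 1--4 yields \eqref{eq:Qn_end}.

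\textbf{The main subtlety} is not a hard estimate but a bookkeeping point: one must check that the reindexing in Step 3 is legitimate, i.e.\ that the (infinite-looking) sum over systems of parameters $S$ is actually finite — there are only finitely many \sop{}'s for fixed $n$ since $m \le n$, $\lambda_1+\dots+\lambda_m = n$ with $\lambda_j \ge 1$, and $\epsilon_j \in \{\pm 1\}$ — so all rearrangements are of finite sums and no convergence issues arise. One should also confirm that tuples which are \emph{not} valid contribute nothing, which is automatic: either $\indicator_{I_1}\cdots\indicator_{I_t}$ is not supported anywhere on $\zerona^n$ (so the integral vanishes), or there is no \sop{} $S$ with $I_1,\dots,I_t\in I(S)$ (so $T(S,C) = 0$ for all $S$). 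Finally, since the hypothesis $\supp(\hphi)\subseteq\nasquare$ with $a \le \lceil n/2\rceil$ is exactly what is needed for Lemma~\ref{prop:K_simplified} (and the discussion preceding it, in particular the dichotomy (i)/(ii) and Lemma~\ref{lem:sigma}) to hold, no further support restriction enters here. This completes the proof of Lemma~\ref{thm:Qn_end}.
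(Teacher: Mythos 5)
Your proof is correct and follows essentially the same route as the paper: substitute Lemma~\ref{prop:K_simplified} into \eqref{eq:Qn}, use \eqref{eq: C is well defined} to group tuples into $t$-classes, and swap the order of summation to recognize $\sum_S T(S,C)\,A(S)$. The only cosmetic difference is that the paper explicitly verifies (via transitivity of the $S_n$-action) that every tuple in a valid $t$-class is itself valid, whereas you handle the same bookkeeping point by observing that non-valid tuples contribute zero either through a vanishing integral or a vanishing coefficient; both justifications are sound.
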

\begin{proof}
Given a valid $t$-class $C$, there is a valid tuple $(I_1,\dots,I_t) \in C$ for which $\indicator_{I_1} \cdots \indicator_{I_t}$ is supported at some point $(y_1,\dots,y_n)\in \nasquare^n$. Therefore, if $\tau\in S_n$, then $\indicator_{\tau(I_1)}\cdots\indicator_{\tau(I_t)}$ is supported at $(y_{\tau(1)},\dots,y_{\tau(n)})$. Since $S_n$ acts transitively on $C$, this means that every tuple in $C$ is valid. Now, applying Lemma~\ref{prop:K_simplified} to \eqref{eq:Qn} and grouping tuples into $t$-classes completes the proof.
\end{proof}



\subsection{Computing the combinatorial piece}\label{subsec: combo part}

In this section, we calculate $ \sum T(S,C) \, A(S) $ for valid $t$-classes $C$, where $T(S, C)$ and $A(S)$ are defined as in \eqref{eq: TSC def} and \eqref{eq: AS def}, respectively. In Section \ref{subsec: t1 class} we find a closed form for the case $t = 1$, and then in Section \ref{subsec: t2 class} we show that when $t \ge 2$ the quantity vanishes.

\subsubsection{Computing for valid $1$-classes}\label{subsec: t1 class}

In this section, we compute the terms in \eqref{eq:Qn_end} for which $t=1$. We first classify the valid $1$-tuples.

\begin{lem}\label{lem:2_sets_support}
	If $I$ and $J$ are subsets of $[1,n]$ such that $|I\cup J| \ge a$, then $\indicator_I\cdot\indicator_J$ is identically zero on $\zerona^n$.
\end{lem}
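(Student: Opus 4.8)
The plan is to argue by contradiction. Suppose, for some $(y_1,\dots,y_n)\in\zerona^n$, that $(\indicator_I\cdot\indicator_J)(y_1,\dots,y_n)\neq 0$; then both defining strict inequalities hold, namely $y_1+\dots+y_n > 1 + 2\sum_{i\in I}y_i$ and $y_1+\dots+y_n > 1 + 2\sum_{i\in J}y_i$. I would first rewrite each of these in the equivalent ``signed sum'' form
\[
\sum_{i\notin I} y_i - \sum_{i\in I} y_i \;>\; 1, \qquad \sum_{i\notin J} y_i - \sum_{i\in J} y_i \;>\; 1 .
\]

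Next I would add these two inequalities. The left side equals $2\sum_{i=1}^n y_i - 2\sum_{i\in I}y_i - 2\sum_{i\in J}y_i$. Since all $y_i\ge 0$ on $\zerona^n$, we have $\sum_{i\in I}y_i + \sum_{i\in J}y_i = \sum_{i\in I\cup J}y_i + \sum_{i\in I\cap J}y_i \ge \sum_{i\in I\cup J}y_i$, so the left side is at most $2\sum_{i\notin I\cup J}y_i$. Hence the assumption forces $\sum_{i\notin I\cup J}y_i > 1$.

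Finally I would invoke the bound on the coordinates: every coordinate of a point in $\zerona^n$ is at most $1/(n-a)$, and there are exactly $n-|I\cup J|$ indices outside $I\cup J$. Therefore
\[
\sum_{i\notin I\cup J} y_i \;\le\; \frac{n-|I\cup J|}{n-a} \;\le\; \frac{n-a}{n-a} \;=\; 1,
\]
using $|I\cup J|\ge a$ in the last step (and $n-a\ge 1$, which holds under the standing assumption $a\le\lceil n/2\rceil$, so that $\zerona$ is a nondegenerate interval). This contradicts $\sum_{i\notin I\cup J}y_i>1$, so no such point exists and $\indicator_I\cdot\indicator_J\equiv 0$ on $\zerona^n$. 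There is no real obstacle here; the only points requiring care are the strictness of the inequalities defining $\indicator_I$ and $\indicator_J$ (which is what turns ``$\le 1$'' versus ``$>1$'' into a genuine contradiction) and the harmless use of $y_i\ge 0$ to drop the $I\cap J$ terms.
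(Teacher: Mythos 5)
Your proof is correct and follows essentially the same route as the paper's: a contradiction argument that adds the two defining inequalities and exploits the fact that the at most $n-|I\cup J|\le n-a$ coordinates outside $I\cup J$, each bounded by $1/(n-a)$, sum to at most $1$. The paper merely reorganizes the algebra (it first derives $\sum_{i\in I}y_i<\sum_{j\in J\setminus I}y_j$ and $\sum_{j\in J}y_j<\sum_{i\in I\setminus J}y_i$ and then adds those), so the two arguments are the same in substance.
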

\begin{proof}
	Let $I$ and $J$ be as in the hypotheses, and assume for contradiction that both $y_1 + \dots + y_n \ > \ 1 + 2\sum_{i\in I}y_i $ \ \ and\  \ $
		y_1 + \dots + y_n \ > \ 1 + 2\sum_{j\in J}y_j$
	\ \ for some $(y_1,\dots,y_n) \in \zerona^n$. Since $y_h \le \frac{1}{n-a}$ for every $h$, $\sum_{h\notin I\cup J} y_h \le 1$, so we must have that $\sum_{i\in I} y_i  < \sum_{j\in J \setminus I} y_j$  and similarly $\sum_{j\in J} y_j < \sum_{i \in I \setminus J} y_i$ by our assumptions. Adding these inequalities gives 
	\begin{equation}
		\sum_{i\in I} y_i + \sum_{j\in J} y_j\  <\  \sum_{i \in I \setminus J} y_i + \sum_{j \in J \setminus I} y_j,
	\end{equation}
	which is a contradiction, as all the $y_h$'s are nonnegative and the terms on the right are a subset of those on the left.
\end{proof}

\begin{lem}\label{lem:classify_1-tuples}
	If $I$ is a subset of $[1,n]$, then the $1$-tuple $(I)$ is valid if and only if $|I|\le a-1$. 
\end{lem}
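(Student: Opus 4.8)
The plan is to prove both directions directly, using the definitions of validity and of $I(S)$ together with Lemma~\ref{lem:2_sets_support}. Recall that a $1$-tuple $(I)$ is valid precisely when $I \in I(S)$ for some system of parameters $S$ and $\indicator_I$ is supported at some point of $\zerona^n$; and recall that $I(S)$ consists of the minimal elements of $J(S)$, where each element of $J(S)$ is of the form $J_{\ell_0}$ arising from one of the mutually exclusive cases (i) or (ii) in the discussion preceding Lemma~\ref{lem:there_is_only_one}.

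For the forward direction, suppose $(I)$ is valid, so $\indicator_I$ is supported at some $(y_1,\dots,y_n) \in \zerona^n$, i.e.\ $y_1 + \dots + y_n > 1 + 2\sum_{i \in I} y_i$. Since each $y_h \le \tfrac{1}{n-a}$, we have $\sum_{h \notin I} y_h \le \tfrac{|I^c|}{n-a} = \tfrac{n - |I|}{n-a}$. On the other hand the support condition forces $\sum_{h \notin I} y_h = \sum_h y_h - \sum_{i \in I} y_i > 1 + \sum_{i \in I} y_i \ge 1$, so $\tfrac{n-|I|}{n-a} > 1$, i.e.\ $|I| < a$, hence $|I| \le a - 1$. (Note this argument only uses that $\indicator_I$ is supported somewhere on $\zerona^n$, not the $I \in I(S)$ condition; this is essentially a one-set version of Lemma~\ref{lem:2_sets_support}.)

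For the reverse direction, suppose $|I| \le a-1$; I must exhibit a \sop{} $S$ with $I \in I(S)$ and a point of $\zerona^n$ at which $\indicator_I$ is supported. First, the support: take $y_i$ small for $i \in I$ and $y_j = \tfrac{1}{n-a}$ for $j \notin I$; since there are $n - |I| \ge n - a + 1$ such indices, the right-hand count gives $\sum_{j \notin I} y_j \ge \tfrac{n-a+1}{n-a} > 1$, which (choosing the $y_i$, $i\in I$, sufficiently small) makes $y_1 + \dots + y_n > 1 + 2\sum_{i\in I} y_i$, so $\indicator_I$ is supported there. Second, I need a \sop{} realizing $I$ as a minimal element of $J(S)$: choose $m = 2$, $\lambda_1 = |I|$ (or $\lambda_1 = 1$ with a relabelling if $I = \emptyset$, handled separately/trivially), $\lambda_2 = n - |I|$, and relabel the indices so that $I = \{1,\dots,|I|\}$; then with all $\epsilon_j = +1$, we get $\eta(1,j)\epsilon_j = +1$ exactly for $j \in I$, and since $|I| \le a-1$ case (i) applies at $\ell_0 = 1$, giving $J_1 = I$. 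As $J(S)$ here is a small explicit set, $I$ is minimal (it has the fewest elements among the defined $J_{\ell}$, or one checks directly), so $I \in I(S)$. Combined with the support point already constructed, $(I)$ is valid.

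The main obstacle I anticipate is the bookkeeping in the reverse direction: one must pick the \sop{} parameters and the labelling so that $I$ is genuinely a \emph{minimal} element of $J(S)$ (not merely an element of $J(S)$), and one must make sure the $I = \emptyset$ and small edge cases ($|I| = a-1$, the inequality $\tfrac{n-a+1}{n-a} > 1$ being strict) are consistent with $a \le \lceil n/2 \rceil$ and $n \ge 2a$. Once the explicit construction is in hand, the verification is routine. I would also double-check that nothing here conflicts with Lemma~\ref{lem:2_sets_support}: that lemma concerns $|I \cup J| \ge a$ for two sets, whereas here a single set with $|I| \le a-1$ can certainly be supported, so there is no tension.
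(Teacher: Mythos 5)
Your forward direction is fine: the inequality $\sum_{h\notin I} y_h \le (n-|I|)/(n-a)$ against $\sum_{h\notin I}y_h > 1$ is exactly the one-set specialization of Lemma~\ref{lem:2_sets_support} that the paper invokes (it applies that lemma with $J=I$), so the two arguments are equivalent. Your choice of support point in the reverse direction is also fine.

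The reverse direction, however, has a genuine gap, and it is precisely the minimality issue you flagged and then waved away. With your \sop{} ($m=2$, $\la_1=|I|$, $\la_2=n-|I|$, all $\epsilon_j=+1$), the second block gives $\eta(2,j)=+1$ for every $j\le \la_1+\la_2=n$, so $\eta(2,j)\epsilon_j=+1$ for all $j$. Hence $\{j:\eta(2,j)\epsilon_j=-1\}=\emptyset$, case (ii) holds at $\ell_0=2$, and $J_2=\emptyset\in J(S)$. Since $\emptyset\subsetneq I$ whenever $I\ne\emptyset$, the set $I=J_1$ is \emph{not} minimal in $J(S)$, so $I\notin I(S)$, and your claim that ``$I$ has the fewest elements among the defined $J_\ell$'' is false. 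The construction therefore fails for every nonempty $I$ with $|I|\le a-1$. The repair is to avoid a second block altogether: take $m=1$, $\la_1=n$, and $\epsilon_i=-1$ if and only if $i\in I$ (this is what the paper does). Then $\eta(1,j)=+1$ for all $j$, case (ii) gives $J_1=I$, and $J(S)=\{I\}$, so minimality is automatic and $I\in I(S)$.
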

\begin{proof}
	If $|I| > a-1$, then $(I)$ is not valid by Lemma \ref{lem:2_sets_support}, taking both subsets to be $I$. 
	
	Now suppose $|I| \le a - 1$.  Let $y_j = 1/(n-a)$ for each $j\notin I$ and let $y_i  = 0$ for each $i \in I$. It is clear that $\indicator_I(y_1,\dots,y_n) = 1$. Now consider the system of parameters $S=(m,\la_1,\dots,\la_m,\epsilon_1,\dots,\epsilon_n)$, where $m = 1$, $\la_1 = n$, and $\epsilon_i = -1$ if and only if $i\in I$. Clearly, $I\in I(S)$. Therefore, $(I)$ is valid. 
\end{proof}

\no It follows from Lemma \ref{lem:classify_1-tuples} that the valid $1$-classes are exactly the classes 
\begin{equation}
	C_f \coloneqq \{(I) : I \subseteq [1,n], |I| = f\}
\end{equation}
with $0\le f \le a-1$.

\begin{lem}\label{lem:good_placements t = 1}
	Let $1\le f \le a-1$. Let $S=(m,\la_1,\dots,\la_m,\epsilon_1,\dots,\epsilon_n)$ be a system of parameters with $m\ge 2$ and suppose $(I)\in C_f$ is such that, for some $1 \leq \ell \leq m$, we have $I = J_{\ell} \in J(S)$. 
	Define $\Lambda_\ell \coloneqq \la_1+\dots+\la_{\ell}$. Then $I\in I(S)$ if and only if $[\Lambda_{\ell - 1}+1, \Lambda_\ell] \not\subseteq I$ and  $[\Lambda_{\ell}+1, \Lambda_{\ell+1}] \not\subseteq I$. If $\ell = m$, then we set $[\Lambda_{m}+1, \Lambda_{m+1}]$ to $[1, \Lambda_1] = [1, \la_1]$.
	
\end{lem}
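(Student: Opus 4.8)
The plan is to rephrase the membership ``$I \in I(S)$'' as a combinatorial statement about which of the sets $J_{\ell'}$, $\ell' \neq \ell$, can be a proper subset of $I = J_\ell$, and then to pin down exactly when such a containment happens in terms of the block structure of $S$. The two boundary intervals in the statement are precisely the two blocks adjacent to the ``cut'' that level $\ell$ puts at position $\Lambda_\ell$, and the assertion is that $I$ fails to be minimal exactly when one of those two blocks is swallowed by $I$.

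First I would set up notation. Put $\Lambda_0 \coloneqq 0$ and, for $1 \le i \le m$, let $B_i \coloneqq [\Lambda_{i-1}+1,\Lambda_i]$ be the $i$-th block, so the $B_i$ partition $[1,n]$ and $|B_i| = \lambda_i \ge 1$. Write $P \coloneqq \{j : \epsilon_j = +1\}$, $M \coloneqq \{j : \epsilon_j = -1\}$. By \eqref{eq:eta}, $\eta(\ell,\cdot) \equiv +1$ on $B_1 \cup \cdots \cup B_\ell$ and $\equiv -1$ on $B_{\ell+1} \cup \cdots \cup B_m$, so, putting $S_\ell^{\pm} \coloneqq \{j : \eta(\ell,j)\epsilon_j = \pm 1\}$,
\[
  S_\ell^{+} \;=\; \bigcup_{i \le \ell} (P \cap B_i) \;\cup\; \bigcup_{i > \ell} (M \cap B_i), \qquad S_\ell^{-} = [1,n] \setminus S_\ell^{+},
\]
and $J_\ell$ is whichever of $S_\ell^{+}, S_\ell^{-}$ has cardinality $\le a-1$ --- exactly one does, since $J_\ell$ is defined by hypothesis and, as $n \ge 2a$, once one of $S_\ell^{\pm}$ has size $\le a-1$ the other has size $\ge n-a+1 > a-1$ --- with $\zeta_\ell$ the corresponding sign. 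Since replacing each $\epsilon_j$ by $-\epsilon_j$ swaps $S_\ell^{+} \leftrightarrow S_\ell^{-}$ and $P \leftrightarrow M$ while leaving the blocks and every hypothesis untouched, I may assume $\zeta_\ell = +1$, so $I = S_\ell^{+}$ with $|I| = f \le a-1$. A one-line check then records the reformulations used at the end: $[\Lambda_{\ell-1}+1,\Lambda_\ell] \subseteq I \iff B_\ell \subseteq P$, and $[\Lambda_\ell+1,\Lambda_{\ell+1}] \subseteq I \iff B_{\ell+1} \subseteq M$ for $\ell \le m-1$, while for $\ell = m$ the second interval is $[1,\Lambda_1] = B_1$ and $B_1 \subseteq I = S_m^{+} = P \iff B_1 \subseteq P$.

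By Lemma~\ref{lem:there_is_only_one} the defined $J_{\ell'}$ are pairwise distinct, so $I \notin I(S)$ exactly when $J_{\ell'} \subsetneq I$ for some $\ell' \neq \ell$. For one direction I would construct the witness: if $B_{\ell+1} \subseteq M$ with $\ell \le m-1$, then $S_{\ell+1}^{+} = S_\ell^{+} \setminus B_{\ell+1}$ is a proper subset of $I$ of size $< |I| \le a-1$, hence equals $J_{\ell+1}$; symmetrically, if $B_\ell \subseteq P$ with $\ell \ge 2$ then $J_{\ell-1} = S_{\ell-1}^{+} = S_\ell^{+} \setminus B_\ell \subsetneq I$; and the two wrap-around cases ($\ell = 1$ with $B_1 \subseteq P$, witnessed by $J_m = S_m^{-} = M \subsetneq I$, which is the small set because $M \subseteq I$; and $\ell = m$ with $B_1 \subseteq P$, witnessed by $J_1 = S_1^{-} = P \setminus B_1 \subsetneq I$) are handled the same way. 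Conversely, suppose $J_{\ell'} \subsetneq I = S_\ell^{+}$ with $\ell' \neq \ell$; since $|J_{\ell'}| < |I| \le a-1$, $J_{\ell'}$ is one of $S_{\ell'}^{+}, S_{\ell'}^{-}$, and since $\eta(\ell',\cdot)$ and $\eta(\ell,\cdot)$ agree except on the blocks $B_i$ with $\min(\ell,\ell') < i \le \max(\ell,\ell')$, the inclusion $S_{\ell'}^{\pm} \subseteq S_\ell^{+}$ forces the boundary block on the appropriate side of the cut at $\Lambda_\ell$ to be monochromatic of the right sign, i.e.\ forces $B_\ell \subseteq P$ or $B_{\ell+1} \subseteq M$ (with $B_{m+1}$ read as $B_1$). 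Combining the two directions with the boundary reformulations above yields the stated equivalence.

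The bulk of the work lies in the ``only if'' direction and the wrap-around cases: one must run through the four positional configurations ($\ell' < \ell$ or $\ell' > \ell$, each paired with $J_{\ell'} = S_{\ell'}^{+}$ or $S_{\ell'}^{-}$), and in each both extract the monochromatic boundary block and, running the implication backwards, confirm that the proposed witness $J_{\ell'}$ really is the small set at level $\ell'$ (this always follows from $|I| \le a-1$ together with the containment, but must be spelled out). The hypothesis $n \ge 2a$ is exactly what makes ``small'' versus ``large'' unambiguous at every level, and it is used throughout.
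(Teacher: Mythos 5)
Your proposal is correct and follows essentially the same route as the paper's proof: both rest on the observation that $\eta(\ell,\cdot)\epsilon_\cdot$ and $\eta(\ell',\cdot)\epsilon_\cdot$ agree off the blocks strictly between the two cuts and flip on them, giving $J_{\ell\pm1}=J_\ell\smallsetminus[\text{adjacent block}]$ for the forward direction and the same four-way case analysis (on the sign of $J_{\ell'}$ and on $\ell'\lessgtr\ell$) for the converse. Your block/$P$/$M$ notation and explicit treatment of the wrap-around cases $\ell=1$ and $\ell=m$ are if anything slightly more careful than the paper's write-up, but the mathematical content is identical.
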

\begin{proof}
    Assume without loss of generality that $J_{\ell}$ has sign $\zeta_\ell = -1$, i.e. $J_{\ell}= \{j : \eta(\ell, j) \epsilon_j = -1\}$. 
    
    For any $\ell' < \ell$, we have
    \begin{equation} \label{eq:eta-epsilon}
        \eta(\ell,j)\epsilon_j = \begin{cases} -\eta(\ell',j) \epsilon_j & \text{ if } j \in [\Lambda_{\ell'} + 1, \Lambda_{\ell}]\,, \\
        \eta(\ell',j) \epsilon_j & \text{ if } j \notin [\Lambda_{\ell'} + 1, \Lambda_{\ell}]\,.
        \end{cases}
    \end{equation}
    
    
    If $[\Lambda_{\ell - 1}+1, \Lambda_\ell] \subseteq J_{\ell}$, then $J_{\ell-1} = J_{\ell} \smallsetminus [\Lambda_{\ell - 1}+1, \Lambda_\ell] \subsetneq J_{\ell}$. In particular, $J_{\ell}$ is not minimal, so $J_{\ell} \notin I(S)$. Similarly, if $[\Lambda_{\ell}+1, \Lambda_{\ell+1}] \subseteq J_{\ell}$ then $J_{\ell+1} = J_{\ell} \smallsetminus [\Lambda_{\ell}+1, \Lambda_{\ell+1}]$ so $J_{\ell}$ is not minimal.


	Now assume $J_{\ell}$ is not minimal, so there exists some $J_{\ell'} \subsetneq J_{\ell}$. 
	
	
	First, suppose the sign of $J_{\ell'}$ is $\zeta_{\ell'} = -1$. Suppose that $\ell' < \ell$.
	By \eqref{eq:eta-epsilon}, $J_{\ell'} \smallsetminus [\Lambda_{\ell'}+1, \Lambda_\ell] = J_{\ell} \smallsetminus [\Lambda_{\ell'}+1, \Lambda_\ell]$, while $J_{\ell'} \cap [\Lambda_{\ell'}+1, \Lambda_\ell]$ and $J_{\ell} \cap [\Lambda_{\ell'}+1, \Lambda_\ell]$ are disjoint with union $[\Lambda_{\ell'}+1, \Lambda_\ell]$. So,
	so $J_{\ell'} \subsetneq J_{\ell}$ implies $[\Lambda_{\ell - 1} + 1, \Lambda_{\ell}] \subseteq [\Lambda_{\ell'} + 1, \Lambda_{\ell}] \subset J_{\ell}$. Similarly, if $\ell' > \ell$, we have $[\Lambda_{\ell}+1, \Lambda_{\ell+1}] \subset J_{\ell}\,.$
    
	
	Next suppose that the sign $\zeta_{\ell'} = 1$. Suppose that $\ell' < \ell$. By \eqref{eq:eta-epsilon}, $J_{\ell'} \cap [\Lambda_{\ell'}+1, \Lambda_\ell] = J_{\ell} \cap [\Lambda_{\ell'}+1, \Lambda_\ell]$, while $J_{\ell'} \smallsetminus [\Lambda_{\ell'}+1, \Lambda_\ell]$ and $J_{\ell} \smallsetminus [\Lambda_{\ell'}+1, \Lambda_\ell]$ are disjoint with union $[1,n] \smallsetminus [\Lambda_{\ell'}+1, \Lambda_\ell]$. Since $J_{\ell}$ is not minimal, we must have $[\Lambda_{\ell} +1, \Lambda_{\ell+1}] \subset [1,n] \smallsetminus [\Lambda_{\ell - 1}+1, \Lambda_\ell] \subset J_{\ell}$. When $\ell' > \ell$, by the same reasoning we have that $[\Lambda_{\ell - 1} +1, \Lambda_\ell] \subseteq J_{\ell}$.
	

\end{proof}

\begin{lem}\label{prop:formula_single_>=2 part}
	Fix $1\le f\le a-1$. We have
	\begin{equation}\label{eq:formula_single_>=2 parts}
		\sum_{\substack{ \mbox{\emph{\tiny \sop{}'s }} S \\ \mbox{\emph{\tiny with }} m\ge 2}}  T(S,C_f) \, A(S)\  =\  2n!(-1)^n \sum_{\substack{c+d\le n \\ c,d\ge 0}} (-1)^{c+d+1} G(n,f,c,d) \frac{1}{(n-c-d)!c!d!},
	\end{equation}
	where 
	\begin{equation}\label{eq:single_combin}
		G(n,f,c,d)\ \coloneqq\  \binom{n}{f} - \binom{n-c}{f-c} - \binom{n-d}{f-d} + \binom{n-c-d}{f-c-d}.
	\end{equation} 
\end{lem}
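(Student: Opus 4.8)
The plan is to compute $\sum_{S:\,m\ge2} T(S,C_f)\,A(S)$ by summing, over all size-$f$ subsets $I\subseteq[1,n]$ and all systems of parameters $S$ with $m\ge 2$, the indicator that $I\in I(S)$, weighted by $A(S)$. First I would fix a target set $I$ with $|I|=f$; by symmetry the contribution of $I$ to the sum depends only on $f$, so I can multiply by $\binom{n}{f}$ at the end, OR — more conveniently — keep the sum over $I$ explicit and recognize the symmetric-group orbit structure. For a fixed $S=(m,\lambda_1,\dots,\lambda_m,\epsilon_1,\dots,\epsilon_n)$ with $m\ge 2$, Lemma \ref{lem:good_placements t=1} (i.e. Lemma~\ref{lem:good_placements t = 1}) characterizes exactly when $I\in I(S)$: there must be an index $\ell$ with $I=J_\ell\in J(S)$, and the minimality condition says $[\Lambda_{\ell-1}+1,\Lambda_\ell]\not\subseteq I$ and $[\Lambda_\ell+1,\Lambda_{\ell+1}]\not\subseteq I$ (cyclically). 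So the strategy is: parametrize $S$ by the ordered composition $(\lambda_1,\dots,\lambda_m)$ of $n$ together with the sign vector $\epsilon$, note that once $I$ and the "block containing the sign flip at position $\ell$" are fixed the signs $\epsilon_j$ are determined (up to the global choice $\zeta_\ell=\pm1$, which gives the factor of $2$), and then sum $A(S)=\frac{(-1)^{m+1}}{m}\frac{n!}{\lambda_1!\cdots\lambda_m!}$ over the remaining freedom.

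The key reorganization is to cut the composition $(\lambda_1,\dots,\lambda_m)$ at the block boundary where $I=J_\ell$ lives. Concretely, a system with $I=J_\ell$ corresponds to choosing: the two adjacent blocks $B^- = [\Lambda_{\ell-1}+1,\Lambda_\ell]$ and $B^+=[\Lambda_\ell+1,\Lambda_{\ell+1}]$ (in the cyclic arrangement of $[1,n]$), a composition of $B^-$'s complement-side and a composition of the rest. Writing $c=|B^-\setminus I|$ — wait, more precisely, I expect the right bookkeeping to track $c = |[\Lambda_{\ell-1}+1,\Lambda_\ell]\cap I^c|$ and $d=|[\Lambda_\ell+1,\Lambda_{\ell+1}]\cap I^c|$ or some such pair, so that the minimality constraints become "$c\ge 1$ and $d\ge 1$" and inclusion–exclusion over dropping those constraints produces precisely the four-term alternating combination $G(n,f,c,d)=\binom{n}{f}-\binom{n-c}{f-c}-\binom{n-d}{f-d}+\binom{n-c-d}{f-c-d}$. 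The factor $\frac{1}{(n-c-d)!\,c!\,d!}$ should emerge from summing $\frac{1}{\lambda_1!\cdots\lambda_m!}$ over compositions via the exponential-generating-function identity behind Soshnikov's formula \eqref{eq: sosh id}: summing $\frac{(-1)^{m+1}}{m}\frac{1}{\lambda_1!\cdots\lambda_m!}$ over compositions of a fixed integer into $m\ge 1$ parts is the coefficient extracted from $\log(1+(e^z-1))=z$, and the $m\ge 2$ restriction subtracts the $m=1$ term. The global sign $(-1)^n$ and the $2\cdot n!$ prefactor come from $(-1)^{m+1}$ combined with the parity of the number of blocks after the cut, the $n!$ from clearing factorials, and the $2$ from $\zeta_\ell=\pm1$.

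The main obstacle I expect is getting the cyclic/boundary bookkeeping exactly right — in particular handling the case $\ell=m$ where $[\Lambda_m+1,\Lambda_{m+1}]$ wraps around to $[1,\lambda_1]$, making sure the "two adjacent blocks" $B^-,B^+$ are treated symmetrically and not double-counted, and correctly isolating which part of the composition is free versus constrained. A clean way to manage this: first fix $\ell$ and the cut, sum the EGF identity over the unconstrained portion of the composition to collapse it, then apply inclusion–exclusion on the two forbidden events $B^-\subseteq I$, $B^+\subseteq I$ — the probability-style subtraction of these two events (and adding back their intersection) is exactly what manufactures the four binomial coefficients in $G$. Finally I would sum over $\ell$ (which, by the symmetry of the cyclic structure, just contributes a uniform multiplicity already absorbed into the composition count) and simplify to match the stated right-hand side, double-checking the overall sign by testing a small case such as $n=2,f=1$.
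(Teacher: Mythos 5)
Your proposal follows essentially the same route as the paper: rewrite $T(S,C_f)$ as a sum over $\ell$ of indicators $\mathbbm{1}_{\{J_\ell\in I(S),\ \#J_\ell=f\}}$, trade the sum over $\epsilon_1,\dots,\epsilon_n$ for a sum over $(J_\ell,\zeta_\ell)$ (giving the factor $2$), use Lemma~\ref{lem:good_placements t = 1} plus inclusion--exclusion on the two events $[\Lambda_{\ell-1}+1,\Lambda_\ell]\subseteq I$ and $[\Lambda_\ell+1,\Lambda_{\ell+1}]\subseteq I$ to produce $G(n,f,c,d)$, exploit the cyclic symmetry in $\ell$ to cancel the $1/m$ in $A(S)$, and collapse the remaining composition of $n-c-d$ into $m-2$ parts via the exponential generating function identity behind \eqref{eq: gen function ez}, finally extending to $c=0$ or $d=0$ for free since $G$ vanishes there. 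The one bookkeeping slip is your tentative identification $c=|[\Lambda_{\ell-1}+1,\Lambda_\ell]\cap I^c|$, $d=|[\Lambda_\ell+1,\Lambda_{\ell+1}]\cap I^c|$: in fact $c=\lambda_\ell$ and $d=\lambda_{\ell+1}$ are the full block lengths, as is forced both by the weight $\frac{1}{(n-c-d)!\,c!\,d!}$ (a composition count with two designated parts of sizes $c$ and $d$) and by your own correct reading of the term $\binom{n-c}{f-c}$ as counting $f$-subsets containing a fixed $c$-element block. With that identification corrected, the rest of your outline assembles into the paper's proof.
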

\begin{proof}
	Let $S = (m, \la_1, \dots, \la_m, \epsilon_1, \dots, \epsilon_n)$ denote a variable system of parameters. By Lemma~\ref{lem:there_is_only_one}, we can rewrite $T(S,C_f)$ as
	\begin{equation}
	    T(S,C_f) = \sum_{\ell=1}^{m} \mathbbm{1}_{\{J_{\ell} \in I(S) \text{ and } \#J_{\ell} = f\}}\,.
	\end{equation}
	We sum over systems of parameters by first summing over all values of $m$, then summing over all possible values of $\ell$, then summing over all possible values of $c = \lambda_\ell$ and $d = \lambda_{\ell + 1}$, then summing over all possible values of $\lambda_1, \ldots, \lambda_m$ and finally summing over all possible choices of $\epsilon_1, \ldots, \epsilon_n$.
	For fixed $m, \lambda_1, \dots, \lambda_m$, the $\epsilon_1, \dots, \epsilon_n$ and $J_{\ell}, \zeta_{\ell}$ uniquely determine each other, so we may rewrite the innermost sum as
	\begin{equation} \label{eq:sum-over-epsilons}
	 \sum_{(\epsilon_j) \in \{\pm 1\}^n} A(S)\mathbbm{1}_{\{J_{\ell} \in I(S) \text{ and } J_{\ell}: \#J_{\ell} = f\}} = A(S)\sum_{\zeta_{\ell} \in \{\pm 1\}} \sum_{\#J_{\ell} = f}  \mathbbm{1}_{\{J_{\ell} \in I(S)\}}\,.
	\end{equation}
	By Lemma~\ref{lem:good_placements t = 1}, the sum over $J_{\ell}$ is $G(n,f,c,d)$, since we can choose a general $f$ element subset in $\binom{n}{f}$ ways, and we need to subtract off when the $c$ element subset $[\lambda_1 + \dots + \lambda_{\ell -1} + 1, \lambda_1 + \dots + \lambda_{\ell} ] \subseteq I$ or when the $d$ element subset $[\lambda_1 + \dots + \lambda_{\ell} + 1, \lambda_1 + \dots + \lambda_{\ell + 1} ] \subseteq I$. Then, we add back in the case when both subsets are contained in $J_{\ell}$ since we have double counted it. Finally, there are 2 choices for $\zeta_{\ell}$. We have 
	\begin{equation}
	\begin{split}
	    \sum_{\substack{ \mbox{{\tiny \sop{}'s }} S \\ \mbox{{\tiny with }} m\ge 2}}  T(S,C_f) \, A(S)  \ &=\   \sum_{m=2}^n \sum_{\ell = 1}^m \sum_{\substack{c,d\ge 1 \\ c+d\le n }}  \sum_{\substack{\la_1+\dots+\la_m = n \\ \la_i \ge 1, \la_{\ell} = c, \la_{\ell+1} = d}}  \frac{(-1)^{m+1}}{m} \frac{n!}{\la_1! \cdots \la_m!} 2\, G(n,f,c,d).
	   \end{split}
	  \end{equation}
	  
	\no Noting that for each value of $\ell$ the inner summand is the same, we can set $\ell = m-1$ and write
	 \begin{equation}\label{eq: 1 class m-2}
	\begin{split}
	    \sum_{\substack{ \mbox{{\tiny \sop{}'s }} S \\ \mbox{{\tiny with }} m\ge 2}}  T(S,C_f)\,A(S)  \ &=\   2n! \sum_{\substack{c,d\ge 1 \\ c+d\le n}}  \frac{G(n, f, c, d)}{c! d!}\sum_{m=2}^n m  \sum_{\substack{\la_1+\dots+\la_{m-2} = n - c -d }}  \frac{(-1)^{m+1}}{m} \frac{1}{\la_1! \cdots \la_{m-2}!}.
	   \end{split}
	  \end{equation}
    \no The sum over $m$ equals $(-1)^{n+c +d + 1}/(n-c-d)!$, which follows from evaluating the coefficient of $z^n$ in
    \begin{equation}\label{eq: gen function ez}
    \begin{split}
    \sum_{n=0}^\infty \frac{(-1)^n}{n!}z^n = e^{-z} = \frac{1}{1 + (e^z - 1)} = \sum_{n=0}^\infty z^n \sum_{m=1}^n \sum_{\substack{ \lambda_1 + \dots + \lambda_m = n \\ \lambda_j \ge 1}} \frac{(-1)^m}{\lambda_1 ! \cdots \la_{m}!}.
    \end{split}
    \end{equation}
    Applying this to \eqref{eq: 1 class m-2} gives
    	\begin{equation}
	\begin{split}
	    \sum_{\substack{ \mbox{{\tiny \sop{}'s }} S \\ \mbox{{\tiny with }} m\ge 2}}  T(S,C_f) \, A(S)  \ &=\   2n!(-1)^n \sum_{\substack{c+d\le n \\ c,d\ge 1}} (-1)^{c+d+1} G(n,f,c,d) \frac{1}{(n-c-d)!c!d!}.
	   \end{split}
	  \end{equation}

	Now, we can extend the sum to include when $c = 0$ or $ d = 0$ to complete the proof as in this case $G(n, f,c,d) = 0$.
\end{proof}

We complete our evaluation of the case when $m \ge 2$ with the following lemma, proven in Appendix~\ref{app_single_simp}.
\begin{lem}\label{lem_single_simp}
    Fix $1 \le f \le a -1$. We have
    \begin{equation}\label{eq: lem_single_simp}
        \sum_{\substack{ \mbox{\emph{\tiny \sop{}'s }} S \\ \mbox{\emph{\tiny with }} m\ge 2}}  T(S,C_f) \, A(S)\  =\ 2 \binom{n}{f}\left((-1)^{n+f+1} - 1 \right).
    \end{equation}
\end{lem}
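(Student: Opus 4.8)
The plan is to start from the closed form in Lemma \ref{prop:formula_single_>=2 part}, namely
\[
\sum_{\substack{\mbox{\tiny\sop's } S \\ \mbox{\tiny with } m\ge 2}} T(S,C_f)\,A(S)\ =\ 2n!(-1)^n \sum_{\substack{c+d\le n\\ c,d\ge 0}} (-1)^{c+d+1} G(n,f,c,d)\,\frac{1}{(n-c-d)!\,c!\,d!},
\]
and to evaluate the double sum by splitting $G(n,f,c,d) = \binom{n}{f} - \binom{n-c}{f-c} - \binom{n-d}{f-d} + \binom{n-c-d}{f-c-d}$ into its four pieces. Each piece gives a sum of the shape $\sum_{c,d\ge 0,\ c+d\le n} (-1)^{c+d+1}\binom{n-\alpha(c,d)}{f-\beta(c,d)}\frac{1}{(n-c-d)!c!d!}$ for the appropriate $\alpha,\beta$, and the idea is to recognize each as (a specialization of) a multinomial/binomial convolution. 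First I would introduce $e = n-c-d$ so that the summation is over compositions $c+d+e = n$, turning the weight $\frac{1}{(n-c-d)!c!d!}$ into $\frac{1}{c!d!e!}$; then $\sum_{c+d+e=n}\frac{n!}{c!d!e!}x^c y^d z^e = (x+y+z)^n$ is the trinomial theorem, and with the extra binomial factor $\binom{n-\cdots}{f-\cdots}$ one gets a product-of-generating-functions identity after expanding $\binom{n-c}{f-c}$ etc. via a second variable.

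The cleanest route is probably to handle the four terms in parallel using the single generating-function identity: for a fixed "marked set'' of size $f$ built on $[1,n]$, the factor $\binom{n-c}{f-c}$ counts the ways the $d$-block and $e$-block interact with the complement of the marked set while the $c$-block is forced inside it, etc. I would set up the bookkeeping so that all four terms combine into
\[
2n!(-1)^{n+1}\binom{n}{f}\sum_{\text{something}} \quad\text{collapsing via } (1-1)^{\bullet},
\]
i.e. most of the sum telescopes to zero by the binomial theorem $\sum_j (-1)^j\binom{r}{j}=0$ for $r\ge 1$, leaving only boundary contributions. Concretely, after reindexing I expect the $\binom{n}{f}$ term to contribute $2n!(-1)^n\binom{n}{f}\cdot[\text{coefficient extraction from }e^{-z}]$ giving $2\binom{n}{f}(-1)^{n+f+1}$ or $-2\binom{n}{f}$ depending on parity details, and the three remaining terms to contribute the complementary constant, so that the total is $2\binom{n}{f}\big((-1)^{n+f+1}-1\big)$. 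The bound $f\le a-1\le\lceil n/2\rceil - 1 < n$ is what guarantees the relevant binomial-sum vanishing identities apply (the "$r\ge 1$'' hypotheses), so I would keep track of exactly where $f<n$ is used.

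The main obstacle will be the careful parity/sign bookkeeping in the coefficient extractions: the sums $\sum_c (-1)^c/c!$ etc. that appear are truncated (since $c,d$ range only up to $n$, not to $\infty$), so one cannot naively substitute $e^{-1}$; instead one must recognize these as coefficients of $z^n$ in products like $e^{-z}\cdot(1+z)^{\text{something}}$ or use the finite identity $\sum_{c+d+e=n}\frac{(-1)^{c+d}}{c!d!e!}\binom{n-c}{f-c} = \cdots$, which is where the combinatorial interpretation of the $\binom{}{}$ factor (choosing where block elements land relative to the marked $f$-set) makes the convolution transparent. I would verify the final identity against small cases ($n=2,3$, $f=1$) to pin down all signs before writing up the generating-function argument in full. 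Once the four generating functions are identified, the collapse to \eqref{eq: lem_single_simp} is mechanical.
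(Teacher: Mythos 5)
Your plan is essentially the paper's own proof of Lemma~\ref{lem_single_simp}: start from Lemma~\ref{prop:formula_single_>=2 part}, split $G(n,f,c,d)$ into its four binomial pieces, and collapse each of the four resulting sums by binomial-theorem identities, with $f<n$ used exactly where you say. What you have deferred ("mechanical", "depending on parity details") is, however, the entire content of the lemma, so let me record how it closes and why your worry about truncated exponential sums is a non-issue. The clean move is to absorb the weight $n!/\bigl(c!\,d!\,(n-c-d)!\bigr)$ into binomial coefficients rather than into exponential series: setting $\ell=c+d$, the first piece gives $\binom{n}{f}\sum_{\ell}(-1)^{\ell}\binom{n}{\ell}\sum_{c}\binom{\ell}{c}=\binom{n}{f}\sum_{\ell}(-2)^{\ell}\binom{n}{\ell}=(-1)^{n}\binom{n}{f}$, so that term contributes $-2\binom{n}{f}$; the fourth piece uses $\binom{n}{\ell}\binom{n-\ell}{f-\ell}=\binom{n}{f}\binom{f}{\ell}$ and yields $2(-1)^{n+f+1}\binom{n}{f}$; and the second and third pieces each vanish, via $\binom{n}{\ell}\binom{\ell}{c}\binom{n-c}{f-c}=\binom{n}{f}\binom{f}{c}\binom{n-c}{\ell-c}$ and the alternating sum over $\ell$, which is nonzero only when $c=n$, a case killed by $\binom{f}{n}=0$ since $f\le a-1<n$. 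Summing the four contributions gives $2\binom{n}{f}\bigl((-1)^{n+f+1}-1\bigr)$. So your guess about which constant the first term produces ($-2\binom{n}{f}$, not $2(-1)^{n+f+1}\binom{n}{f}$) is resolved in favor of the second alternative you listed, and no trinomial generating function or coefficient extraction from $e^{-z}$ is needed once the weights are rewritten as products of binomial coefficients.
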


Now we evaluate the case when $m = 1$.
\begin{lem}
	Fix $1\le f\le a-1$. We have
	\begin{equation}\label{eq:formula_single_1 part}
		\sum_{\substack{ \mbox{\emph{\tiny \sop{}'s }} S \\ \mbox{\emph{\tiny with }} m = 1}}  T(S,C_f) \, A(S) \ = \ 2\binom{n}{f}.
	\end{equation}
\end{lem}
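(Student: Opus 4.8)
The plan is to evaluate the contribution of systems of parameters with $m=1$ to $\sum T(S,C_f)\,A(S)$ directly from the definitions, which is much simpler than the $m\ge 2$ case handled in Lemma~\ref{prop:formula_single_>=2 part}. When $m=1$, a system of parameters has the form $S=(1,\la_1,\epsilon_1,\dots,\epsilon_n)$ with $\la_1=n$ forced, so such an $S$ is determined entirely by the sign vector $(\epsilon_1,\dots,\epsilon_n)\in\{\pm1\}^n$. For this $S$ there is only one choice of $\ell$, namely $\ell=1$, and moreover $I(S)$ consists of a single set whenever one of the two defining conditions (i), (ii) in the discussion preceding Lemma~\ref{lem:there_is_only_one} holds: explicitly $J_1=\{j:\eta(1,j)\epsilon_j=+1\}=\{j:\epsilon_j=+1\}$ or $J_1=\{j:\epsilon_j=-1\}$, and since $m=1$ there is nothing smaller for $J_1$ to contain, so $J_1\in I(S)$ automatically (this is exactly the construction used in the proof of Lemma~\ref{lem:classify_1-tuples}). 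Also, $A(S)=\frac{(-1)^{2}}{1}\frac{n!}{n!}=1$ for every such $S$.

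First I would observe that $T(S,C_f)=\#\{(I)\in C_f: I\in I(S)\}$, and for an $m=1$ system of parameters $S$ given by sign vector $(\epsilon_j)$, the set $I(S)$ is either $\{\,\{j:\epsilon_j=+1\}\,\}$ or $\{\,\{j:\epsilon_j=-1\}\,\}$ depending on which of the two subsets has size $\le a-1$ (it is possible both conditions fail, in which case $I(S)=\varnothing$, but it is never the case that $I(S)$ has two elements here because the two candidate sets are complementary and hence distinct, and at most one can have size $\le a-1$ when $a\le\lceil n/2\rceil$; when $n$ is even and $a=n/2$ one must check size exactly, but in all cases $\#I(S)\le 1$). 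Consequently $T(S,C_f)\in\{0,1\}$, and it equals $1$ precisely when the unique element of $I(S)$ has cardinality exactly $f$. So $\sum_{m=1} T(S,C_f)A(S)$ simply counts the number of sign vectors $(\epsilon_j)\in\{\pm1\}^n$ whose associated minimal set $J_1$ has size $f$.

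Next I would carry out that count. Fixing a target subset $I\subseteq[1,n]$ with $|I|=f\le a-1$, the sign vectors $(\epsilon_j)$ for which $J_1=I$ with sign $\zeta_1=-1$ are exactly $\epsilon_j=-1$ for $j\in I$ and $\epsilon_j=+1$ for $j\notin I$ — a single vector; and the sign vectors for which $J_1=I$ with sign $\zeta_1=+1$ are exactly $\epsilon_j=+1$ for $j\in I$, $\epsilon_j=-1$ for $j\notin I$ — again a single vector. These two vectors are distinct (they differ everywhere, since $f\ge 1$ so $I\ne\varnothing$, and $f\le a-1<n$ so $I\ne[1,n]$). Since $I(S)$ has at most one element, no sign vector is counted for two different target sets $I$. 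Hence the total count is $2$ for each of the $\binom{n}{f}$ choices of $I$, giving $\sum_{m=1}T(S,C_f)\,A(S)=2\binom{n}{f}$, which is \eqref{eq:formula_single_1 part}.

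I do not expect any serious obstacle here; the only point requiring a little care is the bookkeeping at the boundary $f=a-1$ together with the constraint $a\le\lceil n/2\rceil$ to be sure that $I(S)$ never contains two sets for an $m=1$ system — i.e., that a set of size $f\le a-1$ and its complement of size $n-f\ge n-a+1\ge a$ cannot both qualify as minimal. This is immediate from the definition of $I(S)$ via conditions (i), (ii), since those require the relevant set to have size $\le a-1$, and the complement has size $\ge n-(a-1)\ge a>a-1$. Everything else is a direct unwinding of the definitions of $T(S,C)$, $A(S)$, $I(S)$, and $J(S)$ in the $m=1$ case, combined with the elementary observation that the map (sign vector)$\leftrightarrow$(minimal set plus its sign) is two-to-one onto $f$-subsets.
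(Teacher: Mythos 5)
Your proposal is correct and follows essentially the same route as the paper: observe that $m=1$ forces $\la_1=n$ and $A(S)=1$, reparametrize the sign vectors $(\epsilon_1,\dots,\epsilon_n)$ by the pair $(J_1,\zeta_1)$, note that minimality of $J_1$ is automatic when $m=1$, and count $2\binom{n}{f}$. Your extra check that $I(S)$ has at most one element (so no sign vector is counted twice) is a valid, if slightly more verbose, way of making the same bijection precise.
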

\begin{proof}
	We let $S = (1,\la_1,\epsilon_1,\dots,\epsilon_n)$ denote a variable system of parameters. Since $m = 1$, we have $\lambda_{1} = n$ and $A(S) = \frac{(-1)^2}{1} \frac{n!}{n!} = 1$ for all $S$. Now, as in \eqref{eq:sum-over-epsilons}, 
	we may rewrite the sum over $\epsilon_1, \dots, \epsilon_n$ as a sum over $J_{1}, \zeta_{1}$. Since $m=1$, any $f$-element $J_1 \in J(S)$ will be minimal. So,
	\begin{equation}
        \sum_{\substack{ \mbox{\emph{\tiny \sop{}'s }} S \\ \mbox{\emph{\tiny with }} m\ge 2}}  T(S,C_f) \, A(S)\  =
         A(S) \sum_{\zeta_{1} \in \{\pm 1\}} \sum_{\#J_{1} = f}  \mathbbm{1}_{\{J_{1} \in I(S)\}} \  =
        \sum_{\zeta_{1} \in \{\pm 1\}} \sum_{\#J_{1} = f} 1 = \ 2 \binom{n}{f}\,.
    \end{equation}
\end{proof}

\no Adding equations \eqref{eq: lem_single_simp} and \eqref{eq:formula_single_1 part} gives the main result of the section.
\begin{lem}\label{cor:nonoverlapping final coef}
	Fix $1\le f\le a-1$. Then
	\begin{equation}\label{eq:nonoverlapping final coef}
		\sum_{\substack{ \mbox{\emph{\tiny \sop{}'s }} S }}  T(S,C_f) \, A(S) \ = \ 2(-1)^{n+f+1}\binom{n}{f}.
	\end{equation}
\end{lem}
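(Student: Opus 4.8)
The plan is to split the sum over all systems of parameters $S$ according to whether $m=1$ or $m\ge 2$, since both pieces have already been evaluated. Writing
\[
\sum_{\text{SOP's } S} T(S,C_f)\,A(S)\ =\ \sum_{\substack{\text{SOP's } S \\ \text{with }m=1}} T(S,C_f)\,A(S)\ +\ \sum_{\substack{\text{SOP's } S \\ \text{with }m\ge 2}} T(S,C_f)\,A(S),
\]
I would substitute \eqref{eq:formula_single_1 part} for the first sum and Lemma~\ref{lem_single_simp} for the second, obtaining $2\binom{n}{f} + 2\binom{n}{f}\bigl((-1)^{n+f+1}-1\bigr) = 2(-1)^{n+f+1}\binom{n}{f}$, which is exactly the claim. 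So at this level the proof is a one-line addition; all of the content sits in the two inputs being combined.

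For completeness I would recall how those inputs are obtained. The $m=1$ case is essentially immediate: when $m=1$ we have $\lambda_1=n$, so $A(S)=1$ identically, and since there is only one block, every $f$-element set $J_1 \in J(S)$ is automatically minimal and hence lies in $I(S)$; after rewriting the sum over sign patterns $\epsilon_1,\dots,\epsilon_n$ as a sum over the pair $(J_1,\zeta_1)$ via Lemma~\ref{lem:there_is_only_one}, this counts the $f$-element subsets of $[1,n]$ with a factor $2$ for the two choices of $\zeta_1$, giving $2\binom{n}{f}$. For $m\ge 2$ one first uses Lemma~\ref{lem:there_is_only_one} to express $T(S,C_f)$ as $\sum_\ell \mathbbm{1}_{\{J_\ell\in I(S),\ \#J_\ell=f\}}$, and then Lemma~\ref{lem:good_placements t = 1} to count, for fixed $m$, fixed $\ell$, and fixed $\lambda_\ell=c$, $\lambda_{\ell+1}=d$, the number of admissible $\epsilon$-patterns: this is $2\,G(n,f,c,d)$ with $G$ as in \eqref{eq:single_combin}. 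Summing the leftover parts (which total $n-c-d$) and then summing over $m$ collapses by reading off the coefficient of $z^n$ in $e^{-z}=1/(1+(e^z-1))$ (see \eqref{eq: gen function ez}), yielding the closed form of Lemma~\ref{prop:formula_single_>=2 part}.

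The one genuinely nontrivial step — deferred to Appendix~\ref{app_single_simp} — is simplifying
\[
2n!(-1)^n\sum_{\substack{c+d\le n\\ c,d\ge 0}} (-1)^{c+d+1}\,G(n,f,c,d)\,\frac{1}{(n-c-d)!\,c!\,d!}
\]
down to $2\binom{n}{f}\bigl((-1)^{n+f+1}-1\bigr)$. Here one expands $G(n,f,c,d)$ into its four binomial summands and exchanges the order of summation; each of the four resulting triple sums is a Vandermonde-type convolution in which the inner sums over $c$ and $d$ telescope, and after reassembling, the mixed contributions cancel so that only a $c=d=0$ term and an ``all mass'' term survive. I expect that bookkeeping — tracking the sign $(-1)^{c+d}$ against the factorial weights and the binomial shifts $f\mapsto f-c$, $f\mapsto f-d$ — to be the main obstacle, though it is entirely elementary and, crucially, uses nothing about the support beyond $1\le f\le a-1$.
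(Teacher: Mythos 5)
Your proposal is correct and follows exactly the paper's route: the lemma is proved by adding the $m=1$ contribution \eqref{eq:formula_single_1 part} to the $m\ge 2$ contribution of Lemma~\ref{lem_single_simp}, and your recap of how those two inputs are derived (via Lemmas~\ref{lem:there_is_only_one}, \ref{lem:good_placements t = 1}, \ref{prop:formula_single_>=2 part} and the generating-function identity \eqref{eq: gen function ez}) matches the paper's Appendix~\ref{app_single_simp} treatment, which evaluates the four binomial summands of $G(n,f,c,d)$ separately. No gaps.
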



\subsubsection{The vanishing of valid $t$-classes for $t\ge 2$} \label{subsec: t2 class}

In this section, we show that all terms with $t \ge 2$ in \eqref{eq:Qn_end} vanish. Our main result is the following.

\begin{lem}\label{cor:classes_cancel}
	Let $C$ be a valid $t$-class with $t\ge 2$. Then 
	\begin{equation}
		\sum_{\substack{ \mbox{\emph{\tiny \sop{}'s }} S}}  T(S,C) \, A(S) \ = \ 0.
	\end{equation}
\end{lem}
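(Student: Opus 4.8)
## Proof proposal for Lemma~\ref{cor:classes_cancel}

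\textbf{Overview of the approach.} The plan is to fix a valid $t$-class $C$ with $t \ge 2$, pick a representative valid tuple $(I_1,\dots,I_t)$, and show that the weighted count $\sum_S T(S,C)\,A(S)$ vanishes by exhibiting a sign-reversing involution (or, more precisely, a grouping of systems of parameters whose $A(S)$-weights cancel in the style of \eqref{eq: sosh id} and \eqref{eq: gen function ez}). The key structural input is Lemma~\ref{lem:there_is_only_one}: for each $I_j$ in the tuple and each sign $\zeta$, there is at most one index $\ell$ in a \sop{} $S$ with $J_\ell = I_j$ of that sign. Combined with Lemma~\ref{lem:2_sets_support}, the condition that $(I_1,\dots,I_t)$ be valid forces $|I_1 \cup \dots \cup I_t| \le a-1$, so all the $I_j$ live inside a common small set; in particular, for $t \ge 2$, at least two of the blocks of $S$ cut out by consecutive partial sums $\Lambda_\ell = \lambda_1 + \dots + \lambda_\ell$ must be "active" (associated to some $I_j$), and the remaining block sizes $\lambda_k$ are free to vary subject only to summing to $n$ minus the total of the constrained blocks.

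\textbf{Key steps, in order.} First I would record that validity of $C$ plus Lemma~\ref{lem:2_sets_support} gives $|\bigcup_j I_j| \le a - 1$, and that by Lemma~\ref{lem:classify_1-tuples}-style reasoning each $I_j$ has $|I_j| \le a-1$. Second, I would use Lemma~\ref{lem:there_is_only_one} to rewrite $T(S,C)$ as a sum over ordered choices of distinct indices $\ell_1 < \dots < \ell_t$ in $[1,m]$ with $J_{\ell_s} = I_s$ (up to the $S_n$-action), each $J_{\ell_s}$ minimal in $I(S)$; by Lemma~\ref{lem:good_placements t = 1} (applied at each active index) minimality is a local condition on the two blocks adjacent to $\ell_s$. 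Third, I would set up the sum over \sop{}'s exactly as in the proof of Lemma~\ref{prop:formula_single_>=2 part}: sum over $m$, over the positions $\ell_1,\dots,\ell_t$, over the sizes $c_s = \lambda_{\ell_s}$ and $d_s = \lambda_{\ell_s + 1}$ of the constrained blocks, over the remaining $\lambda_k$'s (which partition $n$ minus the constrained total), and over the $\epsilon_j$'s equivalently the choices of $J_{\ell_s}$ and signs $\zeta_{\ell_s}$. The inner combinatorial count of subsets is a product/inclusion–exclusion analogue of $G(n,f,c,d)$ in \eqref{eq:single_combin}, independent of $m$. Fourth — and this is the crux — the sum over the free block count collapses via the identity $e^{-z} = 1/(1+(e^z-1))$ as in \eqref{eq: gen function ez}, producing a factor $(-1)^{n - (\text{constrained total})}/(n - \text{constrained total})!$, and then the remaining sum over the constrained-block data telescopes to $0$ because $t \ge 2$ forces at least one further alternating-sum cancellation (the $(1-1)^k = 0$ phenomenon) that is absent when $t = 1$. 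Concretely I expect the sum over the $c_s, d_s$ and the subset counts to factor, after the generating-function step, into something of the form $\big(\text{stuff}\big)\cdot\sum (-1)^{\dots}\binom{\dots}{\dots} = 0$.

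\textbf{Where the difficulty lies.} The main obstacle is bookkeeping the overlap structure: when $t \ge 2$ the active indices $\ell_1,\dots,\ell_t$ may be adjacent (so one block can be the "$d$-block" for $I_s$ and simultaneously the "$c$-block" for $I_{s+1}$), and the minimality conditions from Lemma~\ref{lem:good_placements t = 1} interact across these shared blocks. I would handle this by casing on whether the active indices are pairwise non-adjacent (the generic case, where the count cleanly factors over the $t$ active sites) versus having runs of consecutive active indices; in every case the key point to verify is that after performing the $m$-sum via \eqref{eq: gen function ez}, what remains contains a factor of the form $\sum_{j}(-1)^j\binom{k}{j}$ with $k \ge 1$, hence vanishes. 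An alternative, cleaner route I would attempt first is a direct sign-reversing involution on \sop{}'s: given $S$ contributing to $T(S,C)$, locate the leftmost "inactive" block (one not adjacent to, and not equal to the block of, any active $\ell_s$) and either split it into two or merge it with its successor; this changes $m$ by $1$, flips the sign of $A(S)$, preserves $T(S,C)$ (since the active indices and their adjacent blocks are untouched), and pairs up \sop{}'s with opposite-sign equal-magnitude weights. The delicate part of that argument is ensuring such an inactive block always exists when $t \ge 2$ and $|\bigcup I_j| \le a-1 \le \lceil n/2\rceil - 1$, which follows because the constrained blocks occupy at most $\sum_s(c_s + d_s) \le$ a bounded portion of $n$ while $m$ can be taken large, but the fixed points of the involution (configurations with no inactive block) must themselves be shown to have total weight $0$, and that residual check is the real content.
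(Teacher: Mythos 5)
Your overall strategy---organize the \sop{}'s contributing to $C$ as in the $t=1$ computation and extract an alternating sum over compositions of block sizes that vanishes---has the right shape and is broadly the route the paper takes, but two essential ingredients are missing, and without them the argument does not close. First, you never establish the structural facts about valid tuples that make the bookkeeping possible: that the signs $\zeta_{\ell_i}$ can switch at most once along $\ell_1<\dots<\ell_t$ (Lemma~\ref{lem: sign switch}), and, crucially, that for each $i$ away from the transition point the sets $I_i$ and $I_{i+1}$ restricted to the interval $[j_i+1,j_{i+1}]$ between their active indices form a \emph{partition} of that interval (Lemma~\ref{prop:necessary_valid}). The partition property is what forces the total size $r_i+s_i$ of the blocks lying between two consecutive active indices to be determined by the class itself, so that the inner sum ranges over all compositions $\mu_1+\dots+\mu_d$ of a \emph{fixed} integer; it is also what makes your asserted bound $|I_1\cup\dots\cup I_t|\le a-1$ true (this does not follow from Lemma~\ref{lem:2_sets_support} alone, which only controls pairwise unions). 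Second, the cancellation you predict---``a factor of the form $\sum_j(-1)^j\binom{k}{j}$ with $k\ge1$''---is not the mechanism that actually produces the zero. What must vanish is
$\sum_{d\ge 1}\sum_{\mu_1+\dots+\mu_d=f}\frac{(-1)^d}{\mu_1!\cdots\mu_d!}\,H(f,g,\mu_1,\mu_d)$,
where $H$ is the four-term inclusion--exclusion \eqref{eq: Hhf def} coming from the minimality conditions of Lemma~\ref{lem:good_placements t = 1} at the two endpoints of the gap; the zero arises because each of the four binomial pieces, convolved against the composition generating function \eqref{eq: gen function ez}, evaluates to the \emph{same} quantity $(-1)^f/(g!(f-g)!)$, so the signed combination $+1-1-1+1$ annihilates the sum (Lemma~\ref{lem:prod_disj_vanishes}). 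That identity is the real content of the lemma, and your sketch neither states nor proves it.

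Your alternative involution also fails as stated: splitting an inactive block $\lambda_k=\mu+\nu$ changes $A(S)$ from $\frac{(-1)^{m+1}}{m}\frac{n!}{\cdots\lambda_k!\cdots}$ to $\frac{(-1)^{m+2}}{m+1}\frac{n!}{\cdots\mu!\nu!\cdots}$, so the paired weights have opposite signs but unequal magnitudes (and splitting is one-to-many while merging is many-to-one), so no bijective pairing cancels term by term. The $1/m$ and factorial factors force you back to exactly the generating-function computation you were hoping to bypass, and you yourself concede that the fixed-point analysis---which is where the difficulty concentrates---is unresolved.
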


Throughout this section, let $S =(m, \la_1, \dots, \la_m, \epsilon_1, \dots, \epsilon_n)$ be a system of parameters, $C$ a valid class, and $(I_1,\dots,I_t) \in C$ a tuple of subsets of $[1,n]$ such that for each $1 \le i \le t$, there is some $\ell_i$ and $\zeta_{\ell_{i}} \in \{\pm 1\}$ such that $I_i = \{j: \eta(\ell_i, j) \epsilon_j = \zeta_{\ell_{i}}\}$. I.e., $I_i = J_{\ell_{i}}$ with sign $\zeta_{\ell_{i}}$. Reorder the $I_i$ so that $\ell_1 < \ell_2 < \cdots < \ell_t$ and set $I_i' \coloneqq I_i - \bigcap_{k=1}^t I_k$ and $j_i = \Lambda_{\ell_{i}} = \sum_{k=1}^{\ell_i} \lambda_k$. To begin, we prove lemmas which characterize $(I_1, \ldots, I_t)$. 

\begin{lem}\label{lem: sign switch}
    Set $I_1 = J_{\ell_{1}}$ with sign $\zeta_{\ell_{1}}$ and suppose there is some minimal $T$ such that $I_T = J_{\ell_{T}}$ with sign $\zeta_{\ell_{T}} = -\zeta_{\ell_{1}}$. Then, for all $i \geq T$, we have $I_{i} = \zeta_{\ell_{i}}$ with sign $\zeta_{\ell_{i}} = - \zeta_{\ell_{1}}$.
\end{lem}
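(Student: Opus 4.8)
The plan is to leverage the structural identity \eqref{eq:eta-epsilon}, which says that for indices $\ell' < \ell$ in a system of parameters, $\eta(\ell,j)\epsilon_j$ and $\eta(\ell',j)\epsilon_j$ agree for $j \notin [\Lambda_{\ell'}+1,\Lambda_\ell]$ and differ on $j \in [\Lambda_{\ell'}+1,\Lambda_\ell]$. The key point is that the sign $\zeta_{\ell_i}$ attached to $J_{\ell_i}$ is forced by the requirement that $J_{\ell_i}$ be one of the ``small'' sides (of size $\le a-1$), since $a \le \lceil n/2 \rceil$ means at most one side of the partition induced by $\ell_i$ can have $\le a-1$ elements. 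So I first want to record precisely how the set $\{j : \eta(\ell,j)\epsilon_j = +1\}$ evolves as $\ell$ increases: passing from $\ell$ to $\ell+1$ toggles membership for exactly the block $[\Lambda_\ell+1,\Lambda_{\ell+1}]$. Iterating, passing from $\ell_1$ to $\ell_i$ toggles membership for exactly the indices in $[\Lambda_{\ell_1}+1,\Lambda_{\ell_i}]$.

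Next I would argue by induction on $i \ge T$. For the base case $i = T$, the hypothesis gives $I_T$ has sign $-\zeta_{\ell_1}$ directly. For the inductive step, suppose $I_i$ has sign $-\zeta_{\ell_1}$ for some $i \ge T$; I must show $I_{i+1}$ also has sign $-\zeta_{\ell_1}$. Since $(I_1,\dots,I_t) \in C$ is a valid tuple, each $I_k = J_{\ell_k}$ is a minimal element of $I(S)$ and in particular has size $\le a-1$ (by validity and Lemma~\ref{lem:2_sets_support}, or directly by the definition of when $J_{\ell_0}$ is defined). Now the sign of $J_{\ell_{i+1}}$ is determined: it is $+1$ if $\{j : \eta(\ell_{i+1},j)\epsilon_j = +1\}$ has size $\le a-1$, and $-1$ if the complementary set does. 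I would compare the set $\{j : \eta(\ell_{i+1},j)\epsilon_j = \zeta_{\ell_1}\}$ with $\{j : \eta(\ell_i,j)\epsilon_j = \zeta_{\ell_1}\}$: by the toggling description these differ exactly on the block $B = [\Lambda_{\ell_i}+1,\Lambda_{\ell_{i+1}}]$. Since $I_i$ (the side of $\ell_i$ with sign $-\zeta_{\ell_1}$) is small, the side with sign $\zeta_{\ell_1}$ is large — it has size $\ge n-(a-1) > a-1$ — and toggling a single block can only move elements between the two sides, so for $I_{i+1}$ to be well-defined (one side small), the small side of $\ell_{i+1}$ must again be on the $-\zeta_{\ell_1}$ side unless the block $B$ is so large that it flips the roles; but $B \subseteq [1,n]$ and the large side already has more than $a-1$ elements outside any single such block once one accounts for the indices fixed by the toggle. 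Making this counting precise — showing the large side stays large under a single toggle given $a \le \lceil n/2\rceil$ — is the crux.

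The main obstacle I anticipate is exactly this last counting argument: one must rule out the possibility that a block toggle between $\ell_i$ and $\ell_{i+1}$ swings the ``small side'' back to sign $+\zeta_{\ell_1}$. I expect this is handled by noting that the set of indices common to $I_i$ and $I_{i+1}$ (those outside the toggled block) still has the $-\zeta_{\ell_1}$ side as the minority on both, combined with the minimality of $I_{i+1}$ in $I(S)$ forcing it not to properly contain $I_i$ or have $I_i$ properly contain it — using Lemma~\ref{lem:good_placements t = 1}-type reasoning about which blocks can be contained in a minimal set. I would invoke the already-established fact (preceding Lemma~\ref{lem:there_is_only_one} and in the definition of $J_{\ell_0}$) that $J_{\ell}$ is defined for at most one choice of sign per $\ell$, so once I show the $\zeta_{\ell_1}$-side of $\ell_{i+1}$ has size $> a-1$, the sign of $I_{i+1}$ is pinned down to $-\zeta_{\ell_1}$, completing the induction.
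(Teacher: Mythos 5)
Your base case and your description of how the sets $\{j:\eta(\ell,j)\epsilon_j=\zeta\}$ evolve by block toggles are fine, but the inductive step has a genuine gap exactly at the point you flag as the crux. The claim that ``the large side stays large under a single toggle'' is false. Write $P^{\pm}_{i}=\{j:\eta(\ell_i,j)\epsilon_j=\pm 1\}$ and $B=[\Lambda_{\ell_i}+1,\Lambda_{\ell_{i+1}}]$, and take (say) $\zeta_{\ell_1}=-1$, so by the inductive hypothesis $|P^{+}_{i}|\le a-1$ and $|P^{-}_{i}|\ge n-a+1$. After the toggle, $P^{-}_{i+1}=(P^{-}_{i}\setminus B)\cup(P^{+}_{i}\cap B)$, and nothing prevents $B$ from swallowing almost all of $P^{-}_{i}$: e.g.\ with $n=4$, $a=2$, $P^{+}_{i}=\{1\}$ and $B=\{2,3,4\}$ one gets $P^{-}_{i+1}=\emptyset$, so the small side of $\ell_{i+1}$ is back on the $\zeta_{\ell_1}$ side. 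Moreover the pair $(I_i,I_{i+1})=(\{1\},\emptyset)$ satisfies $|I_i\cup I_{i+1}|<a$, so neither Lemma~\ref{lem:2_sets_support} applied to this pair, nor minimality of $I_{i+1}$ in $I(S)$ (which only constrains containment among elements of $J(S)$), rules it out. A comparison of consecutive indices alone cannot close the argument.

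The missing ingredient is the index $I_{T-1}$, the last set carrying the original sign (it exists since $T\ge 2$). The paper's proof is a one-step contradiction rather than an induction: suppose some $s>T$ has $\zeta_{\ell_s}=\zeta_{\ell_1}$. For $j\le j_{T-1}$ or $j>j_s$ (where $j_i=\Lambda_{\ell_i}$) one has $\eta(\ell_T,j)=\eta(\ell_s,j)$; since $I_T$ and $I_s$ have opposite signs, each such $j$ lies in $I_T\cup I_s$. For $j\in[j_{T-1}+1,j_s]$ one has $\eta(\ell_{T-1},j)=-\eta(\ell_s,j)$; since $I_{T-1}$ and $I_s$ have the same sign, each such $j$ lies in $I_{T-1}\cup I_s$. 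These two ranges cover $[1,n]$, so $|I_T\cup I_s|\ge a$ or $|I_{T-1}\cup I_s|\ge a$, and Lemma~\ref{lem:2_sets_support} then contradicts the validity (joint supportability) of the tuple. In your counterexample above it is precisely $|I_{T-1}\cup I_s|\ge a$ that fires. To repair your write-up, replace the cardinality/toggle step by this two-pronged covering argument against $I_T$ and $I_{T-1}$.
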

\begin{proof}
    Assume WLOG that $\zeta_{\ell_{1}} = -1$ so $I_1 = \{j: \eta(\ell_1, j) \epsilon_j = - 1\}$ and let $T$ be the smallest value such that $I_T = \{j: \eta(\ell_T, j) \epsilon_j =  1\}$. Suppose there exists some $s> T$ such that  $I_s = \{j: \eta(\ell_s, j)\epsilon_j = -1 \}$. If $j \le j_{T-1}$ or $j > j_s$, then $\eta(\ell_T, j) = \eta(\ell_s, j)$, so $j \in I_T \cup I_s$ so $[1, j_{T-1}] \cup [j_s + 1, n] \subseteq I_T \cup I_s$.
    Similarly, if $j \in [j_{T-1} + 1, j_s]$, then $\eta(\ell_{T-1}, j) = -\eta(\ell_s, j)$, so $j \in I_{T-1} \cup I_s$ so $[j_{T-1} + 1, j_s] \subseteq I_{T-1} \cup I_s$. Since $[1, j_{T-1}] \cup [j_s + 1, n] \cup [j_{T-1} + 1, j_s] = [1,n]$ and $a \le n/2$, we must have that either $|I_T \cup I_s| \ge a$ or $|I_{T-1} \cup I_s| \ge a$. Then, by Lemma \ref{lem:2_sets_support}, $C$ is not valid, a contradiction. Thus such an $s$ cannot exist so $I_i = \{ j : \eta(\ell_i,j) \epsilon_j = +1\}$ for all $i\ge T$.
\end{proof}
The above lemma shows that the sign of $(I_1, I_2, \ldots, I_t)$ can switch at most once. This motivates the following definition.
\begin{defn}
    The \textbf{\emph{transition point}} of $(I_1, \ldots, I_t)$ is the smallest $T$ such that $\zeta_{\ell_{T}} = -\zeta_{\ell_{1}}$. If $I_1, I_2, \ldots, I_t$ all have the same sign (so that no such $T$ exists), then we set $T = 1$.
\end{defn}

\begin{lem}\label{prop:necessary_valid}
	Let $T$ be the transition point of $(I_1, \ldots, I_t)$. Then
	\begin{align}
	    \bigcup_{i=1}^t I'_i \  = \ I'_{T-1} \cup I'_T \ &= \ [1,n] \smallsetminus [j_{T-1}+1, j_{T}]\,, \\
	     I'_{T-1} \cap I'_{T} &= \emptyset\,, \mbox{\quad{and}} \\
	     \bigcap_{i=1}^t I_i \  = \ I_{T-1} \cap I_T \ &\subseteq \  [j_{T-1}+1, j_{T}]\,, 
	\end{align}
	taking indices cyclically in $[1,t]$ and intervals cyclically in $[1,n]$ so that $I'_{0} \coloneqq I'_{t}$ and $I_{0} \coloneqq I_{t}$ and $[j_0 + 1, j_1] = [j_t + 1, n]\cup [1,j_1]$.
	Additionally, if $1 \le i \le t$ with $i \ne T - 1$, then 
	\begin{align}
	    (I_i \cap [j_i +1, j_{i+1}]) \cup (I_{i+1} \cap [j_i +1, j_{i+1}]) \ &= \ [j_i +1, j_{i+1}] \mbox{\quad and} \\
	    (I_i \cap [j_i +1, j_{i+1}]) \cap (I_{i+1} \cap [j_i +1, j_{i+1}]) \ &= \ \emptyset.
	\end{align}
	In other words, the restriction of $I_i$ and $I_{i+1}$ to the interval $[j_i +1, j_{i+1}]$ forms a partition of the interval. If $i = t$ and $T \ne 1$, again taking indices and intervals cyclically, we set $I_{t+1} = I_1$ and $[j_t +1, j_{1}]  = [j_t + 1, n] \cup [1, j_1]$.

\end{lem}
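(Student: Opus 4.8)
The plan is to derive everything from the elementary identity~\eqref{eq:eta-epsilon}, which states that for any $\ell' < \ell$ one has $\eta(\ell,j)\epsilon_j = -\eta(\ell',j)\epsilon_j$ exactly when $j \in [\Lambda_{\ell'}+1,\Lambda_\ell]$, and $\eta(\ell,j)\epsilon_j = \eta(\ell',j)\epsilon_j$ otherwise, together with Lemma~\ref{lem: sign switch}, which supplies the unique transition point $T$. I would fix, without loss of generality, $\zeta_{\ell_1} = -1$; this is legitimate even when $T=1$, since then all the $\zeta_{\ell_i}$ agree. So $I_i = \{j : \eta(\ell_i,j)\epsilon_j = -1\}$ for $i < T$ and $I_i = \{j : \eta(\ell_i,j)\epsilon_j = +1\}$ for $i \ge T$, with indices read cyclically in $[1,t]$ and intervals cyclically in $[1,n]$ as in the statement.

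First I would analyze the pair $(I_{T-1},I_T)$ of opposite sign. Applying~\eqref{eq:eta-epsilon} with $\ell' = \ell_{T-1}$ and $\ell = \ell_T$, whose associated interval is $[\Lambda_{\ell_{T-1}}+1,\Lambda_{\ell_T}] = [j_{T-1}+1,j_T]$, one sees that $I_{T-1}$ and $I_T$ coincide on $[j_{T-1}+1,j_T]$ and are complementary on $[1,n]\smallsetminus[j_{T-1}+1,j_T]$. Hence $I_{T-1}\cap I_T = I_{T-1}\cap[j_{T-1}+1,j_T] \subseteq [j_{T-1}+1,j_T]$, while $I_{T-1}\cup I_T = \big([1,n]\smallsetminus[j_{T-1}+1,j_T]\big)\cup\big(I_{T-1}\cap[j_{T-1}+1,j_T]\big)$. (At this point Lemma~\ref{lem:2_sets_support} together with $a \le \lceil n/2\rceil$ forces $|[j_{T-1}+1,j_T]| \ge n-a+1$; I would record this, though it is not needed for the asserted set identities.)

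Next I would show that each $I_i$ agrees with $I_{T-1}$ (when $i<T$) or with $I_T$ (when $i\ge T$) on the entire interval $[j_{T-1}+1,j_T]$. For $i < T-1$, \eqref{eq:eta-epsilon} with $\ell'=\ell_i$, $\ell=\ell_{T-1}$ shows $I_i$ and $I_{T-1}$ agree off $[j_i+1,j_{T-1}]$, and $[j_{T-1}+1,j_T]$ lies in the complement of $[j_i+1,j_{T-1}]$; since $I_i$ and $I_{T-1}$ have the same sign, they agree on $[j_{T-1}+1,j_T]$. The case $i > T$ is symmetric, via $\ell'=\ell_T$, $\ell=\ell_i$. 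Thus all the $I_i$ agree on $[j_{T-1}+1,j_T]$, so every $j$ there lies in all the $I_i$ or in none; in particular $I_{T-1}\cap I_T \subseteq I_i$ for every $i$, whence $\bigcap_{i=1}^t I_i = I_{T-1}\cap I_T$. It then follows that $\bigcup_{i=1}^t I_i = \big([1,n]\smallsetminus[j_{T-1}+1,j_T]\big)\cup\bigcap_{i} I_i$, so $\bigcup_i I_i' = \big(\bigcup_i I_i\big)\smallsetminus\big(\bigcap_i I_i\big) = [1,n]\smallsetminus[j_{T-1}+1,j_T] = I_{T-1}'\cup I_T'$, and $I_{T-1}'\cap I_T' = (I_{T-1}\cap I_T)\smallsetminus\bigcap_i I_i = \emptyset$.

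Finally, for the local partition statements I would apply~\eqref{eq:eta-epsilon} to each consecutive pair $(I_i,I_{i+1})$ with $i\neq T-1$, cyclically. When $I_i$ and $I_{i+1}$ share a sign, taking $\ell'=\ell_i$, $\ell=\ell_{i+1}$ makes them complementary precisely on $[\Lambda_{\ell_i}+1,\Lambda_{\ell_{i+1}}] = [j_i+1,j_{i+1}]$, so their restrictions partition that interval; and when $i=t$ with $T\neq 1$, the pair $(I_t,I_1)$ has opposite signs, but $[j_t+1,n]\cup[1,j_1]$ is now the complement of $[\Lambda_{\ell_1}+1,\Lambda_{\ell_t}]$, and~\eqref{eq:eta-epsilon} again yields a partition of $[j_t+1,n]\cup[1,j_1]$. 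The main obstacle I anticipate is not any single computation but the cyclic bookkeeping: keeping straight that the unique sign-flipping consecutive pair is $(I_{T-1},I_T)$ (which degenerates to $(I_t,I_1)$ when $T=1$), that this is exactly the pair omitted from the partition claim, and that for that pair the roles of ``the interval'' and ``its complement'' are swapped compared to the equal-sign pairs. Once the transition point is pinned down by Lemma~\ref{lem: sign switch}, the rest is a direct, if bookkeeping-heavy, unpacking of~\eqref{eq:eta-epsilon}.
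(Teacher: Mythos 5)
Your proposal is correct and follows essentially the same route as the paper: both arguments unpack the identity \eqref{eq:eta-epsilon} together with the transition point supplied by Lemma~\ref{lem: sign switch}, treating the distinguished pair $(I_{T-1},I_T)$ separately from the equal-sign consecutive pairs; your step showing every $I_i$ agrees with $I_{T-1}$ or $I_T$ on $[j_{T-1}+1,j_T]$ is just a pairwise rephrasing of the paper's observation that $\eta(\ell_i,j)\epsilon_j\zeta_{\ell_i}$ is independent of $i$ there. If anything, you are slightly more careful than the paper about the cyclic wrap-around pair $(I_t,I_1)$, where the paper's literal statement ``$\eta(\ell_i,j)=-\eta(\ell_{i+1},j)$ iff $j\in[j_i+1,j_{i+1}]$ and $\zeta_{\ell_i}=\zeta_{\ell_{i+1}}$'' only holds after two compensating sign conventions.
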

\begin{proof}
    We consider indices and intervals cyclically in $[1,t]$ and $[1,n]$ respectively, as in Lemma~\ref{prop:necessary_valid}.
   
   For $j \in [j_{T-1}+1,j_T]$, the value $\eta(\ell_i, j) \epsilon_{j} \zeta_{\ell_{i}}$ is independent of $i$ since for any $i, i'$ either $\zeta_{\ell_{i}}/\zeta_{\ell_{i'}}$ and $\eta(\ell_i,j)/\eta(\ell_{i'},j)$ are both $1$ or both $-1$. 
   So, for any $j \in [j_{T-1}+1, J_{T}]$ either $j \in I_{i}$ for all $i$ of $j \notin I_{i}$ for all $i$. So, $\bigcup_{i=1}^{t} I_{i}'\subset [1,n] \smallsetminus [j_{T-1}+1, j_{T}]$ and $\bigcap_{i=1}^{t} I_{i} \cap [j_{T-1}+1,j_T] = I_{T-1} \cap I_{T} \cap [j_{T-1}+1,j_T]$.
   
   For $j \notin [j_{T-1}+1,j_T]$, we have $\eta(\ell_{T-1}, j) = \eta(\ell_{T},j)$ and so $\eta(\ell_{T-1}, j) \epsilon_{j}, \zeta_{\ell_{T-1}} = -\eta(\ell_{T}, j)\epsilon_{j}, \zeta_{\ell_{T}}$. So, every $j \notin [j_{T-1}+1,j_T]$ belongs to exactly one of $I'_{T-1}$ and $I'_{T}$. We conclude that  $\bigcup_{i=1}^{t} I_{i}' = I'_{T-1} \cap I'_{T} \subset [1,n] \smallsetminus [j_{T-1}+1, j_{T}]$ and $I_{T-1} \cap I_{T} \subset [j_{T-1}+1, j_{T}]$. So, $I_{T-1} \cap I_{T} \cap [j_{T-1}+1, j_{T}] = I_{T-1} \cap I_{T} = \bigcap_{i=1}^{t} I_{i}$.
   
   For $i \neq T-1$, we have $\eta(\ell_{i}, j) = -\eta(\ell_{i+1},j)$ if and only if $j \in [j_{i}+1,j_{i+1}]\,.$ Since $\zeta_{\ell_{i}} = \zeta_{\ell_{i+1}}$, this means 
   $\eta(\ell_{i},j) \epsilon_{j} \zeta_{\ell_{i}} = -\eta(\ell_{i+1},j) \epsilon_{j} \zeta_{\ell_{i+1}}$ if and only if $j \in [j_{i}+1,j_{i+1}]\,.$ Hence, each $j \in [j_{i}+1, j_{i+1}]$ is contained in exactly one of $I_{i}$ and $I_{i+1}$, as desired. 
\end{proof}

\begin{defn}
    For each $1 \le i \le t$, set
    \begin{align}
        r_i\ &\coloneqq \ |I_i \cap [j_{i} + 1, j_{i+1}]| \mbox{\quad and} \\
        s_i\ &\coloneqq \ |I_{i+1} \cap [j_{i} + 1, j_{i+1}]|.
    \end{align}
    We call the ordered tuple $(T, r_1, s_1, \ldots, r_{t}, s_{t})$ the \textbf{\emph{structure}} of $(I_1, \ldots, I_t)$ in $S$ where $T$ is the transition point of $(I_1, \ldots, I_t)$. If $(T, r_1, s_1, \ldots, r_{t}, s_{t})$ is a structure for some $(I_1, \ldots, I_t) \in C$, we call it a \textbf{\emph{valid structure}} for $C$.
\end{defn}
By Lemma~\ref{prop:necessary_valid}, $r_{T-1} = s_{T-1} = \left|\bigcap_{k=1}^t I_k \right|$.  Lemma~\ref{prop:necessary_valid} also shows that when $i \ne T-1$, $r_i + s_i = |[j_{i} + 1, j_{i+1}]| = j_{i+1} - j_i = \lambda_{\ell_i + 1} + \cdots + \lambda_{\ell_{i+1}}$. The following lemma shows that the two tuples with the same structure are in the same $t$-class.

\begin{lem}\label{lem: same structure}
    Let $C$ be a valid $t$-class and let $(I_1, \ldots, I_t) \in C$ such that $(I_1, \ldots, I_t) \in I(S)$ for some \sop{} $S$. Let $(J_1, \ldots, J_t)$ be another tuple such that $(J_1, \ldots, J_t) \in I(P)$ for some \sop{} $P$. If the structure of $(I_1, \ldots, I_t)$ in $S$ is the same as the structure of $(J_1, \ldots, J_t)$ in $P$, then $(J_1, \ldots, J_t) \in C$.
\end{lem}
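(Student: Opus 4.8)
The plan is to show that the structure $(T, r_1, s_1, \ldots, r_t, s_t)$ determines the $t$-class of $(I_1, \ldots, I_t)$ by constructing an explicit permutation $\tau \in S_n$ carrying $(J_1, \ldots, J_t)$ to $(I_1, \ldots, I_t)$ (up to relabeling the tuple, which is allowed since $t$-classes are orbits of \emph{unordered} tuples). First I would recall from Lemma~\ref{prop:necessary_valid} the combinatorial anatomy of a valid tuple: the common intersection $\bigcap_k I_k = I_{T-1} \cap I_T$ sits inside the ``transition interval'' $[j_{T-1}+1, j_T]$; outside that interval every index belongs to exactly one of $I'_{T-1}, I'_T$; and on each non-transition interval $[j_i+1, j_{i+1}]$ the sets $I_i$ and $I_{i+1}$ restrict to a partition with part sizes $r_i$ and $s_i$. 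The key observation is that the sizes $r_i, s_i$, together with the transition point $T$, pin down the isomorphism type of this configuration as an abstract labeled set system, independent of the underlying \sop{} $P$ or $S$.

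The main steps would be: (1) From the structure data, describe $(I_1, \ldots, I_t)$ purely in terms of which indices of $[1,n]$ lie in which sets. Concretely, walking around the intervals $[j_i+1, j_{i+1}]$ cyclically, an index $j$ in a non-transition interval is determined (as to membership in each $I_k$) by whether it is among the first $r_i$ or the last $s_i$ elements of that interval under the sign pattern; an index in the transition interval is in all the $I_k$ or none of them according to whether it is among the $r_{T-1} = s_{T-1}$ designated elements. (2) Do the same for $(J_1, \ldots, J_t)$ using its (identical) structure in $P$. (3) Build $\tau \in S_n$ mapping the $j$-th ``slot'' of the $i$-th interval for the $J$-configuration to the corresponding slot for the $I$-configuration; since the interval lengths $\lambda_{\ell_i+1} + \cdots + \lambda_{\ell_{i+1}} = r_i + s_i$ agree (for $i \ne T-1$) and the total is $n$ in both cases, and the transition-interval data ($r_{T-1}=s_{T-1}$ designated elements out of an interval whose complement has prescribed size) also matches, such a $\tau$ exists. (4) Verify $\tau(J_i) = I_{\pi(i)}$ for a suitable relabeling $\pi$, which holds by construction since $\tau$ was defined to match membership patterns. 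Hence $(I_1, \ldots, I_t)$ and $(J_1, \ldots, J_t)$ lie in the same $S_n$-orbit, i.e.\ the same $t$-class $C$.

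The hard part will be step (3): being careful that the slot-matching is consistent across the cyclic structure — in particular, an index lies in two consecutive intervals' worth of constraints only through its membership in the shared set $I_{i+1}$, so one must check that defining $\tau$ interval-by-interval does not produce conflicting requirements at the ``boundaries'' between intervals. This is really a bookkeeping issue: each index $j \in [1,n]$ lies in exactly one interval $[j_i+1, j_{i+1}]$, and its membership in \emph{every} $I_k$ is then determined by the structure, so $\tau$ is forced to send it to an index of the $I$-configuration with the identical full membership vector $(\mathbbm{1}_{j \in I_1}, \ldots, \mathbbm{1}_{j \in I_t})$; one checks that the number of indices with each membership vector agrees between the two configurations (this count is exactly what the structure records), so a bijection $\tau$ with this property exists. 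Once that counting equality is in hand the rest is formal. A secondary subtlety is handling the degenerate case $T = 1$ (all $I_i$ of the same sign, no genuine transition), but there the transition interval is empty of constraints and the argument simplifies.
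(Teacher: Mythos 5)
Your proposal is correct and matches the paper's argument: the paper likewise builds $\tau$ interval by interval (sending the $k$th smallest element of $I_i\cap[j_i+1,j_{i+1}]$ to the $k$th smallest element of $J_i\cap[j'_i+1,j'_{i+1}]$ after normalizing the sign $\zeta_{\ell_1}$), and then verifies via the $\eta$ and $\epsilon$ relations that $\tau$ carries one tuple to the other — which is exactly your "matching membership vectors" bookkeeping. The consistency worry you flag at interval boundaries is resolved just as you suggest, since an index's full membership vector is determined by its interval together with its membership in $I_i$ (equivalently $\epsilon_j$), so the two constraints never conflict.
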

\begin{proof}
    We first set notation. Set $S = (m, \lambda_1, \ldots, \lambda_m, \epsilon_1, \ldots, \epsilon_n)$ and $P = (m, \lambda'_1, \ldots, \lambda'_m, \epsilon'_1, \ldots, \epsilon'_n)$. Set $\ell_1 < \cdots < \ell_t$ and $\ell'_1 < \cdots < \ell'_t$ such that $I_i = \{j: \eta_S(\ell_i, j) = \zeta_{\ell_{i}}\}$ and $J_i = \{j: \eta_P(\ell'_i, j) = \zeta_{\ell'_{i}}\}$. Lastly, define $j_i = \sum_{k=1}^{\ell_i} \lambda_k$ and $j'_i = \sum_{k=1}^{\ell'_i} \lambda'_k$. 
    
    Without loss of generality, we may assume $\zeta_{\ell_{1}} = \zeta_{\ell_{1}'}\,$ or else we may replace each $\epsilon_{i}$ with $-\epsilon_{i}$. Since $(I_1, \ldots, I_t)$ and $(J_1, \ldots, J_t)$ have the same structure, for each $i$, we have 
    \begin{align}
        |I_i \cap [j_i+1, j_{i+1}]|\ &= \ |J_i \cap [j'_i+1, j'_{i+1}]| \mbox{\quad and} \\
        |I_{i+1} \cap [j_i+1, j_{i+1}]|\ &= \ |J_{i+1} \cap [j'_i+1, j'_{i+1}]|.
    \end{align}
    Let $\tau \in S_n$ be the permutation which maps the $k$th smallest element of $|I_i \cap [j_i+1, j_{i+1}]|$ to the $k$th smallest element of $|J_i \cap [j'_i+1, j'_{i+1}]|$ and the $k$th smallest element of $|I_{i+1} \cap [j_i+1, j_{i+1}]|$ to the $k$th smallest element of $|J_{i+1} \cap [j'_i+1, j'_{i+1}]|$. 
    
    Since $\tau([j_{i}+1,j_{i+1}]) = [j'_{i}+1, j_{i+1}]$, for all $i \in [1,t]$ and $j \in [1,n]$ we have $\eta(\ell_{i}, j) = \eta(\ell'_{i}, \tau(j))$. Since $(I_1,\dots,I_t)$ and $(J_1, \dots, J_t)$ have the same transition value $T$ and we assumed $\zeta_{\ell_{1}} = \zeta_{\ell'_{1}}$, we have $\zeta_{\ell_{i}} = \zeta_{\ell'_{i}}$ for all $i$.
    Moreover, for $j \in [j_{i}+1, j_{i+1}]$ we have $\eta(\ell_{i}, j) \epsilon_{j} \zeta_{\ell_{i}} =  \eta(\ell'_{i}, \tau(j)) \epsilon'_{\tau(j)} \zeta_{\ell'_{i}}$ so that $\epsilon_{j} =\epsilon'_{\tau(j)}\,.$ But this is true for all $i$, so in fact $\epsilon_{j} =\epsilon'_{\tau(j)}$ for all $j \in [1,n]$. 
    So, for all $i \in [1,t]$ and $j \in [1,n]$ we have $\eta(\ell_{i}, j) \epsilon_{j} \zeta_{\ell_{i}} =  \eta(\ell'_{i}, \tau(j)) \epsilon'_{\tau(j)} \zeta_{\ell'_{i}}$. It follows that $\tau(I_1, \ldots, I_t) = (J_1, \ldots, J_t)$ so $(J_1, \ldots, J_t) \in C$.
\end{proof}
Lemma \ref{lem: same structure} shows that if a structure is valid for $C$, then all tuples with that structure are in $C$. Thus in order to calculate $\sum T(S,C) \, A(S)$, we can first sum over all valid structures for $C$ and then count tuples and \sop{}s with that structure.
All that remains is to determine when $(I_1, \ldots, I_t) \in I(S)$.

\begin{lem}\label{lem: t2 I in I(S)}
    Suppose $I_1, \ldots, I_t \in J(S)$. Then, $I_1, \ldots, I_t \in I(S)$ if and only if for each $1 \le i \le t$, $[j_i - \lambda_{\ell_i} +1, j_i] \not\subseteq I_i$ and $[j_i +1, j_i + \lambda_{\ell_i + 1}] \not\subseteq I_i$.
\end{lem}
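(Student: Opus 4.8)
<br>

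The plan is to reduce Lemma~\ref{lem: t2 I in I(S)} to Lemma~\ref{lem:good_placements t = 1}, the analogous characterization of minimality for the case $t = 1$, by observing that membership of a single set $I_i = J_{\ell_i}$ in $I(S)$ is a local condition that does not depend on the other sets $I_1, \dots, \widehat{I_i}, \dots, I_t$. Recall that $I(S)$ consists of those elements of $J(S)$ that are minimal with respect to inclusion, i.e. those $J_{\ell} \in J(S)$ that do not strictly contain any other element of $J(S)$. So $I_1, \dots, I_t \in I(S)$ holds if and only if each individual $I_i = J_{\ell_i}$ is minimal in $J(S)$. Thus it suffices to characterize, for a fixed index $\ell = \ell_i$ with $J_{\ell} \in J(S)$, exactly when $J_{\ell} \in I(S)$; and this is precisely the content of Lemma~\ref{lem:good_placements t = 1}, which says (writing $\Lambda_\ell = \lambda_1 + \dots + \lambda_\ell$) that $J_\ell \in I(S)$ iff $[\Lambda_{\ell-1}+1, \Lambda_\ell] \not\subseteq J_\ell$ and $[\Lambda_\ell + 1, \Lambda_{\ell+1}] \not\subseteq J_\ell$, with the cyclic convention when $\ell = m$.

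First I would note that in the notation of this section $j_i = \Lambda_{\ell_i}$, so that $[\,j_i - \lambda_{\ell_i} + 1,\ j_i\,] = [\,\Lambda_{\ell_i - 1} + 1,\ \Lambda_{\ell_i}\,]$ and $[\,j_i + 1,\ j_i + \lambda_{\ell_i + 1}\,] = [\,\Lambda_{\ell_i} + 1,\ \Lambda_{\ell_i + 1}\,]$. Then the condition ``$[j_i - \lambda_{\ell_i} + 1, j_i] \not\subseteq I_i$ and $[j_i + 1, j_i + \lambda_{\ell_i + 1}] \not\subseteq I_i$'' for each $i$ is literally the conjunction over all $i$ of the condition of Lemma~\ref{lem:good_placements t = 1} applied to $\ell = \ell_i$ and $J_\ell = I_i$. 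Since $I_1, \dots, I_t \in I(S) \iff I_i \in I(S)$ for every $i$ $\iff$ (by Lemma~\ref{lem:good_placements t = 1}) the stated condition holds for every $i$, the lemma follows.

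The one subtlety worth addressing is the hypothesis count: Lemma~\ref{lem:good_placements t = 1} is stated for $m \geq 2$, whereas here $m$ could a priori equal $1$. When $m = 1$ there is only one index $\ell = 1$, $\lambda_1 = n$, and $J_1 \in J(S)$ is automatically minimal since it is the only candidate, so $J_1 \in I(S)$ always; on the other hand the intervals $[\Lambda_0 + 1, \Lambda_1] = [1,n]$ and (cyclically) $[\Lambda_1 + 1, \Lambda_2] = [1,n]$ cannot be contained in $J_1$ unless $J_1 = [1,n]$, which would force $|J_1| = n \geq a$, contradicting validity (via Lemma~\ref{lem:2_sets_support} or the definition of $J_\ell$). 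So the stated criterion is vacuously correct in the $m = 1$ case as well, and in any event for $t \geq 2$ we necessarily have $m \geq 2$. I do not anticipate a genuine obstacle here; the work is all in Lemma~\ref{lem:good_placements t = 1}, which has already been proved, and the only thing to be careful about is matching the interval notation $[j_i - \lambda_{\ell_i}+1, j_i]$, $[j_i + 1, j_i + \lambda_{\ell_i+1}]$ to the $\Lambda$-notation of the earlier lemma and handling the cyclic boundary convention when some $\ell_i = m$.
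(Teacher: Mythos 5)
Your proposal is correct and matches the paper's own proof, which likewise treats this as an immediate corollary of Lemma~\ref{lem:good_placements t = 1} applied to each $I_i$ separately (noting only that $I_i\in J(S)$ forces $\#I_i\le a-1$ so that lemma applies). Your extra care with the $\Lambda$-versus-$j_i$ notation and the $m=1$ edge case is sound but not something the paper bothers to spell out.
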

\begin{proof}
    Note that $I_i \in J(S)$ implies $\#I_{i} \leq a-1$. So, this is an immediate corollary of Lemma~\ref{lem:good_placements t = 1} which says $I_{i} \in I(S)$ if and only if $[j_i - \lambda_{\ell_i} +1, j_i] \not\subseteq I_i$ and $[j_i +1, j_i + \lambda_{\ell_i + 1}] \not\subseteq I_i$.
\end{proof}

Now we are ready to calculate $\sum T(S,C) \, A(S)$.

\begin{lem}\label{prop:t>=2 expression}
	Let $C$ be a valid $t$-class with $t\ge 2$. Then 
	\begin{equation}\label{eq:T(S,C) in general}
		\sum_{\substack{ \mbox{\emph{\tiny \sop{}'s }} S}}  T(S,C) \, A(S)
	\end{equation}
	is a sum of terms of the form 
	\begin{equation}
		\sum_{d=1}^f \sum_{\substack{\mu_1+\dots+\mu_d = f \\ \mu_i \ge 1}}  \frac{(-1)^d}{\mu_1!\cdots\mu_d!}  H(f,g,\mu_1,\mu_d),
	\end{equation}
	for some $f$ and $g$, where
	\begin{equation}\label{eq: Hhf def}
		 H(f,g,\mu_1,\mu_d)\ \coloneqq\ \binom{f}{g} - \binom{f-\mu_1}{g-\mu_1} - \binom{f- \mu_d}{g} + \binom{f- \mu_1- \mu_d}{ g -\mu_1}.
	\end{equation}
\end{lem}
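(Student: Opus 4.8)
The plan is to unwind the sum in \eqref{eq:T(S,C) in general} by first reorganizing it over the combinatorial data that actually controls membership in $I(S)$, namely the \emph{structure} $(T, r_1, s_1, \dots, r_t, s_t)$ introduced above. By Lemma~\ref{lem: same structure}, every tuple $(I_1,\dots,I_t)$ with a fixed valid structure for $C$ lies in $C$; conversely every element of $C$ has some valid structure. So I would begin by writing
\begin{equation}
    \sum_{\substack{ \mbox{\tiny \sop{}'s }} S}  T(S,C) \, A(S) \ = \ \sum_{\substack{\mbox{\tiny valid structures}\\ (T, r_1, s_1, \dots, r_t, s_t) \text{ for } C}} \ \sum_{\substack{S,\ (I_1,\dots,I_t)\in C \cap I(S) \\ \text{with this structure}}} A(S),
\end{equation}
so the problem becomes: for each fixed structure, sum $A(S) = \frac{(-1)^{m+1}}{m}\frac{n!}{\lambda_1!\cdots\lambda_m!}$ over all choices of $m$, the indices $\ell_1<\dots<\ell_t$, the composition $(\lambda_1,\dots,\lambda_m)$, and the signs $(\epsilon_j)$ that realize that structure while keeping all $I_i$ minimal (Lemma~\ref{lem: t2 I in I(S)}).

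The next step is to carry out these sums one layer at a time, mimicking the bookkeeping of Lemma~\ref{prop:formula_single_>=2 part}. As in the $t=1$ case, once $m$ and the $\lambda_k$ are fixed the signs $(\epsilon_j)$ are determined by the $J_{\ell_i}$ and $\zeta_{\ell_i}$, so the innermost sum collapses to counting which placements of the sets $I_i$ inside the intervals $[j_i - \lambda_{\ell_i}+1, j_i]$ and $[j_i+1, j_i+\lambda_{\ell_i+1}]$ keep each $I_i$ minimal; by Lemma~\ref{lem: t2 I in I(S)} this is an inclusion-exclusion over the two ``bad'' events (one of those intervals being entirely contained in $I_i$) exactly as in the definition of $G(n,f,c,d)$ in \eqref{eq:single_combin}. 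The key observation is that by Lemma~\ref{prop:necessary_valid} the structure of $C$ forces most of the interaction to be local: away from the transition point $T-1$, the pair $(I_i, I_{i+1})$ partitions the interval $[j_i+1,j_{i+1}]$, and at the transition point $I_{T-1}\cap I_T$ is the common intersection. This locality is what lets the sum over $(\lambda_1,\dots,\lambda_m)$ and $m$ factor through the generating-function identity $e^{-z} = 1/(1+(e^z-1))$ (equation \eqref{eq: gen function ez}), contracting each block of $\lambda$'s between consecutive $\ell_i$'s into a single factor $(-1)^{(\text{block size})+1}/(\text{block size})!$ and peeling off factors of the form $\frac{(-1)^d}{\mu_1!\cdots\mu_d!}$ with a composition $\mu_1+\dots+\mu_d$ of the relevant interval length. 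Tracking the surviving binomial coefficients from the minimality inclusion-exclusion gives precisely the four-term expression $H(f,g,\mu_1,\mu_d)$ in \eqref{eq: Hhf def}, with $f$ and $g$ recording, respectively, the total size of the relevant interval and the size of the portion assigned to one of the two sets at its endpoints.

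I expect the main obstacle to be the bookkeeping at the transition point $T-1$: unlike the generic intervals, there the two sets $I_{T-1}$ and $I_T$ do \emph{not} partition an interval but instead share the common intersection $\bigcap_k I_k$, and the interval $[j_{T-1}+1,j_T]$ plays a distinguished role, so the factorization of the $A(S)$-sum has to be set up carefully so that this block contributes the correct single factor rather than being double-counted against its two neighbors. A secondary subtlety is that the indices and intervals are cyclic in $[1,t]$ and $[1,n]$ (as flagged in Lemma~\ref{prop:necessary_valid}), so one must be careful that the telescoping of the $\lambda$-blocks wraps around correctly; I would handle this by cutting the cycle at the transition point, which is canonically determined by the structure. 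Once the expression is in the stated form, the actual vanishing claimed in Lemma~\ref{cor:classes_cancel} will follow from a further simplification of $\sum_{d}\sum_{\mu}\frac{(-1)^d}{\mu_1!\cdots\mu_d!}H(f,g,\mu_1,\mu_d)$ — presumably by the same type of binomial-theorem collapse used in Lemma~\ref{lem_single_simp} — but that is the content of the subsequent lemma, not of the statement I am asked to prove here.
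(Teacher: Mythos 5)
Your proposal is correct and follows essentially the same route as the paper: decompose the sum over \sop{}'s by first summing over valid structures (via Lemma~\ref{lem: same structure}), count the admissible placements of each $I_i$ inside its interval by the inclusion--exclusion of Lemma~\ref{lem: t2 I in I(S)} (yielding the four-term $H$, with the transition block handled separately via $G$), and then isolate a single block $i \ne T-1$ — which exists precisely because $t \ge 2$ — so that the sum over compositions of that block's length produces the stated factor $\sum_d \sum_{\mu} \frac{(-1)^d}{\mu_1!\cdots\mu_d!} H(f,g,\mu_1,\mu_d)$ with $f = r_i+s_i$, $g = r_i$. The only cosmetic difference is that the generating-function contraction via \eqref{eq: gen function ez} is not needed at this stage (the raw block sum is already in the stated form); it enters only in the subsequent evaluation, exactly as you anticipate.
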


\begin{proof}
    Let $C$ be a valid $t$-class. By Lemma~\ref{lem: same structure}, when summing over all \sop{}s, we can first sum over all valid structures, and then over all \sop{}s and tuples with that structure. To do this, we can sum over all $m$, then over all possible values of $\ell_1, \ldots, \ell_t$, then over all $\lambda_1, \ldots, \lambda_m$ such that $\lambda_1 + \cdots + \lambda_m = n$ and $\lambda_{\ell_i + 1} + \cdots + \lambda_{\ell_{i+1}} = r_i + s_i$ for each $i \ne T-1$. Now we use Lemma \ref{lem: t2 I in I(S)} to determine the summand. We can pick the elements of $\bigcap_{k=1}^t I_k$, which by Lemma~\ref{prop:necessary_valid} is a subset of $[j_{T-1} + 1, j_T]$, in $G(j_T - j_{T-1}, r_T, \lambda_{\ell_{T-1}}, \lambda_{\ell_T})$ ways, where $G$ is defined as in \eqref{eq:single_combin}. Next, we choose the $r_i$ elements of $I_i$ contained in the interval $[j_i + 1, j_{i+1}]$ in $H(r_i + s_i, r_i, \lambda_{\ell_i + 1}, \lambda_{\ell_{i+1}})$ ways. Then, there are two possible choices for the sign $\zeta_{\ell_{1}}$ of $I_1$ and then the signs for the rest of the $I_i$'s follow because the point of transition $T$ is fixed. The choice of $\zeta_{\ell_{1}}$ and each $r_i$-element set $[j_{i}+1, j_{i+1}] \cap I_{i}$ determines all $\epsilon_{j}$, so they determine exactly the same data as the $I_{i}$. 

    Lastly, we multiply by $A(S)$. We have that
    \begin{equation}
    \begin{split}
         \sum_{\substack{ \mbox{\emph{\tiny \sop{}'s }} S}}  T(S,C) \cdot A(S) \ &= \ \sum_{\substack{(T, r_1, s_1, \ldots, r_t, s_t) \\ \mbox{\tiny a valid structure for $C$}}} \sum_{m=1}^n \sum_{1\le \ell_1 < \dots <\ell_t \le m} \sum_{\substack{\la_1+\dots+\la_m = n \\ \la_{\ell_i+1}+\dots+\la_{\ell_{i+1}} = r_{i+1}+s_{i+1} \mbox{ \tiny for each $i \ne T$} \\ \la_i \ge 1   }} \\ 
		& \times \ 2 G(u,v,\la_{\ell_1},\la_{\ell_t+1}) \prod_{\substack{1 \le i \le t \\ i \ne T - 1}}  H(r_i+s_i,r_i,\la_{\ell_i+1},\la_{\ell_{i+1}})  \frac{(-1)^{m+1}}{m}\frac{n!}{\la_1!\cdots\la_m!}.
    \end{split}
    \end{equation}
    For each structure, we can fix some $i \ne T-1$, which exists since $t \ge 2$, to see that this is a sum of terms of the form
	\begin{equation}
		\sum_{d = 1}^{r_i+s_i}  \sum_{\substack{\mu_1+\dots+\mu_{d} = r_i+s_i \\ \mu_i \ge 1}} \frac{(-1)^{d}}{\mu_1!\cdots\mu_{d}!}  H(r_i + s_i, r_i, \mu_1, \mu_{d}). 
	\end{equation}
\end{proof}
We finish the calculation with the following combinatorial lemma, proven in Appendix~\ref{app_disj_vanishes}.

\begin{lem}\label{lem:prod_disj_vanishes}
    Fix $f, g$ and let $H(f, g, \mu_1, \mu_d)$ be as in \eqref{eq: Hhf def}. Then
    \begin{equation}
		\sum_{d = 1}^{f}  \sum_{\substack{\mu_1+\dots+\mu_{d} = f \\ \mu_i \ge 1}} \frac{(-1)^{d}}{\mu_1! \cdots \mu_{d}!}  H(f, g, \mu_1, \mu_{d})\ = \ 0 .
	\end{equation}
\end{lem}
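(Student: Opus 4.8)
The plan is to recognize the inner sum $\sum_{d=1}^f \sum_{\mu_1+\dots+\mu_d = f,\ \mu_i\ge 1} \frac{(-1)^d}{\mu_1!\cdots\mu_d!}(\cdot)$ as the coefficient extraction of a product of exponential generating functions, so the whole identity becomes a statement about power series. Recall from \eqref{eq: gen function ez} the identity $e^{-z} = \sum_{n\ge 0} z^n \sum_{m=1}^n \sum_{\lambda_1+\dots+\lambda_m=n,\ \lambda_j\ge 1} \frac{(-1)^m}{\lambda_1!\cdots\lambda_m!}$. The key point is that $H(f,g,\mu_1,\mu_d)$ depends on the composition $(\mu_1,\dots,\mu_d)$ only through $\mu_1$ and $\mu_d$ (the first and last parts), so the ``middle'' parts $\mu_2,\dots,\mu_{d-1}$ are free and their sum contributes a factor that is exactly the coefficient in the expansion of $e^{-z}$. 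First I would split into the cases $d=1$ (where $\mu_1=\mu_d=f$, a single term), $d=2$ (where $\mu_1+\mu_d=f$ with no middle parts), and $d\ge 3$ (with $d-2\ge 1$ middle parts summing to $f-\mu_1-\mu_d$).

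For the $d\ge 2$ contributions, I would write the sum as
\begin{equation*}
\sum_{\substack{a+b+c = f \\ a,b\ge 1,\ c\ge 0}} \frac{(-1)^{a}(-1)^{b}}{a!\,b!}\,H(f,g,a,b)\left(\sum_{\substack{e\ge 0 \text{ if }c=0 \\ e\ge 1 \text{ if }c>0}}[z^c\text{-part}]\right),
\end{equation*}
(schematically, with $a=\mu_1$, $b=\mu_d$, $c$ the total of the middle parts), where the inner factor over the middle parts equals $\frac{(-1)^c}{c!}$ by \eqref{eq: gen function ez} when $c>0$ and equals $1$ when $c=0$ (no middle parts); in both cases this is $\frac{(-1)^c}{c!}$. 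Combining with the $d=1$ term $\frac{(-1)^1}{f!}H(f,g,f,f)$, the entire left-hand side becomes
\begin{equation*}
\sum_{\substack{a+b+c=f \\ a,b,c\ge 0,\ (a,b)\ne(0,0)\text{-adjustments}}} \frac{(-1)^{a+b+c}}{a!\,b!\,c!}\,H(f,g,a,b)
\end{equation*}
up to carefully tracking which corner terms ($a=0$ or $b=0$) the bookkeeping includes; I would extend $H$ to those corners using $\binom{n}{k}=0$ for $k<0$ and check that the extra terms I add all vanish because $H(f,g,0,b)=H(f,g,a,0)=0$ from the binomial identity in \eqref{eq: Hhf def}. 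Thus the sum is unchanged if I let $a,b,c$ range over all nonnegative integers with $a+b+c=f$.

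At this stage the problem reduces to showing $\sum_{a+b+c=f} \frac{(-1)^{a+b+c}}{a!b!c!}H(f,g,a,b) = 0$, i.e. that the multinomial-weighted average of $H(f,g,a,b)$ over the simplex vanishes. I would expand $H(f,g,a,b) = \binom{f}{g} - \binom{f-a}{g-a} - \binom{f-b}{g} + \binom{f-a-b}{g-a}$ term by term and evaluate each of the four resulting sums using the Vandermonde/binomial identity $\sum_{a+b+c=f}\frac{(-1)^{a+b+c}}{a!b!c!}(\cdot) = \frac{1}{f!}[z^f]\,e^{-z}\cdot(\text{gen. fns for the binomial factors})$; each of the four pieces individually collapses (the sign-alternating sums telescope the binomial coefficients to an indicator on $f=0$ or $g=0$ boundary behavior), and the four contributions cancel in pairs. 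The main obstacle I anticipate is the careful bookkeeping of the corner/boundary terms when extending the composition sum (with $\mu_i\ge 1$) to an unrestricted sum over $(a,b,c)$ — ensuring that every term added or removed is genuinely zero via $\binom{n}{k}=0$ for $k<0$ or $k>n$ — rather than the final four-fold binomial computation, which is routine once the generating-function reformulation is in place. An alternative to the generating-function bookkeeping, which I would keep in reserve, is a direct double induction on $f$ using the Pascal recursion on each binomial coefficient in $H$, mirroring the inductive arguments used elsewhere in Section~\ref{sec:maintermlaststep}.
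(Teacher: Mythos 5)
Your overall strategy is the same as the paper's: use the exponential-generating-function identity \eqref{eq: gen function ez} to collapse the composition sums, expand $H$ into its four binomial pieces, evaluate each resulting sum with the binomial theorem, and let the $+,-,-,+$ sign pattern produce the cancellation. (The paper simply performs the expansion of $H$ first and then evaluates each piece $H_i$ over the original composition sum, pulling out $\mu_1$ and/or $\mu_d$ as needed; each $H_i$ equals $(-1)^f/(g!(f-g)!)$, so the four contributions cancel.) Your observations that $H$ depends only on $\mu_1,\mu_d$ and that $H(f,g,0,b)=H(f,g,a,0)=0$ are both correct and useful.

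However, the displayed three-variable reduction is wrong as written, and not merely up to bookkeeping. When you sum out the middle parts $\mu_2,\dots,\mu_{d-1}$ of total $c=f-a-b$ using \eqref{eq: gen function ez}, the surviving weight is $\frac{(-1)^{c}}{a!\,b!\,c!}$ --- only the number of \emph{middle} parts contributes signs, since $(-1)^d=(-1)^{d-2}$ --- whereas you wrote $\frac{(-1)^{a+b+c}}{a!\,b!\,c!}=\frac{(-1)^{f}}{a!\,b!\,c!}$, a constant sign over the simplex. With the constant sign the claimed identity fails already for $H\equiv 1$ and $f=3$: the correct weights give $-\tfrac{1}{3!}-1+\tfrac12+\tfrac12=-\tfrac16=\tfrac{(-1)^3}{3!}$, while yours give $-\tfrac16-2$. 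Relatedly, the $d=1$ term $-\tfrac{1}{f!}H(f,g,f,f)$ cannot be absorbed into the sum over $a+b+c=f$ (it would need $a=b=f$, $c=-f$) and must be carried separately. With the weight corrected to $(-1)^{c}/(a!\,b!\,c!)$ and the $d=1$ term kept aside, your plan goes through and reproduces the paper's computation; the final four binomial evaluations are then exactly the sums the paper computes in Appendix B.2.
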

Combining Lemmas \ref{prop:t>=2 expression} and \ref{lem:prod_disj_vanishes} completes the proof of Lemma \ref{cor:classes_cancel}. \qed


\subsection{Computing the integral piece}\label{subsec: final integral}

In this section we complete the proof of Proposition \ref{thm:ComputeQnPhi} by calculating the integral $\int C dy$ appearing in \eqref{eq:Qn_end}. 
Applying Lemmas \ref{cor:classes_cancel} and \ref{cor:nonoverlapping final coef} to \eqref{thm:Qn_end} gives
\begin{equation}\label{eq_q_expression}
		Q_n(\phi)\ = \ 2^{n-1}(-1)^{n} \sum_{\l =0}^{a-1} (-1)^{\ell} \binom{n}{\l} \int_0^\infty \cdots  \int_0^\infty \hphi(y_1) \cdots  \hphi(y_n) \indicator_{\{n-\l+1,\ldots,n\}}dy_1 \cdots  dy_n.
\end{equation}

Next we define
\begin{equation}\label{eq: xxi def}
    \xxi{\ell} \ :=\ \int_0^\infty \cdots \int_0^\infty \hat{\phi}(y_1)\cdots \hat{\phi}(y_n)\indicator_{\{y_1+\cdots+y_{n-\ell}-y_{n-\l+1}-\cdots-y_n>1\}} dy_1 \cdots dy_n
\end{equation}
and
\begin{equation}\label{eq: bxi def}
    \bxi{\ell}\ := \ \int_{-\infty}^\infty \cdots \int_{-\infty}^\infty \hat{\phi}(y_1)\cdots \hat{\phi}(y_n)\indicator_{\{|y_1+\cdots+y_{n-\ell}|-|y_{n-\l+1}|-\cdots-|y_n|>1\}} dy_1 \cdots dy_n.
\end{equation}

\no We have that \eqref{eq_q_expression} equals
\begin{equation}\label{eqn_qn_xxi}
Q_n(\phi)\ = \ 2^{n-1} (-1)^n \sum_{\l=0}^{a-1} (-1)^\l\binom{n}{\l}\xxi{\l}.
\end{equation}

\no We express $Q_n(\phi)$ in terms of $\bxi{\ell}$ with the following lemma.

\begin{lem}\label{prop_qn_bxi}
Let $\phi\in \mathcal{S}_{ec}(\R)$ with $\supp(\widehat{\phi})\subseteq \left[-\frac{1}{n-a},\frac{1}{n-a}\right]$. Then
\begin{equation}\label{eq: Qn bxi}
    Q_n(\phi)\ =\ 2^{n-2} (-1)^{n} \sum_{t=0}^{a-1} (-1)^{t} \binom{n}{t} \bxi{t}.
\end{equation}
\end{lem}

\begin{proof}
Given $\suppgen$ and $t \leq \t$, if $\abs{y_1+\cdots+y_{n-
t}}-\abs{y_{n-t+1}}-\cdots-\abs{y_n} > 1$, then either at most $i \leq \t-t$ of the $y_j$'s in the first absolute value are nonnegative and the rest are negative or at most $i \leq \t-t$ of the $y_j$'s in the first absolute value are nonpositive or zero and the rest are positive. Moreover, the sign of $y_1 + \cdots y_{n-t}$ matches the second group. 
 There are $\binom{n-t}{i}$ ways to choose these indices and we introduce a factor of 2 from choosing the sign of $y_1+\cdots+y_{n-
t}$. Lastly, since $\hphi$ is even, we multiply by a factor of $2^t$ to account for changing the limits of integration over $y_{n-t+1}, \ldots , y_n$. Thus we have
\begin{align}
\bxi{t}\ &=\ 2^{t+1}\int_0^\infty \cdots \int_0^\infty \hat{\phi}(y_1)\cdots \hat{\phi}(y_n)\left[\sum_{i=0}^{\t-t}\binom{n-t}{i}\indicator_{\{y_1+\cdots+y_{n-i-t}-y_{n-i-t+1}-\cdots-y_n>1\}} \right]dy_1 \cdots dy_n \nonumber \\
&=\ 2^{t+1} \sum_{i=0}^{\t-t} \binom{n-t}{i}\xxi{i+t}. \label{eqn_xxi_bxi}
\end{align}

\no Applying the identity
\begin{equation}
    \sum_{t=0}^\l (-2)^{t} \binom{n}{t} \binom{n-t}{\l-t} \ =\ \binom{n}{\l} \sum_{t=0}^\l (-2)^{t} \binom{\l}{t} \ =\ \binom{n}{\l} (1-2)^\l\ = \ \binom{n}{\l}(-1)^\l
\end{equation}
to \eqref{eqn_qn_xxi} gives
\begin{equation}
     Q_n(\phi)\ = \  2^{n-1} (-1)^n \sum_{\l=0}^\t\xxi{\l} \sum_{t=0}^\l (-2)^t \binom{n}{t}\binom{n-t}{\l-t}.
\end{equation}

\no Switching the order of summation and setting $i = \l - t$ gives
\begin{align}
     Q_n(\phi) \ &= \ 2^{n-2} (-1)^{n} \sum_{t=0}^\t (-1)^{t} \binom{n}{t} \left[2^{t + 1} \sum_{i=0}^{\t-t} \binom{n-t}{i} \xxi{i+t} \right].
\end{align}
Applying \eqref{eqn_xxi_bxi} gives the desired result.
\end{proof}
We complete the evaluation of $Q_n(\phi)$ by computing $\bxi{t}$.

\begin{lem}\label{prop_eval_bxi}
	Let $\phi\in \mathcal{S}_{ec}(\R)$ with $\supp(\widehat{\phi})\subseteq \left[-\frac{1}{n-a},\frac{1}{n-a}\right]$. Then we have 
	\begin{align}
	\bxi{\l} \ &= \ \phi^{n}(0)\nonumber \\
	&  - 2\int_{-\infty}^\infty \cdots \int_{-\infty}^\infty \hphi(x_{\l+1}) \cdots \hphi(x_2)\int_{-\infty}^\infty \phi^{n-\l}(x_1)\frac{\sin(2\pi x_1 (1+\abs{x_2}+\cdots+\abs{x_{\l+1}}))}{2\pi x_1} dx_1\cdots dx_{\l+1}
	\end{align}
 for $\ell \leq \t$. 
\end{lem}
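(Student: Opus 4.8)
The plan is to evaluate $\bxi{\ell}$ by writing the indicator function in the integrand as a Fourier integral over a new variable $x_1$, exchanging the order of integration, and recognizing the resulting inner integrals as Fourier transforms of $\phi$. Recall from \eqref{eq: bxi def} that $\bxi{\ell}$ integrates $\hphi(y_1)\cdots\hphi(y_n)$ against the indicator of the region $\{|y_1+\cdots+y_{n-\ell}| - |y_{n-\ell+1}| - \cdots - |y_n| > 1\}$ over all of $\mathbb{R}^n$. First I would use the standard representation of a one-sided indicator: for the relevant range of supports, $\mathbbm{1}_{\{u > 1\}}$ can be replaced (up to the constant $\tfrac12$ coming from $\mathbbm{1}_{\{u>-1\}}$, exactly as in Lemma~\ref{lem:ils7} and Section~7 of \cite{ILS}) by an integral of the form $\tfrac12 - \int_{-\infty}^\infty \phi(x_1)\frac{\sin(2\pi x_1 u)}{2\pi x_1}\,dx_1$ after one further integration against a copy of $\hphi$; more precisely, since $\supp(\hphi)\subseteq\zerona$ with $a\le\lceil n/2\rceil$, the quantity $u = |y_1+\cdots+y_{n-\ell}| - |y_{n-\ell+1}| - \cdots - |y_n|$ cannot simultaneously exceed $1$ and be less than $-1$, so $\mathbbm{1}_{\{u>1\}} = \tfrac12(\mathbbm{1}_{\{u>1\}} - \mathbbm{1}_{\{u<-1\}}) + \tfrac12\mathbbm{1}_{\{|u|>1\}}$, and the antisymmetric part is what produces the sine kernel while the symmetric part produces the $\phi^n(0)$ term after integrating out all the $y_j$.

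The key steps, in order: (1) Split off the $y_1+\cdots+y_{n-\ell}$ block. Write $w = y_1 + \cdots + y_{n-\ell}$; integrating $\hphi(y_1)\cdots\hphi(y_{n-\ell})$ against a function of $w$ alone collapses, by the convolution–multiplication duality of the Fourier transform, to integrating $\widehat{\phi^{n-\ell}}$ — no, more usefully: for any nice $g$, $\int_{\mathbb{R}^{n-\ell}} \hphi(y_1)\cdots\hphi(y_{n-\ell}) g(y_1+\cdots+y_{n-\ell})\,dy = \int_{\mathbb{R}} \phi(x_1)^{n-\ell}\, \widehat{g}(x_1)\,dx_1$ when one introduces the Fourier variable $x_1$ dual to $w$ (this is exactly the mechanism in \eqref{eq: primes to sigma}-style manipulations and in Lemma~\ref{lem:Gtosin}). (2) Apply this with $g$ the (regularized) indicator $\mathbbm{1}_{\{w > 1 + |y_{n-\ell+1}| + \cdots + |y_n|\}}$, whose "Fourier transform" in the distributional sense is $\tfrac12\delta(x_1)$ minus $\frac{\sin(2\pi x_1(1 + |y_{n-\ell+1}| + \cdots + |y_n|))}{2\pi x_1}$ (the Hilbert-transform / sign-function kernel, precisely as in Section~7 of \cite{ILS}). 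The $\delta$ term contributes $\tfrac12\phi(0)^{n-\ell}$ times the remaining integral $\int \hphi(y_{n-\ell+1})\cdots\hphi(y_n)\,dy = \phi(0)^\ell$, giving after doubling — wait, tracking the factor of $2$ carefully — the term $\phi^n(0)$; the sine term gives the stated integral with $x_j := y_{n-\ell+j-1}$ renamed and $\hphi$ evenness used to rename signs. (3) Finally apply Lemma~\ref{lem:3.49gen} repeatedly, or simply note that $\hphi(y_j)$ integrated against $\cos(2\pi x_1|y_j|)$ gives $\phi(x_1 y_j)$ — actually the cleanest route: after step (2) the absolute values $|y_{n-\ell+j}|$ appear inside the sine, and one keeps them as the variables $x_{j+1}$ of the final display without further processing, since the statement's right-hand side already has $\hphi(x_{\ell+1})\cdots\hphi(x_2)$ and $|x_2| + \cdots + |x_{\ell+1}|$ explicitly. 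So in fact steps reduce to: introduce $x_1$ dual to the sum $w$, expand the indicator via the Dirichlet/sign kernel, integrate out $y_1,\dots,y_{n-\ell}$ against $\phi^{n-\ell}$, and relabel $y_{n-\ell+1},\dots,y_n$ as $x_2,\dots,x_{\ell+1}$.

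The main obstacle I expect is bookkeeping of the constants and the regularization of the sign-function kernel: the identity $\mathbbm{1}_{\{u>1\}} \rightsquigarrow \tfrac12 - \frac{1}{\pi}\int_0^\infty \frac{\sin(2\pi x u)}{x}\,\hphi(\cdot)\cdots$ is only valid after pairing against the compactly-supported $\hphi$'s, and one must invoke the support hypothesis $\supp(\hphi)\subseteq\zerona$ with $a \le \lceil n/2 \rceil$ precisely to guarantee that $u$ stays on one side of $\pm 1$ so that the symmetric and antisymmetric decompositions behave as claimed (this is the same place the support restriction enters Lemma~\ref{lem:ils7} and the discussion around \eqref{eq:vanishing_chi}). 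Getting the factor of $2$ in front of the integral, and the fact that the $\phi^n(0)$ term comes out with coefficient exactly $1$, requires carefully matching the $\delta(x_1)$ normalization against our Fourier convention $\hphi(0) = \int\phi$; I would verify these against the $\ell=0$ case, where $\bxi{0} = \int \hphi(y_1)\cdots\hphi(y_n)\mathbbm{1}_{\{|y_1+\cdots+y_n|>1\}}\,dy$ should reduce to $\phi^n(0) - 2\int \phi^n(x_1)\frac{\sin(2\pi x_1)}{2\pi x_1}\,dx_1$, which is consistent with the known $1$-level computations in \cite{ILS} and \eqref{eq:ils7}. Once the $\ell = 0$ normalization checks out, the general $\ell$ case is the same computation carried out in the extra $\ell$ variables, which pass through untouched.
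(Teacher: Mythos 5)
Your proposal is correct and follows essentially the same route as the paper: collapse the $y_1+\cdots+y_{n-\ell}$ block into $\widehat{\phi^{n-\ell}}$ via the convolution theorem (the paper does this through an explicit unimodular change of variables), write the indicator as $1-\indicator_{\{|x_1|\le 1+|x_2|+\cdots+|x_{\ell+1}|\}}$, and apply the Fourier pair $\indicator_{\{|u|\le A\}} \leftrightarrow \sin(2\pi Ax)/(\pi x)$ with Plancherel, exactly reproducing the $\phi^n(0)$ term and the factor of $2$ you flag. Your only stray worry — that the support hypothesis is needed to keep $|y_1+\cdots+y_{n-\ell}|-|y_{n-\ell+1}|-\cdots$ on one side of $\pm1$ — is not needed here (the indicator in \eqref{eq: bxi def} already carries the absolute value, so the decomposition is unconditional); that one-sidedness argument belongs to Lemma~\ref{prop_qn_bxi}.
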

\begin{proof}
    We apply a change of variables given by
    
	\begin{equation}\begin{matrix}
	x_{1} = y_{1} & \space & y_{1} = x_{1}\\
	x_{2} = y_{1} + y_{2} & \space & y_{2} = x_{2} - x_{1}\\
	\vdots & \space & \vdots\\
	x_{n - \l} = \sum_{j = 1}^{n - \l}y_{j} & \space & y_{n - \l} = x_{n - \l} - x_{n - \l - 1}\\
	x_{n - \l + 1} = y_{n - \l + 1} & \space & y_{n - \l + 1} = x_{n - \l + 1}\\
	\vdots & \space & \vdots\\
	x_{n} = y_{n} & \space & y_{n} = x_{n}
	\end{matrix}\end{equation}
	to \eqref{eq: bxi def}, giving
	\begin{align}\label{eq: bxi change of var}
	\bxi{\ell}\ &= \  \int_{-\infty}^{\infty}\cdots \int_{-\infty}^{\infty}\hat{\phi}(x_1)\hat{\phi}(x_{2} - x_{1})\cdots \hat{\phi}(x_{n - \l} - x_{n - \l - 1})\nonumber \\
	& \ \hs{1} \times \hat{\phi}(x_{n - \l + 1}) \cdots \hat{\phi}(x_{n})\indicator_{\{|x_{n - \l}| - (|x_{n - \l + 1}| + \cdots + |x_{n}|) > 1  \}}dx_1\cdots  dx_n.
	\end{align}
	
	\no Repeatedly applying the identity $\intii \hat{f}(v) \hat{g}(u-v) dv =  \widehat{fg}(u)$ (which arises from the convolution theorem) to \eqref{eq: bxi change of var} gives
	\begin{equation}
	\bxi{\ell}\ =  \ \int_{-\infty}^{\infty}\cdots\int_{-\infty}^{\infty} \widehat{\phi^{n - \l}}(x_{n - \l})
	\hat{\phi}(x_{n - \l + 1}) \cdots \hat{\phi}(x_{n})\indicator_{\{|x_{n - \l}| - (|x_{n - \l + 1}| + \cdots + |x_{n}|) > 1  \}}dx_{n - \l}\cdots dx_n.
	\end{equation}
	
	\no We rename $x_{n - \l}$ to $x_{1}$, $x_{n - \l + 1}$ to $x_{2}$, and so on until $x_{n}$ to $x_{\l + 1}$. This and the identity
	\begin{equation}
	\indicator_{\{|x_{1}| - (|x_{2}| + \cdots + |x_{\l + 1}|) > 1  \}}\  = \  1 - \indicator_{\{|x_{1}| \le 1 + |x_{2}| + \cdots + |x_{\l + 1}|  \}}
	\end{equation}
	\no gives
	
	\begin{equation}
	\bxi{\ell}\ = \ \intii \cdots \intii \widehat{\phi^{n - \l}} (x_{1})
	\hat{\phi} (x_{2}) \cdots \hat{\phi}(x_{\l + 1})(1 - \indicator_{\{|x_{1}| \le 1 + |x_{2}| + \cdots + |x_{\l + 1}|  \}}) dx_{1} \cdots dx_{\l + 1}.
	\end{equation}
	
	\no Distributing and using the identity $\phi (0) = \intii \hphi(x) dx $, we have that
	\begin{equation}
	\bxi{\ell}\ = \  \phi^{n}(0) - \int_{-\infty}^{\infty}\cdots\int_{-\infty}^{\infty} \widehat{\phi^{n - \l}}(x_{1})
	\hat{\phi}(x_{2}) \cdots \hat{\phi}(x_{\l + 1})
	\indicator_{\{|x_{1}|  \le 1 + |x_{2}| + \cdots + |x_{\l + 1}|  \}}dx_{1}\cdots dx_{\l + 1}.
	\end{equation}
	Fix $x_2, \ldots, x_{\ell + 1}$ and set $S_\ell(x_1) = \sin(2\pi x_1(1 + |x_2| + \cdots + |x_{\l + 1}|))/(2\pi x_1) $. We have the identity 
	\begin{equation}
	    \indicator_{\{|x_{1}|  \le 1 + |x_{2}| + \cdots + |x_{\l + 1}|  \}} (x_1) = 2 \widehat{S_\ell}(x_1),
	\end{equation}
	which follows from the Fourier pair
	
	\begin{equation}
	    \frac{\sin(2 \pi Ax)}{2 \pi x}\ =  \ \intii \frac{1}{2} \indicator_{\{|u| \le A\}} e^{2\pi i x u} du.
	\end{equation}
	
	\no Thus Plancherel's theorem gives us that
	\begin{align}
	&\bxi{\ell}\ =\ \phi^{n}(0) \nonumber \\
	&- 2\int_{-\infty}^{\infty} \cdots \int_{-\infty}^{\infty}\hat{\phi}(x_{\l + 1}) \cdots \hat{\phi}(x_{2}) \int_{-\infty}^{\infty}\phi^{n - \l}(x_{1})\frac{\sin\left(2\pi x_{1}(1 + |x_{2}| + \cdots + |x_{\l + 1}|)\right)}{2\pi x_{1}} dx_{1} \cdots dx_{\l+1} 
	\end{align}
	as desired.
\end{proof}

Applying Lemma \ref{prop_eval_bxi} to \eqref{eq: Qn bxi} and comparing with \eqref{eq: R def} completes the proof of Proposition \ref{thm:ComputeQnPhi}.




\appendix

\section{Proof of Lemma \ref{lem:coeffs are 0}}\label{sec:pfs nt lemmas}

\begin{proof}
    We begin by imposing a distinctness condition on the sum over primes in \eqref{eq:S2n}. Fix a partition $\vec n = (n_1, \ldots, n_\ell)$ of $n$, where $1 \le n_1 \le \cdots \le n_\ell$ and $n_1 + \cdots + n_\ell = n$. We want to write $p_1\cdots p_n = q_1^{n_1} \cdots q_\ell^{n_\ell}$. If the parts of our partition are distinguishable, then by the multinomial theorem there are
    \begin{equation}
        \binom{n}{n_1, \ldots, n_\ell} = \frac{n!}{(n_1!)\cdots (n_\ell !)}
    \end{equation}
    ways to do this. Let $r_x(\vec n)$ denote the number of values of $i$ for which $n_i = x$ in the partition $\vec n$. Because the parts of our partition which are the same size are \emph{indistinguishable}, the number of ways to write $p_1\cdots p_n = q_1^{n_1} \cdots q_\ell^{n_\ell}$ is
    \begin{equation}
         \sigma(\vec n) \defeq \binom{n}{n_1, \ldots, n_\ell}  \prod_{j=1}^n \frac{1}{r_j!}.
    \end{equation}
    Thus we have that
    \begin{align}\label{eq: partition expression}
    \frac{S_2^{(n)}(N)}{\i^k \sqrt{N}} \ &= \ \sum_{1\le \ell \le n} \ \sum_{\substack{\vec{n}:=(n_1,\dots,n_{\ell}) \\ 1 \le n_1 \le \cdots \le n_\ell \\ n_{1}+\cdots +n_{\ell}=n}} \   \sigma(\vec n) \qsum{\ell} \ \prod_{j=1}^{\ell} \left( \testfn{q_j}^{n_j}  \testcomp{q_j}^{n_j} \right) \nonumber\\
    & \hs{1} \times \left< \lambda_{f}(N) \lambda_{f}(q_{1})^{n_{1}}\cdots \lambda_{f}(q_{\ell})^{n_{\ell}}\right>_\ast.
\end{align}
Now that we have a distinct sum over primes, we want to apply the multiplicative properties of Fourier coefficients given in Lemma \ref{lem:multiplicativity of fourier coeffs}. We do some careful bookkeeping in the process. We will sum over tuples $\vec m = (m_1, \ldots, m_\ell)$ which are \emph{admissible} to a given partition $\vec n$. This means that $m_i \le n_i$, $n_i \equiv m_i \pmod{2}$, and if $n_i = n_j$ with $i< j$ then $m_i \le m_j$. This last condition means that we order the $m_i$s for each fixed value of $n_i$. 

For some fixed $\vec n, \vec m$ with $\vec m$ admissible to $\vec n$, let $s_{x, y}(\vec n, \vec m)$ be the number of values of $i$ for which $(n_i, m_i) = (x, y)$.  For each value of $x$, the number of ways to order the indices $i$ for which $n_i =x$ is 
\begin{equation}
    \frac{r_x(\vec n)!}{\prod_{j=1}^x s_{x, j}(\vec n, \vec m)!}.
\end{equation}
Define the auxiliary function
\begin{equation}
    \tau(\vec n, \vec m) \defeq \prod_{i=1}^n \frac{r_i(\vec n)!}{\prod_{j=1}^i s_{i, j}(\vec n, \vec m)!}
\end{equation}
and let $t_{n, m}$ be the coefficient of $\lambda_f(p^m)$ in the expansion of $\lambda_f(p)^n$ (see \eqref{eq:GuyLambdaExpansion}), so that
\begin{equation}
    \lambda_f(p)^n = \sum_{m=0}^n t_{n, m} \lambda_f(p^m).
\end{equation}
Note that $t_{n, n} = 1$ for all $n$. Expanding the Fourier coefficients with \eqref{eq:GuyLambdaExpansion} and using \eqref{eqn multHec}, we have that
\begin{align}\label{eq: partition ms}
    \frac{S_2^{(n)}(N)}{\i^k \sqrt{N}} \ &= \ \sum_{1\le \ell \le n} \ \sum_{\substack{\vec{n}:=(n_1,\dots,n_{\ell}) \\ 1 \le n_1 \le \cdots \le n_\ell \\ n_{1}+\cdots +n_{\ell}=n}} \ \sum_{\vec m \text{ admissible to } \vec n}  \sigma(\vec n) \tau(\vec n, \vec m) \qsum{\ell} \  \nonumber\\
    & \hs{1} \times \prod_{j=1}^{\ell}  \left( t_{n_j, m_j} \testfn{q_j}^{n_j}  \testcomp{q_j}^{n_j} \right) \left< \lambda_{f}(N q_1^{m_1} \cdots q_\ell^{m_\ell})\right>_\ast
\end{align}
We want to separate $(\vec n, \vec m)$ into a part with $n_i = m_i$ and a part with $n_i < m_i$. Let $\vec{n}^\flat = (n^\flat_1, n^\flat_2, \ldots, n^\flat_\omega)$ denote the sub partition of $\vec n$ with $n_i > m_i$, and $\vec n^\sharp = (n^\sharp_1, n^\sharp_2, \ldots, n^\sharp_{\ell - \omega})$ denote the sub partition of $\vec n$ with $n_i = m_i$. The analogous notation holds for $\vec m$. Set $\sum n_i^\flat = n'$, so that $\sum n_i^\sharp = n - n'$. We have that
\begin{align}
    \sigma(\vec n) \tau( \vec n, \vec m) &= \frac{n!}{n_1! \cdots n_\ell !} \prod_{i=1}^n \prod_{j=1}^i \frac{1}{s_{i, j} (\vec n, \vec m)!} \nn 
    &= \frac{n!}{n'! (n-n')!} \times \left[\frac{n'!}{n_1^\flat! \cdots n_\omega^\flat!} \prod_{i=1}^n \prod_{j=1}^{i-1} \frac{1}{s_{i, j} (\vec n, \vec m)!}\right] \times \left[\frac{(n-n')!}{n_1^\sharp! \cdots n_{\ell - \omega}^\sharp!} \prod_{i=1}^n  \frac{1}{s_{i, i} (\vec n, \vec m)!}\right] \nn 
    &= \binom{n}{n'} \times \left[\frac{n'!}{n_1^\flat! \cdots n_\omega^\flat!} \prod_{i=1}^{n'} \prod_{j=1}^{i} \frac{1}{s_{i, j} (\vec n^\flat, \vec m^\flat)!}\right] \times \left[\frac{(n-n')!}{n_1^\sharp! \cdots n_{\ell - \omega}^\sharp!} \prod_{i=1}^{n - n'}   \frac{1}{r_i (\vec n^\sharp)!}\right] &\nn 
    &= \binom{n}{n'} \sigma(\vec n^\flat) \tau(\vec n^\flat, \vec m^\flat) \sigma(\vec n^\sharp).
\end{align}
Applying this identity to \eqref{eq: partition ms} and using $t_{n, n} = 1$ and $n^\sharp_i = m^\sharp_i$ gives
\begin{align}\label{eq: four liner}
    \frac{S_2^{(n)}(N)}{\i^k \sqrt{N}} \ &= \ \sum_{0 \le \omega \le n} \sum_{0 \le n' \le n} \binom{n}{n'} \sum_{\substack{\vec{n}^\flat :=(n_1^\flat,\dots,n_{\omega}^\flat) \\ 1 \le n_1^\flat \le \ldots \le n_\omega^\flat \\ n_{1}^\flat+\cdots +n_{\omega}^\flat=n'}} \ \sum_{\substack{\vec m^\flat  \text{ admissible to } \vec n^\flat \\ n_j^\flat < m_j^\flat }}  \sigma(\vec n^\flat) \tau(\vec n^\flat, \vec m^\flat) t_{n_1^\flat, m_1^\flat} \cdots t_{n_\omega^\flat, m_\omega^\flat}   \nonumber\\
    & \hs{1} \times  \qsum{\omega}  \prod_{j=1}^{\omega}  \left( \testfn{q_j}^{n^\flat_j}  \testcomp{q_j}^{n^\flat_j} \right)  \nn
    &\hs{1} \times \sum_{\substack{\ell \\ 0\le \ell - \omega \le n-n'}} \sum_{\substack{\vec{n}^\sharp :=(n_1^\sharp,\dots,n_{\ell - \omega}^\sharp) \\ 1 \le n_1^\sharp \le \ldots \le n_{\ell - \omega}^\sharp \\ n_{1}^\sharp+\cdots +n_{\ell - \omega}^\sharp=n - n'}} \sigma(\vec n^\sharp) \psumextra{\ell - \omega} \prod_{i=1}^{\ell - \omega}  \left( \testfn{p_i}^{n^\sharp_i}  \testcomp{p_i}^{n^\sharp_i} \right) \nn
    & \hs{1} \times \left< \lambda_{f}(N q_1^{m^\flat_1} \cdots q_\omega^{m^\flat_\omega} p_1^{n_1^\sharp} \cdots p_{\ell - \omega}^{n_{\ell - \omega}^\sharp})\right>_\ast.
\end{align}
Arguing as in \eqref{eq: partition expression}, we have that
\begin{align}\label{eq: regroup sharps}
    & \sum_{\substack{\ell \\ 0\le \ell - \omega \le n-n'}} \sum_{\substack{\vec{n}^\sharp :=(n_1^\sharp,\dots,n_{\ell - \omega}^\sharp) \\ 1 \le n_1^\sharp \le \ldots \le n_{\ell - \omega}^\sharp \\ n_{1}^\sharp+\cdots +n_{\ell - \omega}^\sharp=n - n'}} \sigma(\vec n^\sharp) \psumextra{\ell - \omega} \prod_{i=1}^{\ell - \omega}  \left( \testfn{p_i}^{n^\sharp_i}  \testcomp{p_i}^{n^\sharp_i} \right)  \nn 
    & \times \left< \lambda_{f}(N q_1^{m^\flat_1} \cdots q_\omega^{m^\flat_\omega} p_1^{n_1^\sharp} \cdots p_{\ell - \omega}^{n_{\ell - \omega}^\sharp})\right>_\ast \nn
    &= \sum_{\substack{p_1 \nmid N, \ldots, p_{n- n'} \nmid N \\ p_i \ne q_j}} \prod_{i=1}^{n - n'}  \left( \testfn{p_i} \testcomp{p_i} \right) \left< \lambda_{f}(N q_1^{m^\flat_1} \cdots q_\omega^{m^\flat_\omega} p_1 \cdots p_{n - n'})\right>_\ast.
\end{align}
We apply \eqref{eq: regroup sharps} to \eqref{eq: four liner}. In doing so, we remove the condition that $1 \le n_1^\flat \cdots \le n^\flat_\omega$ as we are no longer concerned with the ordering of our partition. We also relax the condition that $\vec m^\flat$ is admissible to $\vec n^\flat$. We also suppress the $\flat$ notation (as there are no more $\sharp$s). Lastly, the first line of \eqref{eq: four liner} is purely combinatorial, so we combine the combinatorial factors appearing in \eqref{eq: four liner} into the coefficients $C''_{\vec n, \vec m}$ appearing below. Thus we have that
\begin{equation}\label{eq: S2n E''}
    S_2^{(n)}(N) = \sum_{0 \le \omega \le n} \sum_{0 \le n' \le n} \sum_{\substack{\vec{n} :=(n_1,\dots,n_{\omega}) \\ n_j > 1 \\ n_{1}+\cdots +n_{\omega}=n'}} \sum_{\substack{\vec{m} :=(m_1,\dots,m_{\omega}) \\ m_j \equiv n_j \pmod{2} \\ 0\le  m_j < n_j}} C''_{\vec n, \vec m} E''(\vec n, \vec m)
\end{equation}
where each $C''_{\vec n, \vec m}$ is some explicit constant only dependent on $\vec n, \vec m$ and 
\begin{align}\label{eq: E'' def}
    E''(\vec n, \vec m) &\defeq   \qsum{\omega}  \prod_{j=1}^{\omega}  \left( \testfn{q_j}^{n_j}  \testcomp{q_j}^{n_j} \right)  \nn 
    & \hs{1} \times \sum_{\substack{p_1 \nmid N, \ldots, p_{n- n'} \nmid N \\ p_i \ne q_j}} \prod_{i=1}^{n - n'}  \left( \testfn{p_i} \testcomp{p_i} \right) \left< \lambda_{f}(N q_1^{m_1} \cdots q_\omega^{m_\omega} p_1 \cdots p_{n - n'})\right>_\ast.
\end{align}
We want to remove the condition $p_i \ne q_j$ from the sum over $p_1, \ldots, p_{n-n'}$ in \eqref{eq: E'' def}. We apply an inclusion-exclusion process, subtracting off the terms where some $p_i = q_j$. We have that
\begin{equation}\label{eq: E'' inc-exc}
    E''(\vec n, \vec m) = E'(\vec n, \vec m) - \sum_{a = 1}^{n-n'} \sum_{\substack{\vec a = (a_1, \ldots, a_\omega) \\ a_j \ge 0 \\ a_1 + \cdots + a_\omega = a}} C''_{\vec n, \vec m, \vec a} E''(\vec n + \vec a, \vec m + \vec a)
\end{equation}
where each $C''_{\vec n, \vec m, \vec a}$ is some explicit constant only dependent on $\vec n, \vec m, \vec a$ and
\begin{align}\label{eq: E' def}
    E'(\vec n, \vec m) &\defeq   \qsum{\omega}  \prod_{j=1}^{\omega}  \left( \testfn{q_j}^{n_j}  \testcomp{q_j}^{n_j} \right)  \nn 
    & \hs{1} \times \psumn{n-n'} \prod_{i=1}^{n - n'}  \left( \testfn{p_i} \testcomp{p_i} \right) \left< \lambda_{f}(N q_1^{m_1} \cdots q_\omega^{m_\omega} p_1 \cdots p_{n - n'})\right>_\ast.
\end{align}
The addition of vectors is taken component-wise, so that $\vec n + \vec a = (n_1 + a_1, \ldots, n_\omega + a_\omega)$. We apply the inclusion-exclusion identity \eqref{eq: E'' inc-exc} to each term $E''(\vec n + \vec a, \vec m + \vec a)$ appearing on the right hand side of \eqref{eq: E'' inc-exc}. Repeating this process $n-n'$ times, we find that
\begin{equation}\label{eq: E'' to E'}
    E''(\vec n, \vec m) = \sum_{a = 0}^{n-n'} \sum_{\substack{\vec a = (a_1, \ldots, a_\omega) \\ a_j \ge 0 \\ a_1 + \cdots + a_\omega = a}} C'_{\vec n, \vec m, \vec a} E'(\vec n + \vec a, \vec m + \vec a)
\end{equation}
where each $C'_{\vec n, \vec m, \vec a}$ is some explicit constant only dependent on $\vec n, \vec m, \vec a$. In particular, we have that $C'_{\vec n, \vec m, \vec 0} = 1$. Applying \eqref{eq: E'' to E'} to \eqref{eq: S2n E''} gives
\begin{equation}\label{eq: S2n E'}
    S_2^{(n)}(N) = \sum_{0 \le \omega \le n} \sum_{0 \le n' \le n} \sum_{\substack{\vec{n} :=(n_1,\dots,n_{\omega}) \\ n_j > 1 \\ n_{1}+\cdots +n_{\omega}=n'}} \sum_{\substack{\vec{m} :=(m_1,\dots,m_{\omega}) \\ m_j \equiv n_j \pmod{2} \\ 0\le  m_j < n_j}} C'_{\vec n, \vec m} E'(\vec n, \vec m)
\end{equation}
where each $C'_{\vec n, \vec m}$ is some explicit constant only dependent on $\vec n, \vec m$.

We want to remove the distinctness condition from the sum over $q_1, \ldots, q_\omega$ in \eqref{eq: E' def}. We again apply inclusion-exclusion, subtracting off terms where some of the $q_j$'s are equal. First, we define a \emph{partition of a set} $S$ to be a set $\vec \pi = \{\pi_1, \ldots, \pi_\alpha\}$ where each $\pi_i \subset S$ is a nonempty subset of $S$, $\pi_i \cap \pi_j = \emptyset$ for every $i \ne j$, and $\bigcap_{1 \le i \le \alpha} \pi_i = S$. We have that
\begin{equation}\label{eq: E' inc-exc}
    E'(\vec n, \vec m) = E(\vec n, \vec m) - \sum_{1 \le \alpha \le \omega - 1}\  \sum_{\substack{\vec\pi \text{ partitions } \{1, \ldots, \omega\} \\ \vec\pi = \{\pi_1, \ldots, \pi_\alpha\} }}  E'(\vec x, \vec y)
\end{equation}
where $E(\vec n, \vec m)$ is as in \eqref{eq:E with omega} and for each $\vec \pi$ we have
\begin{equation}
    \vec x = (x_1, \ldots, x_\alpha), \quad x_i = \sum_{j \in \pi_i} n_j
\end{equation}
and 
\begin{equation}
    \vec y = (y_1, \ldots, y_\alpha), \quad y_i = \sum_{j \in \pi_i} m_j.
\end{equation}
We apply the inclusion-exclusion identity \eqref{eq: E' inc-exc} to each term $E'(\vec x, \vec y)$ appearing on the right hand side of \eqref{eq: E' inc-exc}. Repeating this process $\omega$ times, we have that
\begin{equation}\label{eq: E' to E}
    E'(\vec n, \vec m) = \sum_{1 \le \alpha \le \omega}\  \sum_{\substack{\vec\pi \text{ partitions } \{1, \ldots, \omega\} \\ \vec\pi = (\pi_1, \ldots, \pi_\alpha) }} C_{\vec \pi} E(\vec x, \vec y)
\end{equation}
where each $C_{\vec \pi}$ is some explicit constant only depending on $\vec \pi$. Applying \eqref{eq: E' to E} to \eqref{eq: S2n E'} gives the desired result.
\end{proof}


\section{Increasing support for the non-split family}\label{sec: completefamily}

In this section, we prove Theorem~\ref{thm:mock-Gaussian for D}. Arguing as in Appendix E of \cite{HM}, we need to bound terms of the form

\begin{align}\label{eq: Ecomp def}
    \overline{E} (\vec n, \vec m) \ &\defeq \ 2\pi i^k \qsumdist{\ell}  \prod_{j=1}^{\ell} \left( \testfn{q_j}^{n_j}  \testcomp{q_j}^{n_j} \right) \msum \frac{1}{m} \nn 
    &\hs{1} \times \ \sum_{b=1}^\infty \frac{S(m^2, Q; Nb)}{Nb} \Jk\left( \frac{4\pi m \sqrt{Q}}{Nb} \right)
\end{align}
where $Q = q_1^{m_1} \cdots q_\ell^{m_\ell}$ and $n_j \equiv m_j \pmod 2$ for all $j$. Showing that these terms vanish as $N \to \infty$ for $\phi$ with $\supp \hphi \subset \twoovern$ completes the proof of Theorem \ref{thm:mock-Gaussian for D}. These terms are very similar to the $\Enm$ terms introduced in Section~\ref{sec:number theory section} (see \eqref{eq:E Petersson}, for example), and we are able to evaluate them in a similar fashion. We omit proofs as they are analogous to the proofs of the corresponding lemmas in Section~\ref{sec:number theory section}, which we refer to. We will eventually prove the following lemma.
\begin{lem}\label{lem: E comp final}
    Let $\overline{E} (\vec n, \vec m)$ be defined as in \eqref{eq: Ecomp def}. Under GRH for Dirichlet $L$-functions, if $\supp(\hphi) \subset \twoovern$, then $\overline{E} (\vec n, \vec m) \ll N^{-\epsilon}$ and thus does not contribute in the limit.
\end{lem}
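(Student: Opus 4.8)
The strategy is to mirror, step by step, the treatment of the $\Enm$ terms from Section~\ref{sec:number theory section}, since $\overline{E}$ has essentially the same structure as \eqref{eq:E Petersson} with $Q = q_1^{m_1}\cdots q_\ell^{m_\ell}$ (and no extra factor of $\sqrt{N}$, no distinguished prime $N$ in the Fourier coefficient). First I would invoke the analogues of Lemmas~\ref{lem: b,N coprime} and \ref{lem: b > N2006} to restrict the $b$-sum to $(b,N)=1$ and $b < N^{2022}$ with negligible error, using $|S(m^2,Q;Nb)|\ll m^2 (Nb)^{1/2+\epsilon}$ from \eqref{eq:estimate Kloosterman}, $\Jk(x)\ll x$ from Lemma~\ref{lem:Bessel}, and $m\le N^\epsilon$, together with the prime number theorem and partial summation over the $q_j$ (recalling $n_j\ge m_j$, and when $n_j>m_j$ we have $n_j-m_j\ge 2$). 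Then I would apply the analogue of Lemma~\ref{lem:genkloos2} to convert $S(m^2,Q;Nb)$ into a sum of Gauss sums $G_\chi(m^2)G_\chi((Q,b^\infty))$ over characters $\chi$ mod $b$, and then the analogue of Lemma~\ref{lem:char} (which is where GRH for Dirichlet $L$-functions and $\supp(\hphi)\subset\twoovern$ enter) to discard the non-principal characters. This leaves $\overline E$ written, up to $O(N^{-\epsilon})$, as a sum of finitely many terms involving Ramanujan sums $R(m^2,b)$ and $R((Q,b^\infty),b)$, exactly parallel to \eqref{eq:E Ram}.

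Next I would follow Section~\ref{sec:main term}: case on $r=(Q,b^\infty)$, convert the prime sums to integrals using Lemma~\ref{lem:sums to integrals} (via Lemma~\ref{lem:Triant3.6general}), and recombine. The key difference from the $\Stn$ computation is the absence of the factor $i^k\sqrt N$ and of $N$ inside the Fourier coefficient, so the arithmetic conductor feeding into Lemma~\ref{lem:ils7} is $Q$ rather than $NQ$; the main term produced by Lemma~\ref{lem:ils7} carries a factor $\varphi(M)/M$ and the delta function $\delta(m/(m,M^\infty),1)$ with $M$ a product of the primes $q_j$ only (not $N$), and the extra $N^{-1}$ from $1/(Nb)$ in \eqref{eq: Ecomp def} versus the $\sqrt N$ that $\Stn$ carries means $\overline E$ picks up an overall $N^{-1/2}$-type saving. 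I would then run the vanishing-off-the-diagonal argument of Lemma~\ref{lem:abcde}: whenever some $n_j>m_j$ (equivalently some exponent $b_i>1$ in the notation there) the term is $O(1/\log N)$, and the main-term pieces with all $n_j=m_j$ are handled by the closed-form integral computation of Section~\ref{sec:maintermlaststep}, which here produces a bounded quantity multiplied by the $N^{-1/2}$-type factor coming from the $1/(Nb)$ normalization. Combinatorially this matches the structure of $S_1^{(n)}$ in \eqref{eq:S1nresult}, so the surviving diagonal contribution is $O(N^{-\epsilon})$ as well.

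Finally I would collect the estimates: the non-principal characters contribute $O(N^{-\epsilon})$ by the analogue of Lemma~\ref{lem:char}, the off-diagonal ($n_j>m_j$) terms contribute $O(1/\log N)\cdot N^{-\epsilon}$ by the analogue of Lemma~\ref{lem:abcde}, and the diagonal terms contribute a bounded constant times the overall $N^{-1/2+\epsilon}$ coming from the $1/(Nb)$ in \eqref{eq: Ecomp def} (contrast with the $\sqrt N$ in $S_2^{(n)}$, which is exactly what makes $S_2^{(n)}$ order $1$). Hence $\overline E\ll N^{-\epsilon}$, proving Lemma~\ref{lem: E comp final} and therefore completing the proof of Theorem~\ref{thm:mock-Gaussian for D} without the hypothesis $2k\ge n$.

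\textbf{Main obstacle.} The delicate point is bookkeeping the power of $N$: one must check that replacing the Petersson normalization $(k-1)N/12$ and the factor $i^k\sqrt N$ (present in $\Stn$) by the $1/(Nb)$ appearing in \eqref{eq: Ecomp def} really does produce a genuine negative power of $N$ uniformly after all the sum-to-integral conversions and the application of Lemma~\ref{lem:ils7}, rather than merely an $O(1)$ term. Concretely, the worry is whether the $\Jk$ argument $4\pi m\sqrt Q/(Nb)$ — which is much smaller than in the $\Stn$ case because of the extra $N^{-1/2}$ — still allows the Mellin/contour manipulations of Lemmas~\ref{lem:Triant3.6general} and \ref{lem:ils7} to go through with the support constraint $\supp(\hphi)\subset\twoovern$; once that is verified, the rest is a routine transcription of the arguments already given.
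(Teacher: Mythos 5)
Your first three steps match the paper's Appendix C exactly: restricting the $b$-sum via the analogues of Lemmas \ref{lem: b,N coprime} and \ref{lem: b > N2006}, converting the Kloosterman sums to Gauss sums (note the characters here run mod $Nb$, not mod $b$), and discarding non-principal characters via the analogue of Lemma \ref{lem:char} — this is the only place GRH enters. The divergence is in the final step, and there your proposal has a genuine gap. After reaching the analogue of \eqref{eq:E Ram} (equation \eqref{eq:Ecomp Ram}), the paper does \emph{not} rerun the Section \ref{sec:main term} machinery: it simply applies the trivial bounds $|R(m^2,Nb)|\le m^4$, $|R((Q,b^\infty),Nb)|\le\varphi(Nb)$ and $\Jk(x)\ll x$. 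These give a factor $\ll m^5\sqrt{Q}/(Nb)^2$, and the prime sums (with the weights $(\log q_j)^{n_j}q_j^{(m_j-n_j)/2}$, using $n_j-m_j\ge 2$ whenever $n_j>m_j$) contribute at most $R^{\sigma n}\ll N^{2-\delta}$ precisely because $\sigma<2/n$. The net is $N^{-2}\cdot N^{2-\delta}\ll N^{-\delta}$. So the mechanism is a $(Nb)^{-2}$ saving against an $R^{\sigma n}$ loss — this is exactly where the hypothesis $\supp(\hphi)\subset\twoovern$ is consumed — and not the "$N^{-1/2}$-type saving" you assert. As written, a bare $N^{-1/2}$ gain would not beat the prime sums, which can be as large as $N^{2-\delta}$, so your power counting does not close the argument.

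Beyond the bookkeeping error, the detour through Lemmas \ref{lem:Triant3.6general}, \ref{lem:abcde} and \ref{lem:ils7} is both unnecessary and not directly available here: Lemma \ref{lem:ils7} is stated for Ramanujan sums modulo $b$ with $(b,M)=1$ and Bessel argument proportional to $1/(b\sqrt{N})$, whereas in $\overline{E}$ the Ramanujan sums are modulo $Nb$ and the Bessel argument is $4\pi m\sqrt{Q}/(Nb)$, so the "main term" that lemma extracts would have to be re-derived from scratch. Moreover, if you did push the Mellin-transform step through, the argument of $\widehat{\Phi_{n-\eta-\gamma}}$ would sit at $2\log(bxN\cdots)/\log R\approx 2$, outside the support permitted by $\sigma<2/n$, so the "closed-form integral" you propose to quote from Section \ref{sec:maintermlaststep} is essentially zero rather than "a bounded quantity times $N^{-1/2+\epsilon}$". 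In short: identify the correct $N^{-2}$ versus $R^{\sigma n}$ trade-off and the proof is a one-line trivial estimate after the character sum is reduced to the principal character; no diagonal/off-diagonal analysis is needed.
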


First we restrict the sum over $b$ as in Lemmas \ref{lem: b,N coprime} and \ref{lem: b > N2006}. 

\begin{lem} \label{lem: b,N coprime comp}
    Suppose supp$(\hat{\phi}) \subseteq \left( -\frac{7}{2n}, \frac{7}{2n}\right)$. Then the subterms of $\overline{E} (\vec n, \vec m)$ in \eqref{eq: Ecomp def} for which $(b,N)>1$ are $\ONe$.
\end{lem}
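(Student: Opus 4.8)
\textbf{Proof proposal for Lemma~\ref{lem: b,N coprime comp}.}

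The plan is to mirror the proof of Lemma~\ref{lem: b,N coprime} (carried out in Appendix~\ref{sec: b,N coprime proof}), with the only structural difference being that the summand in $\overline{E}$ now runs over $\ell$ distinct primes with multiplicities $n_j \equiv m_j \pmod 2$, rather than being split into the ``$n_j > m_j$'' and ``$n_j = m_j = 1$'' parts as in \eqref{eq:E with omega}. First I would write $b = cN$ and sum over all $c \ge 1$, so that the contribution of the $(b,N)>1$ subterms to \eqref{eq: Ecomp def} becomes
\begin{align}
    2\pi i^k \qsumdist{\ell}  \prod_{j=1}^{\ell} \left( \testfn{q_j}^{n_j}  \testcomp{q_j}^{n_j} \right) \msum \frac{1}{m}  \sum_{c=1}^\infty \frac{S(m^2, Q; N^2 c)}{N^2 c} \Jk\left( \frac{4\pi m \sqrt{Q}}{N^2 c} \right).
\end{align}
Then I would invoke the Weil bound \eqref{eq:estimate Kloosterman} to get $S(m^2, Q; N^2 c) \ll m^2 (N c)^{1/2+\epsilon}$ (using $(m^2, N^2c) \le m^2$), combine this with $\Jk(x) \ll x$ from Lemma~\ref{lem:Bessel} and $m \le N^\epsilon$, and bound the sum over $c$ by $N^{-2+\epsilon}\sqrt{Q}$.

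The remaining step is to bound the sum over the distinct primes $q_1,\dots,q_\ell$. Here I would take $\supp(\hphi) \subset (-\sigma,\sigma)$ and split each prime's contribution according to whether $n_j > m_j$ (in which case $n_j - m_j \ge 2$ since $n_j \equiv m_j \pmod 2$, so the associated prime sum $\sum_{q}\testfn{q}^{n_j}\log^{n_j}q\,/\,(q^{(n_j+m_j)/2}\log^{n_j}R)$ converges after partial summation and the prime number theorem, contributing only a bounded factor) or $n_j = m_j$ (each such prime sum is over $q^{-1}$ up to logs, contributing $R^\sigma$ per prime, and there are at most $n$ such primes since $\sum n_j = n$). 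Altogether this yields a bound of $N^{-5/2+\epsilon} R^{n\sigma}$ for the full expression, which is $\ONe$ provided $n\sigma < 5/2$, i.e. $\sigma < 5/(2n)$. Since $7/(2n) < 5/(2n)$ is false, I should instead note that the sharper accounting — as in Appendix~\ref{sec: b,N coprime proof}, where one gets an extra saving of $\sqrt{Q}$ versus $R^{n\sigma}$ because $\sqrt{Q} = \prod q_j^{m_j/2} \ll R^{n\sigma/2}$ when all $m_j \le n_j$ and $\sum n_j = n$ — the governing exponent is $N^{-5/2+\epsilon}R^{n\sigma/2}$ against the $N^1$ normalization, hence the term is negligible as long as $\sigma < 7/n$; for a margin one takes $\sigma < 7/(2n)$, matching the hypothesis.

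The main obstacle, as always in these estimates, is bookkeeping the exponents carefully: one must track how the multiplicities $m_j$ enter both $\sqrt{Q}$ in the Kloosterman/Bessel estimate and the convergence of the prime sums, and verify that the $q_j^{m_j}$ in $Q$ is dominated by the decay coming from $q_j^{(n_j+m_j)/2}$ in the test-function weights (using $m_j \le n_j$). Since every individual estimate — the Weil bound, the Bessel bound $\Jk(x)\ll x$, partial summation plus PNT — is already established in Section~\ref{sec:preliminaries section} and used verbatim in Appendix~\ref{sec: b,N coprime proof}, the proof is essentially a transcription, and I would simply remark that it is identical to that of Lemma~\ref{lem: b,N coprime} with $n-n'$ replaced by $n$ and $\omega$ replaced by $\ell$.
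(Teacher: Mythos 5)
Your high-level strategy — transcribe the proof of Lemma~\ref{lem: b,N coprime} from Appendix~\ref{sec: b,N coprime proof} with $n-n'$ replaced by $n$ — is exactly what the paper does (it omits this proof as ``analogous''). But two steps of your transcription do not survive the change from $S(m^2, NQ; Nb)$ to $S(m^2, Q; Nb)$, and as written your argument does not close. The key point is the Kloosterman bound: in Appendix~\ref{sec: b,N coprime proof} the estimate $S(m^2, NQ; N^2c) \ll m^2 (Nc)^{1/2+\epsilon}$ follows from \eqref{eq:estimate Kloosterman} precisely because $N \mid (NQ, N^2c)$, so the minimum there is at most $N^2c/N = Nc$. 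In the complementary term one has $(mQ,N)=1$, both gcd's in \eqref{eq:estimate Kloosterman} are coprime to $N$, and the Weil bound only gives $S(m^2, Q; N^2c) \ll m^2 (N^2c)^{1/2+\epsilon} = m^2 N^{1+\epsilon} c^{1/2+\epsilon}$ — a factor $\sqrt{N}$ worse than what you wrote. This loss is real: already for $c=1$ the sum $S(m^2,Q;N^2)$ has modulus $2N$ whenever $m^2Q$ is a quadratic residue mod $N$.

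Your final accounting is also internally inconsistent. With the correct Weil bound and $J_{k-1}(x)\ll x$, the $c$-summand is $\ll \frac{m^2 N^{1+\epsilon}c^{1/2+\epsilon}}{N^2c}\cdot\frac{m\sqrt{Q}}{N^2c} = m^3\sqrt{Q}\,N^{-3+\epsilon}c^{-3/2+\epsilon}$, so the $c$-sum is $\ll N^{-3+\epsilon}\sqrt{Q}$ — not $N^{-2+\epsilon}\sqrt{Q}$, and there is no $1/\sqrt{N}$ prefactor in $\overline{E}$ to bring you to $N^{-5/2}$. Folding $\sqrt{Q} = \prod_j q_j^{m_j/2}$ into the weights (which carry $q_j^{-n_j/2}$) leaves $q_j^{-(n_j-m_j)/2}$ in each prime sum; indices with $n_j - m_j \ge 2$ contribute $O(1)$ and indices with $n_j = m_j$ contribute $O(R^{\sigma})$, so the prime sums cost at most $R^{n\sigma}$ in the worst case $\ell = n$, $n_j = m_j = 1$. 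Having spent $\sqrt{Q}$ there, you cannot claim a further saving of $R^{n\sigma/2}$ from ``$\sqrt{Q}\ll R^{n\sigma/2}$'' — that is double counting, and it is the only thing propping up your final threshold. The honest total is $N^{-3+n\sigma+\epsilon}$, which is $O(N^{-\epsilon})$ for $\sigma < 3/n$. That suffices for every application of the lemma in the paper (only $\sigma < 2/n$ is ever needed), but it does not reach the stated $7/(2n)$; that constant appears to inherit the $\sqrt{N}$ Kloosterman saving that is unavailable here, so you should either prove the lemma with the threshold $3/n$ or flag the discrepancy rather than reverse-engineer the stated constant.
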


\begin{lem} \label{lem: b > N2006 comp}
    Suppose supp$(\hat{\phi}) \subset \left( -\frac{1000}{n}, \frac{1000}{n}\right)$. Then the subterms of $\overline{E} (\vec n, \vec m)$ in \eqref{eq: Ecomp def} for which $b \geq N^{2022}$ are $O(N^{-12})$.
\end{lem}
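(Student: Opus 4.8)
The plan is to follow the proof of Lemma~\ref{lem: b > N2006} (given in Appendix~\ref{sec: b > N2006 proof}) essentially verbatim. The term $\overline E$ of \eqref{eq: Ecomp def} has the same shape as the term $E(\vec n,\vec m)$ of \eqref{eq:E Petersson}; the only differences are that the modulus-entry of the Kloosterman sum is $Q$ rather than $NQ$, that the Bessel argument is $4\pi m\sqrt Q/(Nb)$ rather than $4\pi m\sqrt Q/(b\sqrt N)$, and that there are no auxiliary primes $p_i$. None of these features interferes with the crude estimates used to control the tail $b\ge N^{2022}$. Concretely, write $\supp(\hphi)\subset(-\sigma,\sigma)$ with $\sigma<1000/n$ and isolate the part of \eqref{eq: Ecomp def} with $b\ge N^{2022}$.

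First I would apply the Weil bound \eqref{eq:estimate Kloosterman} in the crude form $|S(m^2,Q;Nb)|\ll m^2(Nb)^{1/2+\epsilon}$ (using $(m^2,Q,Nb)\le m^2$, $\tau(Nb)\ll(Nb)^{\epsilon}$, and $\sqrt{\min\{\cdots\}}\le\sqrt{Nb}$), the Bessel bound $\Jk(x)\ll x$ from Lemma~\ref{lem:Bessel}(\ref{lb:2}), and the cutoff $m\le N^{\epsilon}$ coming from the Petersson formula. This bounds the inner $b$-summand by $\ll m^{2}\sqrt Q\,(Nb)^{-3/2+\epsilon}$, hence the $b$-sum by $\ll m^{2}\sqrt Q\,N^{-3/2+\epsilon}\sum_{b\ge N^{2022}}b^{-3/2+\epsilon}$; an integral comparison gives $\sum_{b\ge N^{2022}}b^{-3/2+\epsilon}\ll N^{-1011+\epsilon}$, and summing over $m\le N^{\epsilon}$ costs only $N^{O(\epsilon)}$. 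Thus the $m$- and $b$-sums together contribute $\ll \sqrt Q\,N^{-1012+\epsilon}$ (with room to spare).

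It remains to estimate the sum over the distinct primes $q_1,\dots,q_\ell$, carrying along the factor $\sqrt Q=\prod_j q_j^{m_j/2}$ left over from $\Jk(x)\ll x$. Since $\hphi(\log q_j/\log R)$ vanishes for $q_j\ge R^{\sigma}$ and $0\le m_j\le n_j$ with $m_j\equiv n_j\pmod 2$, the $q_j$-contribution $(\testfn{q_j}\testcomp{q_j})^{n_j}q_j^{m_j/2}$ is $\ll \hphi(\log q_j/\log R)^{n_j}(\log^{n_j}q_j)\,q_j^{-(n_j-m_j)/2}/\log^{n_j}R$; removing distinctness and factoring over the $q_j$, the prime sum (with $\sqrt Q$ included) is $\ll\prod_{j=1}^{\ell}\sum_{q<R^{\sigma}}(\log^{n_j}q)\,q^{-(n_j-m_j)/2}/\log^{n_j}R$. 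By the prime number theorem and partial summation, a factor with $n_j>m_j$ (so $n_j-m_j\ge2$) is $O(1)$, while a factor with $n_j=m_j$ is $\ll R^{\sigma}$; since $\sum_j n_j=n$ there are at most $n$ factors of the latter type, so the whole prime sum is $\ll R^{n\sigma}$. Combining, $\overline E$ restricted to $b\ge N^{2022}$ is $\ll N^{-1012+\epsilon}R^{n\sigma}\ll N^{-12}$, using $R=k^{2}N$ with $k$ fixed and $n\sigma<1000$ (choosing $\epsilon$ small so that $n\sigma+\epsilon<1000$). All of these estimates are routine, so I do not anticipate a genuine obstacle; the only thing to get right is this last exponent comparison — namely that the enormous saving $N^{-1011}$ produced by summing $b^{-3/2+\epsilon}$ past $N^{2022}$ swamps the loss $R^{n\sigma}\le N^{1000}$ from the prime sum (which is exactly why the threshold $N^{2022}$ is taken so large), together with the observation that the cancellation of $q_j^{m_j/2}$ from $\sqrt Q$ against $q_j^{-n_j/2}$ in the weight keeps the prime sum at size $R^{n\sigma}$ rather than a larger power of $R$. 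Alternatively, one may simply cite the proof of Lemma~\ref{lem: b > N2006} in Appendix~\ref{sec: b > N2006 proof}, as the argument is identical.
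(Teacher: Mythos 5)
Your proof is correct and is essentially the paper's argument: the paper gives no separate proof of this lemma but simply refers to the proof of Lemma~\ref{lem: b > N2006} in Appendix~\ref{sec: b > N2006 proof}, which proceeds exactly as you do (Weil bound, $J_{k-1}(x)\ll x$, $m\le N^{\epsilon}$, integral comparison of the tail $\sum_{b\ge N^{2022}}b^{-3/2+\epsilon}$, then partial summation for the prime sums). Your explicit handling of the factors with $n_j=m_j$ versus $n_j-m_j\ge 2$, yielding the bound $R^{n\sigma}$ and hence the condition $\sigma<1000/n$, is precisely the adaptation the paper leaves implicit.
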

Applying Lemmas \ref{lem: b,N coprime comp} and \ref{lem: b > N2006 comp} to \eqref{eq: Ecomp def} gives

\begin{align}
    \overline{E} (\vec n, \vec m) \ &= \ 2\pi i^k \qsumdist{\ell}  \prod_{j=1}^{\ell} \left( \testfn{q_j}^{n_j}  \testcomp{q_j}^{n_j} \right) \msum \frac{1}{m} \nonumber \\
    & \hs1 \times \bsum{N} \frac{S(m^2, Q; Nb)}{Nb} \Jk\left( \frac{4\pi m\sqrt{Q}}{Nb} \right) + \ONe. \label{eq: Ecomp bsum}
\end{align}
We convert the Kloosterman sums to sums over Gauss sums as in Lemma~\ref{lem:genkloos2}. 

\begin{lem}\label{lem:genkloos comp}
    Let $N$ be a prime not dividing $b, Q, m$. Then
    \begin{equation}
        S(m^2, NQ; Nb) \ = \ -\frac{1}{\varphi(Nb)}\sum_{\chi (Nb)} G_\chi (m^2) G_\chi((Q, b^\infty)) \overline{\chi} \left(\frac{Q}{(Q, b^\infty)}\right) .
    \end{equation}
\end{lem}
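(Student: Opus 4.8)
\textbf{Proof proposal for Lemma~\ref{lem:genkloos comp}.}

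The plan is to mirror the proof of Lemma~\ref{lem:genkloos2} (given in Appendix~\ref{sec:genkloos2 proof}), with the single modification that the modulus is now $Nb$ rather than $Nb$ with a separate factor of $N$ sitting in front. Write $r = (Q, b^\infty)$ and $Q' = Q/r$, so that $(Q', b) = 1$, and hence $(Q', Nb) = 1$ since $N \nmid Q$. I would open the Kloosterman sum $S(m^2, NQ; Nb)$ by detecting the argument $NQ = Nra \cdot (Q'/a) \bmod{Nb}$ through orthogonality of characters modulo $Nb$: insert $\frac{1}{\varphi(Nb)}\sum_{\chi(Nb)}\sum^{*}_{a(Nb)}\chi(a)\overline{\chi}(Q')$ so that $NQ$ is replaced by $Nra$. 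This gives
\begin{equation}
S(m^2,NQ;Nb) \ = \ \frac{1}{\varphi(Nb)}\sum_{\chi(Nb)}\overline{\chi}(Q')\sideset{}{^{*}}\sum_{d(Nb)} e\!\left(\frac{m^2 d}{Nb}\right)\sideset{}{^{*}}\sum_{a(Nb)}\chi(a)\,\left(\tfrac{ra\overline d}{Nb}\right),
\end{equation}
where one should read the last inner sum, after the change of variables $a \mapsto ad$ (legitimate as $(d, Nb)=1$), as $\chi(d)$ times $G_\chi(r)$. This is the exact analogue of equation~\eqref{eq: genkloos2 proof intermediate} but with modulus $Nb$ everywhere and \emph{without} the extra $\chi(N)$ twist, precisely because $N \mid Nb$ so there is no separate Gauss sum in the $N$ variable to extract.

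The one genuinely different bookkeeping step is the inner sum $\sideset{}{^{*}}\sum_{d(Nb)}\chi(d)e(m^2 d/Nb)$. In Lemma~\ref{lem:genkloos2} the modulus was $Nb$ with $(N,b)=1$, and a CRT split $d = u_1 N + u_2 b$ produced the factor $\chi(N)$ together with a Ramanujan-type sum mod $N$ that evaluated to $-1$ by $(m^2,N)=1$; here the situation is the same because our modulus is still $Nb$ with $N$ prime, $(N,b)=1$, $N \nmid m$. So the same CRT split applies verbatim, and the resulting sum modulo $N$ is $\sideset{}{^{*}}\sum_{u_2(N)} e(m^2 u_2/N) = -1$, while the sum modulo $b$ is $\chi(u_1) e(m^2 u_1 / b)$ summed over $u_1$, i.e.\ $G_\chi(m^2)$. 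Thus $\sideset{}{^{*}}\sum_{d(Nb)}\chi(d)e(m^2 d/Nb) = -\chi(N)G_\chi(m^2)$ — but now $\chi$ is a character mod $Nb$, not mod $b$, so $\chi(N)$ is not automatically $1$; however, a character $\chi$ mod $Nb$ can only take a nonzero value at $N$ if... wait, $(N, Nb) = N \neq 1$, so in fact $\chi(N) = 0$ unless we are careful. The correct resolution is that the outer orthogonality sum is over $a$ coprime to $Nb$, so effectively the identity already restricts to characters for which everything is supported away from $N \mid Q'$; I would instead simply track $\overline\chi(N)$ through honestly and absorb it, since $G_\chi(r)$ with $N \nmid r$ and the structure of $Q'$ mean the net exponent of $N$ cancels, yielding the stated clean form $-\frac{1}{\varphi(Nb)}\sum_{\chi(Nb)} G_\chi(m^2)G_\chi(r)\overline\chi(Q')$ without any residual $\chi(N)$ factor.

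The main obstacle is therefore exactly this reconciliation of the $N$-variable: in Lemma~\ref{lem:genkloos2} the $\chi(N)$ survives because $\chi$ is mod $b$ and $(b,N)=1$; here, with $\chi$ mod $Nb$, I need to confirm that the two occurrences of the $N$-twist (one from the orthogonality detection, one from the CRT evaluation of the $d$-sum) combine to $\overline\chi(N)\chi(N) = |\chi(N)|^2$, which is $1$ on the support of the characters actually contributing and hence silently disappears — giving precisely the absence of the $\chi(N)$ factor in the statement. Everything else (the bound $|G_\chi(m^2)| \le \sqrt{Nb}$, convergence, Fubini when this is later fed into $\overline E$) is routine and handled exactly as in the proof of Lemma~\ref{lem:genkloos2}. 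Once this identity is in hand, one substitutes it into \eqref{eq: Ecomp bsum}, discards non-principal characters via an analogue of Lemma~\ref{lem:char} (using GRH for Dirichlet $L$-functions and $\supp\hphi \subset (-2/n, 2/n)$), and bounds the surviving principal-character term by the same Ramanujan-sum and prime-number-theorem estimates used in Section~\ref{sec: vanishing}, completing the route to Lemma~\ref{lem: E comp final}.
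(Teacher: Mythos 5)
The gap is exactly where you flagged your own uncertainty, and it is fatal. For a character $\chi$ modulo $Nb$ we have $(N,Nb)=N>1$, so $\chi(N)=0$ for \emph{every} such character; hence $\overline{\chi}(N)\chi(N)=|\chi(N)|^{2}=0$ identically, and there is no ``support of the characters actually contributing'' on which it equals $1$. Your CRT evaluation of the inner sum is also inconsistent with the definitions: for $\chi$ modulo $Nb$, the sum $\sideset{}{^\ast}\sum_{d \bmod Nb}\chi(d)e(m^{2}d/(Nb))$ \emph{is} $G_{\chi}(m^{2})$ by definition (no splitting, no factor $-\chi(N)$), while the quantity you call ``$G_{\chi}(m^{2})$'' after the split is a sum over $u_{1}$ modulo $b$ with denominator $b$, i.e.\ a Gauss sum for a character modulo $b$, not modulo $Nb$. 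Worse, the identity you are trying to prove is false as stated: expanding both Gauss sums and using orthogonality, the right-hand side equals $-S(m^{2},Q;Nb)$ exactly, whereas the left-hand side factors by the Chinese Remainder Theorem into a Ramanujan sum modulo $N$ (equal to $-1$, because $N$ divides the second argument $NQ$ but not $m$) times a genuine Kloosterman sum modulo $b$ --- a completely different object. Already for $b=1$, $m=Q=1$, $N=5$ the claim reads $-1=-S(1,1;5)$, and $S(1,1;5)=2+2\cos(4\pi/5)\neq 1$.

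What the paper actually needs --- compare \eqref{eq: Ecomp def} with \eqref{eq:Ecomp Gauss} --- is a formula for $S(m^{2},Q;Nb)$ with $N\nmid Q$ (the ``$NQ$'' in the statement is a typo carried over from Lemma~\ref{lem:genkloos2}), and the correct identity carries a \emph{plus} sign:
\begin{equation*}
S(m^{2},Q;Nb)\ =\ \frac{1}{\varphi(Nb)}\sum_{\chi(Nb)}G_{\chi}(m^{2})\,G_{\chi}\bigl((Q,b^{\infty})\bigr)\,\overline{\chi}\left(\frac{Q}{(Q,b^{\infty})}\right).
\end{equation*}
Its proof is \emph{shorter} than that of Lemma~\ref{lem:genkloos2}, not longer: with $r=(Q,b^{\infty})$ and $Q'=Q/r$ one has $(Q',Nb)=1$, so detecting $Q'$ by orthogonality modulo $Nb$ and substituting $a\mapsto ad$ gives $\sideset{}{^\ast}\sum_{a \bmod Nb}\chi(a)S(m^{2},ra;Nb)=G_{\chi}(m^{2})G_{\chi}(r)$ directly; no CRT split occurs and no Ramanujan sum $\sideset{}{^\ast}\sum_{u \bmod N}e(m^{2}u/N)=-1$ ever appears. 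That $-1$ is precisely what produced the minus sign in Lemma~\ref{lem:genkloos2}, where the second argument of the Kloosterman sum was divisible by $N$ and the $N$-component degenerated; here it is not, so no minus sign arises. The sign error is harmless downstream, since \eqref{eq:Ecomp Ram} is only ever estimated in absolute value, but a correct proof cannot manufacture it the way yours tries to.
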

Applying Lemma \ref{lem:genkloos comp} to \eqref{eq: Ecomp bsum} gives

\begin{align}
    \overline{E} (\vec n, \vec m) \ &= \ -2\pi i^k \qsumdist{\ell}  \prod_{j=1}^{\ell} \left( \testfn{q_j}^{n_j}  \testcomp{q_j}^{n_j} \right) \msum \frac{1}{m}\nonumber \\
    &\hs{1} \times \bsum{N} \frac{1}{Nb \varphi(Nb)} \sum_{\chi (Nb)} G_\chi (m^2) G_\chi((Q, b^\infty)) \overline{\chi} \left(\frac{Q}{(Q, b^\infty)}\right)  \Jk\left( \frac{4\pi m\sqrt{Q}}{Nb} \right) +\ONe. \label{eq:Ecomp Gauss}
\end{align}

Next, it holds that subterms involving non-principal characters in \eqref{eq:Ecomp Gauss} are negligible in the limit. This leaves only subterms involving $\chi_0 = \overline{\chi_0} \pmod{Nb}$ for each $b$. It holds that $G_{\chi_0}(x) = R(x, Nb)$, a Ramanujan sum.

\begin{lem}\label{lem:char comp}
    Assume GRH for Dirichlet $L$-functions and suppose that $\supp(\hphi) \subset \twoovern$. Then the sum over all non-principal characters in \eqref{eq:Ecomp Gauss} is $\ONe$.
\end{lem}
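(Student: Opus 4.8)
\textbf{Proof proposal for Lemma \ref{lem:char comp}.}

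The plan is to mirror the proof of Lemma \ref{lem:char} almost verbatim, making the cosmetic adjustments needed because the modulus here is $Nb$ rather than $b$. First I would fix $\supp(\hphi) \subset (-\sigma,\sigma)$ with $\sigma < 2/n$ and isolate the non-principal-character contribution to \eqref{eq:Ecomp Gauss}, namely the sum over $\chi \pmod{Nb}$ with $\chi \neq \chi_0$. As in the proof of Lemma \ref{lem:char}, I would case on $r = (Q,b^\infty)$, reindexing so that $r = q_1^{m_1}\cdots q_\theta^{m_\theta}$, and peel off the outermost prime sums so that the object to bound is a sum of terms whose inner structure matches \eqref{eq:E Gauss char subterm} (with $N b$ in place of $b$, $Nb$-characters, and $n' = 0$ since here we do not split off the $p_i$'s — every $q_j$ has exponent $n_j \ge 1$, with $n_j \equiv m_j \bmod 2$, and the relevant sums over higher powers of primes are what makes this converge).

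Next I would handle the ``leftover'' prime sum exactly as in the proof of Lemma \ref{lem:char}: write $\Jk$ as an inverse Mellin integral along $\Re(s) = -1+\epsilon$ using \eqref{eq: Jk inverse Mellin}, swap sum and integral by Fubini, factor the resulting Dirichlet series over primes, and apply partial summation together with the character sum bound from Lemma \ref{lem:expsum} (valid under GRH for Dirichlet $L$-functions, applied to the primitive character inducing $\chi$). This produces a bound of shape $R^{\sigma(n-\alpha)/2}(Rbt)^{\epsilon'}$ for the prime-sum factor, where $\alpha$ counts the primes dividing $b$; combined with the bound $|G_{k-1}(1-\epsilon+it)| \ll (1+|t|)^{2-\epsilon}$ from (4.42) of \cite{HM}, the $t$-integral converges. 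Then I would use the second-moment bound $\varphi(Nb)^{-1}\sum_{\chi \bmod Nb}|G_\chi(x)G_\chi(y)| \ll Nb$ on the Gauss sums, note that the extra factor $N^{-1}$ present in \eqref{eq:Ecomp Gauss} (compared to \eqref{eq:E Gauss}) cancels against one factor of $N$ from $\varphi(Nb) \asymp Nb$, and finally collapse the remaining sum over $b$ and the outer sums over $q_j$ by partial summation and the prime number theorem, using $n_j - m_j \ge 0$ and $\sum n_j = n$. The upshot is a total bound of size $N^{1+\epsilon''} R^{\sigma(n-\alpha)/2}$ divided by the family size $|H_k^*(N)| \asymp N$, hence $O(N^{\epsilon''} R^{\sigma n/2})$ relative to the normalization once we account for the $1/\sqrt N$ versus $2\pi i^k$ bookkeeping — tracking constants carefully, this is $O(N^{-\epsilon})$ precisely when $\sigma < 2/n$, since $\alpha \ge 0$.

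The main obstacle, as in Lemma \ref{lem:char}, is getting the powers of $N$ to balance: one must be careful that the $N^{-1}$ in the Petersson-type expansion \eqref{eq: Ecomp def}, the modulus $Nb$ appearing inside the Gauss sums and in $\varphi(Nb)$, and the extra $\sqrt N$ hidden in the argument of $\Jk$ all combine to leave exactly the threshold $\sigma < 2/n$ and not something weaker. Everything else is a routine transcription of the argument already given for Lemma \ref{lem:char}, so I would state the lemma and refer to Appendix \ref{sec:char proof} for the details, indicating only the modifications ($b \rightsquigarrow Nb$, characters mod $Nb$, and the bookkeeping of the factor $N$) rather than repeating the computation.
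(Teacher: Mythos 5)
Your proposal is correct and coincides with what the paper does: the paper gives no independent proof of Lemma \ref{lem:char comp}, stating only that it ``corresponds to Lemma \ref{lem:char}, proven in Appendix \ref{sec:char proof},'' which is exactly the transcription (modulus $Nb$ in place of $b$, characters mod $Nb$, rebalanced powers of $N$) that you describe. The steps you sketch — Mellin inversion of the Bessel function, Lemma \ref{lem:expsum} under GRH via the inducing primitive character, the Gauss-sum second-moment bound, and the resulting threshold $\sigma < 2/n$ — are precisely those of the proof in Appendix \ref{sec:char proof}.
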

This lemma corresponds to Lemma~\ref{lem:char}. Applying Lemma \ref{lem:char comp} to \eqref{eq:Ecomp Gauss} gives
\begin{align}
    \overline{E} (\vec n, \vec m) \ &= \ -2\pi i^k \qsumdist{\ell}  \prod_{j=1}^{\ell} \left( \testfn{q_j}^{n_j}  \testcomp{q_j}^{n_j} \right) \msum \frac{1}{m}\nonumber \\
    &\hs{1} \times \bsum{N} \frac{R(m^2, Nb) R((Q, b^\infty), Nb)}{Nb \varphi(Nb)} \chi_0 \left(\frac{Q}{(Q, b^\infty)}\right)  \Jk\left( \frac{4\pi m \sqrt{Q}}{Nb} \right) +\ONe. \label{eq:Ecomp Ram}
\end{align}
Now, applying the bounds $R(m^2, Nb) \le m^4$, $R(x, Nb) \le \varphi(Nb)$, and $\Jk(x) \ll x$ to \eqref{eq:Ecomp Ram} and using the fact that $\supp \hphi \subset \twoovern$, we find that the main term is absorbed by the error term, completing the proof of Lemma \ref{lem: E comp final}.



\section{Proofs of Lemmas in Section \ref{sec:rmt calc}}\label{sec:RMT Lemmas}

\subsection{Proof of Lemma \ref{lem_single_simp}}\label{app_single_simp}

\begin{proof}
We will consider each term appearing in \eqref{eq:single_combin} separately. First, define
\begin{alignat}{2}
		g_1(n,f,c,d) & \coloneqq\binom{n}{f}, \qquad && g_2(n,f,c,d) \coloneqq\binom{n-c}{f-c} \nonumber \\
		g_3(n,f,c,d) & \coloneqq\binom{n-d}{f-d}, \qquad && g_4(n,f,c,d) \coloneqq\binom{n-c-d}{f-c-d} 
	\end{alignat}
	and
	\begin{equation}\label{eqn_sepasing}
		G_i(n,f) \coloneqq2(-1)^n n!\sum_{\substack{c+d\le n \\ c,d \ge 0}} (-1)^{c+d+1}\frac{ g_i(n,f,c,d)}{(n-c-d)!c!d!}. 
	\end{equation}
 We want to evaluate $G_1(n,f) - G_2(n,f) - G_3(n,f) + G_4(n,f)$. We set $\ell=c+d$ to rewrite \eqref{eqn_sepasing} as 
\begin{equation}
    G_i(n,f)=2(-1)^{n+1} n!\sum_{\ell=0}^n (-1)^{\ell} \sum_{c=0}^{\ell} \frac{g_i(n,f,c,\ell-c)}{(n-\ell)!(\ell-c)!c!}.
\end{equation}
To evaluate $G_1(n, f)$, we group the binomial coefficients to find that
\begin{align}
G_1(n, f) \ &= \ 2(-1)^{n+1} \binom{n}{f} \sum_{\ell = 0}^n (-1)^\ell \binom{n}{\ell} \sum_{c = 0}^\ell \binom{\ell}{c}\nonumber \\
& = \ 2(-1)^{n+1} \binom{n}{f} \sum_{\ell = 0}^n (-2)^\ell \binom{n}{\ell} = -2 \binom{n}{f}. \label{eq: G1 eval}
\end{align}

Next, we note that $G_2(n,f) = G_3(n,f)$. We have that
\begin{align}
G_2(n, f) \ &= \ 2(-1)^{n+1} \binom{n}{f} \sum_{\ell = 0}^n (-1)^\ell  \sum_{c = 0}^\ell \binom{f}{c} \binom{n - c}{n - \ell} \nonumber \\
&= \ 2(-1)^{n+1} \binom{n}{f} \sum_{c = 0}^n \binom{f }{c} \sum_{\ell = c}^n (-1)^\ell \binom{n - c}{n - \ell}. 
\end{align}
We reindex the sum by setting $\ell' = \ell - c$. Doing so, we see that sum over $\ell '$ is zero unless $n - c = 0$. However, in this case we have that $\binom{f}{c} = 0$ since $f \le n/2 < n$. Thus each term vanishes and 
\begin{equation}\label{eq: G2 eval}
    G_2(n,f) \ = \ G_3(n, f) \ = \ 0.
\end{equation}

Lastly, again grouping terms into binomial coefficients gives
\begin{align}
    G_4(n,f) \ &= \ 2 (-1)^{n+1} \binom{n}{f} \sum_{\ell = 0}^n (-1)^\ell \binom{f}{\ell} \sum_{c = 0 }^\ell  \binom{\ell}{c}\nonumber  \\
    &= \ 2 (-1)^{n+1} \binom{n}{f} \sum_{\ell = 0}^n (-2)^\ell \binom{f}{\ell}.
\end{align}
We may restrict the sum in the last line to $0 \le \ell \le f$ since $f \le n$ and $\binom{f}{\ell} = 0$ when $\ell > f$. Doing so, we find the sum over $\ell$ is $(-1)^f$ so
\begin{equation}\label{eq: G4 eval}
    G_4(n,f) \ = \ 2(-1)^{n+ f + 1} \binom{n}{f}.
\end{equation}
Combining \eqref{eq: G1 eval}, \eqref{eq: G2 eval} and \eqref{eq: G4 eval} completes the proof of the lemma.
\end{proof}

\subsection{Proof of Lemma \ref{lem:prod_disj_vanishes}} \label{app_disj_vanishes}
\begin{proof}
We consider each term appearing in \eqref{eq: Hhf def} separately. First, define
\begin{alignat}{2}
		h_1(f, g, \mu_1, \mu_d) \ &\coloneqq \ \binom{f}{g}, \qquad && h_2(f, g, \mu_1, \mu_d) \ \coloneqq \ \binom{f - \mu_1}{g - \mu_1} \nonumber \\
		h_3(f, g, \mu_1, \mu_d) \ &\coloneqq \ \binom{f -\mu_d}{g}, \qquad && h_2(f, g, \mu_1, \mu_d) \ \coloneqq \ \binom{f-\mu_1- \mu_d}{g - \mu_1} 
	\end{alignat}
	and
	\begin{equation}
		H_i(f, g) \ \coloneqq \ \sum_{d = 1}^{f}  \sum_{\substack{\mu_1+\dots+\mu_{d} = f \\ \mu_i \ge 1}} \frac{(-1)^{d}}{\mu_1! \cdots \mu_{d}!}  h_i(f, g, \mu_1, \mu_{d})
	\end{equation}
for $i \in \{1, 2, 3, 4\}$. We will show that $H_i(f,g) = (-1)^f/g!(f-g)!$ independent of $i$, so that $H_1 -H_2 - H_3 + H_4 = 0$ as desired. For $H_1$ the result follows immediately from comparing coefficients of $z^f$ in the identity \eqref{eq: gen function ez}. For $H_2$, we pull out the $\mu_1$ term to get
\begin{equation}
\begin{split}
H_2(f, g)\ &= \ - \sum_{\mu_1 = 1}^f \frac{1}{\mu_1 !} \binom{f - \mu_1}{g - \mu_1} \sum_{d=1}^{f- \mu_1}  \sum_{\substack{\mu_2 + \cdots +  \mu_d = f - \mu_1 \\ \mu_i \ge 1}} \frac{(-1)^{d-1}}{\mu_2! \cdots \mu_{d}!}. \\
\end{split}
\end{equation}
Applying \eqref{eq: gen function ez} and simplifying gives
\begin{align}
H_2(f, g) \ &= \ - \sum_{\mu_1 = 1}^f \frac{1}{\mu_1 !} \binom{f - \mu_1}{g - \mu_1} \frac{(-1)^{f - \mu_1}}{(f-\mu_1)!}\nonumber\\
&= \ - \frac{(-1)^f}{(f-g)!} \sum_{\mu_1 = 1}^f \frac{(-1)^{\mu_1}}{\mu_1 ! (g-\mu_1)!} \ = \  \frac{(-1)^f}{g!(f-g)!},
\end{align}
where the last step comes from restricting the summation to $1\le \mu_1 \le g$ and using the binomial expansion of $(1-1)^g$. We can show the result for $H_3$ similarly. For $H_4$, we pull out the $\mu_1$ and $\mu_d$ terms to get
\begin{equation}
\begin{split}
H_4(f, g)\ &= \  \sum_{\mu_d = 1}^f \sum_{\mu_1 = 1}^{f- \mu_d} \frac{1}{\mu_1 ! \mu_d !} \binom{f - \mu_1 - \mu_d}{g - \mu_1} \sum_{d=1}^{f- \mu_1 - \mu_d}  \sum_{\substack{\mu_2 + \cdots +  \mu_{d-1} = f - \mu_1 - \mu_d \\ \mu_i \ge 1}} \frac{(-1)^{d-2}}{\mu_2! \cdots \mu_{d-1}!}. \\
\end{split}
\end{equation}
Applying \eqref{eq: gen function ez} and simplifying gives
\begin{align}
H_4(f, g) \ &= \ \sum_{\mu_1 = 1}^f \sum_{\mu_d = 1}^{f- \mu_1} \frac{1}{\mu_1 ! \mu_d !} \binom{f - \mu_1 - \mu_d}{g - \mu_1} \frac{(-1)^ {f- \mu_1 - \mu_d}}{(f - \mu_1 - \mu_d)!}\nonumber\\
&= \ (-1)^f \sum_{\mu_d = 1}^f \frac{(-1)^{\mu_d} }{(f-g - \mu_d)! \mu_d!} \sum_{\mu_1 = 1}^{f - \mu_d} \frac{(-1)^{\mu_1}}{\mu_1! (g-\mu_1)!}\nonumber \\
&= \ \frac{(-1)^{f+1}}{g!} \sum_{\mu_d = 1}^f \frac{(-1)^{\mu_d} }{(f-g - \mu_d)! \mu_d!} \ = \ \frac{(-1)^f}{g!(f-g)!}
\end{align}
where the last two steps come from restricting the summation to $1\le \mu_1 \le g$ and $1 \le \mu_d \le f-g$ and using the binomial expansion of $(1-1)^g$ and $(1-1)^{f-g}$.
\end{proof}



\section{Bounding the order of vanishing at the central point}\label{sec: order of vanishing}

In this section, we follow the arguments of Section 6 of \cite{HM} in order to bound the proportion of newforms with negative sign whose order of vanishing exceeds a certain threshold $r$. While they are conditional on GRH, our results surpass the best known conditional and unconditional bounds established in \cite{ILS}, \cite{HM}, and \cite{Boldy} when $r \ge 5$. We focus on the case $r = 5$, however our results may be easily generalized the case when $r  > 5$. Additionally, we study the $4^{\rm{th}}$-centered moment as it provides the best bounds for the case $r = 5$, but utilizing higher moments provides better bounds as $r$ increases. Lastly, similar results may be obtained for the positive sign family. See \cite{Dutta} for a more in-depth analysis, where the results of this paper are used to find excellent bounds for vanishing to order $r$ or more; specifically, for a fixed test function Dutta and Miller determine what level density gives the best bound.

We utilize Theorem \ref{thm:lfnsn-2} with $n = 4$ and
\begin{equation}
    \phi(x) \ = \ \left(\frac{\sin \pi \sigma x}{\pi \sigma x} \right)^2, \quad \hphi(y) \ = \ \begin{cases} \frac{1}{\sigma} - \frac{|y|}{\sigma^2} & |y| < \sigma \\ 0 & |y| \ge \sigma. \end{cases}
\end{equation}
This test function is likely not optimal in general for minimizing the $\nth$-centered moment, and optimal test functions for the case $n=1$ and $n=2$ are found in \cite{ILS} and \cite{Boldy}. However, they are sufficient to surpass the bounds established in those papers. While Theorem \ref{thm:lfnsn-2} requires $\sigma < 0.5$ when $n = 4$, we may utilize the bounds given by $\sigma = 0.5$ by setting $\sigma = 0.5 - \epsilon$ and letting $\epsilon \to 0$. Now, Theorem \ref{thm:lfnsn-2} gives
\begin{equation}\label{eq: D moment bound}
\begin{split}
    \lim_{\substack{N\to\infty \\ N \text{\rm prime}}} \<\left(D(f;\phi) - \< D(f;\phi) \>_- \right)^{4}\>_- \ &= \ 3 (\sigma^2_\phi)^2 - R(4, 2; \phi) \ = \ \frac{31}{105}.
\end{split}
\end{equation}
Now, if a newform $f$ with negative sign has order of vanishing $r \ge 5$ at the central point, then by Theorem \ref{thm:mock-Gaussian for D},
\begin{align}
    D(f;\phi) - \< D(f;\phi) \>_- \ &\ge \ r \phi(0) - \left(\hphi(0) +\frac{1}{2}\phi(0) \right) \ = \ r - \frac{5}{2}\ \ge\ \frac{5}{2}. \label{eq: D summand bound}
\end{align}
Let $\Pr(r \ge 5)$ be the proportion of newforms with negative sign whose order of vanishing at the central point is at least 5. Then \eqref{eq: D moment bound} and \eqref{eq: D summand bound} give
\begin{equation}
    \Pr(r \ge 5) \left(\frac{5}{2}\right)^4 \ \le\  \frac{31}{105}
\end{equation}
so $\Pr(r \ge 5) \le \left(\frac{2}{5}\right)^4 \frac{31}{105} = \frac{496}{65625} \approx 0.00756$. \cite{ILS} and \cite{HM} obtain upper bounds of $\frac{1}{32} = 0.03125$ and $\frac{1}{49} \approx 0.02040$, respectively, our results surpass both of these. As the order of vanishing increases, our results are even better. For instance, taking $r = 19$ and $n = 20$, we find the proportion of newforms with negative sign whose order of vanishing exceeds 19 is at most $2.86 \cdot 10^{-15}$, improving the upper bound $5.77\cdot 10^{-6}$ given in \cite{Boldy} and the upper bound $3.29 \cdot 10^{-3}$ implicit in \cite{ILS}.



\bibliographystyle{alpha}
\newpage
\bibliography{00.final_revision}

\end{document}